\theoremstyle{theorem}
\newtheorem{lemm}{Lemma}[section]
\newtheorem{prop}[lemm]{Proposition}
\newtheorem{coro}[lemm]{Corollary}
\newtheorem{theo}{Theorem}
\theoremstyle{remark}
\newtheorem{defi}[lemm]{Definition}
\newtheorem{nota}[lemm]{Notation}
\newtheorem{exem}[lemm]{Example}
\newtheorem{rema}[lemm]{Remark}
\definecolor{grey}{rgb}{0.6,0.6,0.6}
\newcommand{\proofend}{\hfill $\square$}
\newcommand{\Z}[1]{\mathbb{Z}/#1\mathbb{Z}}
\newcommand{\defn}[1]{{\em #1}}
\newcommand{\CrP}{\mathrm{Bir}(\Pn)}
\newcommand{\PGLn}[1]{\PGL(#1,\K)}
\newcommand{\Ker}{\mathrm{Ker}}
\newcommand{\pr}{pr}
\newcommand{\Diag}[3]{[#1:#2:#3]}
\newcommand{\DiaG}[4]{[#1:#2:#3:#4]}
\newcommand{\PGL}{\mathrm{PGL}}
\newcommand{\GL}{\mathrm{GL}}
\newcommand{\im}{{\bf i}}
\newcommand{\ipi}{{\bf i}\pi}
\newcommand{\K}{\mathbb{C}}
\newcommand{\Bir}{\mathrm{Bir}}
\newcommand{\Fix}{\mathrm{Fix}}
\newcommand{\Aut}{\mathrm{Aut}}
\newcommand{\Sym}{\mathrm{Sym}}
\newcommand{\Alt}{\mathrm{Alt}}
\newcommand{\rkPic}[1]{\mathrm{rk\ Pic}(#1)}
\newcommand{\Pn}{\mathbb{P}^2}
\newcommand{\Pic}[1]{\mathrm{Pic}(#1)}
\newcommand{\drawat}[3]{\makebox[0pt][l]{\raisebox{#2}{\hspace*{#1}#3}}}
\newcommand{\h}[1]{\hspace{-#1mm}}
\newcommand{\nump}[1]{\ensuremath{\begin{array}{|c|}\hline {\mathrm{#1}} \\ \hline\end{array}}}
\newcommand{\Cs}[1]{\mathit{Cs}_{#1}}
\begin{document}
\title{Linearisation of finite Abelian subgroups of the Cremona group of the plane}
\author{J\'er\'emy Blanc}
\maketitle

\begin{abstract}
Given a finite Abelian subgroup of the Cremona group of the plane, we provide a way to decide whether it is birationally conjugate to a group of automorphisms of a minimal surface.

In particular, we prove that a finite cyclic group of birational transformations of the plane is linearisable if and only if none of its non-trivial elements fix a curve of positive genus. For finite Abelian groups, there exists only one surprising exception, a group isomorphic to $\Z{2}\times\Z{4}$, whose non-trivial elements do not fix a curve of positive genus but which is not conjugate to a group of automorphisms of a minimal rational surface.

We also give some descriptions of automorphisms (not necessarily of finite order) of del Pezzo surfaces and conic bundles.\end{abstract}

\section{Introduction}
\subsection{The main questions and results}
In this paper, every surface will be \emph{complex, rational, algebraic} and \emph{smooth}, and except for $\K^2$, will also be projective. By an \emph{automorphism} of a surface we mean a biregular algebraic morphism from the surface to itself. The group of automorphisms (respectively of birational transformations) of a surface $S$ will be denoted by $\Aut(S)$ (respectively by $\Bir(S)$).

The group $\Bir(\Pn)$ is classically called the \defn{Cremona group}. 
Taking some surface $S$, any birational map $S\dasharrow \Pn$ conjugates $\Bir(S)$ to $\Bir(\Pn)$; any subgroup of $\Bir(S)$ may therefore be viewed as a subgroup of the Cremona group, up to conjugacy.

The minimal surfaces are $\Pn$, $\mathbb{P}^1\times\mathbb{P}^1$ and the Hirzebruch surfaces $\mathbb{F}_n$ for $n\geq 2$; their groups of automorphisms are a classical object of study, and their structures are well known (see for example \cite{bib:BeaBook}). These groups are in fact the maximal connected algebraic subgroups of the Cremona group (see \cite{bib:MuUm}, \cite{bib:Um}).

Given some group acting \emph{birationally} on a surface, we would like to determine some geometric properties that allow us to decide whether the group is conjugate to a group of \emph{automorphisms} of a \emph{minimal} surface, or equivalently to decide whether it belongs to a maximal connected algebraic subgroup of the Cremona group. This conjugation looks like a linearisation, as we will see below, and explains our title.

We observe that the set of points of a minimal surface which are fixed by a non-trivial automorphism is the union of a finite number of points and rational curves. Given a group $G$ of birational transformations of a surface, the following properties are thus related (note that for us the genus is the geometric genus, so that a curve has positive genus if and only if it is not rational); property $(F)$ is our candidate for the geometric property for which we require:\begin{center}\begin{tabular}{rl}
$(F)$ & \begin{tabular}[t]{l}No non-trivial element of $G$ fixes (pointwise) a curve of positive genus.\end{tabular}\\
$(M)$ &\begin{tabular}[t]{l}The group $G$ is birationally conjugate to a group of automorphisms of\\ a minimal surface.\end{tabular}
\end{tabular}\end{center}

The fact that a curve of positive genus is not collapsed by a birational transformation of surfaces implies that property $(F)$ is a conjugacy invariant; it is clear that the same is true of property $(M)$. The above discussion implies that $(M)\Rightarrow (F)$; we would like to prove the converse.

The implication $(F)\Rightarrow (M)$ is true for finite cyclic groups of prime order (see \cite{bib:BeB}). The present article describes precisely the case of finite Abelian groups. 
We prove that $(F) \Rightarrow (M)$ is true for finite cyclic groups of any order, and that we may restrict the minimal surfaces to $\Pn$ or $\mathbb{P}^1\times\mathbb{P}^1$. In the case of finite Abelian groups, there exists, up to conjugation, only one counterexample to the implication, which is represented by a group isomorphic to $\Z{2}\times\Z{4}$ acting biregularly on a special conic bundle. Precisely, we will prove the following results, announced without proof as Theorems~4.4 and~4.5 in \cite{bib:JBCR}:
\begin{theo}
\label{Thm:Cyclic}
Let $G$ be a finite {\bf cyclic} subgroup of order~$n$ of the Cremona group. The following conditions are equivalent:
\begin{itemize}
\item
If $g \in G$, $g\not=1$, then~$g$ does not fix a curve of positive genus.
\item
$G$ is birationally conjugate to a subgroup of $\Aut(\Pn)$.
\item
$G$ is birationally conjugate to a subgroup of $\Aut(\mathbb{P}^1\times\mathbb{P}^1)$.
\item
$G$ is birationally conjugate to the group of automorphisms of $\Pn$ generated by $(x:y:z) \mapsto (x:y:e^{2\ipi/n} z)$.
\end{itemize}
\end{theo}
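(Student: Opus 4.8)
The plan is as follows. The implications starting from $(4)$ are immediate: $(4)\Rightarrow(2)$ needs no argument, and for $(4)\Rightarrow(3)$ one checks that the birational map $\Pn\dasharrow\mathbb{P}^1\times\mathbb{P}^1$, $(x:y:z)\mapsto([x:y],[x:z])$, conjugates the generator of $(4)$ to the automorphism $([a:b],[c:d])\mapsto([a:b],[c:e^{2\ipi/n}d])$ of $\mathbb{P}^1\times\mathbb{P}^1$. For $(2)\Rightarrow(1)$ and $(3)\Rightarrow(1)$ one uses that a cyclic group of automorphisms of $\Pn$ (resp.\ of $\mathbb{P}^1\times\mathbb{P}^1$) can be diagonalised: after a change of coordinates every element has the form $(x:y:z)\mapsto(x:\alpha y:\beta z)$ (resp.\ is a product of diagonal automorphisms of the two factors, possibly composed with the exchange of the factors), so the fixed locus of a non-trivial element is a union of points and of rational curves --- lines of $\Pn$, fibres of the two projections of $\mathbb{P}^1\times\mathbb{P}^1$, or the graph of an isomorphism $\mathbb{P}^1\to\mathbb{P}^1$ --- and hence no non-trivial element fixes a curve of positive genus. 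Everything thus reduces to proving $(1)\Rightarrow(4)$.

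For $(1)\Rightarrow(4)$, assume $G\subset\CrP$ is cyclic of order $n$ and satisfies $(1)$. I would first regularise the action, so as to assume that $G$ acts faithfully and biregularly on a smooth projective rational surface $S$ with a $G$-equivariant birational map $S\dasharrow\Pn$, and then run the $G$-equivariant minimal model program to assume that $S$ is $G$-minimal. By the classical classification of $G$-minimal rational surfaces (Manin, Iskovskikh), $S$ is then a del Pezzo surface with $\rkPic{S}^G=1$, or a conic bundle $S\to\mathbb{P}^1$ with $\rkPic{S}^G=2$. The goal is to show that $(1)$ forces $S$ to be $\Pn$ or $\mathbb{P}^1\times\mathbb{P}^1$, possibly after replacing $(G,S)$ by another $G$-minimal surface obtained from $S$ through a $G$-equivariant birational map. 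Once $S=\Pn$, the group $G\subset\PGL(3,\K)$ is cyclic, hence diagonalisable, so it lies inside the diagonal torus $(\K^*)^2\subset\PGL(3,\K)$; conjugating by a suitable monomial transformation of $\Pn$ --- an element of $\GLZ{2}$ acting on the torus --- and using that $\GLZ{2}$ acts transitively on the cyclic subgroups of order $n$ of $(\K^*)^2$, one brings $G$ to the group of $(4)$. (If one reaches $\mathbb{P}^1\times\mathbb{P}^1$ instead, property $(3)$ holds, and then $(4)$ follows from the birational map of the previous paragraph.)

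In the del Pezzo case I would argue according to the degree $d=K_S^2$. If $d\in\{8,9\}$ then $S$ is already $\Pn$ or $\mathbb{P}^1\times\mathbb{P}^1$. If $5\leq d\leq 7$ then $\rkPic{S}\geq 2$, so $\rkPic{S}^G=1$ forces $G$ to act without non-zero invariant on $K_S^{\perp}\subset\Pic{S}$; listing the possible cyclic groups of isometries and analysing their lifts to $\Aut(S)$ (the essentially new cases being an order-$6$ element on a degree-$6$ surface and an order-$5$ element on a degree-$5$ surface, the latter already covered by \cite{bib:BeB}), one should find that either a power of the generator fixes a smooth anticanonical curve of genus $1$ --- contradicting $(1)$ --- or $S$ admits an explicit $G$-equivariant birational map to $\Pn$. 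If $d\leq 4$, one goes through the finite list of cyclic subgroups of $\Aut(S)$ with $\rkPic{S}^G=1$ furnished by the description of automorphisms of del Pezzo surfaces: for most of them a power of the generator fixes a curve of positive genus --- a hyperplane section, or the ramification curve of an anticanonical morphism --- so they are excluded by $(1)$; for the remaining ones one again exhibits a $G$-equivariant birational map from $S$ to $\Pn$ and concludes as above.

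The hard part will be the conic bundle case, where $G$-minimality is far from excluding all the fibrations and one must genuinely exploit $(1)$. Here $G$ acts on the base $\mathbb{P}^1$ with cyclic image, and an element $g$ in the kernel of this action acts fibrewise; when $g$ is an involution its fixed locus meets the singular fibres and is, away from finitely many points, a double cover of $\mathbb{P}^1$, so that $(1)$ forces this double cover --- essentially, the number of singular fibres on which $g$ behaves in the relevant way --- to be small enough that the conic bundle can be untwisted: one would perform a $G$-equivariant sequence of elementary transformations bringing $S$ down to a Hirzebruch surface, and then to $\Pn$ or $\mathbb{P}^1\times\mathbb{P}^1$. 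The delicate point --- and precisely the source of the exceptional group $\Z{2}\times\Z{4}$ in the Abelian setting --- is that a fibrewise involution with four singular fibres may fail to fix any curve of positive genus, because it acts on the associated genus-one curve as a fixed-point-free translation rather than as a hyperelliptic involution; the crux of the proof is to show that for a \emph{cyclic} $G$ this degenerate configuration cannot occur --- there is no room for the extra involution that it requires --- so that the untwisting always succeeds. Together with the del Pezzo case this gives $(1)\Rightarrow(4)$.
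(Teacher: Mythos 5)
Your overall strategy coincides with the paper's: the easy implications are handled the same way, and for $(1)\Rightarrow(4)$ the paper likewise regularises, passes to a $G$-minimal pair, invokes the Manin--Iskovskikh dichotomy, treats del Pezzo surfaces degree by degree (showing that for degree $\leq 4$ every Abelian group with $\rkPic{S}^G=1$ contains an element fixing a curve of positive genus, and that the degree $5$ and $6$ cases are explicitly linearisable), and then fights through the conic bundle case. The paper actually deduces the cyclic theorem as a corollary of the full Abelian classification, but specialising the same machinery to cyclic groups, as you do, is a legitimate shortcut.

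There is, however, one concrete error in your description of the conic-bundle case, and it sits exactly at the point you identify as the crux. You attribute the $\Z{2}\times\Z{4}$ exception to ``a fibrewise involution with four singular fibres [that] may fail to fix any curve of positive genus, because it acts on the associated genus-one curve as a fixed-point-free translation.'' This configuration does not exist. An involution that twists a singular fibre (exchanges its two components) necessarily acts trivially on the base, and its fixed locus always contains, pointwise, a bisection which is a double cover of $\mathbb{P}^1$ ramified over the $2k$ twisted fibres, hence a hyperelliptic curve of genus $k-1$ (Lemma~\ref{Lem:DeJI}); it cannot act on that curve as a translation. So a fibrewise involution twisting four fibres always violates $(1)$, for any group. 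The actual exceptional group $\Cs{24}$ contains \emph{no} twisting involution at all: its unique fibrewise involution leaves every component of every singular fibre invariant, and the four singular fibres are twisted in pairs by two elements of order $4$ acting non-trivially on the base, whose images generate $(\Z{2})^2\subset\Aut(\mathbb{P}^1)$. The correct reason cyclic groups escape is that $\overline{\pi}(G)$ is cyclic, so all elements acting non-trivially on the base share the same two fixed points and can only twist the two fibres above them; any further singular fibre must then be twisted by a root of the twisting involution $\sigma$ in the kernel, and (as you correctly intuit with ``no room for the extra involution'') the commutation relations force the product of two such roots to be a second twisting involution in the kernel, which a cyclic kernel cannot contain. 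As written, your plan would have you trying to exclude a phantom configuration while leaving the genuine one --- roots of involutions twisting fibres over the base's fixed points --- unanalysed; you need the classification of twisting elements of finite order (Proposition~\ref{Prp:DescriptionTwistingElementsFinite} and Corollary~\ref{Cor:RootRatTwist}) to carry this step out.
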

\begin{theo}
\label{Thm:NonCyclic}
Let $G$ be a finite {\bf Abelian} subgroup of the Cremona group. The following conditions are equivalent:
\begin{itemize}
\item
If $g \in G$, $g\not=1$, then~$g$ does not fix a curve of positive genus.
\item
$G$ is birationally conjugate to a subgroup of $\Aut(\Pn)$, or to a subgroup of $\Aut(\mathbb{P}^1\times\mathbb{P}^1)$ or to the group $\Cs{24}$ isomorphic to $ \Z{2}\times\Z{4}$, generated by the two elements \begin{center}$\begin{array}{lll}(x:y:z)&\dasharrow&(yz:xy:-xz),\\
(x:y:z)&\dasharrow &( yz(y-z):xz(y+z):xy(y+z)).\end{array}$\end{center}
\end{itemize}
Moreover, this last group is conjugate neither to a subgroup of $\Aut(\Pn)$, nor to a subgroup of $\Aut(\mathbb{P}^1\times\mathbb{P}^1)$.
\end{theo}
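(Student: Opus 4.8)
The plan is to combine the regularisation of finite groups, the equivariant minimal model program, and the structural descriptions of del Pezzo surfaces and conic bundles established in the earlier sections. The implication $(M)\Rightarrow(F)$ being already noted, the ``if'' direction of the equivalence is immediate for the first two models ($\Pn$ and $\mathbb{P}^1\times\mathbb{P}^1$ are minimal, so any subgroup of their automorphism groups satisfies $(F)$) and requires for $\Cs{24}$ only a direct verification: the group has order $8$, and one computes the fixed locus of each of its seven non-trivial elements --- the square of the first generator is $(x:y:z)\mapsto(-x:y:z)$, which fixes the line $x=0$; the elements of order $4$ fix finitely many points; the degree-$3$ elements fix only points or rational curves --- to see that none fixes a curve of positive genus. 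It then remains to treat the ``only if'' direction and the final non-conjugacy statement.

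For the ``only if'' direction, let $G\subseteq\Bir(\Pn)$ be finite Abelian and satisfy $(F)$. Being finite, $G$ is conjugate to a group of automorphisms of a smooth projective rational surface $S$, and running the $G$-equivariant minimal model program we may assume $S$ is $G$-minimal, so that either $\mathrm{rk}\,\mathrm{Pic}(S)^{G}=1$ and $S$ is a del Pezzo surface, or $\mathrm{rk}\,\mathrm{Pic}(S)^{G}=2$ and $S$ carries a $G$-equivariant conic bundle $\pi\colon S\to\mathbb{P}^1$; by Theorem~\ref{Thm:Cyclic} we may moreover assume $G$ is not cyclic. In the del Pezzo case we run through the degree $d=K_S^2$ from $9$ downwards: $d=9$ gives $S\cong\Pn$; $G$-minimality rules out $\mathbb{F}_1$ (its $(-1)$-curve is $G$-invariant) and the chain configuration of degree $7$, so $d=8$ gives $S\cong\mathbb{P}^1\times\mathbb{P}^1$; for $d\leq 6$ we invoke the classification of finite Abelian subgroups of $\Aut(S_d)$ and check, group by group, whether property $(F)$ can hold --- the elements to watch being the Geiser involution on $S_2$ (central, fixing a curve of genus $3$), the Bertini involution on $S_1$ (fixing a curve of genus $4$), and the elements of order $\leq 6$ fixing elliptic or hyperelliptic curves on the surfaces of degree $\leq 4$. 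The outcome of this bookkeeping is that, when $(F)$ holds, either $S$ is already $\Pn$ or $\mathbb{P}^1\times\mathbb{P}^1$, or (after possibly contracting a further $G$-invariant set of curves) $G$ preserves a conic bundle structure, reducing us to the next case; the remaining minimal Abelian $(F)$-actions on $S_6,S_5,S_4$ are linearised explicitly, essentially because such an action preserves a pencil of rational curves.

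In the conic bundle case, write $1\to G_F\to G\to G_B\to 1$ with $G_F$ acting trivially on the base of $\pi$ and $G_B$ (finite Abelian, hence cyclic or $(\Z2)^2$) acting faithfully on $\mathbb{P}^1$ and permuting the $m$ singular fibres. A non-trivial element of $G_F$ restricts to a non-trivial automorphism of each smooth fibre, fixing two points per fibre and hence a bisection of $\pi$; this bisection is a double cover of $\mathbb{P}^1$ branched over the singular fibres and has positive genus as soon as $m$ is large and the cover is connected, so $(F)$ forces either few singular fibres or a disconnected --- hence rational --- fixed bisection, and in all cases severely restricts $m$ and the distribution of the singular fibres. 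If $m=0$ then $S=\mathbb{F}_n$, and a finite Abelian group of automorphisms of $\mathbb{F}_n$ is conjugate, via elementary transformations centred at base points fixed by the induced action on $\mathbb{P}^1$, to a subgroup of $\Aut(\mathbb{P}^1\times\mathbb{P}^1)$ or of $\Aut(\Pn)$. For small $m>0$ one performs $G$-equivariant elementary transformations to decrease $m$, again landing on $\mathbb{P}^1\times\mathbb{P}^1$ or on $\Pn$, with a single exceptional rigid configuration --- four singular fibres carrying a $\Z2\times\Z4$-action whose image in $\Aut(\mathbb{P}^1)$ is cyclic of order $4$, the fixed bisection of $G_F$ being reducible --- which is precisely $\Cs{24}$ and admits no such $G$-equivariant simplification.

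The main obstacle is the final non-conjugacy statement: one must show the conic bundle $(S,\pi)$ underlying $\Cs{24}$ is $G$-birational neither to $\Pn$ nor to $\mathbb{P}^1\times\mathbb{P}^1$. Since any two $G$-minimal models of a birational $G$-action are connected by a chain of $G$-equivariant links, it is enough to prove that $(S,\pi)$ is $G$-rigid. Going through the list of elementary links issuing from a $G$-conic bundle, one checks that the only ones compatible with this action are elementary transformations which merely permute the four singular fibres and never change $m$, and that no link to a del Pezzo surface is $G$-equivariant, because the configuration of $(-1)$-curves one would have to contract is not $G$-invariant. Hence $(S,\pi)$ is the unique $G$-minimal model of its birational class, and as neither $\Pn$ nor $\mathbb{P}^1\times\mathbb{P}^1$ is $G$-isomorphic to it, $\Cs{24}$ embeds in neither automorphism group up to conjugacy. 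This rigidity verification --- enumerating all the $G$-links and ruling them out --- together with the degree-by-degree del Pezzo bookkeeping, constitutes the bulk of the work.
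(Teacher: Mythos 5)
Your overall strategy coincides with the paper's: regularise the finite group, pass to a $G$-minimal model, apply the Manin--Iskovskikh dichotomy (del Pezzo surface with $\rkPic{S}^G=1$ versus conic bundle), eliminate the del Pezzo surfaces of degree $\leq 4$ by exhibiting elements fixing curves of positive genus, analyse the conic-bundle case through the exact sequence $1\to G'\to G\to\overline{\pi}(G)\to 1$ and the twisting elements, and establish the non-conjugacy of $\Cs{24}$ by decomposing any equivariant birational map into elementary links. Two minor remarks on the del Pezzo step: the paper deliberately avoids invoking the classification of $\Aut(S_d)$ and argues directly degree by degree, and on degree-$4$ surfaces there are in fact \emph{no} minimal Abelian actions satisfying $(F)$ (every such group contains an involution fixing an elliptic curve), so nothing remains to be ``linearised'' there.

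There is, however, a concrete error in your description of the exceptional conic-bundle configuration. You assert that the image of the $\Z{2}\times\Z{4}$-action in $\Aut(\mathbb{P}^1)$ is cyclic of order $4$. It is not: for $\Cs{24}$ one computes $\overline{\pi}(g_1):(y:z)\mapsto(y:-z)$ and $\overline{\pi}(g_2):(y:z)\mapsto(z:y)$, so $\overline{\pi}(\Cs{24})\cong(\Z{2})^2$ and the kernel $G'$ is the $\Z{2}$ generated by the lift of $(x:y:z)\mapsto(-x:y:z)$. This is not cosmetic: if $\overline{\pi}(G)$ were cyclic, all its non-trivial elements would share the same two fixed points on the base, so at most two singular fibres could be twisted by elements acting non-trivially on the base, and any further singular fibres would have to be twisted by involutions of $G'$ --- a situation that Propositions~\ref{Prp:necDeJi} and~\ref{Prp:HardTwistInv} show to be incompatible with $(F)$ once there are four or more singular fibres. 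The exceptional case exists precisely because $\overline{\pi}(G)\cong(\Z{2})^2$ has three involutions with six fixed points in total, four of which carry the singular fibres, each twisted by an order-$4$ element of type $\kappa_{\alpha,\beta}$ and none by a twisting involution. A search conducted under your hypothesis would not locate $\Cs{24}$. Finally, in the rigidity argument you must also exclude links of type $IV$ (change of the fibration), which requires knowing that $\hat{S_4}$ carries a unique conic-bundle structure (Corollary~\ref{Cor:S4onefib}); the contraction links are excluded by minimality of the pair and the type-$II$ links return an isomorphic triple, as you say.
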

Then, we discuss the case in which the group is infinite, respectively non-Abelian (Section~\ref{Sec:Counter}) and provide many examples of groups satisfying $(F)$ but not $(M)$.

Note that many finite groups which contain elements that fix a non-rational curve are known, see for example \cite{bib:Wim} or more recently \cite{bib:JBTh} and \cite{bib:Dol}. This can also occur if the group is infinite, see \cite{bib:BPV} and \cite{bib:JBCubIn}. In fact, the set of non-rational curves fixed by the elements of a group is a conjugacy invariant very useful in describing conjugacy classes (see \cite{bib:BaB}, \cite{bib:deF}, \cite{bib:JBSMF}).
\subsection{How to decide}
Given a finite Abelian group of birational transformations of a (rational) surface, we thus have a good way to determine whether the group is birationally conjugate to a group of automorphisms of a minimal surface (in fact to $\Pn$ or $\mathbb{P}^1\times\mathbb{P}^1$). If some non-trivial element fixes a curve of positive genus (i.e.\ if condition $(F)$ is not satisfied), this is false. Otherwise, if the group is not isomorphic to $\Z{2}\times\Z{4}$, it is birationally conjugate to a subgroup of $\Aut(\Pn)$ or of $\Aut(\mathbb{P}^1\times\mathbb{P}^1)$. There are exactly four conjugacy classes of groups isomorphic to $\Z{2}\times\Z{4}$ satisfying condition $(F)$ (see Theorem~\ref{Prp:TheClassification}); three are conjugate to a subgroup of $\Aut(\Pn)$ or $\Aut(\mathbb{P}^1\times\mathbb{P}^1)$, and the fourth (the group $\Cs{24}$ of Theorem~\ref{Thm:NonCyclic}, described in detail in Section~\ref{Sec:ExampleCs24}) is not.
\subsection{Linearisation of birational actions}
Our question is related to that of linearisation of birational actions on~$\K^2$. This latter question has been studied intensively for \emph{holomorphic} or \emph{polynomial} actions, see for example \cite{bib:DeKu}, \cite{bib:Kra} and \cite{bib:vdE}. Taking some group acting birationally on $\K^2$, we would like to know if we may birationally conjugate this action to have a linear action. Note that working on $\Pn$ or $\K^2$ is the same for this question. Theorem~\ref{Thm:Cyclic} implies that for finite cyclic groups, being linearisable is equivalent to fulfilling condition $(F)$. This is not true for finite Abelian groups in general, since some groups acting biregularly on $\mathbb{P}^1\times\mathbb{P}^1$ are not birationally conjugate to groups of automorphisms of $\Pn$.
Note that Theorem~\ref{Thm:Cyclic} implies the following result on linearisation, also announced in \cite{bib:JBCR} (as Theorem 4.2):
\begin{theo}
\label{Thm:Roots}
Any birational map which is a root of a non-trivial linear automorphism of finite order of the plane is conjugate to a linear automorphism of the plane.\end{theo}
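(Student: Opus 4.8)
The natural strategy is to deduce Theorem~\ref{Thm:Roots} from Theorem~\ref{Thm:Cyclic}. Write $f^{k}=g$ with $g\in\PGLn{3}=\Aut(\Pn)$ of finite order $m>1$. Then $f^{km}=g^{m}=1$, so $f$ has finite order and $\langle f\rangle\subset\CrP$ is a finite cyclic group. Since $\langle f\rangle$ is generated by $f$, it suffices to show that $\langle f\rangle$ is birationally conjugate to a subgroup of $\Aut(\Pn)$; by Theorem~\ref{Thm:Cyclic} this is equivalent to checking condition $(F)$ for $\langle f\rangle$, i.e.\ that no non-trivial element of $\langle f\rangle$ fixes a curve of positive genus.

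So suppose, aiming at a contradiction, that some $h=f^{i}\neq 1$ fixes (pointwise) a curve $\Gamma$ of positive genus. Replacing $h$ by $h^{d/q}$, where $q$ is a prime divisor of $d=\mathrm{ord}(h)$, I may assume that $h$ has prime order $q$: it is still a power of $f$ and still fixes $\Gamma$, so $\langle h\rangle$ is \emph{the} subgroup of order $q$ of the cyclic group $\langle f\rangle$. The basic geometric input is that the fixed-point set of a non-trivial element of $\PGLn{3}$ is a union of points and lines, hence has no component of positive genus; in particular no non-trivial power of $g$ can fix $\Gamma$ pointwise.

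I would then split according to whether $q$ divides $m=\mathrm{ord}(g)$. If $q\mid m$, then $\langle g\rangle$ contains an element of order $q$, which generates the unique order-$q$ subgroup $\langle h\rangle$; hence $h$ is a power of $g$, so $h\in\Aut(\Pn)$, contradicting that $h$ fixes $\Gamma$. Thus $q\nmid m$, and since $q\mid\mathrm{ord}(f)\mid km$ we get $q\mid k$. Here one uses that $g$ commutes with $h$ (both are powers of $f$): $g$ sends $\Fix(h)$ to itself, and because a finite-order automorphism of a rational surface fixes at most one curve of positive genus, $g(\Gamma)=\Gamma$; moreover $g|_{\Gamma}\neq\mathrm{id}$, for otherwise the linear map $g$ would fix the positive-genus curve $\Gamma$ pointwise. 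The same argument gives $f(\Gamma)=\Gamma$, and from $(f|_{\Gamma})^{i}=h|_{\Gamma}=\mathrm{id}$ together with $(f|_{\Gamma})^{k}=g|_{\Gamma}\neq\mathrm{id}$ one reads off that the order of $f|_{\Gamma}$ divides $i$ but not $k$.

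The step I expect to be the main obstacle is to convert this last piece of arithmetic — combined with $\mathrm{ord}(f)\mid km$ and the fact that \emph{every} power of $g$ is linear — into the conclusion that $h$ is itself a power of $g$, hence linear, which is the desired contradiction. All the geometry needed is elementary (fixed loci of elements of $\PGLn{3}$; the uniqueness of the positive-genus fixed curve; a commuting automorphism preserves that curve), but controlling the position of the ``dangerous'' powers — those $f^{j}$ with $(f^{j})^{k}=g^{j}=1$ — relative to the subgroup $\langle f^{\gcd(\mathrm{ord}(f),k)}\rangle$, on which $f$ already coincides with a power of $g$, is exactly where the hypothesis ``$f$ is a root of a linear map'' must be used beyond the mere finiteness of $\langle f\rangle$, and where the contrast with general finite Abelian groups (Theorem~\ref{Thm:NonCyclic}) becomes visible. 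Once $(F)$ is established, Theorem~\ref{Thm:Cyclic} yields at once that $\langle f\rangle$, and hence $f$, is conjugate to a subgroup of $\Aut(\Pn)$, indeed to the diagonal automorphism $(x:y:z)\mapsto(x:y:e^{2\ipi/n}z)$ with $n=\mathrm{ord}(f)$.
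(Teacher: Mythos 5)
Your overall strategy --- reduce to Theorem~\ref{Thm:Cyclic} by verifying condition $(F)$ for the finite cyclic group $\langle f\rangle$ --- is exactly what the paper intends (its entire proof is the single sentence that Theorem~\ref{Thm:Roots} ``is a corollary of Theorem~\ref{Thm:Cyclic}''), and your treatment of the case $q\mid m$ is correct: the unique subgroup of order $q$ of $\langle f\rangle$ then sits inside $\langle g\rangle$, so $h$ is linear and cannot fix a curve of positive genus. But the case $q\nmid m$ (equivalently $q\mid k$) is a genuine gap, and you are right to identify it as the obstacle: no bookkeeping with the orders of $f|_{\Gamma}$ and $g|_{\Gamma}$ will close it, because the implication you are trying to prove fails in that regime. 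Concretely, let $\sigma:(x,y)\dasharrow (x,(x^6-1)/y)$ be the de Jonqui\`eres involution fixing pointwise the genus-$2$ hyperelliptic curve $y^2=x^6-1$ (the involution of Lemma~\ref{Lem:DeJI}, ramified over the six points where $x^6=1$), and let $g:(x,y)\mapsto(\zeta x,y)$ with $\zeta=e^{2\ipi/3}$. Since $x^6$ is invariant under $x\mapsto \zeta x$, the maps $\sigma$ and $g$ commute, so $f=\sigma g$ has order $6$, and one computes $f^{4}=g$ while $f^{3}=\sigma$. Thus $f$ is a root of the non-trivial linear automorphism $g$ of finite order, yet $\langle f\rangle$ violates $(F)$, so by the implication $(M)\Rightarrow(F)$ the map $f$ is \emph{not} conjugate to a linear automorphism. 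Here $q=2$, $m=3$, $k=4$: precisely your unhandled case.

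What your argument does prove --- and it is worth recording --- is the statement under the additional hypothesis that every prime divisor of $\mathrm{ord}(f)$ divides $\mathrm{ord}(g)$: then every element of prime order of $\langle f\rangle$ lies in $\langle g\rangle$, hence is linear and fixes no curve of positive genus; since any non-trivial element fixing such a curve pointwise has a power of prime order that still fixes it pointwise, $(F)$ holds and Theorem~\ref{Thm:Cyclic} applies. So the defect is not in your reasoning but in the statement itself (or in the unspecified meaning of ``root''): read as ``$f^k=g$ for some $k$'', Theorem~\ref{Thm:Roots} does not follow from Theorem~\ref{Thm:Cyclic}, and the paper supplies no further argument against which to compare yours.
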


\subsection{The approach and other results}
Our approach -- followed in all the modern articles on the subject -- is to view the finite subgroups of the Cremona group as groups of (biregular) automorphisms of smooth projective rational surfaces and then to assume that the action is minimal (i.e.\ that it is not possible to blow-down some curves and obtain once again a biregular action on a smooth surface). Manin and Iskovskikh (\cite{bib:Man} and \cite{bib:Isk3}) proved that the only possible cases are action on del Pezzo surfaces or conic bundles. We will clarify this classification, for finite Abelian groups fillfulling (F), by proving the following result:
\begin{theo}\label{Thm:Classifmin}
Let $S$ be some smooth projective rational surface and let $G\subset \Aut(S)$ be a finite Abelian group of automorphisms of $S$ such that
\begin{itemize}
\item
the pair $(G,S)$ is minimal;
\item
if $g \in G$, $g\not=1$, then~$g$ does not fix a curve of positive genus.
\end{itemize}
Then, one of the following occurs:
\begin{enumerate}
\item[\upshape 1.]
The surface $S$ is minimal, i.e.\ $S\cong \mathbb{P}^2$, or $S\cong \mathbb{F}_n$ for some integer $n\not=1$.
\item[\upshape 2.]
The surface $S$ is a del Pezzo surface of degree $5$ and $G\cong \Z{5}$.
\item[\upshape 3.]
The surface $S$ is a del Pezzo surface of degree $6$ and $G\cong \Z{6}$.
\item[\upshape 4.]
The pair $(G,S)$ is isomorphic to the pair $(\Cs{24},\hat{S_4})$ defined in Section~{\upshape \ref{Sec:ExampleCs24}}.
\end{enumerate}
\end{theo}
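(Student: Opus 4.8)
The plan is to feed the hypotheses into the Manin--Iskovskikh machinery and then filter the output with condition $(F)$. Since the pair $(G,S)$ is minimal and $G$ is finite, a theorem of Manin and Iskovskikh gives two possibilities: either $\rkPic{S}^G=1$ and $S$ is a del Pezzo surface, or $\rkPic{S}^G=2$ and $S$ admits a $G$-equivariant conic bundle structure $\pi\colon S\to\mathbb{P}^1$. I would treat these two regimes separately; in each, the known description of $\Aut(S)$ together with the constraint that no non-trivial element of $G$ fixes a non-rational curve does the sieving. A recurring tool is the Lefschetz fixed-point formula on a rational surface, which relates the trace of $g^*$ on $\Pic{S}\otimes\K$ to the Euler characteristic of $\Fix(g)$ and so detects when $\Fix(g)$ contains a curve of positive genus.

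In the del Pezzo regime set $d=K_S^2\in\{1,\dots,9\}$. The extreme degrees are immediate: $d=9$ forces $S\cong\Pn$, and $d=8$ forces $S\cong\mathbb{P}^1\times\mathbb{P}^1$ (the other surface of degree $8$, $\mathbb{F}_1$, has a unique $(-1)$-curve, hence is never $G$-minimal), which is case~1; $d=7$ is impossible since the ``middle'' of the three $(-1)$-curves is always invariant and contractible. For $d=6$ I would use $\Aut(S)\cong(\K^*)^2\rtimes(\Sym_3\times\Z{2})$ acting on the hexagon of $(-1)$-curves: a finite Abelian subgroup with $\rkPic{S}^G=1$ must act on the hexagon with a single orbit, and running through the possible images in the symmetry group of the hexagon, together with the torus part, forces $G\cong\Z{6}$; a direct check shows this $G$ satisfies $(F)$, giving case~3. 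For $d=5$ one has $\Aut(S)\cong\Sym_5$ acting on $\Pic{S}$, and the only Abelian subgroup whose invariant sublattice has rank~$1$ is generated by a $5$-cycle, so $G\cong\Z{5}$, again satisfying $(F)$: case~2. For $d=4$ and $d=3$ I would go through the short list of finite Abelian subgroups that act minimally --- using the action on the $16$, respectively $27$, lines and the classification of automorphisms of del Pezzo surfaces --- and exhibit in each of them an element violating $(F)$ (typically an involution whose fixed locus contains an elliptic curve when $d=4$, or an automorphism fixing a non-rational hyperplane section when $d=3$), so these degrees contribute nothing. Finally $d\in\{1,2\}$ is excluded because the Bertini ($d=1$), respectively Geiser ($d=2$), involution lies in the centre of $\Aut(S)$ and fixes a smooth curve of genus~$4$, respectively~$3$; appealing to the classification of involutions one checks that every finite Abelian minimal subgroup contains such an element or a square root of it, so $(F)$ always fails.

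In the conic bundle regime, if $\pi$ has no singular fibre then $S\cong\mathbb{F}_n$ with $n\neq1$ (minimality excludes $n=1$), which is case~1. Otherwise let $m\geq1$ be the number of singular fibres ($K_S^2=8-m$) and consider $1\to G_\pi\to G\to\bar G\to1$, where $\bar G\subset\PGLn{2}$ is the image acting on the base and $G_\pi$ preserves every fibre; being finite Abelian, $G_\pi$ and $\bar G$ are each cyclic or isomorphic to $\Z{2}\times\Z{2}$. An involution $\sigma\in G_\pi$ acting non-trivially on the general fibre is a de Jonqui\`eres-type involution whose fixed locus, when it contains a curve, is a double cover of $\mathbb{P}^1$ branched over the points below the singular fibres at which $\sigma$ exchanges the two components; its genus grows with the number of such branch points, so $(F)$ forces $m$ to be small and severely restricts how $\bar G$ may permute the $m$ singular fibres. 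A case-by-case examination of the surviving small values of $m$ --- keeping track of which $G$-orbits of components of singular fibres could be contracted equivariantly, which minimality forbids, and discarding the surfaces already treated as del Pezzo surfaces above --- leaves exactly one possibility: the pair $(\Cs{24},\hat{S_4})$ of Section~\ref{Sec:ExampleCs24}, which is case~4.

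The main obstacle is this last regime, together with degrees $3$ and $4$ of the del Pezzo regime. Showing that, once condition $(F)$ is imposed, the conic-bundle possibilities collapse to the single exceptional pair requires a careful study of conic bundles over $\mathbb{P}^1$ with few singular fibres, of the Abelian groups acting on them via the exact sequence above, and of exactly which equivariant contractions minimality rules out; this is where essentially all the work lies, the del Pezzo degrees $\geq 5$ and $\leq 2$ being comparatively routine consequences of the classical descriptions of $\Aut(S)$.
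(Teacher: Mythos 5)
Your plan coincides with the paper's proof in both its skeleton and most of its details: the Manin--Iskovskikh dichotomy, then Proposition~\ref{Prp:DPrkone} for the del Pezzo regime (proved degree by degree via the known automorphism groups and the Lefschetz fixed-point formula, Lemmas~\ref{Lem:DP6rk1} through~\ref{Lem:DP1}), and Proposition~\ref{Prop:MainConicBundles} for the conic-bundle regime (via the exact sequence $1\to G'\to G\to\overline{\pi}(G)\to 1$ and the genus count for twisting involutions, Propositions~\ref{Prp:necDeJi} and~\ref{Prp:HardTwistInv}).

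The one step that would fail as written is your exclusion of degrees $1$ and $2$. It is not true that every minimal finite Abelian subgroup there contains the Bertini (resp.\ Geiser) involution or a square root of it. On the degree-$1$ surface $w^2=z^3+x^4z+y^6$ in $\mathbb{P}(3,1,1,2)$, the cyclic group $G$ of order $12$ generated by $g=\DiaG{\im}{1}{\zeta_{12}}{-1}$ satisfies $\rkPic{S}^G=1$; its unique involution is $g^6=\DiaG{-1}{1}{-1}{1}$, which is not the Bertini involution $\DiaG{-1}{1}{1}{1}$, and no element of $G$ squares to the Bertini involution. Condition $(F)$ does fail for this $G$, but via the order-$3$ element $g^4=\DiaG{1}{1}{e^{2\ipi/3}}{1}$, which fixes pointwise the elliptic curve cut out by $y=0$; likewise one branch of the degree-$2$ analysis produces an order-$3$ element fixing an elliptic curve rather than anything tied to the Geiser involution. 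So these degrees are not routine corollaries of the centrality of the Bertini/Geiser involution: the paper's Lemma~\ref{Lem:DP1} needs the Lefschetz trace estimates for elements of each order to force $G$ cyclic of order at least $7$, and then an explicit case analysis of the possible equations $w^2=z^3+zL_4(x,y)+L_6(x,y)$. The conclusion you want --- that $(F)$ always fails on minimal Abelian actions in degrees at most $4$ --- is correct, but for degrees $1$ and $2$ the mechanism you propose must be replaced by this finer analysis.
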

We will then prove that all the pairs in cases $1,2$ and $3$ are birationally equivalent to a group of automorphisms of $\mathbb{P}^1\times\mathbb{P}^1$ or $\mathbb{P}^2$, and that this is not true for case $4$. In fact, we are able to provide the precise description of all conjugacy classes of finite Abelian subgroups of $\Bir(\Pn)$ satisfying $(F)$:

\begin{theo}
\label{Prp:TheClassification}
Let $G$ be a finite Abelian subgroup of the Cremona group such that no non-trivial element of $G$ fixes a curve of positive genus. Then, $G$ is birationally conjugate to one and only one of the following:
\begin{flushleft}\begin{tabular}{llll}
{\upshape [1]} & $G \cong \Z{n}\times\Z{m}$& g.b.&$(x,y)Ê\mapsto (\zeta_n x,y)$ and
$(x,y) \mapsto (x,\zeta_m y)$\\
{\upshape [2]} & $G \cong \Z{2}\times\Z{2n} $& g.b.&$(x,y)Ê\mapsto (x^{-1},y)$ and
$(x,y) \mapsto (-x,\zeta_{2n} y)$\\
{\upshape [3]} & $G \cong (\Z{2})^2\times\Z{2n} $& g.b.&$(x,y)Ê\mapsto (\pm x^{\pm 1},y)$ and
$(x,y) \mapsto (x,\zeta_{2n} y)$\\
{\upshape [4]} & $G \cong (\Z{2})^3$& g.b. &$(x,y)Ê\mapsto (\pm x,\pm y)$ and
$(x,y) \mapsto (x^{-1},y^{-1})$\\
{\upshape [5]} & $G \cong (\Z{2})^4$& g.b. &$(x,y)Ê\mapsto (\pm x^{\pm 1},\pm y^{\pm 1})$\\
{\upshape [6]} & $G \cong \Z{2}\times \Z{4}$& g.b. & $(x,y)Ê\mapsto (x^{-1},y^{-1})$ and $(x,y) \mapsto (-y,x)$\\
{\upshape [7]} & $G \cong (\Z{2})^3$& g.b. & $(x,y)Ê\mapsto (-x,-y)$, $(x,y)Ê\mapsto (x^{-1},y^{-1})$, 
\\
& & & and $(x,y) \mapsto (y,x)$ \\
{\upshape [8]} & $G \cong (\Z{2})\times(\Z{4})$& g.b. & $(x:y:z)\dasharrow ( yz(y-z):xz(y+z):xy(y+z))$ \\
& & & and $(x:y:z)\dasharrow (yz:xy:-xz)$ \\
{\upshape [9]} & $G \cong (\Z{3})^2$& g.b. & $(x:y:z)\mapsto (x:\zeta_3 y:(\zeta_3)^2 z)$
\\
& & & and $(x:y:z) \mapsto (y:z:x)$ \\
\end{tabular}\\
{\upshape (}where $n,m$ are positive integers,~$n$ divides~$m$ and $\zeta_n=e^{2\ipi/n}${\upshape )}.\end{flushleft}
Furthermore, the groups in cases {\upshape [1]} through {\upshape [7]} are birationally conjugate to subgroups of $\Aut(\mathbb{P}^1\times\mathbb{P}^1)$, but the others are not. The groups in cases {\upshape [1]} and {\upshape [9]} are birationally conjugate to subgroups of $\Aut(\Pn)$, but the others are not.
\end{theo}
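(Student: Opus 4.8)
The plan is to combine the classification of minimal pairs in Theorem~\ref{Thm:Classifmin} with an explicit birational normalisation of each of its cases, and then to prove pairwise non-conjugacy and the (non-)realisability statements by means of birational invariants. \emph{Step~1: reduction to $\Pn$ and $\mathbb{P}^1\times\mathbb{P}^1$.} Realise $G$ as a group of automorphisms of a smooth projective rational surface $S$ with $(G,S)$ minimal, and apply Theorem~\ref{Thm:Classifmin}. In its case~2 (resp.\ case~3) $G$ is cyclic, so by Theorem~\ref{Thm:Cyclic} it is conjugate to the group generated by $(x:y:z)\mapsto(x:y:e^{2\ipi/5}z)$ (resp.\ with $e^{2\ipi/6}$), that is, to family~[1] with $(n,m)=(1,5)$ (resp.\ $(1,6)$). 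In case~4 the pair $(G,S)$ is $(\Cs{24},\hat{S_4})$, which is family~[8] up to the evident identification of generators, and the fact that $\Cs{24}$ embeds neither in $\Aut(\Pn)$ nor in $\Aut(\mathbb{P}^1\times\mathbb{P}^1)$ is established in Section~\ref{Sec:ExampleCs24}. In case~1, if $S\cong\mathbb{F}_1$ we contract its unique $(-1)$-curve (which is $G$-invariant) and obtain $\Pn$; if $S\cong\mathbb{F}_n$ with $n\ge 2$, the negative section and the ruling are $G$-invariant, the image of $G$ in $\PGL(2,\K)$ acting on the base is a finite Abelian group, hence cyclic or a Klein four-group, and therefore has a fixed point or an invariant pair of points on the base; a $G$-equivariant elementary transformation there lowers $n$, and iterating brings us to $\mathbb{F}_0=\mathbb{P}^1\times\mathbb{P}^1$, or to $\mathbb{F}_1$ and then $\Pn$. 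Hence every $G$ as in the statement is birationally conjugate to a finite Abelian subgroup of $\Aut(\Pn)$, of $\Aut(\mathbb{P}^1\times\mathbb{P}^1)$, or to $\Cs{24}$.

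\emph{Step~2: normal forms.} A finite Abelian subgroup of $\Aut(\Pn)=\PGL(3,\K)$ either can be conjugated into the diagonal torus --- giving $\Z{n}\times\Z{m}$ with $n\mid m$, i.e.\ family~[1] --- or it cannot: since every element of order~$3$ is diagonalisable while a finite Abelian subgroup of $\PGL(3,\K)$ not conjugate into a torus must contain, hence equal (being Abelian), a copy of $(\Z{3})^2$ generated by a diagonal element of order~$3$ and a cyclic coordinate permutation, this exceptional case is exactly family~[9]. For $\Aut(\mathbb{P}^1\times\mathbb{P}^1)=(\PGL(2,\K)\times\PGL(2,\K))\rtimes\Z{2}$ one treats separately $G_0=G\cap(\PGL(2,\K)\times\PGL(2,\K))$, of index $1$ or $2$ in $G$. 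If the index is $1$, each projection of $G_0$ is a finite Abelian subgroup of $\PGL(2,\K)$, hence cyclic or a Klein four-group; choosing on each factor a coordinate in which these act by $x\mapsto\zeta_k x$ or by $x\mapsto\pm x^{\pm1}$ and enumerating the subgroups of $\Z{a}\times\Z{b}$, of $\Z{a}\times(\Z{2})^2$ and of $(\Z{2})^2\times(\Z{2})^2$ up to the automorphisms induced by the normaliser, one lands in one of the families~[1], [2], [3], [4], [5]. If the index is $2$, $G$ contains an involution exchanging the two factors; commutation with it forces $G_0$ to be ``diagonal'', isomorphic to a finite Abelian subgroup of $\PGL(2,\K)$, and a short case analysis yields families~[6] and~[7]. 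Thus the nine families exhaust the possibilities, and in passing we see that [1]--[7] are realised on $\mathbb{P}^1\times\mathbb{P}^1$ and that [1] and~[9] are realised on $\Pn$.

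\emph{Step~3: irredundancy and the negative realisability statements.} It remains to show that the nine entries are pairwise non-birationally-conjugate, that none of [2]--[8] is conjugate to a subgroup of $\Aut(\Pn)$, and that neither [8] nor~[9] is conjugate to a subgroup of $\Aut(\mathbb{P}^1\times\mathbb{P}^1)$. Entries with non-isomorphic groups are clearly distinct, and in families~[1] and~[3] distinct parameters give non-isomorphic groups; the genuine work concerns the collisions of isomorphic groups: the two copies of $(\Z{2})^3$ ([4] and~[7]); the two copies of $\Z{2}\times\Z{4}$ ([6] and~[8]); the group $(\Z{3})^2$ occurring both as~[9] and as~[1] with $(n,m)=(3,3)$; and the overlaps of~[2],~[3],~[6] with the isomorphic members of~[1]. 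These I would separate by three conjugacy invariants. First, property~$(M)$: it holds for [1]--[7] and~[9] but fails for~[8], which isolates~[8] from all the others --- in particular from~[6] --- and (with the input of Section~\ref{Sec:ExampleCs24}) shows that~[8] lies in neither $\Aut(\Pn)$ nor $\Aut(\mathbb{P}^1\times\mathbb{P}^1)$. Second, the restricted list of minimal models given by Theorem~\ref{Thm:Classifmin}: for instance $(\Z{2})^3$ and $(\Z{2})^4$ do not embed in $\PGL(3,\K)$, so [4], [5], [7] are not on $\Pn$; and a $(\Z{3})^2$ in $\Aut(\mathbb{P}^1\times\mathbb{P}^1)$ has exponent~$3$, hence contains no factor-exchanging element and has cyclic projections, hence is the diagonal one. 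Third, the existence of a common fixed point and the induced local action there: the toral groups of family~[1] fix a point of $\Pn$ and, since the action on every exceptional $\mathbb{P}^1$ arising in an equivariant blow-up or blow-down is through a cyclic group, keep a fixed point on every equivariant model, whereas the $(\Z{3})^2$ of~[9] has no fixed point on $\Pn$ and the groups realising~[2],~[3],~[6] on $\mathbb{P}^1\times\mathbb{P}^1$ have no fixed point there; this separates~[9] from~[1] at $(3,3)$ (whence, with the second point, [9] is not on $\mathbb{P}^1\times\mathbb{P}^1$), separates~[2],~[3],~[6] from the isomorphic members of~[1], and so completes the proof that none of [2]--[8] lies in $\Aut(\Pn)$.

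\emph{Main obstacle.} The heaviest part is Step~3, and within it the collision of~[4] with~[7]: both are $(\Z{2})^3$ satisfying~$(M)$ and realised on $\mathbb{P}^1\times\mathbb{P}^1$, but in~[4] the two rulings are preserved --- so $G$ acts trivially on $\mathrm{Pic}(\mathbb{P}^1\times\mathbb{P}^1)$ and the pair is a minimal conic bundle --- while in~[7] they are exchanged --- so $\mathrm{Pic}(\mathbb{P}^1\times\mathbb{P}^1)^{G}$ has rank~$1$ and the pair is a minimal del Pezzo surface of degree~$8$ --- and one must check that these two minimal pairs are not linked by any $G$-equivariant birational map. This, together with the two non-embeddings of~$\Cs{24}$ borrowed from Section~\ref{Sec:ExampleCs24} and the elementary-transformation bookkeeping of Step~1, are the technically most delicate points of the argument.
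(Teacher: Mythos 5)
Your overall architecture coincides with the paper's: reduce via the minimal-pair classification (Theorem~\ref{Thm:Classifmin}) to subgroups of $\Aut(\Pn)$, of $\Aut(\mathbb{P}^1\times\mathbb{P}^1)$, or to $\Cs{24}$; normalise inside each; then separate the resulting classes by conjugacy invariants. The genuine gap sits in your Step~3, exactly where you flag the ``main obstacle''. The three invariants you propose (property $(M)$, the list of possible minimal models, existence of a fixed point) do not separate the collisions of isomorphic groups all realised on $\mathbb{P}^1\times\mathbb{P}^1$: they fail for $[4]$ versus $[7]$ (both $(\Z{2})^3$, both satisfying $(M)$, both fixed-point free), for $[2]$ with $n=2$ versus $[6]$ (both $\Z{2}\times\Z{4}$), and for $[3]$ with $n=1$ versus $[4]$ and $[7]$ --- a third copy of $(\Z{2})^3$ that your proposal does not mention at all. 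You defer $[4]$ versus $[7]$ to a full $G$-equivariant link decomposition between a minimal conic bundle and a minimal degree-$8$ del Pezzo pair, but you do not carry this out, and it is substantially harder than what is actually needed. The missing idea is the invariant of Proposition~\ref{Prp:AutP1P1B}: the groups $[2]$--$[5]$ admit two invariant pencils of rational curves whose \emph{free} intersection equals one (the two rulings), whereas for $[6]$ and $[7]$ one has $\Pic{\mathbb{P}^1\times\mathbb{P}^1}^G=\mathbb{Z}d$ with $d$ the diagonal class, so any two invariant pencils have even total intersection; since $G$ has order $8$ and acts freely, the fixed part of that intersection is even as well, hence the free part is never one. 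The last collision, $[3]$ with $n=1$ versus $[4]$, is settled by counting fixed-point-free subgroups ($[4]$ contains three, $[3]$ with $n=1$ only one). Without these two arguments the ``one and only one'' half of the statement is not established.

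Two smaller repairable points. First, invoking Theorem~\ref{Thm:Cyclic} to linearise cases $2$ and $3$ of Theorem~\ref{Thm:Classifmin} is circular in the paper's logical order, since Theorem~\ref{Thm:Cyclic} is itself deduced from the present theorem; the non-circular reference is Proposition~\ref{Prp:DPrkone} (via Lemmas~\ref{Lem:DP6rk1} and~\ref{Lem:DP5rk1}), which conjugates those cyclic groups directly to diagonal subgroups of $\Aut(\Pn)$. Second, your Step~2 enumeration on $\mathbb{P}^1\times\mathbb{P}^1$ only classifies up to conjugation inside $\Aut(\mathbb{P}^1\times\mathbb{P}^1)$; to land exactly on the list $[1]$--$[7]$ one still needs the extra birational conjugations used in Proposition~\ref{Prp:AutP1P1B}, for instance $(x,y)\dasharrow(x,y(x+x^{-1}))$ to merge the odd-order cases into family $[2]$. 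These are minor; the Step~3 gap is the substantive one.
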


\bigskip

To prove these results, we will need a number of geometric results on automorphisms of rational surfaces, and in particular on automorphisms of conic bundles and del Pezzo surfaces (Sections~\ref{Aut:ConBundles} to~\ref{Sec:DelPezzo}). We give for example the classification of all the twisting elements (that exchange the two components of a singular fibre) acting on conic bundles in Proposition~\ref{Prp:DescriptionTwistingElementsFinite} (for the elements of finite order) and Proposition~\ref{Prp:DescriptionTwistingElementsInfinite} (for those of infinite order); these are the most important elements in this context (see Lemma~\ref{Lem:MinTripl}). 
We also prove that actions of (possibly infinite) Abelian groups on del Pezzo surfaces satifying $(F)$ are minimal only if the degree is at least $5$ (Section~\ref{Sec:DelPezzo}) and describe these cases precisely (Sections~\ref{Sec:DelPezzo6}, \ref{Sec:DelPezzo5} and~\ref{Sec:DelPezzo}). We also show that a finite Abelian group acting on a projective smooth surface $S$ such that $(K_S)^2\geq 5$ is birationally conjugate to a group of automorphisms of $\mathbb{P}^1\times\mathbb{P}^1$ or $\Pn$ (Corollary~\ref{Coro:K2leq5}) and in particular satisfies $(F)$.
\subsection{Comparison with other work}
Many authors have considered the finite subgroups of $\Bir(\Pn)$. Among them, S.~Kantor \cite{bib:SK} gave a classification of the finite subgroups, which was incomplete and included some mistakes; A.~Wiman \cite{bib:Wim} and then I.V. Dolgachev and V.A. Iskovskikh \cite{bib:Dol} successively improved Kantor's results. The long paper \cite{bib:Dol} expounds the general theory of finite subgroups of $\Bir(\Pn)$ according to the modern techniques of algebraic geometry, and will be for years to come the reference on the subject. Our viewpoint and aim differ from those of \cite{bib:Dol}: we are only interested in Abelian groups in relation with the above conditions (F) and (M); this gives a restricted setting in which the theoretical approach is simplified and the results obtained are more accurate. In the study of del Pezzo surfaces, using the classification \cite{bib:Dol} of subgroups of automorphisms would require the examination of many cases; for the sake of readibility we prefered a direct proof. The two main theorems of \cite{bib:Dol} on automorphism of conic bundles (Proposition 5.3 and Theorem~5.7(2)) do not exclude groups satisfying property $(F)$ and do not give explicit forms for the generators of the groups or the surfaces.

\subsection{Aknowledgements}
This article is part of my PhD thesis \cite{bib:JBTh}; I am grateful to my advisor T.~Vust for his invaluable help during these years, to I. Dolgachev for helpful discussions, and thank J.-P.~Serre and the referees for their useful remarks on this paper. 

\section{Automorphisms of $\mathbb{P}^2$ or $\mathbb{P}^1\times\mathbb{P}^1$} \label{Sec:P1P2C2}
Note that a linear automorphism of $\mathbb{C}^2$ may be extended to an automorphism of either $\mathbb{P}^2$ or $\mathbb{P}^1\times\mathbb{P}^1$. Moreover, the automorphisms of finite order of these three surfaces are birationally conjugate. For finite Abelian groups, the situation is quite different. We give here the birational equivalence of these groups.

\begin{nota}
The element $\Diag{a}{b}{c}$ denotes the diagonal automorphism $(x:y:z) \mapsto (ax:by:cz)$ of $\Pn$, and $\zeta_m=e^{2\ipi/m}$.\end{nota}

\begin{prop}[Finite Abelian subgroups of $\Aut(\Pn)$]
\label{Prp:PGL3Cr}
Every finite Abelian subgroup of\hspace{0.2cm}$\Aut(\Pn)=\PGLn{3}$ is conjugate, in the Cremona group $\CrP$, to one and only one of the following:
\begin{itemize}
\item[\upshape{ 1.}]
A diagonal group, isomorphic to $\Z{n} \times \Z{m}$, where~$n$ divides~$m$, generated by $\Diag{1}{\zeta_n}{1}$ and $\Diag{\zeta_m}{1}{1}$. (The case $n=1$ gives the cyclic groups).
\item[\upshape{ 2.}]
 The special group $V_9$, isomorphic to $\Z{3} \times \Z{3}$, generated by $\Diag{1}{\zeta_3}{(\zeta_3)^2}$\\ and $(x:y:z) \mapsto (y:z:x)$.
\end{itemize}
Thus, except for the group $V_9$, two isomorphic finite Abelian subgroups of $\PGLn{3}$ are conjugate in $\CrP$.
\end{prop}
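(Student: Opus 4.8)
The plan is to classify the finite Abelian subgroups of $\PGLn{3}$ up to \emph{linear} conjugacy first, and then to decide which of the resulting groups are conjugate in $\CrP$; for the diagonalisable ones this is handled with monomial transformations, while the group $V_9$ needs a separate, birational argument.

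First I would reduce to $\GL(3,\K)$. Given a finite Abelian group $G\subset\PGLn{3}$, lift a set of generators to elements of finite order of $\GL(3,\K)$ and adjoin the scalar matrices occurring as their pairwise commutators (these are scalar because $G$ is Abelian). One obtains a finite subgroup $H\subset\GL(3,\K)$ mapping onto $G$ whose derived subgroup consists of scalar matrices, hence is cyclic of some order $k$. If $k=1$ then $H$ is Abelian, thus diagonalisable, and $G$ is conjugate in $\PGLn{3}$ to a subgroup of the diagonal torus, i.e.\ to a group as in case~1. If $k>1$, choose $\hat g,\hat h\in H$ whose commutator is $\zeta I$ for a primitive $k$-th root of unity $\zeta$; then $\hat h\hat g\hat h^{-1}=\zeta'\hat g$ with $\zeta'$ a primitive $k$-th root of unity, so the eigenvalue multiset of $\hat g$ is invariant under multiplication by $\zeta'\neq 1$. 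A short case analysis on the number of distinct eigenvalues of $\hat g$ (the cases of one or two distinct eigenvalues force $\zeta'=1$) shows that $\hat g$ must have three distinct eigenvalues, cyclically permuted by multiplication by $\zeta'$; hence $(\zeta')^3=1$, $k=3$, and in a suitable basis $\hat g$ is a scalar multiple of $\mathrm{diag}(1,\zeta',(\zeta')^2)$ while $\hat h$ is a scalar multiple of the cyclic permutation matrix, so $\langle g,h\rangle=V_9$. Since $G$ is Abelian it centralises $V_9$, and a direct computation gives that the centraliser of $V_9$ in $\PGLn{3}$ equals $V_9$; hence $G=V_9$. This establishes the linear classification: case~1 or case~2.

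Next I would pass to conjugacy in $\CrP$ for the diagonalisable groups. Such a group is a finite subgroup of the two-dimensional torus $(\K^*)^2$ acting by $(u,v)\mapsto(\alpha u,\beta v)$ in affine coordinates, and it is isomorphic to $\Z{n}\times\Z{m}$ for a unique pair of positive integers with $n\mid m$ (its invariant factors). The monomial birational transformations $(u,v)\mapsto(u^av^b,u^cv^d)$ for $\left(\begin{smallmatrix}a&b\\c&d\end{smallmatrix}\right)\in\GLZ{2}$ normalise the torus and act on it through the full group $\GLZ{2}$; since finite subgroups of $(\K^*)^2$ lying in the same $\GLZ{2}$-orbit are exactly the abstractly isomorphic ones (Smith normal form), every diagonal group is conjugate in $\CrP$ to the standard representative $\langle\Diag{1}{\zeta_n}{1},\Diag{\zeta_m}{1}{1}\rangle$ of its isomorphism type. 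This proves the existence part of the statement, and shows that any two isomorphic diagonalisable finite Abelian subgroups of $\PGLn{3}$ are conjugate in $\CrP$.

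It remains to prove the ``one and only one'' (uniqueness) claim, and this is where the real difficulty lies. Two distinct standard diagonal groups are non-isomorphic (different invariant factors) and hence not conjugate, and $V_9\cong\Z{3}\times\Z{3}$ is distinguished from the diagonal groups of different type for the same reason. The only delicate point is that $V_9$ is \emph{not} conjugate in $\CrP$ to the diagonal group $\langle\Diag{1}{\zeta_3}{1},\Diag{\zeta_3}{1}{1}\rangle$, to which it is abstractly isomorphic. This cannot be detected from the fixed curves of individual elements, since every non-trivial element of $V_9$ is linearly conjugate to $\Diag{1}{\zeta_3}{(\zeta_3)^2}$ and fixes only points, so $V_9$ satisfies property~$(F)$. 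To settle it I would use $G$-minimal models: both $(\Z{3}\times\Z{3},\Pn)$ and $(V_9,\Pn)$ are minimal pairs on a del Pezzo surface of degree~$9$, so a conjugacy between them would have to be factored into elementary links starting from $\Pn$, each of which blows up a $G$-orbit of at most three points. The diagonal group fixes the three coordinate points, whereas $V_9$ acts on $\Pn$ without fixed points and without orbits of size~$2$; its only link therefore blows up an orbit of three (non-collinear) points, yielding a del Pezzo surface of degree~$6$ on which $V_9$ again has no fixed point and preserves no conic bundle structure, so that the only available link returns to $(V_9,\Pn)$. Hence the $G$-birational class of $V_9$ never meets an action conjugate to the diagonal one. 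Equivalently, this non-conjugacy is precisely the assertion that cases {\upshape [1]} and {\upshape [9]} of Theorem~\ref{Prp:TheClassification} are distinct (and that $V_9$, unlike the diagonal group, is not conjugate to a subgroup of $\Aut(\mathbb{P}^1\times\mathbb{P}^1)$), so it may alternatively be deferred to the proof of that theorem. Ruling out this last birational conjugacy -- between two del Pezzo surfaces of degree~$9$ carrying the two different actions -- is the main obstacle.
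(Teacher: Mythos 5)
Your linear classification (reduction to $\GL(3,\K)$, the central cyclic derived subgroup, the eigenvalue argument forcing $k=3$ and the Heisenberg group $V_9$, whose centraliser in $\PGLn{3}$ is itself) is a correct and complete expansion of what the paper dismisses as ``a simple calculation'', and your treatment of the diagonal groups by monomial transformations and Smith normal form is exactly the paper's argument. The genuine divergence is in the one delicate point, the non-conjugacy of $V_9$ with the diagonal $\Z{3}\times\Z{3}$. The paper settles this in one line: $V_9$ fixes no point of $\Pn$, every diagonal group does, and by Koll\'ar--Szab\'o \cite[Proposition A.2]{bib:KoS} the existence of a fixed point is a birational conjugacy invariant for finite group actions on rational surfaces. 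Your route through $G$-minimal models and equivariant Sarkisov links can be made to work, but it is far heavier, and as written it has a gap: after performing the quadratic link centred at a $V_9$-orbit of three points you assert that the resulting linear action of $(\Z{3})^2$ on $\Pn$ ``again has no fixed point'', but this persistence of fixed-point-freeness under the link is exactly the kind of statement the Koll\'ar--Szab\'o theorem exists to provide, and you neither invoke it nor give the (short, but necessary) direct argument --- e.g.\ that an invariant point of the target $\Pn$ would lift to an invariant point or invariant exceptional curve of $S_6$, both of which are excluded because $V_9$ acts transitively on the three blown-up points and fixes nothing on the source.

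One further caution: your suggested alternative of ``deferring'' the non-conjugacy of cases {\upshape [1]} and {\upshape [9]} to the proof of Theorem~\ref{Prp:TheClassification} would be circular, since the paper's proof of that theorem cites precisely Proposition~\ref{Prp:PGL3Cr} for this distinction. So you must either complete the link argument as above or use the fixed-point obstruction directly; you cannot outsource it.
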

\begin{proof}
First of all, a simple calculation shows that every finite Abelian subgroup of $\PGLn{3}$ is either diagonalisable or conjugate to the group $V_9$.
Furthermore, since this last group does not fix any point, it is not diagonalisable, even in $\CrP$ \cite[Proposition A.2]{bib:KoS}.

Let $\mathcal{T}$ denote the torus of $\PGLn{3}$ constituted by diagonal automorphisms of $\Pn$. Let $G$ be a finite subgroup of $\mathcal{T}$; as an abstract group it is isomorphic to $\Z{n} \times \Z{m}$, where~$n$ divides~$m$. Now we can conjugate $G$ by a birational map of the form $h:(x,y) \dasharrow (x^ay^b,x^cy^d)$ so that it contains $\Diag{\zeta_m}{1}{1}$ (see \cite{bib:BeB} and \cite{bib:BlaMM}). Since~$h$ normalizes the torus $\mathcal{T}$, the group $G$ remains diagonal and contains the $n$-torsion of $\mathcal{T}$, hence it contains $\Diag{1}{\zeta_n}{1}$.
\end{proof}
\begin{coro}
Every finite Abelian group of linear automorphisms of $\K^2$ is birationally conjugate to a diagonal group, isomorphic to $\Z{n} \times \Z{m}$, where~$n$ divides~$m$, generated by $(x,y)\mapsto (\zeta_n x,y)$ and $(x,y)\mapsto (x,\zeta_m y)$.
\end{coro}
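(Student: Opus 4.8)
The plan is to deduce this directly from Proposition~\ref{Prp:PGL3Cr}. Let $G$ be a finite Abelian group of linear automorphisms of $\K^{2}$, so $G\subset\GL(2,\K)$. Every element of $G$ has finite order, hence is diagonalisable over $\K$, and since $G$ is Abelian these diagonalisations can be carried out simultaneously; conjugating by a suitable element of $\GL(2,\K)\subset\Aut(\K^{2})$ (a birational conjugation) we may therefore assume that $G$ consists of diagonal maps $(x,y)\mapsto(ax,by)$. (If ``linear'' is meant in the affine sense, one first conjugates by the translation carrying the barycentre of a $G$-orbit to the origin, reducing to the $\GL(2,\K)$ case.) Extending these maps to $\Pn$ in the obvious way, $G$ becomes a finite subgroup of the torus $\mathcal{T}\subset\PGLn{3}$ of diagonal automorphisms; as an abstract group it is isomorphic to $\Z{n}\times\Z{m}$ with $n$ dividing $m$, this being the only possibility for a finite subgroup of a two-dimensional torus.

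Next I would reuse the ``torus'' step from the proof of Proposition~\ref{Prp:PGL3Cr}. A monomial birational map $h$, given by $(x,y)\mapsto(x^{a}y^{b},x^{c}y^{d})$ with $\left(\begin{smallmatrix}a&b\\c&d\end{smallmatrix}\right)\in\GLZ{2}$, normalises $\mathcal{T}$, so that $hGh^{-1}$ is again diagonal; the induced $\GLZ{2}$-action on the character lattice of $\mathcal{T}$ is the usual one, so by reduction to Smith normal form one can choose $h$ so that $hGh^{-1}$ is generated by $\Diag{1}{\zeta_{n}}{1}$ and $\Diag{\zeta_{m}}{1}{1}$. Reading these two generators in the affine chart $\{z\not=0\}$ with coordinates $(x/z,y/z)$ gives the maps $(x,y)\mapsto(x,\zeta_{n}y)$ and $(x,y)\mapsto(\zeta_{m}x,y)$, which, after interchanging the roles of the two coordinates, is precisely the normal form of the statement. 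The pair $(n,m)$ is unique because it is the invariant-factor type of the abstract group $G$.

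An equivalent route is to apply Proposition~\ref{Prp:PGL3Cr} as a black box to the image of $G$ in $\PGLn{3}$: that image is conjugate in $\CrP$ either to the diagonal group of the statement or to $V_{9}$, and $V_{9}$ is excluded because $G$ fixes the origin of $\K^{2}$ whereas $V_{9}$ fixes no point of any rational surface --- which is exactly the fact used in the proof of Proposition~\ref{Prp:PGL3Cr} to see that $V_{9}$ is not diagonalisable even in $\CrP$. I do not expect a real obstacle: the only points meriting care are the simultaneous diagonalisation of a finite Abelian subgroup of $\GL(2,\K)$ and the $\GLZ{2}$-bookkeeping that produces the normalised generators $\Diag{1}{\zeta_{n}}{1}$ and $\Diag{\zeta_{m}}{1}{1}$ from an arbitrary finite subgroup of $\mathcal{T}$, both of which are routine.
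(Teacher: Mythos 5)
Your proposal is correct, and your second route is exactly the paper's argument: the corollary is deduced from Proposition~\ref{Prp:PGL3Cr} by observing that $\GL(2,\K)$ extends to automorphisms of $\Pn$ fixing a point, which rules out $V_9$ (the only non-diagonalisable case) since $V_9$ is fixed-point free. Your primary route is just a self-contained unpacking of the same ingredients --- simultaneous diagonalisation of a finite Abelian subgroup of $\GL(2,\K)$, followed by the monomial (Smith normal form) normalisation of a finite subgroup of the torus, which is precisely the step the paper delegates to the proof of Proposition~\ref{Prp:PGL3Cr} and the references therein --- so there is no substantive difference in approach.
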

\begin{proof}
This follows from the fact that the group $\GL(2,\K)$ of linear automorphisms of $\K^2$ extends to a group of automorphisms of $\Pn$ that leaves the line at infinity invariant and fixes one point.\end{proof}

\begin{exem}
\label{Exa:TorP1}
Note that $\Aut(\mathbb{P}^1 \times \mathbb{P}^1)$ contains the group $(\K^{*})^2 \rtimes \Z{2}$, where $(\K^{*})^2$ is the group of automorphisms of the form $(x,y)\mapsto (\alpha x,\beta y)$, $\alpha,\beta \in \K^{*}$, and $\Z{2}$ is generated by the automorphism $(x,y) \mapsto (y,x)$.

The birational map $(x,y) \dasharrow (x:y:1)$ from $\mathbb{P}^1\times \mathbb{P}^1$ to $\Pn$ conjugates $(\K^{*})^2 \rtimes \Z{2}$ to the group of automorphisms of\hspace{0.2 cm}$\Pn$ generated by $(x:y:z)\mapsto (\alpha x:\beta y:z)$, $\alpha,\beta \in \K^{*}$ and $(x:y:z) \mapsto (y:x:z)$.
\end{exem}

\begin{prop}[Finite Abelian subgroups of $\Aut(\mathbb{P}^1\times\mathbb{P}^1)$] \label{Prp:AutP1P1B}
Up to birational conjugation, every finite Abelian subgroup of $\Aut(\mathbb{P}^1\times\mathbb{P}^1)$ is conjugate to one and only one of the following:
\begin{flushleft}\begin{tabular}{llll}
{\upshape [1]} & $G \cong \Z{n}\times\Z{m}$& g.b.&$(x,y)Ê\mapsto (\zeta_n x,y)$ and
$(x,y) \mapsto (x,\zeta_m y)$\\
{\upshape [2]} & $G \cong \Z{2}\times\Z{2n} $& g.b.&$(x,y)Ê\mapsto (x^{-1},y)$ and
$(x,y) \mapsto (-x,\zeta_{2n} y)$\\
{\upshape [3]} & $G \cong (\Z{2})^2\times\Z{2n} $& g.b.&$(x,y)Ê\mapsto (\pm x^{\pm 1},y)$ and
$(x,y) \mapsto (x,\zeta_{2n} y)$\\
{\upshape [4]} & $G \cong (\Z{2})^3$& g.b. &$(x,y)Ê\mapsto (\pm x,\pm y)$ and
$(x,y) \mapsto (x^{-1},y^{-1})$\\
{\upshape [5]} & $G \cong (\Z{2})^4$& g.b. &$(x,y)Ê\mapsto (\pm x^{\pm 1},\pm y^{\pm 1})$\\
{\upshape [6]} & $G \cong \Z{2}\times \Z{4}$& g.b. & $(x,y)Ê\mapsto (x^{-1},y^{-1})$ and $(x,y) \mapsto (-y,x)$\\
{\upshape [7]} & $G \cong (\Z{2})^3$& g.b. & $(x,y)Ê\mapsto (-x,-y)$, $(x,y)Ê\mapsto (x^{-1},y^{-1})$, 
\\
& & & and $(x,y) \mapsto (y,x)$ \\
\end{tabular}\\
{\upshape (}where $n,m$ are positive integers,~$n$ divides~$m$ and $\zeta_n=e^{2\ipi/n}${\upshape )}.\end{flushleft}
 Furthermore, the groups in $[1]$ are conjugate to subgroups of $\Aut(\Pn)$, but the others are not. 
\end{prop}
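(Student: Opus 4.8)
### Proof proposal for Proposition~\ref{Prp:AutP1P1B}

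The plan is to work inside the group $\Aut(\mathbb{P}^1\times\mathbb{P}^1)$, whose structure is $(\PGL(2,\K)\times\PGL(2,\K))\rtimes\Z{2}$, the $\Z{2}$ exchanging the two rulings. First I would split a finite Abelian subgroup $G$ according to whether it is contained in the connected component $\PGL(2,\K)\times\PGL(2,\K)$ or not. If $G$ lies in the product, then each of its two projections $p_1(G),p_2(G)$ is a finite Abelian subgroup of $\PGL(2,\K)$; such a group is cyclic or isomorphic to $(\Z{2})^2$ (the Klein four-group of involutions), and can be put, after conjugation in $\PGL(2,\K)$ acting on $\mathbb{P}^1$, into one of the standard forms: $x\mapsto\zeta x$, or the dihedral pair $x\mapsto x^{-1}$, $x\mapsto -x$ generating $(\Z{2})^2$. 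Conjugating the two factors independently, $G$ becomes a subgroup of $\langle x\mapsto\zeta_a x,\ x\mapsto x^{-1}\rangle\times\langle y\mapsto\zeta_b y,\ y\mapsto y^{-1}\rangle$; listing the Abelian subgroups of this (explicit, small) group and reducing modulo the remaining conjugations available — the $(\K^{*})^2$ torus, and in the $(\Z{2})^2\times(\Z{2})^2$ case the normalisers of the finite subgroups — produces the families [1], [2], [3], [4] (and [5] when the ruling-swap is absent but both factors are $(\Z{2})^2$; I would check that [5] can also be realised without the swap). This is mostly bookkeeping; the one subtlety is keeping track of which normalising birational maps are available without leaving $\Aut(\mathbb{P}^1\times\mathbb{P}^1)$ versus only in $\CrP$.

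Next I would treat the case where $G$ is not contained in the product, i.e.\ some element $\tau\in G$ swaps the two rulings. Then $G\cap(\PGL(2,\K)^2)$ has index $2$ in $G$, and is normalised by $\tau$; since conjugation by $\tau$ swaps the two factors, commutativity forces the two projections to be "the same" under this identification — concretely, $G\cap(\PGL(2,\K)^2)$ is a diagonal-type subgroup $\{(h,\phi\tau_0 h\tau_0^{-1}\phi^{-1})\}$ for a suitable element. After conjugating by an automorphism of the form $(x,y)\mapsto(x,\theta(y))$ I can arrange $\tau\colon(x,y)\mapsto(y,x)$, and then $G\cap(\PGL(2,\K)^2)$ must be $\{(h,h):h\in H\}$ for $H$ a finite Abelian subgroup of $\PGL(2,\K)$ commuting appropriately. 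Running again over the cyclic and $(\Z{2})^2$ possibilities for $H$, and determining for which $H$ and which choice of $\tau$ (one may also take $(x,y)\mapsto(\alpha y,x)$, giving a genuinely different extension) the resulting $G$ is Abelian, yields exactly the families [6] and [7]: the $\Z{2}\times\Z{4}$ of [6] comes from $H$ cyclic of order $2$ together with the "$4$-th root" swap $(x,y)\mapsto(-y,x)$, whose square is $(x,y)\mapsto(-x,-y)$; the $(\Z{2})^3$ of [7] comes from $H=(\Z{2})^2$ plus the plain swap. I would also have to verify no swap-extension of a larger cyclic $H$ stays Abelian, which is a short computation with the relation $\tau(h,h)\tau^{-1}=(h,h)$.

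Finally, for the "one and only one" part I must show the nine — here seven — listed groups are pairwise non-conjugate in $\CrP$, and establish the last sentence about which are conjugate into $\Aut(\Pn)$. Non-conjugacy: the abstract isomorphism type already separates most entries; the remaining collisions to rule out are [4] versus [7] (both $(\Z{2})^3$) and distinguishing the various $\Z{2}\times\Z{2n}$-type groups from one another and from diagonal ones. For this I would use the conjugacy invariant furnished by $(F)$-type data, namely the collection of curves (with their genus, or at least their classes) fixed pointwise by the involutions of the group: in [7] the swap $(x,y)\mapsto(y,x)$ fixes the diagonal, a rational curve, and one checks that the three non-trivial-involution-fixed-curve configurations differ from those in [4]; similarly the number of involutions fixing a curve, or the self-intersections of those curves after resolving, separates the families. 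To show [1] embeds in $\Aut(\Pn)$ I invoke Example~\ref{Exa:TorP1} and Proposition~\ref{Prp:PGL3Cr}: the diagonal group on $\mathbb{P}^1\times\mathbb{P}^1$ is conjugated by $(x,y)\dashrightarrow(x:y:1)$ to a diagonal group on $\Pn$. To show [2]--[7] do \emph{not} embed in $\Aut(\Pn)$, I use that by Proposition~\ref{Prp:PGL3Cr} every finite Abelian subgroup of $\PGLn{3}$ is conjugate in $\CrP$ to a diagonal group or to $V_9\cong(\Z{3})^2$; none of [2]--[7] is isomorphic to $(\Z{3})^2$, and none is conjugate to a diagonal (toric) group because — again by the fixed-curve invariant, or because a diagonal group on $\Pn$ fixes three points whereas e.g.\ the groups containing $x\mapsto x^{-1}$ have involutions fixing a whole rational curve that meets the relevant configuration differently — a diagonal action on $\mathbb{P}^1\times\mathbb{P}^1$ is precisely family [1]. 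The main obstacle I anticipate is not any single step but the combinatorial care needed in the second paragraph: correctly enumerating the Abelian swap-extensions of $(\Z{2})^2$ and of $\Z{2}$ up to the conjugations genuinely available in $\Aut(\mathbb{P}^1\times\mathbb{P}^1)$, so that [6] and [7] emerge as the complete list with no duplicates and no omissions.
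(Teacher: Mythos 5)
Your enumeration in the first two paragraphs follows essentially the same route as the paper: split according to whether $G$ lies in $\PGL(2,\K)\times\PGL(2,\K)$, reduce the two projections to cyclic or $(\Z{2})^2$ normal form, and handle the ruling-swap case by normalising the swap to $(x,y)\mapsto(\lambda y,x)$ with $\lambda$ a root of unity and using commutativity to force $H=G\cap(\PGL(2,\K)\times\PGL(2,\K))$ into diagonal form; that part is sound modulo the bookkeeping you acknowledge. The genuine gap is in your non-conjugacy argument. The invariant you propose --- the configuration of curves fixed pointwise by the involutions of the group, their classes or their self-intersections --- is \emph{not} a birational conjugacy invariant when the fixed curves are rational. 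Concretely, $(x,y)\mapsto (x^{-1},y^{-1})$ is conjugate inside $\Aut(\mathbb{P}^1\times\mathbb{P}^1)$ to $(s,t)\mapsto(-s,-t)$ via $s=\frac{x-1}{x+1}$, $t=\frac{y-1}{y+1}$, and the latter is carried by $(s,t)\dasharrow(s:t:1)$ to the linear involution $(x:y:z)\mapsto(-x:-y:z)$ of $\Pn$: the first fixes only four isolated points, the last fixes a whole line pointwise. Only fixed curves of \emph{positive} genus survive birational conjugation, and every group on the list has none, so your invariant cannot separate $[2]$ (with $n=2$) from $[6]$, nor $[3]$ (with $n=1$), $[4]$ and $[7]$ from one another; the same defect undermines your argument that $[2]$--$[7]$ are not conjugate to diagonal groups.

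What the paper actually uses, and what you would need to substitute, are invariants of a different kind. First, the groups of case $[1]$ fix a point of $\mathbb{P}^1\times\mathbb{P}^1$ while none of $[2]$--$[7]$ does, and the existence of a fixed point for a finite group action is a birational invariant (Koll\'ar--Szab\'o); this separates $[1]$ from the rest and, combined with Proposition~\ref{Prp:PGL3Cr}, gives the last assertion without circularity. Second, for $[6]$ and $[7]$ one computes $\Pic{\mathbb{P}^1\times\mathbb{P}^1}^G=\mathbb{Z}d$ with $d$ the diagonal class, so any two $G$-invariant pencils of rational curves have even total intersection; since $G$ has order $8$ and acts freely, the fixed part of the intersection is also even, hence the free part is even and never equal to $1$ --- whereas each of $[2]$--$[5]$ preserves the two rulings, which meet freely in exactly one point. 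Third, $[4]$ versus $[3]$ with $n=1$ is settled by counting subgroups that act without fixed points (three in $[4]$, one in $[3]$), again a fixed-point-type invariant. Your plan as written does not reach these conclusions.
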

\begin{proof}
Recall that $\Aut(\mathbb{P}^1\times\mathbb{P}^1)=(\PGLn{2} \times \PGLn{2})\rtimes\Z{2}$. Let $G$ be some finite Abelian subgroup of $\Aut(\mathbb{P}^1\times\mathbb{P}^1)$; we now prove that $G$ is conjugate to one of the groups in cases $[1]$ through $[7]$.

First of all, if $G$ is a subgroup of the group $(\K^{*})^2\rtimes\Z{2}$ given in Example~\ref{Exa:TorP1}, then it is conjugate to a subgroup of $\Aut(\Pn)$ and hence to a group in case $[1]$.

Assume that $G \subset \PGLn{2} \times \PGLn{2}$ and denote by $\pi_1$ and $\pi_2$ the projections $\pi_i: \PGLn{2} \times \PGLn{2} \rightarrow \PGLn{2}$ on the $i$-th factor. Since $\pi_1(G)$ and $\pi_2(G)$ are finite Abelian subgroups of $\PGLn{2}$ each is conjugate to a diagonal cyclic group or to the group $x\dasharrow \pm x^{\pm 1}$, isomorphic to $(\Z{2})^2$. We enumerate the possible cases.

{\it If both groups $\pi_1(G)$ and $\pi_2(G)$ are cyclic,}
the group $G$ is conjugate to a subgroup of the diagonal torus $(\K^{*})^2$ of automorphisms of the form $(x,y)\mapsto (\alpha x,\beta y)$, $\alpha,\beta \in \K^{*}$.

{\it If exactly one of the two groups $\pi_1(G)$ and $\pi_2(G)$ is cyclic} we may assume,
up to conjugation in $\Aut(\mathbb{P}^1 \times \mathbb{P}^1)$, that $\pi_2(G)$ is cyclic, generated by $y \mapsto \zeta_m y$, for some integer $m\geq 1$, and that $\pi_1(G)$ is the group $x\dasharrow \pm x^{\pm 1}$. We use the exact sequence $1\rightarrow G\cap \ker \pi_2\rightarrow G \rightarrow \pi_2(G)\rightarrow 1$ and find, up to conjugation, two possibilities for $G$:
\begin{center}$
\begin{array}{lllll}
\mbox{(a)}&\mbox{$G$ is generated by}&(x,y)\mapsto (x^{-1},y)&\mbox{and}&(x,y) \mapsto (-x,\zeta_{m}y).\\
\mbox{(b)}&\mbox{$G$ is generated by}& (x,y) \mapsto (\pm x^{\pm 1},y)&\mbox{and}&(x,y)\mapsto (x,\zeta_m y).\end{array}
$\end{center}
If~$m$ is even, we obtain respectively [2] and [3] for $n=m/2$. If~$m$ is odd, the two groups are equal; conjugating by $\varphi:(x,y) \dasharrow (x,y(x+x^{-1}))$ (which conjugates $(x,y)\mapsto (-x,y)$ to $(x,y)\mapsto (-x,-y)$) we obtain the group [2] for $n=m$.

{\it If both groups $\pi_1(G)$ and $\pi_2(G)$ are isomorphic to $(\Z{2})^2$,}
then up to conjugation, we obtain three groups, namely
\begin{center}$
\begin{array}{lllll}
\mbox{(a)}&\mbox{$G$ is generated by}&(x,y)Ê\mapsto (-x,-y)&\mbox{and}&(x,y) \mapsto (x^{-1},y^{-1}).\\
\mbox{(b)}&\mbox{$G$ is generated by}& (x,y)Ê\mapsto (\pm x,\pm y)&\mbox{and}&(x,y) \mapsto (x^{-1},y^{-1}).\\
\mbox{(c)}&\mbox{$G$ is given by}& \multicolumn{2}{l}{(x,y)Ê\mapsto (\pm x^{\pm 1},\pm y^{\pm 1}).}\end{array}
$\end{center}
The group $[2]$ with $n=1$ is conjugate to (a) by $(x,y)\dasharrow (x,x\frac{y+x}{y+x^{-1}})$. The groups (b) and (c) are respectively equal to $[4]$ and $[5]$.

We now suppose that the group $G$ is not contained in $\PGLn{2} \times \PGLn{2}$. Any element $\varphi\in \Aut(\mathbb{P}^1 \times \mathbb{P}^1)$ not contained in $\PGLn{2} \times \PGLn{2}$ is conjugate to $\varphi:(x,y)\mapsto (\alpha(y),x)$, where $\alpha \in \Aut(\mathbb{P}^1)$, and if $\varphi$ is of finite order, $\alpha$ may be chosen to be $y\mapsto \lambda y$ with $\lambda \in \K^{*}$ a root of unity. 

Thus, up to conjugation, $G$ is generated by the group $H =G\cap (\PGLn{2} \times \PGLn{2})$ and one element $(x,y)\mapsto (\lambda y,x)$, for some $\lambda \in \K^{*}$ of finite order. Since the group $G$ is Abelian, every element of $H$ is of the form $(x,y) \mapsto (\beta(x),\beta(y))$, for some $\beta\in \PGLn{2}$ satisfying $\beta(\lambda x)=\lambda \beta(x)$. Three possibilities occur, depending on the value of $\lambda$ which may be $1$, $-1$ or something else.

\emph{If $\lambda=1$,}
we conjugate the group by some element $(x,y) \mapsto (\gamma(x),\gamma(y))$ so that $H$ is either diagonal or equal to the group generated by $(x,y) \mapsto (-x,-y)$ and $(x,y) \mapsto (x^{-1},y^{-1})$. In the first situation, the group is contained in $(\K^{*})^2 \rtimes \Z{2}$ (which gives $[1]$); the second situation gives $[7]$.

\emph{If $\lambda=-1$,}
the group $H$ contains the square of $(x,y) \mapsto (-y,x)$, which is $(x,y) \mapsto (-x,-y)$ and is either cyclic or generated by $(x,y) \mapsto (-x,-y)$ and $(x,y) \mapsto (x^{-1},y^{-1})$. 
If $H$ is cyclic, it is diagonal, since it contains $(x,y) \mapsto (-x,-y)$, so $G$ is contained in $(\K^{*})^2 \rtimes \Z{2}$. The second possibility gives $[6]$. 

\emph{If $\lambda \not=\pm 1$,}
the group $H$ is diagonal and then $G$ is contained in $(\K^{*})^2 \rtimes \Z{2}$.

We now prove that distinct groups of the list are not birationally conjugate.

First of all, each group of case $[1]$ fixes at least one point of $\mathbb{P}^1\times\mathbb{P}^1$. Since the other groups of the list don't fix any point, they are not conjugate to $[1]$ \cite[Proposition A.2]{bib:KoS}.

Consider the other groups. The set of isomorphic groups are those of cases $[3]$ (with $n=1$), $[4]$ and $[7]$ (isomorphic to $(\Z{2})^3$), and of cases $[2]$ (with $n=2$) and $[6]$ (isomorphic to $\Z{2}\times\Z{4}$).

The groups of cases $[2]$ to $[5]$ leave two pencils of rational curves invariant (the fibres of the two projections $\mathbb{P}^1\times\mathbb{P}^1\rightarrow \mathbb{P}^1$) which intersect freely in exactly one point. We prove that this is not the case for $[6]$ and $[7]$; this shows that these two groups are not birationally conjugate to any of the previous groups. Take $G\subset \Aut(\mathbb{P}^1 \times \mathbb{P}^1)$ to be either $[6]$ or $[7]$. We have then $\Pic{\mathbb{P}^1 \times \mathbb{P}^1}^G=\mathbb{Z}d$, where $d=-\frac{1}{2}K_{\mathbb{P}^1\times\mathbb{P}^1}$ is the diagonal of $\mathbb{P}^1 \times \mathbb{P}^1$.
Suppose that there exist two $G$-invariant pencils $\Lambda_1=n_1 d$ and $\Lambda_2=n_2 d$ of rational curves, for some positive integers $n_1,n_2$ (we identify here a pencil with the class of its elements in $\Pic{\mathbb{P}^1\times\mathbb{P}^1}^G$). The intersection $\Lambda_1 \cdot \Lambda_2=2 n_1 n_2$ is an even integer. Note that the fixed part of the intersection is also even, since $G$ is of order $8$ and acts without fixed points on $\mathbb{P}^1\times\mathbb{P}^1$. The free part of the intersection is then also an even integer and hence is not $1$.

Let us now prove that $[4]$ is not birationally conjugate to $[3]$ (with $n=1$). This follows from the fact that $[4]$ contains three subgroups that are fixed-point free (the groups generated by $(x,y)\mapsto (x^{-1},y^{-1})$ and one of the three involutions of the group $(x,y)\mapsto (\pm x,\pm y)$), whereas $[3]$ (with $n=1$) contains only one such subgroup, which is $(x,y)\mapsto (\pm x^{\pm 1},y)$.

We now prove the last assertion.
The finite Abelian groups of automorphisms of\hspace{0.2 cm}$\Pn$ are conjugate either to $[1]$ or to the group $V_9$, isomorphic to $(\Z{3})^2$ (see Proposition~\ref{Prp:PGL3Cr}). As no group of the list $[2]$ through $[7]$ is isomorphic to $(\Z{3})^2$, we are done.
\end{proof}

{\bf Summary of this section.}\hspace{0.1 cm}{\it We have found that the groups common to the three surfaces $\K^2, \mathbb{P}^2$ and $\mathbb{P}^1\times\mathbb{P}^1$ are the "diagonal" ones (generated by $(x,y)\mapsto (\zeta_n x,y)$ and $(x,y) \mapsto (x,\zeta_m y)$). On $\Pn$ there is only one more group, which is the special group $V_9$, and on $\mathbb{P}^1\times\mathbb{P}^1$ there are $2$ families ($[2]$ and $[3]$) and $4$ special groups ($[4]$, $[5]$, $[6]$ and $[7]$).}

\section{Some facts about automorphisms of conic bundles}
\label{Aut:ConBundles}
We first consider conic bundles without mentioning any group action on them. We recall some classical definitions:
\begin{defi}
Let $S$ be a rational surface and $\pi:S\rightarrow \mathbb{P}^1$ be a 
morphism. We say that the pair $(S,\pi)$ is a \emph{conic bundle} if
a general fibre of $\pi$ is isomorphic to $\mathbb{P}^1$, with a finite number of exceptions: these singular fibres are the union of
smooth rational curves $F_1$ and $F_2$ such that $(F_1)^2=(F_2)^2=-1$ and $F_1\cdot F_2=1$.

Let $(S,\pi)$ and $(\tilde{S},\tilde{\pi})$ be two conic bundles. We say that $\varphi:S \dasharrow \tilde{S}$ is a \defn{birational map of conic bundles} if $\varphi$ is a birational map which sends a general fibre of $\pi$ on a general fibre of $\tilde{\pi}$.

We say that a conic bundle $(S,\pi)$ is \defn{minimal} if any birational morphism of conic bundles $(S,\pi)\rightarrow (\tilde{S},\tilde{\pi})$ is an isomorphism.
\end{defi}
We remind the reader of the following well-known result: 
\begin{lemm}
\label{Lem:SmoothHirz}
Let $(S,\pi)$ be a conic bundle. The following conditions are equivalent:
\begin{itemize}
\item
$(S,\pi)$ is minimal.
\item
The fibration $\pi$ is smooth, i.e.\ no fibre of $\pi$ is singular.
\item
$S$ is a Hirzebruch surface $\mathbb{F}_m$, for some integer $m\geq 0$.\proofend
\end{itemize}
\end{lemm}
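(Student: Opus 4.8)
The plan is to deduce the three-way equivalence from the chain $(3)\Rightarrow(2)$, $(2)\Rightarrow(1)$, $(1)\Rightarrow(2)$ and $(2)\Rightarrow(3)$. Throughout I would use the preliminary remark that a birational morphism of conic bundles $\varphi\colon(S,\pi)\to(\tilde S,\tilde\pi)$ is automatically a morphism \emph{over} $\mathbb{P}^1$: since $\tilde\pi\circ\varphi$ is constant on the general fibre of $\pi$ and $\pi$ has connected fibres, it factors as $u\circ\pi$ for some morphism $u\colon\mathbb{P}^1\to\mathbb{P}^1$, and since $\varphi$ is birational a general fibre of $\pi$ is not contracted and is mapped onto a general fibre of $\tilde\pi$, so $u$ has degree one and is an isomorphism, which we use to identify the two bases.

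For $(1)\Leftrightarrow(2)$: if $\pi$ has a singular fibre $F_1\cup F_2$, then $F_1$ is a $(-1)$-curve, and contracting it (Castelnuovo) yields a birational morphism to a smooth surface on which $\pi$ descends to a conic bundle — the image of $F_2$ is a smooth rational curve of self-intersection $F_2^2+(F_1\cdot F_2)^2=0$, hence a smooth fibre — and this morphism is not an isomorphism, so $(S,\pi)$ is not minimal. Conversely, suppose $\pi$ is smooth and $\varphi\colon(S,\pi)\to(\tilde S,\tilde\pi)$ is a birational morphism of conic bundles which is not an isomorphism. Then $\varphi$ contracts some $(-1)$-curve $E\subset S$; by the remark above $E$ is contracted over $\mathbb{P}^1$, hence lies in a fibre $F$ of $\pi$, which is a smooth irreducible rational curve with $F^2=0$. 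This forces $E=F$, contradicting $E^2=-1$, so $(S,\pi)$ is minimal.

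For $(2)\Leftrightarrow(3)$: a Hirzebruch surface $\mathbb{F}_m$ with $m\geq 0$ (with the convention $\mathbb{F}_0=\mathbb{P}^1\times\mathbb{P}^1$) is a $\mathbb{P}^1$-bundle over $\mathbb{P}^1$, hence a conic bundle with no singular fibre, giving $(3)\Rightarrow(2)$. Conversely, if every fibre of $\pi$ is isomorphic to $\mathbb{P}^1$, then $\pi$ exhibits $S$ as a geometrically ruled surface over $\mathbb{P}^1$; by the theorem of Noether and Enriques $\pi$ admits a section, so $S\cong\mathbb{P}(E)$ for some rank-$2$ vector bundle $E$ on $\mathbb{P}^1$, and Grothendieck's splitting $E\cong\mathcal{O}_{\mathbb{P}^1}(a)\oplus\mathcal{O}_{\mathbb{P}^1}(b)$ gives $S\cong\mathbb{F}_{|a-b|}$. (Note that $\mathbb{F}_1$ does occur here, although $\mathbb{F}_1$ is not a minimal \emph{surface}: minimality of a conic bundle is a relative notion.)

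I expect the only genuinely non-formal ingredient to be the classical input used in $(2)\Rightarrow(3)$ — the structure theory of geometrically ruled surfaces over $\mathbb{P}^1$ (Noether–Enriques together with Grothendieck's theorem on vector bundles over $\mathbb{P}^1$) — which is precisely why the lemma may be quoted as well known. The only other step that demands a little care is the preliminary remark that a birational morphism of conic bundles respects the two fibrations, and in particular that the general fibre is neither contracted nor collapsed onto a proper subset of a general fibre; the remaining verifications are routine intersection-theoretic bookkeeping together with the Castelnuovo factorisation of birational morphisms of smooth projective surfaces.
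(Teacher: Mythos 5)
Your proof is correct. The paper offers no argument of its own here: the lemma is introduced with ``we remind the reader of the following well-known result'' and closed immediately with the end-of-proof symbol, so there is nothing to compare against. Your chain $(3)\Rightarrow(2)\Rightarrow(1)\Rightarrow(2)\Rightarrow(3)$ is the standard one: the preliminary observation that a birational morphism of conic bundles commutes with the two fibrations, the self-intersection computation $F_2^2+(F_1\cdot F_2)^2=0$ after contracting one component of a singular fibre, the fact that a non-isomorphic birational morphism must contract a $(-1)$-curve lying in a fibre, and the Noether--Enriques/Grothendieck classification of geometrically ruled surfaces over $\mathbb{P}^1$ are exactly the ingredients the author is implicitly invoking, and your parenthetical remark that $\mathbb{F}_1$ is allowed (relative versus absolute minimality) is the one point worth being explicit about.
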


Blowing-down one irreducible component in any singular fibre of a conic bundle $(S,\pi)$, we obtain a birational morphism of conic bundles $S\rightarrow \mathbb{F}_m$ for some integer $m\geq 0$. Note that $m$ depends on the choice of the blown-down components. The following lemma gives some information on the possibilities. Note first that since the sections of $\mathbb{F}_m$ have self-intersection $\geq -m$, the self-intersections of the sections of $\pi$ are also bounded from below.
\begin{lemm}
\label{Prp:IskWithoutG}
\index{Conic bundles!automorphisms}
Let $(S,\pi)$ be a conic bundle on a surface $S\not\cong\mathbb{P}^1\times\mathbb{P}^1$. Let $-n$ be the minimal self-intersection of sections of $\pi$ and let $r$ be the number of singular fibres of $\pi$. Then $n\geq 1$ and: 
\begin{enumerate}
\item[\upshape 1.]
There exists a birational morphism of conic bundles $p_{-}:S\rightarrow \mathbb{F}_n$ such that:
\begin{enumerate}
\item[\upshape (a)]
$p_{-}$ is the blow-up of\hspace{0.05 cm} $r$ points of\hspace{0.05 cm} $\mathbb{F}_n$, none of which lies on the exceptional section $E_n$.
\item[\upshape (b)]
The strict pull-back $\widetilde{E_n}$ of $E_n$ by~$p_{-}$ is a section of $\pi$ with self-intersection $-n$.
\end{enumerate}
\item[\upshape 2.]
If there exist two different sections of $\pi$ with self-intersection $-n$, then $r\geq 2n$. In this case, there exist birational morphisms of conic bundles $p_0:S\rightarrow \mathbb{F}_0=\mathbb{P}^1\times\mathbb{P}^1$ and $p_1:S\rightarrow \mathbb{F}_1$.
\end{enumerate}
\end{lemm}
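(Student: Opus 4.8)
The plan is to deduce both parts from Lemma~\ref{Lem:SmoothHirz} by contracting, in each singular fibre, a favourably chosen component, and then by tracking the self-intersection of one distinguished section. Fix a section $C$ of $\pi$ with $C^2=-n$ minimal. Since $C\cdot f=1$ for the general fibre $f$ and a singular fibre $G_1\cup G_2$ is numerically equivalent to $f$, exactly one of $C\cdot G_1,\,C\cdot G_2$ is $1$ and the other is $0$; so $C$ meets exactly one component of each singular fibre. The $r$ components \emph{not} met by $C$ lie in distinct fibres, hence are pairwise disjoint $(-1)$-curves; contracting all of them gives a birational morphism of conic bundles $p_-\colon S\to S'$ with $C$ disjoint from everything contracted. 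By Lemma~\ref{Lem:SmoothHirz}, $S'\cong\mathbb{F}_m$; moreover $p_-$ is the blow-up of $r$ distinct points $q_1,\dots,q_r$ lying in $r$ distinct fibres, and $p_-(C)$ is a section of $\mathbb{F}_m$ with $(p_-(C))^2=-n$. Next I would show $m\ge 1$: if $m=0$ then $S'\cong\mathbb{P}^1\times\mathbb{P}^1$, and $r=0$ would give $S\cong\mathbb{P}^1\times\mathbb{P}^1$ (excluded), while $r\ge 1$ is impossible since a section of self-intersection $0$ of $\mathbb{P}^1\times\mathbb{P}^1$ through $q_1$ would pull back to a section of $\pi$ of self-intersection $<-n$. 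With $m\ge 1$ the strict transform $\widetilde{E_m}$ is a section of $\pi$ of self-intersection $\le -m<0$, so $-n\le -m$, i.e.\ $n\ge m\ge 1$; and since $p_-(C)$ has negative self-intersection it must equal $E_m$, forcing $m=n$ and $C=\widetilde{E_n}$. Finally no $q_i$ lies on $E_n$, for otherwise $\widetilde{E_n}$ would have self-intersection $<-n$. This yields 1(a), 1(b) and $n\ge 1$.

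For the inequality in part 2, assume $C_1\ne C_2$ are sections with $C_i^2=-n$, and run the construction above with $C=C_1$, so $\widetilde{E_n}=C_1$. Then $p_-(C_2)\ne E_n$ (else $C_2=\widetilde{E_n}=C_1$), so $p_-(C_2)$ is an irreducible section of $\mathbb{F}_n$ other than the exceptional one; such a section has class $E_n+bf$ with $b\ge n$, hence self-intersection $\ge n$. If $k$ of the points $q_i$ lie on $p_-(C_2)$, then $-n=C_2^2=(p_-(C_2))^2-k\ge n-k$, whence $r\ge k\ge 2n$.

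To produce $p_0$ and $p_1$, keep $p_-\colon S\to\mathbb{F}_n$ as above. Over $\pi(q_i)$ the singular fibre of $\pi$ is $G_1^{(i)}\cup G_2^{(i)}$, where $G_1^{(i)}$ is the $p_-$-exceptional curve over $q_i$ and $G_2^{(i)}$ is the strict transform of the $\mathbb{F}_n$-fibre through $q_i$; since $q_i\notin E_n$ one checks $\widetilde{E_n}\cdot G_1^{(i)}=0$ and $\widetilde{E_n}\cdot G_2^{(i)}=1$. For any $T\subseteq\{1,\dots,r\}$, contracting $G_2^{(i)}$ for $i\in T$ and $G_1^{(i)}$ for $i\notin T$ gives (again by Lemma~\ref{Lem:SmoothHirz}) a birational morphism of conic bundles onto a Hirzebruch surface, under which the image of $\widetilde{E_n}$ is a section of self-intersection $-n+|T|$. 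Taking $|T|=n-1$ (possible since $r\ge 2n$) produces a Hirzebruch surface carrying a section of self-intersection $-1$, necessarily $\mathbb{F}_1$; taking $|T|=n$ produces one carrying a section of self-intersection $0$, necessarily $\mathbb{F}_0=\mathbb{P}^1\times\mathbb{P}^1$ (for $m\ge 1$, $\mathbb{F}_m$ has no irreducible section of self-intersection $0$). These are the desired $p_1$ and $p_0$.

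The one genuinely delicate point I expect to be the main obstacle is the equality $m=n$ in part 1: that contracting precisely the components a \emph{minimal} section avoids lands one exactly on $\mathbb{F}_n$, neither on some $\mathbb{F}_m$ with $m>n$ nor lower. This is the step coupling the numerical invariant $n$ to the blow-down geometry; once it is established, the rest is bookkeeping with intersection numbers on blow-ups of Hirzebruch surfaces, together with the standard list of self-intersections of irreducible sections of $\mathbb{F}_m$. A secondary care point is organising the argument for $n\ge 1$ so that it does not presuppose $n\ge 1$.
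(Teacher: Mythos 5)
Your proof is correct, and for part~1 and the construction of $p_0,p_1$ it follows essentially the same route as the paper: contract the component of each singular fibre missed by a minimal section, identify the target as $\mathbb{F}_n$ by excluding $m=0$ and invoking the uniqueness of the negative section, and then re-choose which component to contract in $|T|$ of the fibres to raise the self-intersection of $\widetilde{E_n}$ to $-1$ or $0$. The one place where you genuinely diverge is the inequality $r\geq 2n$ in part~2. The paper works inside $\Pic{S}$: it writes the second section as $t=s+bf-\sum a_iF_i$, combines $t^2=-n$ with the adjunction formula to get $\sum a_i=\sum a_i^2=2b$ (so each $a_i\in\{0,1\}$ and $2b\leq r$), and concludes from $s\cdot t=b-n\geq 0$. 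You instead push the second section down to $\mathbb{F}_n$, use the standard fact that an irreducible section other than $E_n$ has class $E_n+bf$ with $b\geq n$ and hence self-intersection $\geq n$, and count the blown-up points lying on it. The two computations are equivalent (your $k$ is the paper's $\sum a_i=2b$), but yours is more geometric and avoids adjunction, at the cost of relying on the classification of irreducible sections of $\mathbb{F}_n$; the paper's lattice computation is self-contained and reappears in the same form elsewhere (e.g.\ in Lemma~\ref{Lem:10irreduciblecurves}). All the delicate points you flag — the equality $m=n$, and not presupposing $n\geq 1$ — are handled correctly in your ordering of the argument.
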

\begin{proof}
We denote by~$s$ a section of $\pi$ of minimal self-intersection $-n$, for some integer~$n$ (this integer is in fact positive, as will appear in the proof). Note that this curve intersects exactly one irreducible component of each singular fibre.

If $r=0$, the lemma is trivially true: take $p_{-}$ to be the identity map. We now suppose that $r\geq1$, and denote by $F_1,...,F_r$ the irreducible components of the singular fibres which do not intersect $s$. Blowing these down, we get a birational morphism of conic bundles $p_{-}:S\rightarrow \mathbb{F}_m$, for some integer $m\geq 0$.
The image of the section $s$ by~$p_{-}$ is a section of the conic bundle of $\mathbb{F}_m$ of minimal self-intersection, so we get $m=n$, and $n \geq 0$. 
If we had $n=0$, then taking some section $\tilde{s}$ of $\mathbb{P}^1\times\mathbb{P}^1$ of self-intersection $0$ passing through at least one blown-up point, its strict pull-back by~$p_{-}$ would be a section of negative self-intersection, which contradicts the minimality of $s^2=-n=0$.
We find finally that $m=n>0$, and that $p_{-}(s)$ is the unique section $\mathbb{F}_n$ of self-intersection $-n$. This proves the first assertion. 

We now prove the second assertion. Suppose that some section $t\not=s$ has self-intersection $-n$. The Picard group of $S$ is generated by $s=p_{-}^{*}(E_n)$, the divisor $f$ of a fibre of $\pi$ and $F_1,...,F_r$. Write $t$ as $t=s+bf-\sum_{i=1}^r{a_i}F_i$, for some integers $b,a_1,...,a_r$, with $a_1,...,a_r \geq 0$. We have $t^2=-n$ and $t\cdot (t+K_S)=-2$ (adjunction formula), where $K_S=p_{-}^{*}(K_{\mathbb{F}_n})+\sum_{i=1}^r F_i=-(n+2)f-2s+\sum_{i=1}^r F_i$. These relations give:
\begin{center}
$\begin{array}{ccccl}
s^2&=&t^2&=&s^2-\sum_{i=1}^r a_i^2+2b,\vspace{0.1 cm}\\
n-2&=&t\cdot K_S&=&-(n+2)+2n-2b+\sum_{i=1}^r a_i,
\end{array}$\end{center}
whence $\sum_{i=1}^r a_i=\sum_{i=1}^r a_i^2=2b$, so each $a_i$ is equal to $0$ or $1$ and consequently $2b\leq r$. Since $s\cdot t=b-n \geq 0$, we find that $r\geq 2n$, as announced.

Finally, by contracting $f-F_1,f-F_2,...,f-F_n,F_{n+1},F_{n+2},...,F_{r}$, we obtain a birational morphism $p_0$ of conic bundles which sends $s$ on a section of self-intersection $0$ and whose image is thus $\mathbb{F}_0$. Similarly, the morphism $p_1:S\rightarrow \mathbb{F}_1$ is given by the contraction of $f-F_1,f-F_2,...,f-F_{n-1},F_{n},F_{n+1},...,F_{r}$.
\end{proof}

We now add some group actions on the conic bundles, and give natural definitions (note that we will restrict ourselves to finite or Abelian groups only when this is needed and will then say so):
\begin{defi}
Let $(S,\pi)$ be some conic bundle. 
\begin{itemize}
\item
We denote by $\Aut(S,\pi)\subset \Aut(S)$ the group of automorphisms of the conic bundle, i.e.\ automorphisms of $S$ that send a general fibre of $\pi$ on another general fibre.
\end{itemize}
Let $G \subset \Aut(S,\pi)$ be some group of automorphisms of the conic bundle $(S,\pi)$.
\begin{itemize}
\item
We say that a birational map of conic bundles $\varphi:S \dasharrow \tilde{S}$ is \defn{$G$-equivariant} if the $G$-action on $\tilde{S}$ induced by~$\varphi$ is biregular (it is clear that it preserves the conic bundle structure).
\item
We say that the triple $(G,S,\pi)$ is \defn{minimal} if any $G$-equivariant birational morphism of conic bundles $\varphi:S\rightarrow \tilde{S}$ is an isomorphism.\end{itemize}\end{defi}
\begin{rema}
We insist on the fact that since a \emph{conic bundle} is for us a pair $(S,\pi)$, an automorphism of $S$ is \emph{not} necessarily an automorphism of the conic bundle (i.e.\ $\Aut(S)\not=\Aut(S,\pi)$ in general). 
One should be aware that in the literature, \emph{conic bundle} sometimes means "a variety admitting a conic bundle structure". 
\end{rema}
\begin{rema}
If $G \subset \Aut(S,\pi)$ is such that the pair $(G,S)$ is minimal, so is the triple $(G,S,\pi)$. The converse is not true in general (see Remark~\ref{remark:Exakappanotmin}).
\end{rema}
Note that any automorphism of the conic bundle acts on the set of singular fibres and on its irreducible components. 
The permutation of the two components of a singular fibre is very important (Lemma~\ref{Lem:MinTripl}). For this reason, we introduce some terminology:
\begin{defi}
Let $g\in \Aut(S,\pi)$ be an automorphism of the conic bundle $(S,\pi)$. Let $F=\{F_1,F_2\}$ be a singular fibre. We say that~$g$ \defn{twists} the singular fibre $F$ if $g(F_1)=F_2$ {\upshape (}and consequently $g(F_2)=F_1${\upshape )}. 

If~$g$ twists at least one singular fibre of $\pi$, we will say that~$g$ \emph{twists} the conic bundle $(S,\pi)$, or simply (if the conic bundle is implicit) that~$g$ is a \emph{twisting} element.
\end{defi}

Here is a simple but very important observation:
\begin{lemm}
\label{Lem:MinTripl}
 Let $G \subset \Aut(S,\pi)$ be a group of automorphisms of a conic bundle.
The following conditions are equivalent:
\begin{itemize}
\item[\upshape 1.]
The triple $(G,S,\pi)$ is minimal.
\item[\upshape 2.]
Any singular fibre of $\pi$ is twisted by some element of $G$.\hfill \proofend
\end{itemize}
\end{lemm}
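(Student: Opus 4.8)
The plan is to establish the two implications separately, both built on one elementary observation about the fibration. If $\varphi\colon S\to\tilde S$ is a birational morphism of conic bundles, then there is $\theta\in\Aut(\mathbb{P}^1)$ with $\tilde\pi\circ\varphi=\theta\circ\pi$, so any irreducible curve contracted by $\varphi$ is contained in a fibre of $\pi$. A general fibre $\cong\mathbb{P}^1$ and any whole fibre have self-intersection $0$, hence cannot belong to the exceptional locus of a birational morphism (on which the intersection form is negative definite); therefore every curve contracted by $\varphi$ is one of the two $(-1)$-components $F_1,F_2$ of some singular fibre. I would state and prove this first.

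\emph{Proof of $2\Rightarrow 1$.} Assume every singular fibre of $\pi$ is twisted by some element of $G$, and let $\varphi\colon S\to\tilde S$ be a $G$-equivariant birational morphism of conic bundles. If $\varphi$ were not an isomorphism it would contract a curve, hence by the observation a component $F_1$ of a singular fibre $F=\{F_1,F_2\}=\pi^{-1}(t)$. By hypothesis there is $h\in G$ with $h(F_1)=F_2$, and by $G$-equivariance $\varphi$ contracts $h(F_1)=F_2$ as well; so $\varphi$ contracts the whole fibre $\pi^{-1}(t)$, contradicting the fact that $\varphi$ maps $\pi^{-1}(t)$ onto $\tilde\pi^{-1}(\theta(t))$, which is a curve. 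Thus $\varphi$ contracts nothing, and a birational morphism onto a smooth (normal) surface with empty exceptional locus is an isomorphism.

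\emph{Proof of $1\Rightarrow 2$ (contrapositive).} Suppose a singular fibre $F=\{F_1,F_2\}$ is not twisted by any element of $G$, and consider the orbit $\mathcal O=\{g(F_1):g\in G\}$, a $G$-invariant set of $(-1)$-components of singular fibres. The crucial point is that the members of $\mathcal O$ are pairwise disjoint: if $g(F_1)$ and $g'(F_1)$ lay in a common singular fibre, then $g^{-1}g'$ would preserve $F$ as a set, and since no element of $G$ twists $F$ it would fix $F_1$, giving $g(F_1)=g'(F_1)$. Hence distinct members of $\mathcal O$ lie in distinct fibres and do not meet, so one may simultaneously contract all of them; by uniqueness of the contraction the $G$-action descends, yielding a smooth projective surface $\tilde S$, an induced morphism $\tilde\pi\colon\tilde S\to\mathbb{P}^1$, and a $G$-equivariant birational morphism $\varphi\colon S\to\tilde S$. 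In each affected singular fibre, contracting one $(-1)$-component turns the other into a smooth curve of self-intersection $0$, i.e.\ a fibre $\cong\mathbb{P}^1$, while all other fibres are untouched; so $(\tilde S,\tilde\pi)$ is again a conic bundle and $\varphi$ a birational morphism of conic bundles which is not an isomorphism. Thus $(G,S,\pi)$ is not minimal.

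I expect the only genuinely delicate step to be the disjointness of the orbit $\mathcal O$ in the second implication — this is precisely the place where the ``no element twists $F$'' hypothesis is used — together with the routine but necessary checks that contracting a $G$-invariant family of disjoint $(-1)$-curves produces another conic bundle carrying an induced biregular $G$-action. Everything else is bookkeeping with the fibration and the observation above.
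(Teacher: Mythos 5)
Your proof is correct and complete; the paper itself treats this lemma as an observation and omits the argument entirely (the statement is followed immediately by the end-of-proof symbol), so there is no competing proof to compare against. The two implications you give — contracted curves must be single components of singular fibres, and the $G$-orbit of an untwisted component is a disjoint $G$-invariant collection of $(-1)$-curves that can be contracted equivariantly — are exactly the details the author is leaving to the reader, including the key disjointness step where the ``no twisting'' hypothesis enters.
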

\begin{rema}
An automorphism of a conic bundle with a non-trivial action on the basis of the fibration may twist at most two singular fibres. However, an automorphism with a trivial action on the basis of the fibration may twist a large number of fibres. We will give in Propositions~\ref{Prp:DescriptionTwistingElementsFinite} and~\ref{Prp:DescriptionTwistingElementsInfinite} a precise description of all twisting elements.
\end{rema}

\bigskip

The following lemma is a direct consequence of Lemma~\ref{Prp:IskWithoutG}; it provides information on the structure of the underlying variety of a conic bundle admitting a twisting automorphism.
\begin{lemm}
\label{Lem:GoingToF0F1}
Suppose that some automorphism of the conic bundle $(S,\pi)$ twists at least one singular fibre. Then, the following occur.
\begin{itemize}
\item[\upshape 1.]
There exist two birational morphisms of conic bundles $p_0: S \rightarrow \mathbb{F}_0$ and $p_1:S \rightarrow \mathbb{F}_1$ (which are not $g$-equivariant).
\item[\upshape 2.]
Let $-n$ be the minimal self-intersection of sections of $\pi$ and let $r$ be the number of singular fibres of $\pi$. Then, $r\geq 2n\geq 2$.
\end{itemize}
\end{lemm}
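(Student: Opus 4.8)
The statement is Lemma~\ref{Lem:GoingToF0F1}, which I want to deduce from Lemma~\ref{Prp:IskWithoutG}. The hypothesis is that some $g\in\Aut(S,\pi)$ twists at least one singular fibre $F=\{F_1,F_2\}$; in particular $\pi$ has a singular fibre, so $r\ge 1$ and $S\not\cong\mathbb{P}^1\times\mathbb{P}^1$ by Lemma~\ref{Lem:SmoothHirz}. Thus Lemma~\ref{Prp:IskWithoutG} applies with some $n\ge 1$. The whole point is to show that the two sections of minimal self-intersection hypothesis in part~2 of that lemma is automatically satisfied here, so that we get $r\ge 2n\ge 2$ and the two morphisms $p_0,p_1$.

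First I would argue that a twisting element forces at least two disjoint sections of minimal self-intersection. Take $s$ a section of $\pi$ of self-intersection $-n$; by part~1 of Lemma~\ref{Prp:IskWithoutG} we may assume $p_-(s)=E_n$, and $s$ meets exactly one component of each singular fibre. Now $g(s)$ is again a section of self-intersection $-n$, since $g$ is an automorphism of the conic bundle (it permutes fibres and preserves intersection numbers). I claim $g(s)\neq s$: the twisted fibre $F=\{F_1,F_2\}$ has $g(F_1)=F_2$, and $s$ meets exactly one of $F_1,F_2$ — say $s\cdot F_1=1$, $s\cdot F_2=0$ — so $g(s)$ meets $g(F_1)=F_2$ and not $F_1$; hence $g(s)$ and $s$ hit different components of $F$ and are distinct sections. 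Feeding $t=g(s)$ into part~2 of Lemma~\ref{Prp:IskWithoutG} gives $r\ge 2n$, and since $n\ge1$ we get $r\ge 2n\ge 2$, which is assertion~2. Assertion~1 — the existence of $p_0:S\to\mathbb{F}_0$ and $p_1:S\to\mathbb{F}_1$ — is then literally the second sentence of part~2 of Lemma~\ref{Prp:IskWithoutG}; and these morphisms cannot be $g$-equivariant, since their targets $\mathbb{F}_0,\mathbb{F}_1$ have no singular fibres, so a $g$-equivariant such morphism would make $g$ act on a conic bundle with no fibre to twist, contradicting the hypothesis (one should note $g$ does twist a fibre downstairs only if that fibre survives, which it doesn't) — more simply, $g$-equivariance of a birational morphism of conic bundles onto a Hirzebruch surface would contradict that $g$ twists a singular fibre, as blowing down cannot destroy the twisting of fibres that are not contracted, and every singular fibre upstairs maps to a smooth fibre.

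The only mildly delicate point is the parenthetical claim that $p_0$ and $p_1$ are \emph{not} $g$-equivariant. I would handle it by observing that if $\varphi:S\to\tilde S$ is a $G$-equivariant (hence here $g$-equivariant) birational morphism of conic bundles, then by Lemma~\ref{Lem:MinTripl} every singular fibre of $\tilde S$ would have to be twisted by $g$; but if $\varphi$ is onto a Hirzebruch surface then $\tilde S$ has \emph{no} singular fibres, so the image of the singular fibre that $g$ twists upstairs is a smooth fibre, on which $g$ acts — this does not yet contradict anything. The cleaner route: a $g$-equivariant birational morphism $S\to\tilde S$ of conic bundles composed down to a \emph{minimal} triple would still be $g$-equivariant; but a $g$ that twists a fibre means $(g,S,\pi)$ restricted to that fibre is already minimal there, so no component over the twisted fibre can be $g$-equivariantly contracted, whereas $p_0,p_1$ contract a component over \emph{every} singular fibre including the twisted one. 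This last observation is exactly what I expect to be the main obstacle to write carefully, because it requires tracking which components are contracted by the explicit $p_0,p_1$ of Lemma~\ref{Prp:IskWithoutG} (the $f-F_i$ and $F_j$) and checking that the twisted fibre's two components get treated asymmetrically by $p_-$, hence one of them is among those contracted; once that bookkeeping is in place, $g$-equivariance is impossible since $g$ swaps the two components and the contraction does not. I would state this as a short remark rather than belabour it, since it is parenthetical in the lemma.
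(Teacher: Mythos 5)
Your proof is correct and is essentially the paper's own argument: the paper likewise observes that any section meets exactly one component of each singular fibre, so $g$ acts without fixed points on the set of sections, forcing at least two sections of minimal self-intersection $-n$, and then invokes Lemma~\ref{Prp:IskWithoutG} to obtain $r\geq 2n\geq 2$ and the morphisms $p_0,p_1$. The paper does not bother to justify the parenthetical non-equivariance claim, but your final observation (that $p_0,p_1$ contract exactly one component of the twisted fibre while $g$ swaps the two components, so the contracted set is not $g$-invariant) settles it.
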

\begin{proof}
Note that any section of $\pi$ touches exactly one component of each singular fibre. Since~$g$ twists some singular fibre, its action on the set of sections of $S$ is fixed-point-free. The number of sections of minimal self-intersection is then greater than $1$ and we apply Lemma~\ref{Prp:IskWithoutG} to get the result.
\end{proof}
\begin{rema}
A result of the same kind can be found in \cite{bib:Isk1}, Theorem~1.1.\end{rema}

\bigskip

\begin{lemm}
\label{Lem:Degree567CB}
Let $G \subset \Aut(S,\pi)$ be a group of automorphisms of the conic bundle $(S,\pi)$, such that:
\begin{itemize}
\item
 $\pi$ has at most $3$ singular fibres (or equivalently $(K_S)^2\geq 5$);
 \item
the triple $(G,S,\pi)$ is minimal.
 \end{itemize}
 Then, $S$ is either a Hirzeburch surface or a del Pezzo surface of degree $5$ or $6$, depending on whether the number of singular fibres is $0$, $3$ or $2$ respectively.
\end{lemm}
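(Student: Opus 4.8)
The plan is to argue by the number $r$ of singular fibres of $\pi$. By the equivalence recalled in the statement, the hypothesis $(K_S)^2\geq 5$ means exactly $r\leq 3$, and the degree of $S$ (when it turns out to be del Pezzo) will be $(K_S)^2=8-r$.

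If $r=0$, then $\pi$ is smooth, so Lemma~\ref{Lem:SmoothHirz} gives that $S$ is a Hirzebruch surface $\mathbb{F}_m$, and the triple is vacuously minimal. Suppose now $r\geq 1$. Since $(G,S,\pi)$ is minimal, Lemma~\ref{Lem:MinTripl} shows that some singular fibre is twisted by an element of $G$, hence Lemma~\ref{Lem:GoingToF0F1} yields $r\geq 2n\geq 2$, where $-n$ is the minimal self-intersection of a section of $\pi$. Together with $r\leq 3$ this forces $n=1$ and $r\in\{2,3\}$. It therefore remains to prove that in these two cases $S$ is a del Pezzo surface; its degree is then $(K_S)^2=8-r$, that is, $6$ if $r=2$ and $5$ if $r=3$, as announced.

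For the del Pezzo statement I would use Lemma~\ref{Prp:IskWithoutG}(1): as $n=1$, there is a birational morphism $p_-\colon S\to\mathbb{F}_1$ blowing up $r$ points, none on $E_1$. The map $p_-$ contracts the $r$ components of the singular fibres of $\pi$ that do not meet $\widetilde{E_1}$; these are pairwise disjoint $(-1)$-curves lying in the $r$ (distinct) singular fibres, so the $r$ blown-up points are pairwise distinct and lie on $r$ distinct fibres of $\mathbb{F}_1$. Contracting $E_1$ as well realises $S$ as the blow-up of $\mathbb{P}^2$ at the $r+1$ distinct points $q,\bar p_1,\dots,\bar p_r$, where $q$ is the image of $E_1$ and each $\bar p_i$ lies on the line $\ell_i\subset\mathbb{P}^2$ through $q$ which is the image of the fibre of $\mathbb{F}_1$ containing the $i$-th point. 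These $r+1\leq 4$ points are in general position: a line through $q$ and two of the $\bar p_i$ would force $\ell_i=\ell_j$ for some $i\neq j$, contradicting that the $r$ fibres are distinct; and if $r=3$, a line through $\bar p_1,\bar p_2,\bar p_3$ avoiding $q$ would have strict transform in $S$ a section of $\pi$ of self-intersection $1-3=-2$, contradicting $n=1$. Hence $S$ is a del Pezzo surface of degree $9-(r+1)=8-r$.

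The delicate step is the last one: showing that $S$ is a genuine del Pezzo surface rather than merely a weak one. Everything rests on two numerical facts coming from the conic bundle structure — that the contracted $(-1)$-curves are disjoint and sit in distinct fibres (so no infinitely near points occur), and that no section of $\pi$ has self-intersection below $-1$ (which is precisely $n=1$) — together with the elementary observation that the only $(-2)$-curves, and more generally the only $K_S$-negative irreducible curves, that can appear on a blow-up of $\mathbb{P}^2$ at at most four points are of class $E_i-E_j$ or strict transforms of lines through three of the points; both are excluded, so $-K_S$ is ample.
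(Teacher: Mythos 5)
Your proof is correct and follows essentially the same route as the paper's: $r=0$ gives a Hirzebruch surface, and for $r\geq 1$ minimality plus Lemma~\ref{Lem:MinTripl} and Lemma~\ref{Lem:GoingToF0F1} force $r\geq 2n\geq 2$, hence $n=1$ and $r\in\{2,3\}$, after which Lemma~\ref{Prp:IskWithoutG} exhibits $S$ as the blow-up of $3$ or $4$ points of $\Pn$ with no three collinear (the paper likewise rules out collinearity via a section of self-intersection $\leq -2$). The only difference is that you spell out in more detail why distinctness of the points and the no-three-collinear condition suffice for $-K_S$ to be ample, which the paper leaves implicit.
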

\begin{proof}
Let $-n$ be the minimal self-intersection of sections of $\pi$ and let $r\leq 3$ be the number of singular fibres of $\pi$. If $r=0$, we are done, so we may suppose that $r>0$.
Since $(G,S,\pi)$ is minimal, every singular fibre is twisted by some element of $G$ (Lemma~\ref{Lem:MinTripl}). From Lemma~\ref{Lem:GoingToF0F1}, we get $r\geq 2n\geq 2$, whence $r =2$ or $3$ and $n=1$, and we obtain the existence of some birational morphism of conic bundles (not $G$-equivariant) $p_1:S\rightarrow \mathbb{F}_1$. So the surface $S$ is obtained by the blow-up of $2$ or $3$ points of $\mathbb{F}_1$, not on the exceptional section (Lemma~\ref{Prp:IskWithoutG}), and thus by blowing-up $3$ or $4$ points of\hspace{0.2 cm}$\Pn$, no $3$ of which are collinear (otherwise we would have a section of self-intersection $\leq -2$). The surface is then a del Pezzo surface of degree~$6$ or~$5$.
\end{proof}

\bigskip

\begin{rema}
\label{Rem:ExactSeq}
We conclude this section by mentioning an important exact sequence. Let $G \subset \Aut(S,\pi)$ be some group of automorphisms of a conic bundle $(S,\pi)$. We have a natural homomorphism
$\overline{\pi}: G
\rightarrow \Aut(\mathbb{P}^1)= \PGLn{2}$ that satisfies $\overline{\pi}(g)\pi=\pi g$, for every $g \in G$.
We observe that the group $G' =\ker \overline{\pi}$ of automorphisms 
that leave every fibre invariant embeds in the group
$\PGL(2,\K(x))$ of automorphisms of the generic fibre $\mathbb{P}^1(\K(x))$.
Then we get the exact sequence
\begin{equation}
\label{eq:ExactSeqCB}
1 \rightarrow G' \rightarrow G \stackrel{\overline{\pi}}{\rightarrow} \overline{\pi}(G) \rightarrow 1.\end{equation} This restricts the structure of $G$;
for example if $G$ is Abelian and finite, so are $G'$ and $\overline{\pi}(G)$, and we know that the finite Abelian subgroups of $\PGLn{2}$ and $\PGL(2,\K(x))$ are 
either cyclic or isomorphic to $(\Z{2})^2$.

We also see that the group $G$ is birationally conjugate to a subgroup of the group of birational transformations of $\mathbb{P}^1\times\mathbb{P}^1$ of the form (written in affine coordinates):
$$(x,y)\dasharrow \left(\frac{ax+b}{cx+d},\frac{\alpha(x)y+\beta(x)}{\gamma(x)y+\delta(x)}\right),$$
where $a,b,c,d \in \K$, $\alpha,\beta,\gamma,\delta \in \K(x)$, and $(ad-bc)(\alpha\delta-\beta\gamma)\not=0$.

This group, called the \defn{de Jonqui\`eres group}, is the group of birational transformations of $\mathbb{P}^1\times\mathbb{P}^1$ that preserve the fibration induced by the first projection, and is isomorphic to $\PGL(2,\K(x))\rtimes \PGL(2,\K)$.

The subgroups of this group can be studied algebraically (as in \cite{bib:Bea2} and \cite{bib:JBSMF}) but we will not adopt this point of view here.
\end{rema}
\section{The del Pezzo surface of degree $6$}
\label{Sec:DelPezzo6}
There is a single isomorphism class of del Pezzo surfaces of degree $6$, since all sets of three non-collinear points of\hspace{0.2 cm}$\Pn$ are equivalent under the action of linear automorphisms. 
Consider the surface $S_6$ of degree $6$ defined by the blow-up of the points $A_1=(1:0:0)$, $A_2=(0:1:0)$ and $A_3=(0:0:1)$. 
We may view it in $\Pn \times \Pn$, defined as $\{ \big((x:y:z) , (u:v:w)\big)\ | \ ux=vy=wz\}$, where the blow-down $p:S_6\rightarrow \Pn$ is the restriction of the projection on one copy of\hspace{0.2 cm}$\Pn$, explicitly $p: \big((x:y:z) , (u:v:w)\big)\mapsto (x:y:z)$.
There are exactly $6$ exceptional divisors, which are the pull-backs of the $A_i$'s by the two projection morphisms. We write $E_i=p^{-1}(A_i)$ and denote by $D_{ij}$ the strict pull-back by $p$ of the line of $\Pn$ passing through $A_i$ and $A_j$.

The group of automorphisms of $S_6$ is well known (see for example \cite{bib:Wim}, \cite{bib:Dol}). It is isomorphic to $(\K^{*})^2 \rtimes (\Sym_3 \times \Z{2})$, where $(\K^{*})^2 \rtimes \Sym_3$ is the lift on $S_6$ of the group of automorphisms of\hspace{0.2 cm}$\Pn$ that leave the set $\{A_1,A_2,A_3\}$ invariant, and $\Z{2}$ is generated by the permutation of the two factors (it is the lift of the standard quadratic transformation $(x:y:z) \dasharrow (yz:xz:xy)$ of $\Pn$); the action of $\Z{2}$ on $(\K^{*})^2$ sends an element on its inverse.

There are three conic bundle structures on the surface $S_6$. 
Let $\pi_1:S_6\rightarrow \mathbb{P}^1$ be the morphism defined by
\begin{center}
$\pi_1:\big((x:y:z) , (u:v:w)\big)\mapsto\left\{\begin{array}{lll}
(y:z)& \mbox{ if } &(x:y:z) \not= (1:0:0),\\
(w:v)& \mbox{ if } &(u:v:w) \not= (1:0:0).\end{array}\right.$
\end{center}
Note that $p$ sends the fibres of $\pi_1$ on lines of\hspace{0.2 cm}$\Pn$ passing through $A_1$. There are exactly two singular fibres of this fibration, namely
\begin{center}
\begin{tabular}{lll}
$\pi_1^{-1}(1:0)=\{E_2,D_{12}\}$& and &$\pi_1^{-1}(0:1)=\{E_3,D_{13}\}$;
\end{tabular}\end{center}
and $E_1$, $D_{23}$ are sections of $\pi_1$.

\begin{center}
\includegraphics[width=40.00mm]{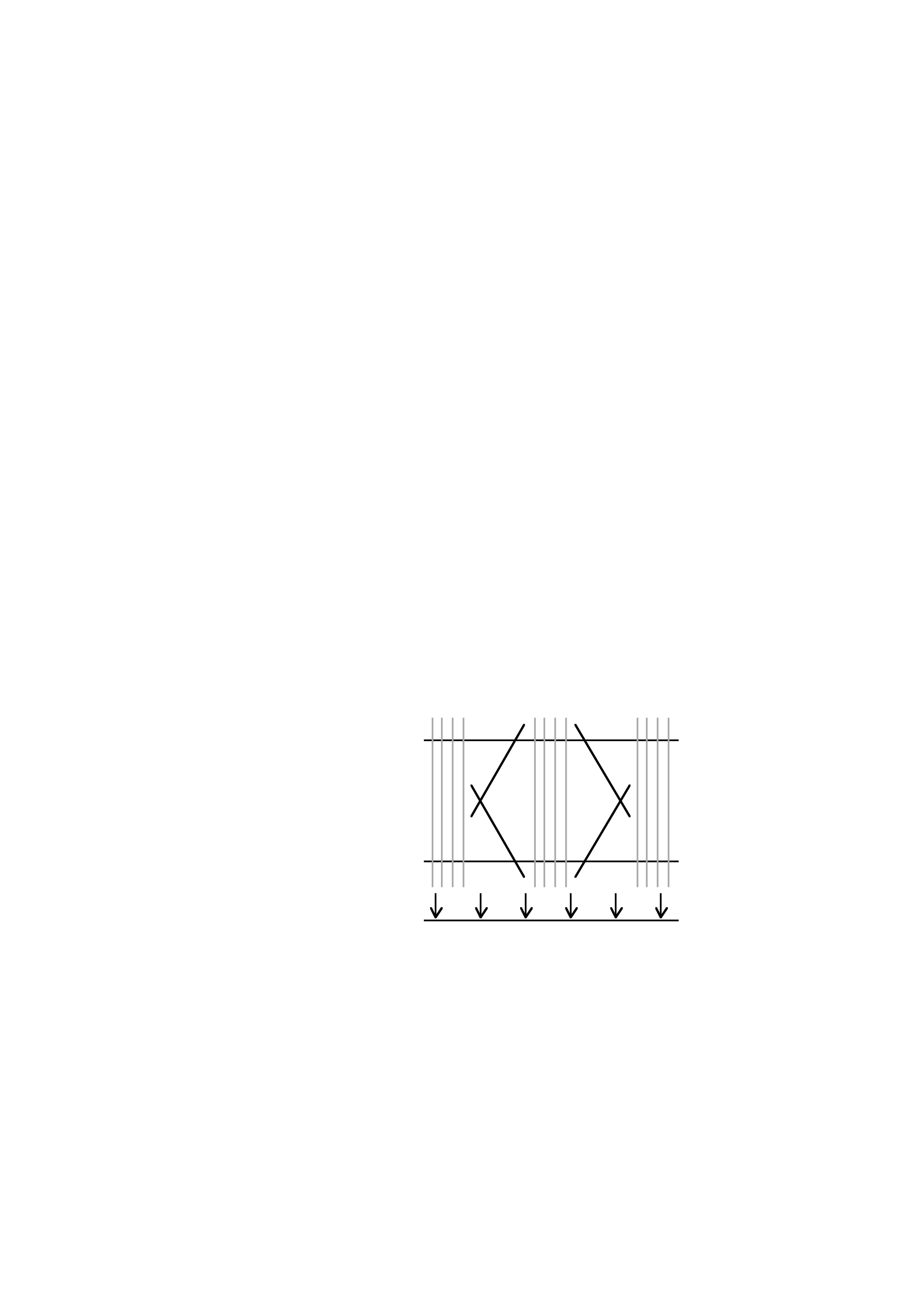}%
\drawat{0mm}{28mm}{$D_{23}$}%
\drawat{-12mm}{24.5mm}{$E_3$}%
\drawat{-12mm}{13.1mm}{$D_{13}$}%
\drawat{0mm}{9mm}{$E_1$}%
\drawat{-34.1mm}{13.1mm}{$D_{12}$}%
\drawat{-32.8mm}{24.5mm}{$E_2$}%
\drawat{-22mm}{2.5mm}{$\pi_1$}%
\end{center}
\begin{lemm}
\label{Lem:AutDP6CB}
The group $\Aut(S_6,\pi_1)$ of automorphisms of the conic bundle $(S_6,\pi_1)$ acts on the hexagon $\{E_1,E_2,E_3,D_{12},D_{13},D_{23}\}$ and leaves the set $\{E_1,D_{23}\}$ invariant.
\begin{enumerate}
\item[\upshape 1.]
The action on the hexagon gives rise to the exact sequence 
\begin{center}
$1\rightarrow (\K^{*})^2 \rightarrow \Aut(S_6,\pi_1) \rightarrow (\Z{2})^2 \rightarrow 1.$
\end{center}
\item[\upshape 2.]
This exact sequence is split and $\Aut(S_6,\pi_1)= (\K^{*})^2\rtimes (\Z{2})^2$, where 
\begin{enumerate}
\item[\upshape (a)]
$(\K^{*})^2$ is the group of automorphisms of the form\\
$\big((x:y:z) , (u:v:w)\big) \mapsto \big((x:\alpha y: \beta z) ,(\alpha\beta u:\beta v:\alpha w)\big)$,
$\alpha,\beta \in \K^{*}$.
\item[\upshape (b)]
The group $ (\Z{2})^2$ is generated by the automorphisms
\begin{center}
$\big((x:y:z) , (u:v:w)\big) \mapsto \big((x:z:y),(u:w:v)\big)$,
\end{center}
whose action on the set of exceptional divisors is $(E_2 \ E_3)(D_{12}\ D_{13})$; and 
\begin{center}
$\big((x:y:z) , (u:v:w)\big) \mapsto \big((u:v:w),(x:y:z)\big),$
\end{center}
whose action is $(E_1 \ D_{23})(E_2\ D_{13})(E_3\ D_{12})$.
\item[\upshape (c)]
The action of $(\Z{2})^2$ on $(\K^{*})^2$ is generated by permutation of the coordinates and inversion.\end{enumerate}
\end{enumerate}
\end{lemm}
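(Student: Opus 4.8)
The plan is to locate $\Aut(S_6,\pi_1)$ inside the known group $\Aut(S_6)\cong(\K^{*})^2\rtimes(\Sym_3\times\Z{2})$. Every automorphism of $S_6$ permutes the six $(-1)$-curves and preserves their intersection form, hence acts on the hexagon $\{E_1,E_2,E_3,D_{12},D_{13},D_{23}\}$; as recalled above, $\Sym_3\times\Z{2}$ is realised this way as the full dihedral group of the hexagon and the kernel of the action is exactly the torus $(\K^{*})^2$ (the lift of the diagonal torus of $\PGLn{3}$, which fixes $A_1,A_2,A_3$ and every line through two of them, hence fixes each of the six curves). The first step is the observation that $g\in\Aut(S_6)$ preserves $\pi_1$ if and only if it permutes the singular fibres, i.e.\ preserves the set $\{E_2,D_{12},E_3,D_{13}\}$ of their components, equivalently preserves the complementary antipodal pair $\{E_1,D_{23}\}$ of the hexagon; this gives the first sentence of the statement, and shows in particular that $(\K^{*})^2\subset\Aut(S_6,\pi_1)$ (a diagonal automorphism of $\Pn$ preserves the pencil of lines through $A_1$, whose strict transforms are the fibres of $\pi_1$).

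Next I would compute the image of $\Aut(S_6,\pi_1)$ in the dihedral group $\Sym_3\times\Z{2}$. Since this group permutes transitively the three antipodal pairs of vertices of the hexagon, the stabiliser of $\{E_1,D_{23}\}$ has order $4$; it is generated by the central rotation through $\pi$ and the reflection fixing $E_1$ and $D_{23}$, hence is isomorphic to $(\Z{2})^2$. With the previous step this shows $\Aut(S_6,\pi_1)$ sits between $(\K^{*})^2$ and the preimage of this $(\Z{2})^2$; to obtain the exact sequence of part~1 it then suffices to prove surjectivity onto $(\Z{2})^2$, which I would establish by exhibiting the two explicit automorphisms of part~2(b).

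For the explicit description I would verify directly on the model $\{ux=vy=wz\}\subset\Pn\times\Pn$ that $g_1\colon\big((x:y:z),(u:v:w)\big)\mapsto\big((x:z:y),(u:w:v)\big)$ and $g_2\colon\big((x:y:z),(u:v:w)\big)\mapsto\big((u:v:w),(x:y:z)\big)$ are automorphisms of $S_6$ preserving $\pi_1$: $g_1$ is the lift of $(x:y:z)\mapsto(x:z:y)$, which fixes $A_1$ and hence the pencil through it, while $g_2$ is the lift of the standard quadratic transformation $(x:y:z)\dasharrow(yz:xz:xy)$ and exchanges the two projections, so a short computation with the piecewise definition of $\pi_1$ gives $\pi_1\circ g_2=\tau\circ\pi_1$ for the involution $\tau$ of $\mathbb{P}^1$. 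Reading off the induced permutations of the hexagon one gets $(E_2\ E_3)(D_{12}\ D_{13})$ for $g_1$ and $(E_1\ D_{23})(E_2\ D_{13})(E_3\ D_{12})$ for $g_2$, i.e.\ exactly the reflection and the central rotation generating the stabiliser of $\{E_1,D_{23}\}$, which gives the desired surjectivity. A one-line computation shows $g_1$ and $g_2$ are involutions and commute (both $g_1g_2$ and $g_2g_1$ equal $\big((x:y:z),(u:v:w)\big)\mapsto\big((u:w:v),(x:z:y)\big)$), so $\langle g_1,g_2\rangle\cong(\Z{2})^2$ splits the sequence; this yields part~2. For part~2(a) I would solve $ux=vy=wz$ to write the lift of a diagonal automorphism $(x:y:z)\mapsto(x:\alpha y:\beta z)$ in the stated form, and for 2(c) conjugate this torus by $g_1$ (which interchanges $\alpha$ and $\beta$) and by $g_2$ (which, being the lift of $(x:y:z)\mapsto(1/x:1/y:1/z)$, sends $(\alpha,\beta)$ to $(\alpha^{-1},\beta^{-1})$).

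The one point requiring care is the bookkeeping tying the hexagon combinatorics to the explicit automorphisms: identifying which antipodal pair corresponds to $\pi_1$ and which two elements of $\Sym_3\times\Z{2}$ generate its stabiliser, and above all checking that the factor-swap $g_2$ is genuinely an automorphism of the conic bundle $(S_6,\pi_1)$ rather than only of $S_6$. Everything else reduces to routine computation on $\Pn\times\Pn$.
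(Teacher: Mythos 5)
Your proposal is correct and takes essentially the same route as the paper's (much terser) proof: restrict the known structure $\Aut(S_6)=(\K^{*})^2\rtimes(\Sym_3\times\Z{2})$ to the stabiliser of the fibration, use the hexagon action with the torus as kernel and the invariance of the section pair $\{E_1,D_{23}\}$ to bound the image by $(\Z{2})^2$, then exhibit the two explicit automorphisms to get surjectivity and the splitting. The paper leaves the explicit verifications as "the rest follows directly"; you have simply carried them out, correctly.
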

\begin{proof}\upshape
Since $\Aut(S_6)$ acts on the hexagon, so does $\Aut(S_6,\pi_1) \subset \Aut(S_6)$. Since the group $\Aut(S_6,\pi_1)$ sends a section on a section, the set $\{E_1,D_{23}\}$ is invariant.

The group $(\K^{*})^2$ leaves the conic bundle invariant, and is the kernel of the action of $\Aut(S_6,\pi_1)$ on the hexagon. As the set $\{E_1,D_{23}\}$ is invariant, the image is contained in the group $(\Z{2})^2$ generated by $(E_2 \ E_3)(D_{12}\ D_{13})$ and $(E_1 \ D_{23})(E_2\ D_{13})(E_3\ D_{12})$. The rest of the lemma follows directly.
\end{proof}

By permuting coordinates, we have two other conic bundle structures on the surface $S_6$, given by the following morphisms $\pi_2,\pi_3:S_6 \rightarrow \mathbb{P}^1$:
\begin{center}
$\pi_2(\big((x:y:z) , (u:v:w)\big))=\left\{\begin{array}{lll}
(x:z)& \mbox{ if }& (x:y:z) \not= (0:1:0),\\
(w:u)& \mbox{ if }& (u:v:w) \not= (0:1:0).\end{array}\right.$\vspace{0.1cm}\\
$\pi_3(\big((x:y:z) , (u:v:w)\big))=\left\{\begin{array}{lll}
(x:y)& \mbox{ if }& (x:y:z) \not= (0:0:1),\\
(v:u)& \mbox{ if }& (u:v:w) \not= (0:0:1).\end{array}\right.$
\end{center}

The description of the exceptional divisors on $S_6$ shows that $\pi_1,\pi_2$ and $\pi_3$ are the only conic bundle structures on $S_6$.

\begin{lemm}
\label{Lem:DP6CBPasMin}
For $i=1,2,3$, the pair $(\Aut(S_6,\pi_i),S_6)$ is not minimal. More precisely the morphism $\pi_j\times \pi_k:S_6\rightarrow \mathbb{P}^1\times\mathbb{P}^1$ conjugates $\Aut(S_6,\pi_i)$ to a subgroup of $\Aut(\mathbb{P}^1\times \mathbb{P}^1)$, where $\{i,j,k\}=\{1,2,3\}$.
\end{lemm}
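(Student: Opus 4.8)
The plan is to exploit the fact that $S_6$ sits naturally inside $\Pn\times\Pn$ and that the three conic bundle maps $\pi_1,\pi_2,\pi_3$ are the only ones, so that any automorphism in $\Aut(S_6,\pi_i)$ permutes the other two fibrations among themselves. First I would observe that it suffices to treat the case $i=1$, the other cases being obtained by permutation of coordinates. So fix $\{i,j,k\}=\{1,2,3\}$ with $i=1$, $j=2$, $k=3$, and consider the morphism $\varphi=\pi_2\times\pi_3:S_6\rightarrow \mathbb{P}^1\times\mathbb{P}^1$. The first step is to check that $\varphi$ is a birational morphism and to identify which curves it contracts: since $\pi_2$ has singular fibres $\{E_3,D_{23}\}$ and $\{E_1,D_{12}\}$ while $\pi_3$ has singular fibres $\{E_1,D_{13}\}$ and $\{E_2,D_{23}\}$, a component like $D_{23}$ lies in a fibre of $\pi_2$ and a fibre of $\pi_3$, hence is contracted by $\varphi$, whereas $E_1$ is a section of both $\pi_2$ and $\pi_3$ and is not contracted. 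A short combinatorial check on the hexagon shows $\varphi$ contracts exactly the curves that are components of singular fibres of both $\pi_2$ and $\pi_3$, giving a birational morphism onto $\mathbb{F}_0=\mathbb{P}^1\times\mathbb{P}^1$; alternatively one can invoke Lemma~\ref{Prp:IskWithoutG}(2) applied to $(S_6,\pi_1)$, noting that $E_1$ and $D_{23}$ are two sections of $\pi_1$ of minimal self-intersection, so a morphism $p_0:S_6\rightarrow\mathbb{F}_0$ exists, and identify it with $\pi_2\times\pi_3$ by checking the fibres match.

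Second, I would show $\varphi$ is $\Aut(S_6,\pi_1)$-equivariant. The key point is that $\Aut(S_6,\pi_1)$ preserves $\pi_1$, hence permutes the remaining conic bundle structures $\{\pi_2,\pi_3\}$ (using that $\pi_1,\pi_2,\pi_3$ are the \emph{only} conic bundle structures on $S_6$, already established in the excerpt). Therefore the product map $\pi_2\times\pi_3$ is sent to itself up to the coordinate swap on $\mathbb{P}^1\times\mathbb{P}^1$, so $\varphi$ conjugates $\Aut(S_6,\pi_1)$ into $\Aut(\mathbb{P}^1\times\mathbb{P}^1)=(\PGLn{2}\times\PGLn{2})\rtimes\Z{2}$. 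Concretely, using the explicit generators of $\Aut(S_6,\pi_1)=(\K^{*})^2\rtimes(\Z{2})^2$ from Lemma~\ref{Lem:AutDP6CB}, one computes directly: the torus $(\K^{*})^2$ goes to the diagonal torus of $\Aut(\mathbb{P}^1\times\mathbb{P}^1)$, the involution with hexagon action $(E_2\ E_3)(D_{12}\ D_{13})$ fixes $\pi_2$ and $\pi_3$ individually (it acts on each $\mathbb{P}^1$ factor), and the involution with action $(E_1\ D_{23})(E_2\ D_{13})(E_3\ D_{12})$ exchanges the two fibrations $\pi_2\leftrightarrow\pi_3$, hence maps to an element outside $\PGLn{2}\times\PGLn{2}$. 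In particular $(\Aut(S_6,\pi_1),S_6)$ is not minimal, since $\varphi$ is a non-isomorphism $G$-equivariant birational morphism of surfaces (indeed it contracts curves), which is the first assertion.

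The main obstacle, and the step deserving the most care, is the verification that the chosen pair of fibrations $\pi_j,\pi_k$ really does yield a \emph{morphism} to $\mathbb{P}^1\times\mathbb{P}^1$ (rather than merely a rational map) and the bookkeeping of exactly which of the six curves $E_1,E_2,E_3,D_{12},D_{13},D_{23}$ get contracted — this must be done so that the contracted curves are pairwise disjoint $(-1)$-curves, consistent with landing on $\mathbb{F}_0$. Here the cleanest route is the one via Lemma~\ref{Prp:IskWithoutG}: for $(S_6,\pi_1)$ we have $n=1$, $r=2$, and two distinct sections of self-intersection $-1$ (namely $E_1$ and $D_{23}$, whose classes are distinct in $\Pic(S_6)$ and which satisfy $E_1^2=D_{23}^2=-1$), so part (2) of that lemma directly furnishes $p_0:S_6\rightarrow\mathbb{F}_0$; one then only needs to match $p_0$ with $\pi_2\times\pi_3$ by comparing their fibres over $\mathbb{P}^1$, which is immediate since $p$ sends fibres of $\pi_2$ (resp.\ $\pi_3$) to lines through $A_2$ (resp.\ $A_3$), and two such pencils of lines through distinct points separate points of $\Pn$ generically. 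Finally, to conclude the ``more precisely'' clause it remains to spell out that $p_0$ is equivariant, which follows from the permutation-of-fibrations argument above, completing the proof.
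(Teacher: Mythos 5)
Your overall strategy is the paper's: exhibit $\pi_j\times\pi_k$ as an equivariant birational morphism contracting the two disjoint sections of $\pi_i$, the equivariance coming from the invariance of the data attached to $\pi_i$. But your bookkeeping on the hexagon contains a concrete error that, as written, breaks the key step. You assert that $E_1$ ``is a section of both $\pi_2$ and $\pi_3$ and is not contracted.'' This is false: the singular fibres of $\pi_2$ are $\{E_1,D_{12}\}$ and $\{E_3,D_{23}\}$ (its sections among the exceptional curves are $E_2$ and $D_{13}$), and those of $\pi_3$ are $\{E_1,D_{13}\}$ and $\{E_2,D_{23}\}$ (sections $E_3$ and $D_{12}$); so $E_1$, exactly like $D_{23}$, is a fibre component of both $\pi_2$ and $\pi_3$ and \emph{is} contracted by $\pi_2\times\pi_3$ (equivalently, $E_1\cdot(L-E_2)=E_1\cdot(L-E_3)=0$). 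Your own criterion --- contracted iff a component of singular fibres of both --- correctly yields exactly $\{E_1,D_{23}\}$, and this is forced: a birational morphism onto $\mathbb{P}^1\times\mathbb{P}^1$ must raise $K^2$ from $6$ to $8$, hence contract exactly two disjoint $(-1)$-curves. If only $D_{23}$ were contracted, the image would have degree $7$ and $\pi_2\times\pi_3$ could not be a birational morphism onto $\mathbb{P}^1\times\mathbb{P}^1$. The paper's proof is precisely this: $\{E_1,D_{23}\}$ is an $\Aut(S_6,\pi_1)$-invariant pair of disjoint $(-1)$-curves (Lemma~\ref{Lem:AutDP6CB}), so contracting both gives the equivariant morphism to $\mathbb{P}^1\times\mathbb{P}^1$, identified with $q\mapsto(\pi_2(q),\pi_3(q))$.

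A second, smaller slip: in your explicit equivariance check the two generating involutions are interchanged. The involution $\big((x:y:z),(u:v:w)\big)\mapsto\big((x:z:y),(u:w:v)\big)$, with hexagon action $(E_2\ E_3)(D_{12}\ D_{13})$, swaps $A_2$ and $A_3$ and hence exchanges the fibrations $\pi_2\leftrightarrow\pi_3$, landing outside $\PGLn{2}\times\PGLn{2}$; whereas the factor swap, with action $(E_1\ D_{23})(E_2\ D_{13})(E_3\ D_{12})$, preserves each of $\pi_2$ and $\pi_3$ (it exchanges the two sections $E_2,D_{13}$ of $\pi_2$, and likewise for $\pi_3$). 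This does not affect the conclusion that the image lies in $\Aut(\mathbb{P}^1\times\mathbb{P}^1)$, but the identification of which generator realizes the $\Z{2}$-factor is reversed. With these two corrections (and your fallback via Lemma~\ref{Prp:IskWithoutG}(2), which is sound) your argument coincides with the paper's.
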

\begin{proof}\upshape
The union of the sections $E_1$ and $D_{23}$ is invariant by the action of the whole group $\Aut(S_6,\pi_1)$. Since these two exceptional divisors don't intersect, we can contract both and get a birational $\Aut(S_6,\pi_1)$-equivariant morphism from $S_6$ to $\mathbb{P}^1 \times \mathbb{P}^1$: the pair $(\Aut(S_6,\pi_1),S_6)$ is thus not minimal; 
explicitly, the birational morphism is given by $q \mapsto (\pi_2(q),\pi_3(q))$, as stated in the lemma.
We obtain the other cases by permuting coordinates.
\end{proof}
\begin{rema}\label{Rem:DP6CBPasMin} The subgroup of $\Aut(\mathbb{P}^1 \times \mathbb{P}^1)$ obtained in this manner doesn't leave any of the two fibrations of $\mathbb{P}^1 \times \mathbb{P}^1$ invariant.\end{rema}
\begin{coro}
\label{Coro:S6cnotminimal}
If $(G,S_6)$ is a minimal pair (where $G\subset \Aut(S_6)$), then $G$ does not preserve any conic bundle structure.\proofend
\end{coro}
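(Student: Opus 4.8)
The plan is to prove the contrapositive: if $G\subset\Aut(S_6)$ preserves some conic bundle structure on $S_6$, then the pair $(G,S_6)$ is not minimal. The first step is to observe that the hypothesis pins down the situation completely. By the discussion preceding this corollary, the only conic bundle structures on $S_6$ are $\pi_1$, $\pi_2$ and $\pi_3$ (this is read off from the list of the six exceptional divisors and their intersection pattern). Hence ``$G$ preserves a conic bundle structure'' means exactly that $G\subseteq\Aut(S_6,\pi_i)$ for some $i\in\{1,2,3\}$ — and here one must be a little careful to note that preserving \emph{one} structure is what is being assumed, as opposed to merely permuting the three structures among themselves.

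The second step is to invoke Lemma~\ref{Lem:DP6CBPasMin}: the morphism $\pi_j\times\pi_k:S_6\rightarrow\mathbb{P}^1\times\mathbb{P}^1$, with $\{i,j,k\}=\{1,2,3\}$, is an $\Aut(S_6,\pi_i)$-equivariant birational morphism, and it is not an isomorphism since it contracts the two disjoint sections of $\pi_i$ among the hexagon (for $i=1$ these are $E_1$ and $D_{23}$). Because $G\subseteq\Aut(S_6,\pi_i)$, the same morphism is a fortiori $G$-equivariant. We thus exhibit a $G$-equivariant birational morphism from $S_6$ that is not an isomorphism, so $(G,S_6)$ is not minimal; this is the required contradiction.

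Since the entire content has already been established in Lemma~\ref{Lem:AutDP6CB}, Lemma~\ref{Lem:DP6CBPasMin}, and the classification of conic bundle structures on $S_6$, I do not anticipate any genuine obstacle. The only point deserving an explicit sentence is the reduction in the first step (that the assumption gives $G\subseteq\Aut(S_6,\pi_i)$ for a single $i$), after which the conclusion is immediate from the fact that a birational morphism equivariant for a group is equivariant for every subgroup.
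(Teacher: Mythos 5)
Your proposal is correct and is essentially the argument the paper intends: the corollary carries no written proof precisely because it follows at once from Lemma~\ref{Lem:DP6CBPasMin}, via the observation that preserving a conic bundle structure means $G\subseteq\Aut(S_6,\pi_i)$ for one of the three fibrations $\pi_1,\pi_2,\pi_3$, and the equivariant contraction $\pi_j\times\pi_k$ for the full group $\Aut(S_6,\pi_i)$ is a fortiori $G$-equivariant. Your extra care about the three structures being the only ones, and about ``preserve one'' versus ``permute the three,'' is a worthwhile clarification but does not change the route.
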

\bigskip

We conclude this section with a fundamental example; we will use several times the following automorphism $\kappa_{\alpha,\beta}$ of $(S_6,\pi_1)$:
\begin{exem}
\label{Exa:KappaAB}
For any $\alpha,\beta \in \K^{*}$, we define $\kappa_{\alpha,\beta}$ to be the following automorphism of $(S_6,\pi_1)$:
\begin{center}
$\kappa_{\alpha,\beta}:\big((x:y:z) , (u:v:w)\big) \mapsto \big((u:\alpha w:\beta v), (x:\alpha^{-1} z:\beta^{-1} y)\big)$.\end{center}

Note that $\kappa_{\alpha,\beta}$ twists the two singular fibres of $\pi_1$ (see Lemma~\ref{Lem:KappaDP6} below); its action on the basis of the fibration is $(x_1:x_2) \mapsto (\alpha x_1:\beta x_2)$ and 
\begin{center}$\kappa_{\alpha,\beta}^2(\big((x:y:z) , (u:v:w)\big))=\big((x:\alpha\beta^{-1} y:\alpha^{-1}\beta z), (u:\alpha^{-1}\beta v:\alpha\beta^{-1} w)\big)$.\end{center} So $\kappa_{\alpha,\beta}$ is an involution if and only if its action on the basis of the fibration is trivial.
\end{exem}
\begin{lemm}
\label{Lem:KappaDP6}
Let $g \in \Aut(S_6,\pi_1)$ be an automorphism of the conic bundle $(S_6,\pi_1)$. The following conditions are equivalent:
\begin{itemize}
\item
the triple $(<g>,S_6,\pi_1)$ is minimal;
\item
$g$ twists the two singular fibres of $\pi_1$;
\item
the action of~$g$ on the exceptional divisors of $S_6$ is $(E_1 \ D_{23})(E_2\ D_{12})(E_3\ D_{13})$;
\item
$g=\kappa_{\alpha,\beta}$ for some $\alpha,\beta \inÊ\K^{*}$.
\end{itemize}
\end{lemm}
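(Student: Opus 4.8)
The plan is to prove the chain of equivalences by going around a cycle, using the structure of $\Aut(S_6,\pi_1)=(\K^{*})^2\rtimes(\Z{2})^2$ from Lemma~\ref{Lem:AutDP6CB}. The equivalence of the first two bullets is immediate from Lemma~\ref{Lem:MinTripl}: the triple $(\langle g\rangle,S_6,\pi_1)$ is minimal if and only if every singular fibre is twisted by some power of $g$; since there are only two singular fibres $\{E_2,D_{12}\}$ and $\{E_3,D_{13}\}$, and an automorphism with nontrivial action on the base can twist at most two of them, minimality forces $g$ itself to twist both. Conversely if $g$ twists both singular fibres, both are twisted by an element of $\langle g\rangle$, so the triple is minimal.

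Next I would show that the second and third bullets are equivalent by analysing the possible actions on the hexagon. By Lemma~\ref{Lem:AutDP6CB}, the image of $g$ in $(\Z{2})^2$ is generated by the two permutations $(E_2\,E_3)(D_{12}\,D_{13})$ and $(E_1\,D_{23})(E_2\,D_{13})(E_3\,D_{12})$. Twisting the singular fibre $\{E_2,D_{12}\}$ means $g(E_2)=D_{12}$, and twisting $\{E_3,D_{13}\}$ means $g(E_3)=D_{13}$; inspecting the four elements of $(\Z{2})^2$ — the identity, the two generators, and their product — one sees that the only element doing both is the product $(E_1\,D_{23})(E_2\,D_{13})(E_3\,D_{12})\cdot(E_2\,E_3)(D_{12}\,D_{13})=(E_1\,D_{23})(E_2\,D_{12})(E_3\,D_{13})$. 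So $g$ twists the two singular fibres precisely when its action on the exceptional divisors is $(E_1\,D_{23})(E_2\,D_{12})(E_3\,D_{13})$, which also shows in particular that $g\notin(\K^{*})^2$ and that the set $\{E_1,D_{23}\}$ is still only preserved, not fixed.

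Finally I would close the loop by showing the third bullet implies the fourth, and check the fourth implies the third (or the second) directly. For the fourth $\Rightarrow$ third, one simply computes the action of $\kappa_{\alpha,\beta}$ on $E_i$, $D_{ij}$ from its explicit formula $\big((x{:}y{:}z),(u{:}v{:}w)\big)\mapsto\big((u{:}\alpha w{:}\beta v),(x{:}\alpha^{-1}z{:}\beta^{-1}y)\big)$ — this swaps the two $\Pn$-factors combined with a coordinate transposition $y\leftrightarrow z$, giving exactly $(E_1\,D_{23})(E_2\,D_{12})(E_3\,D_{13})$ and hence (by the previous paragraph) twisting both singular fibres. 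For the third $\Rightarrow$ fourth: any $g$ with that hexagon action lies in the same $(\K^{*})^2$-coset as the particular element $\kappa_{1,1}$ (which realises that action, by the computation just made), so $g=t\cdot\kappa_{1,1}$ for some $t\in(\K^{*})^2$; writing $t$ in the form of Lemma~\ref{Lem:AutDP6CB}(2a) as $\big((x{:}y{:}z),(u{:}v{:}w)\big)\mapsto\big((x{:}\alpha y{:}\beta z),(\alpha\beta u{:}\beta v{:}\alpha w)\big)$ and composing, one checks that $t\cdot\kappa_{1,1}=\kappa_{\alpha,\beta}$ after matching coefficients. The main obstacle, such as it is, is purely bookkeeping: keeping the two $\Pn\times\Pn$ factors and the scaling in Lemma~\ref{Lem:AutDP6CB}(2a) consistent so that the composition $t\cdot\kappa_{1,1}$ comes out in exactly the normalised shape of $\kappa_{\alpha,\beta}$ rather than some projectively equivalent but differently written formula. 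No genuinely hard step is involved; everything reduces to the finite case-check on $(\Z{2})^2$ and one explicit composition in $(\K^{*})^2\rtimes(\Z{2})^2$. \proofend
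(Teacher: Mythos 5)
Your proof is correct and follows essentially the same route as the paper's: enumerate the four possible actions on the hexagon via Lemma~\ref{Lem:AutDP6CB}, observe that only $(E_1\,D_{23})(E_2\,D_{12})(E_3\,D_{13})$ twists the singular fibres, and identify the corresponding coset $(\K^{*})^2\kappa_{1,1}$ with the family $\kappa_{\alpha,\beta}$. One justification is off, though easily repaired: the remark that ``an automorphism with nontrivial action on the base can twist at most two fibres'' does not show that minimality forces $g$ \emph{itself}, rather than merely some power of $g$, to twist both fibres; the correct reason is that the hexagon action of $g^k$ factors through $(\Z{2})^2$ and is therefore either trivial or equal to that of $g$, so a fibre twisted by some power of $g$ is twisted by $g$. (The paper sidesteps this by directly exhibiting, for each of the other three hexagon actions, a $\langle g\rangle$-invariant contractible pair such as $\{E_2,E_3\}$ or $\{E_2,D_{13}\}$, which shows non-minimality.)
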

\begin{proof}
According to Lemma~\ref{Lem:AutDP6CB} the action of $\Aut(S_6,\pi_1)$ on the exceptional curves is isomorphic to $(\Z{2})^2$ and hence the possible actions of $g\not=1$ are these:

\begin{tabular}{ll}
1. $\mathrm{id}$,&\hspace{1 cm}2. $(E_2 \ E_3)(D_{12}\ D_{13}),$\\
3. $(E_1 \ D_{23})(E_2\ D_{13})(E_3\ D_{12})$, &\hspace{1 cm}4. $(E_1 \ D_{23})(E_2\ D_{12})(E_3\ D_{13}).$
\end{tabular}

In the first three cases, the triple $(<g>,S_6,\pi_1)$ is not minimal. Indeed, the blow-down of $\{E_2,E_3\}$ or $\{E_2,D_{13}\}$ gives a $g$-equivariant birational morphism of conic bundles.

Hence, if $(<g>,S_6,\pi_1)$ is minimal, its action on the exceptional curves is the fourth one above, as stated in the lemma, and it then twists the two singular fibres of $\pi_1$. Conversely if~$g$ twists the two singular fibres of $\pi_1$, the triple $(<g>,S_6,\pi_1)$ is minimal (by Lemma~\ref{Lem:MinTripl}).

It remains to see that the last assertion is equivalent to the others. This follows from Lemma~\ref{Lem:AutDP6CB}; indeed this lemma implies that $(\K^{*})^2 \kappa_{1,1}$ is the set of elements of $\Aut(S_6,\pi_1)$ inducing the permutation $(E_1 \ D_{23})(E_2\ D_{12})(E_3\ D_{13})$.
\end{proof}

\begin{rema}
\label{remark:Exakappanotmin}
The pair $(\Aut(S_6,\pi_1),S_6)$ is not minimal (Lemma 
\ref{Lem:DP6CBPasMin}). Consequently $<\kappa_{\alpha,\beta}>$ is an example of a group whose action on the surface is not minimal, but whose action on a conic bundle is minimal.
\end{rema}
\section{The del Pezzo surface of degree $5$}
\label{Sec:DelPezzo5}
As for the del Pezzo surface of degree $6$, there is a single isomorphism class of del Pezzo surfaces of degree $5$. 
Consider the del Pezzo surface $S_5$ of degree $5$ defined by the blow-up $p:S_5 \rightarrow \Pn$ of the points $A_1=(1:0:0)$, $A_2=(0:1:0)$, $A_3=(0:0:1)$ and $A_4=(1:1:1)$. 
There are $10$ exceptional divisors on $S_5$, namely the divisor $E_i=p^{-1}(A_i)$, for $i=1,...,4$, and the strict pull-back $D_{ij}$ of the line of $\Pn$ passing through $A_i$ and $A_j$, for $1\leq i<j\leq 4$. There are $5$ sets of $4$ skew exceptional divisors on $S_5$, namely

\begin{center}
$\begin{array}{lll}
F_1=\{E_1,D_{23},D_{24},D_{34}\},& F_2=\{E_2,D_{13},D_{14},D_{34}\}, & 
F_3=\{E_3,D_{12},D_{14},D_{24}\},\\
F_4=\{E_4,D_{12},D_{13},D_{23}\}, & F_5=\{E_1,E_2,E_3,E_4\}.\end{array}$
\end{center}

\begin{prop}
\label{Prp:AutS5}
The action of $\Aut(S_5)$ on the five sets $F_1,...,F_5$ of four skew exceptional divisors of $S_5$ gives rise to an isomomorphism 
\begin{center}$\rho:\Aut(S_5) \rightarrow \Sym_5$.\end{center}
Furthermore, the actions of $\Sym_n$, $\Alt_m\subset \Aut(S_5)$ on $S_5$ given by the canonical embedding of these groups into $\Sym_5$ are fixed-point free if and only if $n=3,4,5$, respectively $m=4,5$.
\end{prop}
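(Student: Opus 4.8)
The plan is to use the well-known fact that $S_5$ is the blow-up of $\Pn$ at four points in general position, so its Picard lattice is $\mathbb{Z}^{1,4}$ with canonical class $K = -3H + E_1 + \dots + E_4$, and $\Aut(S_5)$ acts on $\Pic(S_5)$ preserving $K$ and the intersection form. The ten $(-1)$-curves are exactly the classes $E_i$ and $D_{ij} = H - E_i - E_j$, and the five sets $F_1,\dots,F_5$ of four pairwise-skew exceptional curves are as listed. First I would check that $\Aut(S_5)$ permutes the $F_k$: each $F_k$ is a maximal set of mutually disjoint $(-1)$-curves, and these are permuted by any automorphism, so we get a homomorphism $\rho:\Aut(S_5)\to\Sym_5$. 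Injectivity comes from the fact that an automorphism fixing all five $F_k$ fixes each $E_i$ and each $D_{ij}$ (one recovers $E_i$ and $D_{ij}$ from intersections of the $F_k$'s), hence acts trivially on $\Pic(S_5)$; since $-K$ is very ample and $\Aut(S_5)$ embeds into $\Aut(\mathbb{P}(H^0(S_5,-K)))=\PGL(6,\K)$ acting linearly on the anticanonical model, an automorphism trivial on $\Pic$ and on the configuration of lines must be the identity. (Alternatively: after contracting $F_5=\{E_1,\dots,E_4\}$ one gets an automorphism of $\Pn$ fixing $A_1,\dots,A_4$, hence the identity.) Surjectivity: the lift to $S_5$ of the subgroup $\Sym_4\subset\PGL(3,\K)$ permuting $\{A_1,A_2,A_3,A_4\}$ gives an $\Sym_4$, and the standard Cremona (quadratic) involution centered at $A_1,A_2,A_3$ together with these gives a transposition outside that $\Sym_4$; so the image is all of $\Sym_5$. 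This is classical and I would cite it, but it is short enough to include.

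For the fixed-point statement, the key point is to compute the fixed locus of each conjugacy class of elements of $\Sym_5$ acting on $S_5$, and observe that an element has a fixed point on $S_5$ if and only if it fixes some $(-1)$-curve or, more precisely, fixes a point of the anticanonical model. I would organize this by cycle type. An element of order $2$ of type $(2)$ (a transposition in $\Sym_5$) corresponds, in a suitable blow-down, to a quadratic/linear involution of $\Pn$ whose fixed locus is non-empty (a line plus a point, or a conic), and one checks it has fixed points on $S_5$. An element of type $(2,2)$, $(3)$, $(4)$ or $(5)$: I would realize a representative concretely — e.g.\ the $3$-cycle as the lift of $(x:y:z)\mapsto(y:z:x)$ composed with a permutation of the blown-up points (note $A_4=(1:1:1)$ is fixed by $(x:y:z)\mapsto(y:z:x)$, which is exactly why the action on $S_5$ can be fixed-point-free even though the map on $\Pn$ has the point $A_4$ among its fixed points, since that point is not on $S_5$ but replaced by $E_4$, and the induced action on $E_4\cong\mathbb{P}^1$ is then checked to be fixed-point-free). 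Then I would use the holomorphic Lefschetz fixed point formula, or simply the topological Lefschetz number $L(g)=\sum(-1)^i\mathrm{tr}(g^*|H^i(S_5,\mathbb{Q}))$ computed from the action on $\Pic(S_5)=H^2$: $L(g)=2+\mathrm{tr}(g^*|\Pic)$, which by Lefschetz equals the (signed, but for finite-order automorphisms of a surface, actual) number of fixed points; for $n=3,4,5$ and $m=4,5$ one gets $L(g)=0$ for the relevant generators, forcing an empty fixed locus, while for transpositions and double transpositions $L(g)>0$.

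The step I expect to be the main obstacle is the passage from "$L(g)=0$" to "$\Fix(g)=\emptyset$" and, conversely, pinning down that $n=2$ (and $m=2,3$, which do not occur since $\Alt_m$ is non-cyclic and $\Sym_2$ is already covered) genuinely does have fixed points — i.e.\ ruling out isolated-fixed-points-with-cancelling-signs. For automorphisms of finite order of a smooth projective surface the fixed locus is a disjoint union of points and smooth curves and the holomorphic Lefschetz number is a sum of strictly positive local contributions, so $L_{\mathrm{hol}}(g)=\sum_p \frac{1}{\det(1-dg_p)} + (\text{curve contributions})$ and vanishing of the topological Lefschetz number together with this positivity gives emptiness; I would carry this out carefully for each cycle type, using the trace of $g^*$ on $\Pic(S_5)$ read off from the permutation of the lines. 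The cleanest route, and the one I would actually write, is: (i) list the action of a representative of each cycle type on the ten $(-1)$-curves (hence on $\Pic$); (ii) compute $\mathrm{tr}(g^*|\Pic)$ and thus $L(g)$; (iii) for the cycle types $(2)$ and $(2,2)$ exhibit the fixed locus explicitly (a curve, visibly of genus $0$); (iv) for $(3),(4),(5)$ conclude $\Fix(g)=\emptyset$ from $L(g)=0$ plus the structure theorem for fixed loci. Finally I would note that $\Sym_n$ and $\Alt_m$ act fixed-point-freely on $S_5$ precisely when \emph{every} non-trivial element does, which by the above is equivalent to the stated ranges $n\in\{3,4,5\}$, $m\in\{4,5\}$ (since $\Sym_n$ for $n\le 2$ and $\Alt_3$ contain no element of the requisite cycle types, or contain transpositions, respectively).
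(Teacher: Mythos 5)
Your treatment of the isomorphism $\rho$ is correct and is essentially the paper's argument (kernel: contract $F_5$ and get an automorphism of $\Pn$ fixing four points in general position; image: the linear $\Sym_4$ plus the standard quadratic involution, which fixes $A_4$ and realises $(F_4\ F_5)$). No issues there.

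The second half of your plan, however, rests on a misreading of the statement and on a Lefschetz computation that comes out wrong. ``Fixed-point free'' here means that the \emph{group} has no common fixed point on $S_5$, not that every non-trivial element acts without fixed points. The latter reading cannot be what is claimed: a transposition in $\Sym_3\subset\Sym_5$ is the lift of a linear involution of $\Pn$ and fixes a whole curve of $S_5$ (as you yourself note in step (iii)), so under your reading $\Sym_3,\Sym_4,\Sym_5$ would all fail, contradicting the assertion for $n=3,4,5$; your closing parenthetical does not resolve this. Worse, the key computation you rely on is false: since $\Pic(S_5)\otimes\mathbb{Q}=\mathbb{Q}K_S\oplus K_S^{\perp}$ with $\Sym_5$ acting on $K_S^{\perp}\cong A_4$ by the standard representation, one gets $\mathrm{tr}(g^{*}|\Pic)=\#\{i: \sigma(F_i)=F_i\}$ and hence $L(g)=2+\#\{i:\sigma(F_i)=F_i\}\geq 2$ for \emph{every} $g$; in particular $L(g)=4$ for a $3$-cycle and $L(g)=2$ for a $5$-cycle, never $0$. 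So every non-trivial element of $\Aut(S_5)$ has fixed points (consistent with the holomorphic Lefschetz formula on a rational surface), and no argument of the form ``$L(g)=0$ forces $\Fix(g)=\emptyset$'' can be run. What is actually needed, and what the paper does, is a common-fixed-point analysis: $\Sym_2$ and $\Alt_3$ are cyclic, hence fix points; the linear $\Sym_3$ permuting $A_1,A_2,A_3$ fixes only $A_4=(1{:}1{:}1)$ in $\Pn$, and after blowing up $A_4$ the induced action of the non-cyclic group $\Sym_3$ on $E_4\cong\mathbb{P}^1$ has no common fixed point, so the action on $S_5$ is fixed-point free; for $\Alt_4$ one exhibits two elements (a $3$-cycle and a double transposition) with disjoint fixed loci on $\Pn$ away from the blown-up points, and notes that $\Alt_4$ acts transitively on $E_1,\dots,E_4$; the remaining cases contain these. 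You should redo this part along those lines.
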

\begin{proof}
Since any automorphism in the kernel of $\rho$ leaves $E_1,E_2,E_3$ and $E_4$ invariant and hence is the lift of an automorphism of $\Pn$ that fixes the $4$ points, the homomorphism $\rho$ is injective.
 
 We now prove that $\rho$ is also surjective. Firstly, the lift of the group of automorphisms of $\Pn$ that leave the set $\{A_1,A_2,A_3,A_4\}$ invariant is sent by $\rho$ on $\Sym_4=\Sym_{\{F_1,F_2,F_3,F_4\}}$. Secondly, the lift of the standard quadratic transformation $(x:y:z) \dasharrow (yz:xz:xy)$ is an automorphism of $S_5$, as its lift on $S_6$ is an automorphism, and as it fixes the point $A_4$; its image by $\rho$ is $(F_4\ F_5)$.
 
 It remains to prove the last assertion. First of all, it is clear that the actions of the cyclic groups $\Alt_3$ and $\Sym_2$ fix some points. The group $\Sym_3\subset \Aut(\Pn)$ of permutations of $A_1,A_2$ and $A_3$ fixes exactly one point, namely $(1:1:1)$. The blow-up of this point gives a fixed-point free action on $\mathbb{F}_1$, and thus its lift on $S_5$ is also fixed-point free. The group $\Alt_4\subset \Aut(\Pn)$ contains the element $(x:y:z)\mapsto (z:x:y)$ (which corresponds to $(1\ 2\ 3)$) that fixes exactly three points, i.e.\ $(1:a:a^2)$ for $a^3=1$. It also contains the element $(x:y:z)\mapsto (z-y:z-x:z)$ (which corresponds to $(1\ 2)(3\ 4)$) that does not fix $(1:a:a^2)$ for $a^3=1$. Thus, the action of $\Alt_4$ on $\Pn$ is fixed-point free and the same is true on $S_5$.
 \end{proof}
\begin{rema}
The structure of $\Aut(S_5)$ is classical and can be found for example in \cite{bib:Wim} and \cite{bib:Dol}.
\end{rema}

\begin{lemm}\label{Lem:AutDP5Conic}
Let $\pi:S_5\rightarrow \mathbb{P}^1$ be some morphism inducing a conic bundle $(S_5,\pi)$. There are exactly four exceptional curves of $S_5$ which are sections of $\pi$; the blow-down of these curves gives rise to a birational morphism $p:S_5 \rightarrow \Pn$ which conjugates the group $\Aut(S_5,\pi)\cong \Sym_4$ to the subgroup of $\Aut(\Pn)$ that leaves invariant the four points blown-up by $p$. In particular, the pair $(\Aut(S_5,\pi),S_5)$ is not minimal.
\end{lemm}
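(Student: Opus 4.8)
The plan is to analyze the conic bundle structure $(S_5,\pi)$ in terms of the $10$ exceptional curves of $S_5$, using the fact (Lemma~\ref{Prp:IskWithoutG}) that there is a birational morphism of conic bundles $p_-:S_5\to\mathbb{F}_n$ blowing up $r$ points off the exceptional section. Since $S_5$ has Picard rank $5$, and $\mathbb{F}_n$ has Picard rank $2$, we must have $r=3$, so $\pi$ has exactly three singular fibres, accounting for $3$ of the $10$ exceptional curves as one component in each singular fibre; the other components of the singular fibres give $3$ more exceptional curves, and what remains are $10-6=4$ exceptional curves. First I would check that these $4$ remaining exceptional curves are exactly the sections of $\pi$ among the exceptional curves: a component of a singular fibre has self-intersection $-1$ and is not a section, while an exceptional curve that is not contained in a fibre must map onto $\mathbb{P}^1$, hence is a section (it meets a general fibre in one point since its class pairs to $1$ with the canonical-derived fibre class on a del Pezzo surface of degree~$5$). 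So there are exactly four exceptional sections.

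Next I would verify that these four sections are pairwise skew. Two sections of $\pi$ meeting would, together with fibre components, force a curve of self-intersection $\leq-2$ on $S_5$, contradicting that $S_5$ is del Pezzo (it contains no curve of negative self-intersection other than the $(-1)$-curves). Hence the four exceptional sections form one of the five sets $F_1,\dots,F_5$ of four skew exceptional divisors listed before Proposition~\ref{Prp:AutS5}. Contracting these four skew $(-1)$-curves gives a birational morphism $p:S_5\to\Pn$ (blowing up four points, no three collinear since $S_5$ is del Pezzo). Because $\Aut(S_5,\pi)$ preserves $\pi$, it permutes the singular fibres and their components, hence preserves the set of exceptional sections; therefore $p$ is $\Aut(S_5,\pi)$-equivariant and conjugates $\Aut(S_5,\pi)$ into the subgroup of $\Aut(\Pn)$ fixing (as a set) the four blown-up points.

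Then I would identify this subgroup and its order. Under the isomorphism $\rho:\Aut(S_5)\to\Sym_5$ of Proposition~\ref{Prp:AutS5}, the stabiliser of one of the five sets $F_i$ is the subgroup $\Sym_4\subset\Sym_5$ fixing one of the five symbols; so $\Aut(S_5,\pi)$ is contained in a group isomorphic to $\Sym_4$. To get equality I would argue that the full $\Sym_4$ (realised as the lift of the linear automorphisms of $\Pn$ permuting the four base points of $p$) does preserve the conic bundle $\pi$: indeed such a linear automorphism permutes the lines through pairs of the four points and the exceptional curves accordingly, hence permutes the three singular fibres of $\pi$ (whose components are among these curves), and it preserves the class $f$ of the fibre since $f$ is determined by the conic bundle and is fixed by the whole stabiliser of the set of four sections. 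Finally, since $p$ is $\Aut(S_5,\pi)$-equivariant and contracts four curves, the pair $(\Aut(S_5,\pi),S_5)$ is not minimal.

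The main obstacle I expect is the bookkeeping needed to pin down that the set of exceptional sections is genuinely one of the $F_i$ and that the stabiliser of that set acts by conic bundle automorphisms for the \emph{correct} fibration (rather than just preserving the four curves); concretely, one must check that the fibre class $f$ — equivalently the conic bundle structure — is uniquely recovered from the configuration of singular fibres, so that the linear $\Sym_4$ cannot send $\pi$ to a different conic bundle on $S_5$. This is a finite combinatorial verification on the $(-1)$-curves of $S_5$ and the intersection form, but it is where the argument has to be done carefully rather than waved through.
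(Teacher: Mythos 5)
Your proposal is correct and follows essentially the same route as the paper: both identify the three singular fibres and the four remaining exceptional curves as a set of four skew sections (one of the $F_i$), contract them equivariantly to $\Pn$, and recover the full $\Sym_4$ by checking that the fibre class (equivalently, the conics through the four points) is preserved. The only cosmetic difference is that the paper pins everything down in coordinates by first passing through $\mathbb{F}_1$ and the pencil of lines through $A_1$, whereas you argue by counting and the intersection form; your stated mechanism for skewness (producing a curve of self-intersection $\leq -2$) is not quite the actual reason, but the fact itself is the finite verification you already flag, so this is not a gap.
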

\begin{proof}
Blowing-down one component in any singular fibre, we obtain a birational morphism of conic bundles (not $\Aut(S_5,\pi)$-equivariant) from $S_5$ to some Hirzebruch surface $\mathbb{F}_n$. Since $S_5$ does not contain any curves of self-intersection $\leq -2$,~$n$ is equal to $0$ or $1$. Changing the component blown-down in a singular fibre performs an elementary link $\mathbb{F}_n\dasharrow \mathbb{F}_{n\pm 1}$; we may then assume that $n=1$, and that $\mathbb{F}_1$ is the blow-up of $A_1\in \Pn$. Consequently, the fibres of the conic bundles correspond to the lines passing through $A_1$. Denoting by $A_2,A_3,A_4$ the other points blown-up by the constructed birational morphism $S_5\rightarrow \Pn$ and using the same notation as before, the three singular fibres are $\{E_i,D_{1i}\}$ for $i=2,...,4$, and the other exceptional curves are four skew sections of the conic bundle, namely the elements of $F_1=\{E_1,D_{23},D_{24},D_{34}\}$. The blow-down of $F_1$ gives an $\Aut(S_5,\pi)$-equivariant birational morphism (that is not a morphism of conic bundles) $p:S_5\rightarrow \Pn$ and conjugates $\Aut(S_5,\pi)$ to a subgroup of the group $\Sym_4\subset\Aut(\Pn)$ of automorphisms that leaves the four points blown-up by $p$ invariant. The fibres of $\pi$ are sent on the conics passing through the four points, so the lift of the whole group $\Sym_4$ belongs to $\Aut(S_5,\pi)$.
\end{proof}
\begin{coro}
\label{Cor:No3singfibres}
Let $G$ be some group of automorphisms of a conic bundle $(S,\pi)$ such that the pair $(G,S)$ is minimal and $(K_S)^2\geq 5$ (or equivalently such that the number of singular fibres of $\pi$ is at most $3$). 
Then, the fibration is smooth, i.e.\ $S$ is a Hirzebruch surface.
\end{coro}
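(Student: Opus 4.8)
The plan is to apply Lemma~\ref{Lem:Degree567CB} and then to rule out the two del Pezzo cases it leaves open, using the non-minimality results of the two preceding sections.

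First I would record the equivalence asserted in the statement: for a conic bundle $(S,\pi)$ with $r$ singular fibres one has $(K_S)^2 = 8-r$ (a Hirzebruch surface has $(K)^2=8$, and each blow-up of a point lowers $(K)^2$ by one), so $(K_S)^2\geq 5$ means exactly $r\leq 3$. Next, since $(G,S)$ is minimal, the triple $(G,S,\pi)$ is minimal too (the Remark following the definition of minimality of a triple), and Lemma~\ref{Lem:Degree567CB} applies: $S$ is a Hirzebruch surface, or a del Pezzo surface of degree $6$ (when $r=2$), or a del Pezzo surface of degree $5$ (when $r=3$). It then remains to exclude the last two possibilities.

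To exclude degree $6$: here $G$ is a group of automorphisms of $S\cong S_6$ preserving a conic bundle structure, which contradicts Corollary~\ref{Coro:S6cnotminimal}. To exclude degree $5$: here $G\subset\Aut(S_5,\pi)$, and by the proof of Lemma~\ref{Lem:AutDP5Conic} there is a set $F_i$ of four mutually skew exceptional curves of $S_5$, consisting of sections of $\pi$, which is invariant under all of $\Aut(S_5,\pi)$ and hence under $G$; blowing these four curves down yields a $G$-equivariant birational morphism $S_5\to\Pn$, so $(G,S_5)$ is not minimal, a contradiction. Therefore $S$ is a Hirzebruch surface, and Lemma~\ref{Lem:SmoothHirz} gives that $\pi$ is smooth.

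There is no hard calculation here; the only point that needs care is that the non-minimality of the pairs $(\Aut(S_6,\pi_i),S_6)$ and $(\Aut(S_5,\pi),S_5)$ passes to arbitrary subgroups. This holds because the configurations of exceptional curves contracted in the proofs of Lemmas~\ref{Lem:DP6CBPasMin} and~\ref{Lem:AutDP5Conic} --- the pair $\{E_1,D_{23}\}$ on $S_6$ and a quadruple $F_i$ of skew sections on $S_5$ --- are invariant under the full group of automorphisms of the relevant conic bundle, so the corresponding blow-downs are $G$-equivariant for every subgroup $G$.
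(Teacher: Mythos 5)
Your proposal is correct and follows essentially the same route as the paper's proof: reduce to a minimal triple, invoke Lemma~\ref{Lem:Degree567CB}, and eliminate the degree $6$ and degree $5$ del Pezzo cases via Corollary~\ref{Coro:S6cnotminimal} and Lemma~\ref{Lem:AutDP5Conic}. Your added remark that the contracted configurations ($\{E_1,D_{23}\}$ on $S_6$, the quadruple of skew sections on $S_5$) are invariant under the full automorphism group of the conic bundle, so that non-minimality descends to every subgroup $G$, is exactly the point the paper's terse proof leaves implicit.
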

\begin{proof}
Since $(G,S)$ is minimal, so is the triple $(G,S,\pi)$. By Lemma~\ref{Lem:Degree567CB}, the surface $S$ is either a Hirzebruch surface, or a del Pezzo surface of degree $5$ or $6$. Corollary~\ref{Coro:S6cnotminimal} shows that the del Pezzo surface of degree $6$ is not possible and Lemma~\ref{Lem:AutDP5Conic} eliminates the possibility of the del Pezzo surface of degree $5$.
\end{proof}
\section{Description of twisting elements}
In this section, we describe the twisting automorphisms of conic bundles, which are the most important automorphisms (see Lemma~\ref{Lem:MinTripl}).
\begin{lemm}[Involutions twisting a conic bundle]
\label{Lem:DeJI}
Let $g \in \Aut(S,\pi)$ be a twisting automorphism of the conic bundle $(S,\pi)$.
Then, the following properties are equivalent:
\begin{itemize}
\item[\upshape 1.]
$g$ is an involution;
\item[\upshape 2.]
$\overline{\pi}(g)=1$, i.e.\ $g$ has a trivial action on the basis of the fibration;
\item[\upshape 3.]
the set of points of $S$ fixed by~$g$ is an irreducible hyperelliptic curve of genus $(k-1)$ -- a double covering of $\mathbb{P}^1$ by means of $\pi$, ramified over $2k$ points -- plus perhaps a finite number of isolated points, which are the singular points of the singular fibres not twisted by~$g$.
\end{itemize}
Furthermore, if the three conditions above are satisfied, the number of singular fibres of $\pi$ twisted by~$g$ is $2k\geq 2$.
\end{lemm}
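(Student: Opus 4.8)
The plan is to run the cycle $(1)\Rightarrow(2)\Rightarrow(3)\Rightarrow(1)$, working throughout with the generic fibre of $\pi$ --- a copy of $\mathbb{P}^1$ over $\K(x)=\K(\mathbb{P}^1)$ --- and with the exact sequence of Remark~\ref{Rem:ExactSeq}, through which $\ker\overline{\pi}$ embeds faithfully into $\PGL(2,\K(x))$. The basic local tool is the following. Fix a singular fibre $\{F_1,F_2\}$ twisted by $g$, lying over $p\in\mathbb{P}^1$, with node $q$, and choose local coordinates $(s,t)$ at $q$ with $F_1=\{t=0\}$, $F_2=\{s=0\}$ and $\pi=st$. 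Since $g$ fixes $q$ and exchanges the two branches, $g(s,t)=(\lambda t+\cdots,\ \mu s+\cdots)$ with $\lambda\mu\neq0$, so $dg_q$ is anti-diagonal, $(dg_q)^2=\lambda\mu\cdot\mathrm{id}$, and the relation $\pi\circ g=\overline{\pi}(g)\circ\pi$ shows that $\overline{\pi}(g)$ fixes $p$ with derivative $\lambda\mu$ there. From this, $(1)\Rightarrow(2)$ is immediate: $g^2=1$ forces $\lambda\mu=1$, so $\overline{\pi}(g)$ fixes $p$ with derivative $1$ and is the identity. And $(3)\Rightarrow(2)$ is immediate as well: if $\Fix(g)$ contains a curve mapping onto $\mathbb{P}^1$ then $\overline{\pi}(g)$ fixes $\mathbb{P}^1$ pointwise.

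The heart is $(2)\Rightarrow(1)$, carried out together with $(2)\Rightarrow(3)$. Assume $\overline{\pi}(g)=1$. Then $\pi\circ g=\pi$, so in the local model $\lambda\mu=1$ and $dg_q$ has eigenvalues $+1$ and $-1$, the $(+1)$-eigendirection being transverse to both components; hence $\Fix(g)$ contains a curve through $q$ on which $\pi$ is $2{:}1$ and ramified. Now $g\in\ker\overline{\pi}$ is a nontrivial element of $\PGL(2,\K(x))$, so over $\overline{\K(x)}$ it is conjugate to $z\mapsto z+1$ or to $z\mapsto\zeta z$ with $\zeta\neq1$. The unipotent case is impossible: its unique fixed point on the generic fibre is $\K(x)$-rational and closes up to a \emph{section}, which cannot be the ramified double cover just found. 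So $g$ is semisimple, $\Fix(g)$ meets the generic fibre in a length-$2$ divisor $\{z^2=\delta(x)\}$ with $\delta\in\K(x)^{*}$, and --- since a section cannot be ramified --- this divisor is irreducible, i.e. $\delta$ is not a square and the fixed curve $\Gamma:=\overline{\{z^2=\delta\}}$ is an irreducible, hence connected, double cover of $\mathbb{P}^1$ passing through $q$. Along $\Gamma$ the differential of $g$ is diagonal, equal to $\mathrm{id}$ on $T\Gamma$ and to a multiplier $\nu$ transverse to $\Gamma$; $\nu$ defines a constant function on the connected projective curve $\Gamma$, it lies in $\{\zeta,\zeta^{-1}\}$ at a general point (where $g$ acts as $z\mapsto\zeta z$ on the ambient fibre), and it equals $-1$ at $q$. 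Thus $\zeta=-1$, so $g$ acts on the generic fibre as an involution, and faithfulness of $\ker\overline{\pi}\hookrightarrow\PGL(2,\K(x))$ gives $g^2=1$.

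It remains to finish $(3)$ and the count. Since $g^2=1$, $\Fix(g)$ is a smooth disjoint union of curves and points, and no fixed curve is $\pi$-vertical: if $g$ fixed a component $F_1'$ of a singular fibre pointwise, then (smoothness of $\Fix(g)$) $g$ would act on the other component as a nontrivial involution, with multiplier $-1$ at the node, and $\pi\circ g=\pi$ in the local model would force the multiplier of $g$ along $F_1'$ at the node to be $-1$ as well --- contradicting that $F_1'$ is fixed. So the only positive-dimensional part of $\Fix(g)$ is $\Gamma$. Normalising $g$ to $z\mapsto\delta(x)/z$ and passing to the local ring at a point $p'$, one checks that $g$ twists the fibre over $p'$ exactly when $v_{p'}(\delta)$ is odd, i.e. exactly at the branch points of $\Gamma\to\mathbb{P}^1$; if there are $r$ of them, then $\Gamma$ is branched over $r$ points and, $\Gamma\to\mathbb{P}^1$ being $2{:}1$, Riemann--Hurwitz forces $r$ even, say $r=2k$, with $\Gamma$ of genus $k-1$, and $r\geq2$ because $g$ twists at least one fibre. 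Finally, at a non-twisted singular fibre the node is a fixed point at which $dg$ is diagonal with determinant $1$ and square $\mathrm{id}$, hence equals $-\mathrm{id}$ (it is not $\mathrm{id}$, else $g=1$), so it is isolated in $\Fix(g)$; and a brief inspection of the smooth fibres and of the twisted fibres shows $\Fix(g)$ has no other isolated point. This yields exactly statement $(3)$ together with the count ``$2k\geq2$'', and closes the cycle.

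The step I expect to fight with is $(2)\Rightarrow(1)$: only first-order information is available at the nodes and it must be upgraded to the global identity $g^2=1$. The leverage is the rigidity of $\ker\overline{\pi}$ inside $\PGL(2,\K(x))$, which reduces the task to (a) discarding the unipotent conjugacy class --- done via the ramified fixed curve forced at a twisted node, which no section can be --- and (b) pinning the multiplier of the remaining semisimple class to $-1$ --- done via the constancy of the transverse multiplier of $g$ along the connected curve $\Gamma$ together with its value $-1$ at a node. A subsidiary but indispensable point, used both for (b) and for statement $(3)$, is that $\Gamma$ is genuinely irreducible rather than a pair of sections, and this is again guaranteed by the presence of a twisted fibre, whose node $\Gamma$ passes through while sections cannot.
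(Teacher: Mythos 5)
Your route is genuinely different from the paper's and, once one step is shored up, it works. The paper proves $1\Rightarrow 2$ by reducing to a minimal triple, invoking Lemma~\ref{Lem:Degree567CB} to land on the del Pezzo surface of degree $6$ and checking the explicit elements $\kappa_{\alpha,\beta}$ there; your local computation at a twisted node (anti-diagonal $dg_q$, derivative of $\overline{\pi}(g)$ at the image point equal to $\lambda\mu$) is shorter and self-contained. For $2\Rightarrow 1$ the paper pulls back the exceptional section of $\mathbb{F}_1$ to obtain a section $C$ with $g(C)\neq C$ and finishes by noting that an automorphism of $\mathbb{P}^1$ exchanging two points is an involution; you instead classify the image of $g$ in $\PGL(2,\K(x))$ and pin the transverse multiplier along the fixed curve to $-1$ at a node. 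Both are valid; the paper's finish is a one-liner once the section is produced, while yours delivers the fixed curve $\Gamma$, its irreducibility and the ramification data of assertion~3 in the same stroke.

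The step that needs repair is the opening of your $(2)\Rightarrow(1)$/$(3)$ block: from ``$dg_q$ has eigenvalue $+1$'' you conclude ``hence $\Fix(g)$ contains a curve through $q$''. That implication fails for a general automorphism germ (e.g.\ $(s,t)\mapsto(-s+t^2,\,t+s^2)$ fixes only the origin although its differential there has eigenvalue $1$), and you may not linearise $g$ at $q$ because finite order is not available -- the lemma is applied to infinite-order twisting elements in Proposition~\ref{Prp:DescriptionTwistingElementsInfinite} precisely through the implication $2\Rightarrow 1$. Two repairs are available. Locally: writing $g=(a,b)$ with $a=\lambda t+\cdots$, $b=\mu s+\cdots$, the relation $\pi\circ g=\pi$ reads $ab=st$, whence $b\,(a-s)+s\,(b-t)=0$; since $b=s\cdot(\mathrm{unit})$ near $q$, the ideal $(a-s,\,b-t)$ of $\Fix(g)$ is generated by $a-s$ alone, so $\Fix(g)$ is locally a curve through $q$, smooth and transverse to both branches, hence ramified over the base. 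Globally (and this simultaneously replaces your use of the local curve to exclude the unipotent and split semisimple cases): every horizontal component of $\Fix(g)$ meets the twisted fibre $F=F_1\cup F_2$, and $\Fix(g)\cap F=\{q\}$ because $g$ exchanges $F_1$ and $F_2$; a curve through the node meets $F$ with multiplicity at least $2$, so no section of $\pi$ can occur as a horizontal component of $\Fix(g)$, which kills the unipotent class and forces the fixed divisor $\{z^2=\delta\}$ of the semisimple class to be irreducible and ramified at $q$. With either patch the remainder of your argument -- constancy of $\det(dg)$ along the connected projective curve $\Gamma$, its value $-1$ at $q$, and the Riemann--Hurwitz count of twisted fibres -- goes through.
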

\begin{proof}
$1\Rightarrow 2$: 
By contracting some exceptional curves, we may assume that the triple $(<g>,S,\pi)$ is minimal. Suppose that~$g$ is an involution and $\overline{\pi}(g)\not=1$. Then~$g$ may twist only two singular fibres, which are the fibres of the two points of $\mathbb{P}^1$ fixed by $\overline{\pi}(g)$. Hence, the number of singular fibres is $\leq 2$. Lemma~\ref{Lem:Degree567CB} tells us that $S$ is a del Pezzo surface of degree $6$ and then Lemma~\ref{Lem:KappaDP6} shows that $g=\kappa_{\alpha,\beta}$ (Example~\ref{Exa:KappaAB}) for some $\alpha,\beta\in \K^{*}$. But such an element is an involution if and only if it acts trivially on the basis of the fibration.

$(1 \mbox{ and } 2)\Rightarrow 3$: 
Suppose first that $(<g>,S,\pi)$ is minimal. This implies that~$g$ twists every singular fibre of $\pi$.
Therefore, since $\overline{\pi}(g)=1$ and $g^2=1$, on a singular fibre there is one point fixed by~$g$ (the singular point of the fibre) and on a general fibre there are two fixed points. The set of points of $S$ fixed by~$g$ is thus a smooth irreducible curve. The projection $\pi$ gives it as a double covering of $\mathbb{P}^1$ ramified over the points whose fibres are singular and twisted by~$g$. By the Riemann-Hurwitz formula, this number is even, equal to $2k$ and the genus of the curve is $k-1$.

The situation when $(<g>,S,\pi)$ is not minimal is obtained from this one, by blowing-up some fixed points. This adds in each new singular fibre (not twisted by the involution) an isolated point, which is the singular point of the singular fibre.
We then get the third assertion and the final remark.

$3\Rightarrow 2$: This implication is clear.

$2\Rightarrow 1$: If $\overline{\pi}(g)=1$, then, $g^2$ leaves every component of every singular fibre of $\pi$ invariant.
Let $p_1:S\rightarrow \mathbb{F}_1$ be the birational morphism of conic bundles given by Lemma~\ref{Lem:GoingToF0F1}; it is a $g^2$-equivariant birational morphism which conjugates $g^2$ to an automorphism of $\mathbb{F}_1$ that necessarily fixes the exceptional section. The pull-back by $p_1$ of this section is a section $C$ of $\pi$, fixed by $g^2$. Since $C$ touches exactly one component of each singular fibre (in particular those that are twisted by~$g$), $g$~sends $C$ on another section $D$ also fixed by $g^2$. The union of the sections $D$ and $C$ intersects a general fibre in two points, which are exchanged by the action of~$g$. This implies that~$g$ has order $2$.
\end{proof}

\bigskip

We now give some further simple results on twisting involutions.
\begin{coro}
\label{Cor:NoRoot}
Let $(S,\pi)$ be some conic bundle. No involution twisting $(S,\pi)$ has a root in $\Aut(S,\pi)$ which acts trivially on the basis of the fibration.
\end{coro}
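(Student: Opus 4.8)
The plan is to argue by contradiction and to reduce everything to the equivalence $(1)\Leftrightarrow(2)$ of Lemma~\ref{Lem:DeJI}. Suppose that $g\in\Aut(S,\pi)$ is an involution twisting $(S,\pi)$, and that some $h\in\Aut(S,\pi)$ with trivial action on the basis of the fibration (i.e.\ $\overline{\pi}(h)=1$) is a root of $g$, say $h^{n}=g$ for some integer $n\geq 2$. Note first that $h$ is then of finite order, since $h^{2n}=g^{2}=1$.

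The first real step is to show that $h$ itself twists the conic bundle. Fix a singular fibre $F=\{F_{1},F_{2}\}$ twisted by $g$, so $g(F_{1})=F_{2}$. Since $\overline{\pi}(h)=1$, the automorphism $h$ sends every fibre of $\pi$ to itself; in particular it preserves the set $\{F_{1},F_{2}\}$, hence it either fixes both components or exchanges them. If $h$ fixed both $F_{1}$ and $F_{2}$, then so would $h^{n}=g$, contradicting $g(F_{1})=F_{2}$. Therefore $h$ exchanges $F_{1}$ and $F_{2}$, i.e.\ $h$ is a twisting automorphism of $(S,\pi)$.

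Now apply Lemma~\ref{Lem:DeJI} to $h$: being a twisting automorphism with $\overline{\pi}(h)=1$, condition $(2)$ holds for $h$, hence so does $(1)$, that is, $h^{2}=1$. Consequently $h^{n}$ equals $1$ if $n$ is even and equals $h$ if $n$ is odd; since $h^{n}=g\neq 1$ we must have $n$ odd and $h=g$. Thus the only element of $\Aut(S,\pi)$ acting trivially on the basis of the fibration and having a power equal to $g$ is $g$ itself, so $g$ admits no such root, as claimed.

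The argument is short, and the only point that needs care is the middle step: ensuring that the hypothetical root $h$ genuinely twists a singular fibre, so that Lemma~\ref{Lem:DeJI} can be invoked. Once this is established the conclusion is immediate, and I do not expect any further obstacle.
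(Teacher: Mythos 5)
Your proof is correct and takes essentially the same route as the paper, whose entire argument is the one line ``such a root must twist a singular fibre and so (Lemma~\ref{Lem:DeJI}) is an involution.'' You merely fill in the two details left implicit there: that a root acting trivially on the base must exchange the components of any fibre twisted by $g$ (since otherwise $g=h^n$ would fix them), and that an involutive root can only be $g$ itself.
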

\begin{proof}\upshape
Such a root must twist a singular fibre and so (Lemma~\ref{Lem:DeJI}) is an involution.
\end{proof}
\begin{rema}There may exist some roots in $\Aut(S,\pi)$ of twisting involutions which act non trivially on the basis of the fibration.\\ Take for example four general points $A_1,...,A_4$ of the plane and denote by $g\in \Aut(\Pn)$ the element of order $4$ that permutes these points cyclically. The blow-up of these points conjugates~$g$ to an automorphism of the del Pezzo surface $S_5$ of degree $5$ (see Section~\ref{Sec:DelPezzo5}). The pencil of conics of $\Pn$ passing through the four points induces a conic bundle structure on $S_5$, with three singular fibres which are the lift of the pairs of two lines passing through the points. The lift on $S_5$ of~$g$ is an automorphism of the conic bundle whose square is a twisting involution.\end{rema}
\begin{coro}
\label{Cor:CurveFixDeJ}
Let $(S,\pi)$ be some conic bundle and let $g \in \Aut(S,\pi)$. The following conditions are equivalent.
\begin{enumerate}
\item[\upshape 1.]
$g$ twists more than $2$ singular fibres of $\pi$.
\item[\upshape 2.]
~$g$ fixes a curve of positive genus. 
\end{enumerate}
And these conditions imply that~$g$ is an involution which acts trivially on the basis of the fibration and twists at least $4$ singular fibres.
\end{coro}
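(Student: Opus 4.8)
The plan is to reduce to the setting of Lemma~\ref{Lem:DeJI} by first showing that condition 1 forces $g$ to be an involution acting trivially on the basis. Suppose $g$ twists more than two singular fibres. An element of $\Aut(S,\pi)$ with non-trivial action on the basis $\mathbb{P}^1$ can twist at most two singular fibres (only those sitting over the two fixed points of $\overline{\pi}(g)$ on $\mathbb{P}^1$; this is the content of the remark following Lemma~\ref{Lem:MinTripl}). Hence $\overline{\pi}(g)=1$. Now $g$ is a twisting automorphism with trivial action on the basis, so by the implication $2\Rightarrow 1$ of Lemma~\ref{Lem:DeJI}, $g$ is an involution. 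Applying the equivalence $1\Leftrightarrow 3$ together with the final remark of Lemma~\ref{Lem:DeJI}, the number of singular fibres twisted by $g$ is $2k$ with $k\geq 1$, and the fixed set of $g$ contains an irreducible curve of genus $k-1$ which is a double cover of $\mathbb{P}^1$ ramified over those $2k$ points. Since we assumed $2k>2$, i.e.\ $k\geq 2$, this curve has genus $k-1\geq 1$, so $g$ fixes a curve of positive genus; and moreover $2k\geq 4$. This establishes $1\Rightarrow 2$ as well as the concluding assertion.

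For the converse $2\Rightarrow 1$, suppose $g$ fixes a curve $\Gamma$ of positive genus. First, $g$ is a twisting element by hypothesis, so Lemma~\ref{Lem:DeJI} applies as soon as we know the relevant trigger; the cleanest route is to argue by contradiction using the classification already in hand. If $g$ twists at most two singular fibres, I want to rule out genus $\geq 1$ in the fixed locus. If $\overline{\pi}(g)\neq 1$, then by Lemma~\ref{Lem:Degree567CB} (after contracting to make $(\langle g\rangle,S,\pi)$ minimal, which twists all singular fibres, so there are at most two of them) $S$ is a del Pezzo surface of degree $6$, and Lemma~\ref{Lem:KappaDP6} identifies $g$ with some $\kappa_{\alpha,\beta}$; one then computes directly that the fixed locus of such an automorphism on $S_6$ consists only of rational curves and isolated points — no curve of positive genus appears. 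If instead $\overline{\pi}(g)=1$, then $g$ is an involution by Lemma~\ref{Lem:DeJI}, and by the same lemma its fixed curve is a double cover of $\mathbb{P}^1$ ramified over the (at most $2$) twisted singular fibres, hence ramified over at most $2$ points, hence of genus $0$ — again no curve of positive genus. Either way we contradict the assumption that $g$ fixes a curve of positive genus, so $g$ must twist more than two singular fibres.

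The main obstacle I anticipate is the explicit fixed-point analysis of $\kappa_{\alpha,\beta}$ on $S_6$ in the case $\overline{\pi}(g)\neq 1$: one needs to check that this automorphism of order possibly larger than $2$ fixes no curve of positive genus on a del Pezzo surface of degree $6$. This is a finite, concrete computation using the explicit form of $\kappa_{\alpha,\beta}$ in Example~\ref{Exa:KappaAB} and the description of $S_6\subset\Pn\times\Pn$, and it is plausible that the cleanest packaging simply invokes that the del Pezzo surface of degree $6$ case contributes at most $2$ singular fibres, so that the genus bound $k-1$ with $2k\leq 2$ already gives genus $0$ — making even the explicit check unnecessary if one is willing to route everything through Lemma~\ref{Lem:DeJI}'s numerology. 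A secondary point to handle carefully is the contraction step: replacing $(S,\pi)$ by a minimal model for $\langle g\rangle$ changes neither whether $g$ twists $>2$ fibres (twisted fibres are preserved, and only untwisted ones get blown down) nor whether $g$ fixes a curve of positive genus (a $g$-equivariant blow-down cannot create or destroy a fixed curve of positive genus, since such a curve is never contracted), so the equivalence is insensitive to this reduction.
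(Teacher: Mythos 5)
Your proof of $1\Rightarrow 2$ and of the concluding assertion is essentially the paper's: an element with $\overline{\pi}(g)\neq 1$ can twist only the fibres over the two fixed points of $\overline{\pi}(g)$, hence condition 1 forces $\overline{\pi}(g)=1$, and Lemma~\ref{Lem:DeJI} then gives an involution fixing a hyperelliptic curve of genus $k-1$ with $2k>2$ twisted fibres, whence $k\geq 2$ and $2k\geq 4$. For the converse, however, your contrapositive argument contains a genuine error in the case $\overline{\pi}(g)\neq 1$. You claim that after making the triple $(\langle g\rangle,S,\pi)$ minimal, the hypothesis that $g$ twists at most two fibres leaves at most two singular fibres, so that Lemma~\ref{Lem:Degree567CB} applies. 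But Lemma~\ref{Lem:MinTripl} only guarantees that in the minimal model every singular fibre is twisted by \emph{some element of} $\langle g\rangle$, not by $g$ itself; powers of $g$ can twist fibres that $g$ does not. Proposition~\ref{Prp:DescriptionTwistingElementsFinite} (cases 3 and 4) exhibits exactly this: a minimal triple with $r+2k$ singular fibres of which $g$ twists only $r\leq 2$, the remaining $2k$ being twisted by $g^n$. So the reduction to the del Pezzo surface of degree $6$ and to $\kappa_{\alpha,\beta}$ fails, and that step as written does not go through.

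The conclusion you want in that case is nevertheless true for a much simpler reason, and this is how the paper argues $2\Rightarrow 1$ directly rather than by contradiction: a curve fixed pointwise by $g$ projects onto a subset of $\mathbb{P}^1$ fixed pointwise by $\overline{\pi}(g)$; if $\overline{\pi}(g)\neq 1$ this subset is finite, so the curve lies in fibres and is rational. Hence condition 2 immediately forces $\overline{\pi}(g)=1$ and forces the fixed curve to meet a general fibre in exactly two points, after which the numerology of Lemma~\ref{Lem:DeJI} (genus $k-1$, $2k$ twisted fibres) gives $2k\geq 4$. A second, smaller gap: your opening assertion that ``$g$ is a twisting element by hypothesis'' is unjustified --- condition 2 says nothing of the sort --- and Lemma~\ref{Lem:DeJI} is stated only for twisting automorphisms, so in the case $\overline{\pi}(g)=1$ you must separately dispose of a non-twisting $g$; this is easy (contract one component of each singular fibre $g$-equivariantly to reach a Hirzebruch surface, whose non-trivial automorphisms fix only rational curves and isolated points), but it needs to be said.
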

\begin{proof}\upshape
The first condition implies that~$g$ acts trivially on the basis of the fibration, and thus (by Lemma~\ref{Lem:DeJI}) that~$g$ is an involution which fixes a curve of positive genus.

Suppose that~$g$ fixes a curve of positive genus. Then,~$g$ acts trivially on the basis of the fibration, and fixes $2$ points on a general fibre.
Consequently, the curve fixed by~$g$ is a smooth hyperelliptic curve; we get the remaining assertions from Lemma~\ref{Lem:DeJI}.
\end{proof}

\bigskip

As we mentioned above, the automorphisms that twist some singular fibre are fundamental (Lemma~\ref{Lem:MinTripl}). We now describe  these elements and prove that the only possibilities are twisting involutions, roots of twisting involutions (of even or odd order) and elements of the form $\kappa_{\alpha,\beta}$ (see Example~\ref{Exa:KappaAB}):

\begin{prop}[Classification of twisting elements of finite order]
\label{Prp:DescriptionTwistingElementsFinite}
Let $g \in \Aut(S,\pi)$ be a twisting automorphism of finite order of a conic bundle $(S,\pi)$. Let~$n$ be the order of its action on the basis.

Then $g^n$ is an involution that acts trivially on the basis of the fibration and twists an even number $2k$ of singular fibres; furthermore, exactly one of the following situations occurs:
\begin{itemize}
\item[\upshape 1.]
{\bf ${n=1}$.}
\item[\upshape 2.]
{ ${n>1}$ and ${k=0}$;} in this case~$n$ is even and 
there exists a $g$-equivariant birational morphism of conic bundles $\eta:S\rightarrow S_6$ (where $S_6$ is the del Pezzo surface of degree $6$) such that $\eta g\eta^{-1}=\kappa_{\alpha,\beta}$ for some $\alpha,\beta \in \K^{*}$ (see Example~$\ref{Exa:KappaAB}$). 
\item[\upshape 3.]
{${n>1}$ is odd and ${k>0}$;} here~$g$ twists $1$ or $2$ fibres, which are the fibres twisted by $g^n$ that are invariant by~$g$.
\item[\upshape 4.]
{ ${n}$ is even and ${k>0}$;} here~$g$ twists $r=1$ or $2$ singular fibres; none of them are twisted by $g^n$; moreover the action of~$g$ on the set of $2k$ fibres twisted by $g^n$ is fixed-point free; furthermore, $n$~divides $2k$, and $2k/n\equiv r \pmod{2}$.
\end{itemize}
\end{prop}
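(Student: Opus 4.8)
The plan is to analyse the twisting element $g$ by passing to the quotient of its action on the base $\mathbb{P}^1$ and by studying how $g$ permutes the singular fibres, separating the cases according to the parity of $n$ and to whether $g^n$ twists any fibre. First I would establish the opening claim: since $g$ is twisting, some element of the cyclic group $\langle g\rangle$ twists a singular fibre; by contracting components we may arrange that $(\langle g\rangle,S,\pi)$ is minimal, so by Lemma~\ref{Lem:MinTripl} \emph{every} singular fibre is twisted by some power of $g$. In particular $g^n$ acts trivially on the base, and being twisting it is an involution by Lemma~\ref{Lem:DeJI}; that lemma also gives that $g^n$ twists an even number $2k$ of singular fibres.

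Next I would handle the structure according to the value of $k$. If $k>0$, then $g^n\ne 1$; the remark after Lemma~\ref{Lem:MinTripl} says an element acting non-trivially on the base twists at most two singular fibres, so $g$ itself twists $r\in\{1,2\}$ fibres. A fibre $F$ twisted by $g$ lies over a point of $\mathbb{P}^1$ fixed by $\overline{\pi}(g)$, and $g$ acts on the set of $2k$ fibres twisted by $g^n$: if $n$ is odd, a fibre twisted by $g$ must be $g$-invariant and among those twisted by $g^n$, giving case~3 (one or two fibres, the $g$-fixed ones among the $2k$); if $n$ is even, $g^{n/2}$ would be an involution acting trivially on the base, hence can twist a fibre $F$ only if $g$ does not (otherwise $g$ and $g^{n/2}$ would both twist $F$ with incompatible parities on the two components), so the $r$ fibres twisted by $g$ are disjoint from the $2k$ twisted by $g^n$, the $\langle g\rangle$-action on those $2k$ fibres is free (a fixed one would be twisted by $g^m$ for the relevant $m$, forcing it to be twisted by $g$), whence $n\mid 2k$, and a counting of the singular fibres over each orbit of $\overline{\pi}(g)$ on $\mathbb{P}^1$ (the $2k$ "twisted-by-$g^n$" fibres split into $2k/n$ free orbits contributing an even count per base-orbit, while the $r$ fibres twisted by $g$ sit over the at most two $\overline{\pi}(g)$-fixed base points) yields $2k/n\equiv r\pmod 2$ — case~4.

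If $k=0$, then $g^n=1$, so $g$ has order dividing $n$; as $g$ is twisting, $n>1$ in the non-trivial case, and $g$ twists one or two fibres over the $\overline{\pi}(g)$-fixed base points. Here I would invoke Lemma~\ref{Lem:Degree567CB}: after making $(\langle g\rangle,S,\pi)$ minimal, $\pi$ has $r\le 2$ singular fibres, so $S$ is a del Pezzo surface of degree $6$ (the case $r=2$; $r=0$ is excluded since $g$ is twisting), and Lemma~\ref{Lem:KappaDP6} then forces $g=\kappa_{\alpha,\beta}$ for some $\alpha,\beta\in\K^{*}$. Since $\kappa_{\alpha,\beta}^2$ is the diagonal automorphism displayed in Example~\ref{Exa:KappaAB}, which is trivial iff the base action is, finiteness of $g$ together with $g^n=1$ forces $n$ even; conjugating the minimal model back to the original $S$ by the contraction morphism $\eta^{-1}$ gives the $g$-equivariant birational morphism $\eta:S\to S_6$ of case~2. (One must check $\eta$ is genuinely a morphism of conic bundles and $g$-equivariant; this is built into the construction of Lemma~\ref{Lem:KappaDP6}.)

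The main obstacle I anticipate is the careful parity bookkeeping in case~4: one has to track, over each $\overline{\pi}(g)$-orbit on $\mathbb{P}^1$, exactly how many singular fibres lie there and which power of $g$ twists each component, and to argue cleanly that $g^{n/2}$ cannot twist a fibre also twisted by $g$ — the cleanest route is to observe that if $g$ twists $F=\{F_1,F_2\}$ then $g$ swaps $F_1\leftrightarrow F_2$, so $g^2$ fixes each component, hence so does $g^{n}$ (as $n$ is even, $n=2(n/2)$), contradicting that $g^{n}$ twists $F$; then the freeness of the $\langle g\rangle$-action on the $2k$-element set and the congruence $2k/n\equiv r\pmod 2$ drop out of counting fibres modulo the (two) branch points of $\mathbb{P}^1\to\mathbb{P}^1/\langle\overline{\pi}(g)\rangle$. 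Everything else is an assembly of Lemmas~\ref{Lem:MinTripl}, \ref{Lem:DeJI}, \ref{Lem:Degree567CB} and~\ref{Lem:KappaDP6} together with the remark bounding twisted fibres for elements acting non-trivially on the base.
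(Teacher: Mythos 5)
Your overall strategy matches the paper's: reduce to a minimal triple $(\langle g\rangle,S,\pi)$, observe that powers $g^i$ with $i\not\equiv 0\pmod n$ can only twist the fibres over the two fixed points of $\overline{\pi}(g)$, and split into cases according to the parity of $n$ and whether $g^n$ twists anything, invoking Lemmas~\ref{Lem:MinTripl}, \ref{Lem:DeJI}, \ref{Lem:Degree567CB} and~\ref{Lem:KappaDP6}. Cases 1 and 3 and the disjointness/freeness claims in case 4 are handled correctly. However, there are two genuine defects.

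First, in the case $k=0$ you assert that $g^n=1$. This is false and contradicts the proposition's own claim that $g^n$ is an involution: when $k=0$ one finds $g=\kappa_{\alpha,\beta}$, whose order is $2n$ (its square is the displayed diagonal automorphism, whose order equals that of the base action), so $g^n$ is a \emph{non-trivial} involution that acts trivially on the base and twists nothing. Your opening paragraph also only justifies ``$g^n$ is an involution'' when $g^n$ is twisting, so the $k=0$ case of that claim is left unproven by your argument. The correct route to ``$n$ even'' does not need $g^n=1$: if $g$ twists $F=\{F_1,F_2\}$ then $g^i$ twists $F$ exactly for odd $i$, so $g^n$ not twisting $F$ forces $n$ even; and the order-$2n$ computation for $\kappa_{\alpha,\beta}$ supplies the involution claim.

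Second, and more seriously, the congruence $2k/n\equiv r\pmod 2$ in case 4 is not proved. Counting singular fibres over the $\overline{\pi}(g)$-orbits only tells you that the total number of singular fibres is $r+tn$ with $t=2k/n$; no parity relation between $t$ and $r$ ``drops out'' of such a count (for instance, nothing combinatorial rules out $n=2$, $k=1$, $r=2$, which is precisely what Corollary~\ref{Cor:RootRatTwist} must exclude). The paper proves the congruence by a divisor-class argument: choosing one component $F_i$ in a fibre of each orbit $C_i$ and one component $L_i$ over each $a_i$, setting $R=\sum_i\sum_j g^j(F_i)+\sum_i L_i$, and computing $g(R)-R=(r+t)f-2(\sum L_i+\sum F_i)$; contracting $R$ to a Hirzebruch surface and writing $K_S=-2s+(m-2)f+R$ then shows $(r+t)f\in 2\,\Pic{S}$, and since $f$ is a primitive class this gives $r\equiv t\pmod 2$. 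Some argument of this lattice-theoretic kind (or an equally careful Riemann--Hurwitz analysis of the $g$-action on the curve fixed by $g^n$, which has its own subtleties over the fixed base point not carrying a singular fibre when $r=1$) is indispensable here and is missing from your proposal.
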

\begin{proof}
Lemma~\ref{Lem:DeJI} describes the situation when $n=1$. We now assume that $n>1$; by blowing-down some components of singular fibres we may also suppose that the triple $(G,S,\pi)$ is minimal. 

Denote by $a_1,a_2\in\mathbb{P}^1$ the two points fixed by $\overline{\pi}(g)\in\Aut(\mathbb{P}^1)$. For $i\not\equiv0\pmod{n}$ the element $\overline{\pi}(g^i)$ fixes only two points of $\mathbb{P}^1$, namely $a_1$ and $a_2$ (since $\overline{\pi}(g)$ has order~$n$); the only possible fibres twisted by $g^i$ are thus $\pi^{-1}(a_1),\pi^{-1}(a_2)$.

Suppose that $g^n$ does not twist any singular fibre. By minimality 
there are at most $2$ singular fibres ($\pi^{-1}(a_1)$ and/or $\pi^{-1}(a_2)$) of $\pi$ and~$g$ twists each one. Lemma~\ref{Lem:Degree567CB} tells us that $S$ is a del Pezzo surface of degree $6$ and Lemma~\ref{Lem:KappaDP6} shows that
\begin{center}
$\begin{array}{lllll}
g=\kappa_{\alpha,\beta}:&\h{3}\big((x:y:z) , (u:v:w)\big) \mapsto \big((u:\alpha w:\beta v)&\h{3}, &\h{3}(x:\alpha^{-1} z:\beta^{-1} y)\big),\end{array}$\end{center} for some $\alpha,\beta \in \K^{*}$. We compute the square of~$g$ and find
\begin{center}
$\begin{array}{lllll}
g^2:&\h{3}\big((x:y:z) , (u:v:w)\big) \mapsto \big((x:\alpha\beta^{-1} y:\alpha^{-1}\beta z)&\h{3}, &\h{3}(u:\alpha^{-1}\beta v:\alpha\beta^{-1} w)\big).\end{array}$\end{center}
Consequently, the order of~$g$ is $2n$. The fact that $g^i$ twists $\pi^{-1}(a_1)$ and $\pi^{-1}(a_2)$ when $i$ is odd implies that~$n$ is even. Case $2$ is complete.

If $g^n$ twists at least one singular fibre, it twists an even number of singular fibres (Lemma \ref{Lem:DeJI}) which we denote by $2k$, and $g^n$ is an involution. If~$n$ is odd, each fibre twisted by $g^n$ is twisted by~$g$, and conversely; this yields case $3$. It remains to consider the more difficult case when~$n$ is even.

Firstly we observe that there are $r+2k$ singular fibres with $r\in\{1,2\}$, corresponding to the points $a_1$ and/or $a_2$, $c_1,...,c_{2k}$ of $\mathbb{P}^1$, the first $r$ of them being twisted by~$g$ and the $2k$ others by $g^n$. Under the permutation $\overline{\pi}(g)$, the set $\{c_1,...,c_{2k}\}$ decomposes into disjoint cycles of length~$n$ (this action is fixed-point-free); this shows that~$n$ divides $2k$. We write $t=2k/n\in\mathbb{N}$ and set $\{c_1,...,c_{2k}\}=\cup_{i=1}^t C_i$, where each $C_i\subset \mathbb{P}^1$ is an orbit of $\overline{\pi}(g)$ of size~$n$. To deduce the congruence $r\equiv t\pmod{2}$, we study the action of~$g$ on $\Pic{S}$. 

 For $i\in\{1,...,t\}$, choose $F_{i}$ to be a component in the fibre of the singular fibre of some point of $C_i$, and for $i\in\{1,r\}$ choose $L_i$ to be a component in the fibre of $a_i$. Let us write \begin{center}$R=\sum_{i=1}^t (F_i+g(F_i)+...+g^{n-1}(F_i))+\sum_{i=1}^r L_i\in\Pic{S}.$\end{center}
 
 Denoting by $f\subset S$ a general fibre of $\pi$, we find the equalities $g(L_i)=f-L_i$ and $g^n(F_i)=f-F_i$ in $\Pic{S}$, which yield  (once again in $\Pic{S}$):
 \begin{center}$g(R)=R+(r+t)f-2(\sum_{i=1}^r L_i +\sum_{i=1}^t F_i).$\end{center}
 
 The contraction of the divisor $R$ gives rise to a birational morphism of conic bundles (not $g$-equivariant) $\nu:S\rightarrow \mathbb{F}_m$ for some integer $m\geq 0$. Denote by $s\subset S$ the pull-back by $\nu$ of a general section of $\mathbb{F}_m$ of self-intersection~$m$ (which does not pass through any of the base-points of $\nu^{-1}$). The canonical divisor $K_S$ of $S$ is then equal in $\Pic{S}$ to the divisor $-2s+(m-2)f+R$. We compute $g(2s)$ and $2(g(s)-s)=g(2s)-2s$ in $\Pic{S}$:
 \begin{center}$\begin{array}{rcl}g(2s)&=&g(-K_S+(m-2)f+R)=-K_S+(m-2)f+g(R);\\
g(2s)-2s&=&g(R)-R=(r+t)f-2(\sum_{i=1}^r L_i +\sum_{i=1}^t F_i).\end{array}$\end{center}
 This shows that $(r+t)f\in 2\Pic{S}$, which implies that $r\equiv t\pmod{2}$. Case $4$ is complete.\end{proof}

\begin{coro}\label{Cor:RootRatTwist}
If $g\in \Aut(S,\pi)$ is a root of a twisting involution~$h$ that fixes a rational curve (i.e.\ that twists $2$ singular fibres) and if~$g$ twists at least one fibre not twisted by~$h$, then $g^2=h$,~$g$ twists exactly one singular fibre, and it exchanges the two fibres twisted by~$h$.
\end{coro}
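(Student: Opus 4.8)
The plan is to recognise this as a fairly direct consequence of the classification of twisting elements of finite order (Proposition~\ref{Prp:DescriptionTwistingElementsFinite}), after a short preliminary reduction.

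First I would observe that $g$ has finite order: since $g$ is a root of $h$ we have $g^m=h$ for some positive integer $m$, and $h^2=1$ forces $g^{2m}=1$. As $g$ also twists a singular fibre by hypothesis, $g$ is a twisting automorphism of finite order, so Proposition~\ref{Prp:DescriptionTwistingElementsFinite} applies. Write $n$ for the order of the action of $g$ on the base $\mathbb{P}^1$; the proposition says $g^n$ is an involution acting trivially on the base and twisting an even number $2k$ of singular fibres. The key identification is then $h=g^n$: both $h=g^m$ and $g^n$ lie in the cyclic group $\langle g\rangle$ and both have order $2$, and a cyclic group has at most one element of order $2$. Hence $h$ twists exactly $2k$ singular fibres; since $h$ fixes a rational curve, Lemma~\ref{Lem:DeJI} forces the fixed hyperelliptic curve of the twisting involution $h$ to have genus $k-1=0$, so $k=1$ and $h$ twists exactly $2$ fibres (consistent with the parenthetical hypothesis).

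Next I would eliminate cases $1$, $2$ and $3$ of Proposition~\ref{Prp:DescriptionTwistingElementsFinite}. In case $1$ ($n=1$) we would have $h=g^n=g$, so every fibre twisted by $g$ would be twisted by $h$, contradicting the hypothesis that $g$ twists a fibre not twisted by $h$. Case $2$ has $k=0$, incompatible with $k=1$. In case $3$ the fibres twisted by $g$ are among those twisted by $g^n=h$, again contradicting the hypothesis. Therefore we must be in case $4$.

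Finally, in case $4$ I would simply read off the conclusions: $n$ is even and divides $2k=2$, hence $n=2$, so $g^2=g^n=h$; the number $r$ of fibres twisted by $g$ satisfies $r\in\{1,2\}$ and $r\equiv 2k/n=1\pmod 2$, hence $r=1$, i.e.\ $g$ twists exactly one singular fibre; and the action of $g$ on the $2k=2$ fibres twisted by $h=g^n$ is fixed-point free, so $g$ exchanges them. I do not expect a genuine obstacle: the substance is already packaged in Proposition~\ref{Prp:DescriptionTwistingElementsFinite}, and the only points needing a little care are the identification $h=g^n$ via uniqueness of the involution in $\langle g\rangle$, and matching ``$h$ fixes a rational curve'' with ``$2k=2$'' through Lemma~\ref{Lem:DeJI}.
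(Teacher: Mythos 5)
Your proof is correct and follows exactly the same route as the paper, whose entire proof is the one-line remark that one applies Proposition~\ref{Prp:DescriptionTwistingElementsFinite} and lands in case~4 with $k=1$. Your write-up simply supplies the details the paper leaves implicit (the identification $h=g^n$ via the unique involution of $\langle g\rangle$, the elimination of cases 1--3, and reading off $n=2$, $r=1$ from the divisibility and parity constraints), all of which are accurate.
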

\begin{proof}
We apply Proposition~\ref{Prp:DescriptionTwistingElementsFinite} and obtain case $4$ with $k=1$.
\end{proof}

Corollary \ref{Cor:RootRatTwist} and the following result will be useful in the sequel.
\begin{lemm}
\label{Lem:CommutLeav}
Let $g\in \Aut(S,\pi)$ be a non-trivial automorphism of finite order that leaves every component of every singular fibre of $\pi$ invariant (i.e.\ that acts trivially on $\Pic{S}$) and let $h\in\Aut(S,\pi)$ be an element that commutes with~$g$. Then, either no singular fibre of $\pi$ is twisted by~$h$ or each singular fibre of $\pi$ which is invariant by~$h$ is twisted by~$h$.
\end{lemm}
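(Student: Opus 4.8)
The plan is to prove the statement in its useful form: assuming $h$ twists some singular fibre $F_0=\{F_{0,1},F_{0,2}\}$ (otherwise the first alternative holds and there is nothing to prove), I will show that every singular fibre $F'=\{F_1',F_2'\}$ with $h(F')=F'$ is twisted by $h$. Write $p_0$ for the singular point of $F_0$ and $a_0=\pi(p_0)$. The engine of the argument is a local computation at $p_0$: since $g$ fixes $F_{0,1}$ and $F_{0,2}$, it fixes $p_0$ and preserves each of the two tangent lines $T_{p_0}F_{0,i}$, whereas $h$ fixes $p_0$ and exchanges these lines; expressing both actions in a basis of $T_{p_0}S$ adapted to the two lines, commutativity forces $g_{*}|_{T_{p_0}S}=\alpha_0\cdot\mathrm{Id}$ for a scalar $\alpha_0$, and $\alpha_0\neq1$ because a finite-order automorphism tangent to the identity at a point is the identity. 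Comparing with the relation $\pi\circ g=\overline{\pi}(g)\circ\pi$ yields $\alpha_0^{\,2}=\overline{\pi}(g)'(a_0)$. The proof then divides according to whether $\overline{\pi}(g)$ is trivial.

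In the case $\overline{\pi}(g)\neq1$, the automorphism $\overline{\pi}(g)$ of $\mathbb{P}^1$ is nontrivial of finite order, hence has exactly two fixed points; as $g$ fixes every singular fibre, the base points of all singular fibres lie among these two, so $\pi$ has at most two singular fibres. If it has one, it is $F_0=F'$ and we are done. If it has two, then $h$ being twisting forces, via Lemma~\ref{Lem:GoingToF0F1} and Lemma~\ref{Prp:IskWithoutG}, that the minimal self-intersection of sections of $\pi$ is $-1$ and that $S$ is $\mathbb{F}_1$ blown up at two points lying on distinct fibres and off the exceptional section; via the contraction $\mathbb{F}_1\to\mathbb{P}^2$ this realises $S$ as $\mathbb{P}^2$ blown up at three non-collinear points, so $S\cong S_6$ and $\pi$ is one of the three conic bundle structures of Section~\ref{Sec:DelPezzo6}. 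Lemma~\ref{Lem:KappaDP6} then lists the four permutations of the six exceptional curves that $h$ may induce, and inspection shows the only one twisting a singular fibre twists both of them; hence $h$ twists $F'$.

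In the case $\overline{\pi}(g)=1$, we get $\alpha_0=-1$, so $g^2$ is the identity near $p_0$ and therefore $g$ is an involution; running the same local analysis at the singular point of any singular fibre, the two eigenvalues of $g_{*}$ there are $\pm1$ with product $\overline{\pi}(g)'=1$ and cannot both be $1$, so $g_{*}=-\mathrm{Id}$ and that point is isolated in $\mathrm{Fix}(g)$. Since $g$ fixes every fibre and is a nontrivial involution, it acts as a nontrivial involution on a general fibre, so the curve part $D$ of $\mathrm{Fix}(g)$ is a bisection of $\pi$; a tangent-space comparison (a tangency of $D$ to a component of a singular fibre would put both $+1$ and $-1$ on one tangent line) shows $D\to\mathbb{P}^1$ is unramified, and as $\mathbb{P}^1$ is simply connected, $D=D_1\sqcup D_2$ splits as a disjoint union of two sections, neither of which meets a singular point of a singular fibre. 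Now $h$ commutes with $g$, so it permutes $\{D_1,D_2\}$; it cannot fix $D_1$, for then it would fix the unique point $D_1\cap F_0$, which lies on $F_{0,1}$ or $F_{0,2}$ and differs from $p_0$, contradicting that $h$ exchanges the two components; hence $h(D_1)=D_2$. If $h$ did not twist $F'$, it would fix $F_1'$ and $F_2'$ and send $D_1\cap F'$ to $D_2\cap F'$, whence these two points together with the singular point $p'$ would be three distinct $g$-fixed points on one component of $F'$, forcing that component into $\mathrm{Fix}(g)$ — impossible, since a fibre component is not one of the sections $D_1,D_2$. So $h$ twists $F'$, and the lemma follows.

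The step I expect to be the crux is the case $\overline{\pi}(g)=1$, and within it the precise identification of $\mathrm{Fix}(g)$ for the involution $g$: namely that every singular point of $\pi$ is an isolated fixed point and that the fixed bisection is actually a pair of disjoint sections. Once that structure is in hand, the conclusion is a short incidence argument relating $D_1$, $D_2$ and the components of the singular fibres, so the technical weight lies entirely in controlling this fixed locus.
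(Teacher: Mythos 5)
Your proof is correct, and it reaches the same structural pivot as the paper --- that $\Fix(g)$ consists of the nodes of the singular fibres together with two disjoint sections, which $h$ must permute, and that $h$ exchanges the two sections if and only if it twists a given invariant singular fibre --- but it gets there by a genuinely different route. The paper simply contracts one component of each singular fibre via the morphism $p_0:S\rightarrow\mathbb{P}^1\times\mathbb{P}^1$ of Lemma~\ref{Lem:GoingToF0F1} (this contraction is $g$-equivariant because $g$ acts trivially on $\Pic{S}$), reads off that the fixed locus downstairs is a union of two sections, and pulls back; you instead work intrinsically on $S$, linearising $g$ at the node of the twisted fibre, using commutativity with $h$ to force $dg=\alpha\cdot\mathrm{Id}$ there, and then reconstructing $\Fix(g)$ via the unramifiedness of the fixed bisection. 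Your approach costs more (the case split on $\overline{\pi}(g)$ and the ramification analysis), but it buys two things: it is self-contained at the level of tangent spaces rather than relying on the classification of finite-order automorphisms of $\mathbb{P}^1\times\mathbb{P}^1$ preserving a ruling, and it honestly treats the case $\overline{\pi}(g)\neq 1$, which the paper's one-line description of $\Fix(g)$ as ``two rational curves'' passes over --- note that triviality on $\Pic{S}$ does \emph{not} force triviality on the base when there are only two singular fibres (e.g.\ the torus of $\Aut(S_6,\pi_1)$), so your reduction of that case to $S_6$ and Lemma~\ref{Lem:KappaDP6} is a genuine, and welcome, supplement. (In the paper's only application, Proposition~\ref{Prp:HardTwistInv}(d), there are at least four singular fibres, so $\overline{\pi}(g)=1$ is automatic and the gap is harmless there.) All the individual steps check out: the identity $\alpha_0^2=\overline{\pi}(g)'(a_0)$ from the local form $\pi=uv\cdot(\mathrm{unit})$ at the node, the exclusion of vertical components and of ramification of $D\rightarrow\mathbb{P}^1$ by comparing eigenvalues $\pm1$ on a common tangent line, and the final three-fixed-points argument on a component of $F'$.
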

\begin{proof}
If no twisting element belongs to $\Aut(S,\pi)$, we are done. Otherwise, the birational morphism of conic bundles $p_0:S\rightarrow \mathbb{P}^1\times\mathbb{P}^1$ given by Lemma~\ref{Lem:GoingToF0F1} conjugates~$g$ to an element of finite order of $\Aut(\mathbb{P}^1\times\mathbb{P}^1,\pi_1)$ whose set of fixed points is the union of two rational curves. The set of points of $S$ fixed by~$g$ is thus the union of two sections and a finite number of points (which are the singular points of the singular fibres of $\pi$). Any element $h\in \Aut(S,\pi)$ that commutes with~$g$ leaves the set of these two sections invariant. More precisely, the action on one invariant singular fibre $F$ implies the action on the two sections:~$h$ exchanges the two sections if and only if it twists $F$. Since the situation is the same at any other singular fibre, we obtain the result.
\end{proof}

We conclude this section with some results on automorphisms of infinite order of conic bundles, which will not help us directly here but seem interesting to observe.

\begin{prop}[Classification of twisting elements of infinite order]
\label{Prp:DescriptionTwistingElementsInfinite}
Let $(S,\pi)$ be a conic bundle and $g \in \Aut(S,\pi)$ be a twisting automorphism of infinite order.

Then~$g$ twists exactly two fibres of $\pi$ and there exists some $g$-equivariant birational morphism of conic bundles $\eta:S\rightarrow S_6$, where $S_6$ is the del Pezzo surface of degree $6$ and $\eta g\eta^{-1}=\kappa_{\alpha,\beta}$ for some $\alpha,\beta \in \K^{*}$. 
\end{prop}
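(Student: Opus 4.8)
The plan is to show that a twisting automorphism of infinite order must have a base action of infinite order as well, and that, once this is known, the situation collapses --- after an equivariant contraction of singular fibres --- to the del Pezzo surface of degree $6$ studied in Section~\ref{Sec:DelPezzo6}. Throughout I will freely contract $g$-orbits of components of non-twisted singular fibres to pass to a minimal triple $(\langle g\rangle,\widetilde S,\widetilde\pi)$, obtaining a $g$-equivariant birational morphism of conic bundles $\eta\colon S\to\widetilde S$; by Lemma~\ref{Lem:MinTripl} every singular fibre of $\widetilde\pi$ is then twisted by some power of $g$.

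First I would observe that $\overline{\pi}(g)\neq 1$: otherwise the implication $2\Rightarrow 1$ of Lemma~\ref{Lem:DeJI} would make $g$ an involution, contradicting its infinite order. The key point is that $\overline{\pi}(g)$ has infinite order in $\PGL(2,\K)$. Suppose it has finite order $n>1$ and put $h=g^n\in\ker\overline{\pi}$; then $h\neq 1$, and $h$ twists no singular fibre, since otherwise $h$ would be a twisting automorphism with $\overline{\pi}(h)=1$, hence an involution by Lemma~\ref{Lem:DeJI}, forcing $g^{2n}=1$. As $h$ acts trivially on the basis and twists no fibre, $h$ --- and therefore every power of $h$ --- fixes each irreducible component of each singular fibre. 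Passing to the minimal triple $(\langle g\rangle,\widetilde S,\widetilde\pi)$, each singular fibre of $\widetilde\pi$ is twisted by some power $g^i$, necessarily with $n\nmid i$ by the previous remark, so that $\overline{\pi}(g)^i\neq 1$ and this fibre lies over one of the at most two fixed points of $\overline{\pi}(g)$ on $\mathbb{P}^1$. Thus $\widetilde\pi$ has at most two singular fibres, and since $g$ still twists at least one of them, Lemmas~\ref{Lem:GoingToF0F1} and~\ref{Lem:Degree567CB} force $\widetilde S$ to be the del Pezzo surface of degree $6$ with exactly two singular fibres. After a coordinate permutation (an automorphism of $S_6$ permuting the conic bundle structures $\pi_1,\pi_2,\pi_3$) I may assume $\widetilde\pi=\pi_1$, and then Lemma~\ref{Lem:KappaDP6} identifies the induced automorphism with some $\kappa_{\alpha,\beta}$. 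But $\kappa_{\alpha,\beta}$ acts on the basis by $(x_1:x_2)\mapsto(\alpha x_1:\beta x_2)$, which has finite order only when $\alpha/\beta$ is a root of unity, in which case the formula for $\kappa_{\alpha,\beta}^2$ in Example~\ref{Exa:KappaAB} shows that $\kappa_{\alpha,\beta}$ is itself of finite order --- contradicting the infinite order of $g$. Hence $\overline{\pi}(g)$ has infinite order.

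With this in hand I would run the contraction argument unconditionally. Every non-identity power $\overline{\pi}(g)^i$, $i\neq 0$, of an infinite-order element of $\PGL(2,\K)$ has the same fixed-point set on $\mathbb{P}^1$, of cardinality one (parabolic case) or two (the case of a diagonalisable element with eigenvalue ratio not a root of unity), so every singular fibre of $S$ twisted by a power of $g$ lies over this fixed set. Passing again to the minimal triple via $\eta\colon S\to\widetilde S$, the surface $\widetilde S$ has at most two singular fibres, hence, exactly as above, $\widetilde S\cong S_6$, and after absorbing a coordinate permutation into $\eta$ we obtain $\widetilde\pi=\pi_1$ and $\eta g\eta^{-1}=\kappa_{\alpha,\beta}$ for some $\alpha,\beta\in\K^{*}$ by Lemma~\ref{Lem:KappaDP6}; this is the asserted $g$-equivariant birational morphism of conic bundles. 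Finally, a fibre of $(S,\pi)$ twisted by $g$ is twisted by an element of $\langle g\rangle$, so it is not contracted by $\eta$ and corresponds to one of the two singular fibres of $S_6$; conversely $\kappa_{\alpha,\beta}$ twists both of those (Lemma~\ref{Lem:KappaDP6}), so $g$ twists exactly two fibres of $\pi$.

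The step I expect to be the main obstacle is the proof that $\overline{\pi}(g)$ has infinite order: this is the only place where a competing configuration must genuinely be excluded, and doing so requires going through the degree-$6$ description and then noticing that a $\kappa_{\alpha,\beta}$ whose action on the basis has finite order is automatically of finite order. Once the base action is known to be of infinite order, the bound \emph{at most two singular fibres} is immediate from the fixed-point structure of $\PGL(2,\K)$, and the remainder of the statement is a direct appeal to Lemmas~\ref{Lem:GoingToF0F1},~\ref{Lem:Degree567CB} and~\ref{Lem:KappaDP6}.
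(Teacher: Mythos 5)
Your argument is correct and follows essentially the same route as the paper: pass to a minimal triple, use Lemma~\ref{Lem:DeJI} to force every twisting power of $g$ to act non-trivially on the basis, conclude that all singular fibres lie over the common fixed locus of the powers of $\overline{\pi}(g)$ (hence at most two of them), and then invoke Lemmas~\ref{Lem:Degree567CB} and~\ref{Lem:KappaDP6}. The only difference is that you carefully justify, via the detour through the finite-base-order case and the explicit formula for $\kappa_{\alpha,\beta}^2$, the assertion that $\overline{\pi}(g)$ has infinite order, which the paper compresses into a single ``consequently''.
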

\begin{proof}
Assume that the triple $(<g>,S,\pi)$ is minimal. Lemma \ref{Lem:DeJI} shows that no twisting element of infinite order acts trivially on the basis of the fibration. Consequently, $g^k$ acts trivially on the basis if and only if $k=0$, whence $g^k$ twists a fibre $F$ if and only if $k$ is odd and~$g$ twists $F$. There thus exist at most $2$ singular fibres of $\pi$, and Lemma~\ref{Lem:Degree567CB} tells us that $S$ is a del Pezzo surface of degree $6$. Lemma~\ref{Lem:KappaDP6} shows that $g=\kappa_{\alpha,\beta}$ for some $\alpha,\beta \in \K^{*}$.
\end{proof}

\begin{coro}
Let $g\in \Aut(S,\pi)$ be an element of infinite order; then a birational morphism conjugates~$g$ to an automorphism of a Hirzebruch surface.
\end{coro}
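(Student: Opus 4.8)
The plan is to reduce to the situation where the triple $(<g>,S,\pi)$ is minimal, and then to read off the surface from the classification lemmas of the preceding sections. To begin, exactly as in the proof of Lemma~\ref{Lem:DeJI} I would compose $g$ with a sequence of $g$-equivariant birational morphisms of conic bundles until $(<g>,S,\pi)$ becomes minimal: a non-trivial such morphism strictly decreases $\rkPic{S}$, so the process terminates, and at each stage $g$ is replaced by a conjugate which is again an automorphism of infinite order of a conic bundle. Composing all of these with whatever birational morphism we produce in the minimal case will give the birational morphism required by the statement, so from now on I assume $(<g>,S,\pi)$ minimal.

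If $\pi$ is smooth then $S$ is already a Hirzebruch surface (Lemma~\ref{Lem:SmoothHirz}) and we are done. Otherwise $\pi$ has a singular fibre, which by Lemma~\ref{Lem:MinTripl} is twisted by some power $g^k$ with $k\not=0$, and $g^k$ has infinite order since $g$ does. The key observation is now the following. By Lemma~\ref{Lem:DeJI} a twisting automorphism acting trivially on the basis of the fibration is an involution, so no power of $g$ can twist a fibre while fixing the basis; hence $\overline{\pi}(g)\in\PGLn{2}$ is non-trivial, and each non-trivial power $\overline{\pi}(g^m)$ fixes the same (at most two) points $p_1,p_2$ of $\mathbb{P}^1$. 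Since, by minimality, every singular fibre of $\pi$ is twisted by some power of $g$, and a power of $g$ twisting a fibre must fix the basepoint of that fibre, every singular fibre lies over $p_1$ or $p_2$; thus $\pi$ has at most two singular fibres. On the other hand the existence of a twisting automorphism forces at least two singular fibres (Lemma~\ref{Lem:GoingToF0F1}), so there are exactly two, and Lemma~\ref{Lem:Degree567CB} identifies $S$ with the del Pezzo surface $S_6$ of degree $6$, equipped with one of its three conic bundle structures $\pi_i$.

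It remains to perform the conjugation. As $g\in\Aut(S_6,\pi_i)$, Lemma~\ref{Lem:DP6CBPasMin} supplies the birational morphism $\pi_j\times\pi_k:S_6\rightarrow\mathbb{P}^1\times\mathbb{P}^1$, where $\{i,j,k\}=\{1,2,3\}$, which conjugates $\Aut(S_6,\pi_i)$ — and in particular $g$ — into $\Aut(\mathbb{P}^1\times\mathbb{P}^1)$. Since $\mathbb{P}^1\times\mathbb{P}^1=\mathbb{F}_0$ is a Hirzebruch surface, composing this morphism with the reduction morphisms of the first step yields the asserted birational morphism conjugating $g$ to an automorphism of a Hirzebruch surface.

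The only step that requires a moment of care is the bound on the number of singular fibres: one uses the hypothesis of infinite order twice — once (via Lemma~\ref{Lem:DeJI}) to exclude a twisting power acting trivially on the basis, and implicitly again through the elementary fact that the non-trivial powers of an element of $\PGLn{2}$ share their fixed points on $\mathbb{P}^1$. Everything else is a bookkeeping application of lemmas already at hand. One could alternatively split into the cases ``$g$ twists some fibre'' — where Proposition~\ref{Prp:DescriptionTwistingElementsInfinite} directly produces the $g$-equivariant morphism onto $S_6$ — and ``$g$ twists no fibre''; but the non-twisting case still has to be pushed down to a minimal triple (it need not itself be minimal), so the uniform reduction above seems the most economical.
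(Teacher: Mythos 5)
Your proof is correct and follows essentially the same route as the paper: reduce to a minimal triple, dispose of the smooth case, identify $S$ with the del Pezzo surface of degree $6$, and conclude via Lemma~\ref{Lem:DP6CBPasMin}. The only difference is that you re-derive the bound of two singular fibres by hand (carefully handling the case where only a power of $g$ twists), whereas the paper simply cites Proposition~\ref{Prp:DescriptionTwistingElementsInfinite}; your inlined version is, if anything, slightly more scrupulous about verifying that proposition's twisting hypothesis.
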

\begin{proof}
Assume that the triple $(<g>,S,\pi)$ is minimal. If the fibration is smooth, we are done. Otherwise, a birational morphism conjugates~$g$ to an automorphism $\kappa_{\alpha,\beta}\in\Aut(S_6)$ of a conic bundle on the del Pezzo surface of degree $6$ (Lemma~\ref{Prp:DescriptionTwistingElementsInfinite}). We conclude by using Lemma~\ref{Lem:DP6CBPasMin}.
\end{proof}
\section{The example $\Cs{24}$}
\label{Sec:ExampleCs24} 
We now give the most important example of this paper. This is the only finite Abelian subgroup of the Cremona group which is not conjugate to a group of automorphisms of\hspace{0.2 cm}$\Pn$ or $\mathbb{P}^1\times\mathbb{P}^1$ but whose non-trivial elements do not fix any curve of positive genus (Theorem~\ref{Thm:NonCyclic}).

\bigskip

Let $S_6 \subset \Pn \times \Pn$ be the del Pezzo surface of degree $6$ (see Section~\ref{Sec:DelPezzo6}) defined by 
\begin{center}$S_6=\{ \big((x:y:z) , (u:v:w)\big)\ | \ ux=yv=zw\};$\end{center} we keep the notation of Section~\ref{Sec:DelPezzo6}. We denote by $\eta:\hat{S_4}\rightarrow S_6$ the blow-up of $A_4,A_5 \in S_6$ defined by
\begin{center}$\begin{array}{ccccccc}A_4=&\big(&(0:1:1)&,&(1:0:0)&\big)& \in D_{23}, \vspace{0.1 cm}\\
A_5=&\big(&(1:0:0)&,&(0:1:-1)&\big)& \in E_1.\end{array}$\end{center}

We again denote by $E_1,E_2,E_3,D_{12},D_{13},D_{23}$ the total pull-backs by $\eta$ of these divisors of $S_6$. We denote by $\widetilde{E_1}$ and $\widetilde{D_{23}}$ the strict pull-backs of $E_1$ and $D_{23}$ by $\eta$. (Note that for the other exceptional divisors, the strict and total pull-backs are the same.) Let us illustrate the situations on the surfaces $S_6$ and $\hat{S_4}$ respectively:
\begin{center}
\includegraphics[width=90.00mm]{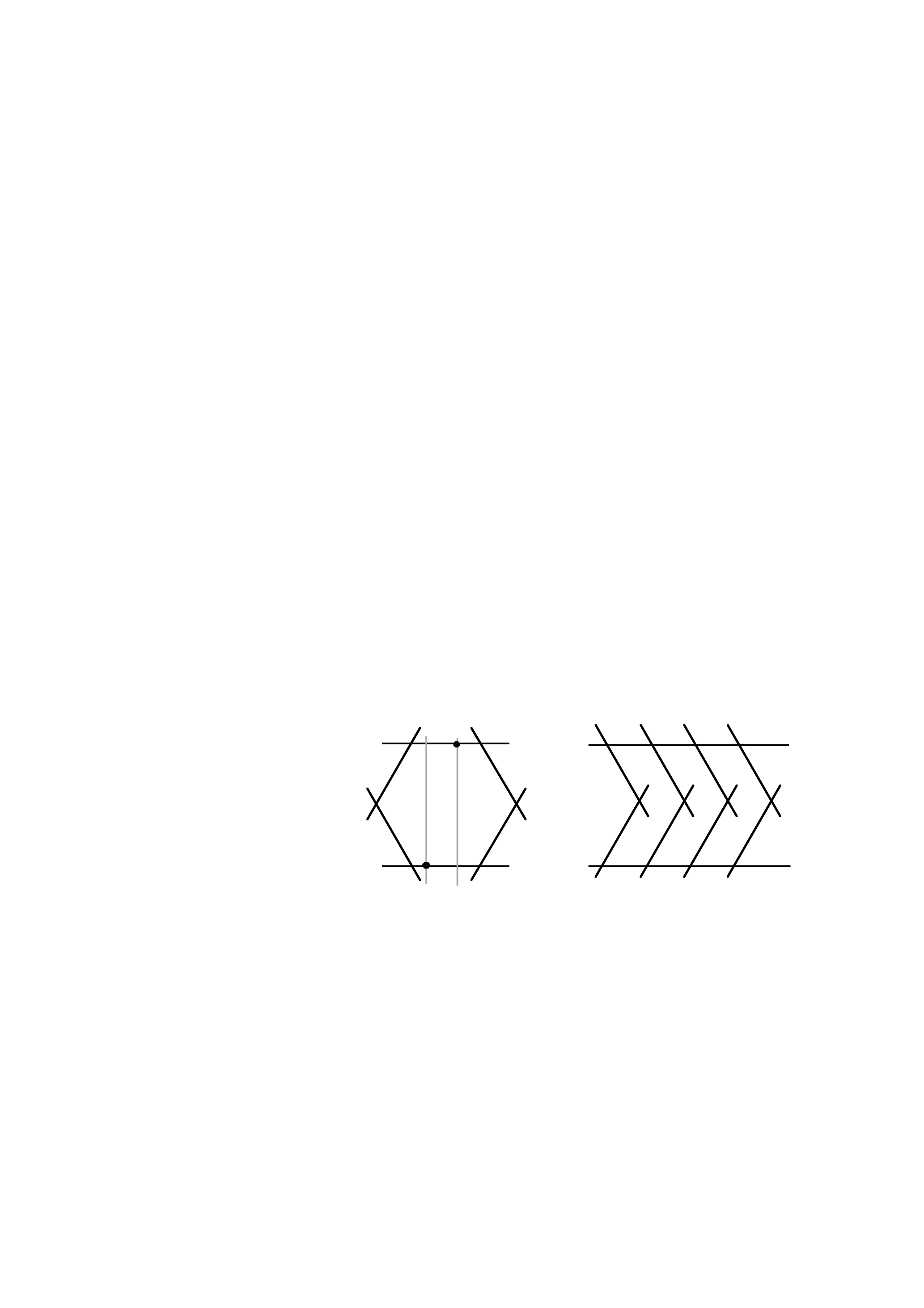}%
\drawat{-63.78mm}{30mm}{$D_{23}$}%
\drawat{-60.8mm}{23.25mm}{$E_3$}%
\drawat{-60.93mm}{8.12mm}{$D_{13}$}%
\drawat{-64.07mm}{1.0mm}{$E_1$}%
\drawat{-88mm}{8.12mm}{$D_{12}$}%
\drawat{-87.0mm}{23.25mm}{$E_2$}%
\drawat{-74.47mm}{6.16mm}{$A_5$}%
\drawat{-68.49mm}{25.03mm}{$A_4$}\drawat{-8.94mm}{30mm}{$\widetilde{D_{23}}$}%
\drawat{-8mm}{0.5mm}{$\widetilde{E_1}$}%
\drawat{-35.66mm}{22.6mm}{$E_2$}%
\drawat{-26.96mm}{22.6mm}{$D_{15}$}%
\drawat{-18.14mm}{22.6mm}{$E_4$}%
\drawat{-9.55mm}{22.6mm}{$E_3$}%
\drawat{-38mm}{6.9mm}{$D_{12}$}%
\drawat{-28.8mm}{6.9mm}{$E_5$}%
\drawat{-21mm}{6.9mm}{$D_{14}$}%
\drawat{-11.9mm}{6.9mm}{$D_{13}$}\end{center}

Let $\pi_1$ denote the morphism $S_6 \rightarrow \mathbb{P}^1$ defined in Section~\ref{Sec:DelPezzo6}. The morphism $\pi=\pi_1 \circ \eta$ gives the surface $\hat{S_4}$ a conic bundle structure $(\hat{S_4},\pi)$. It has $4$ singular fibres, which are the fibres of $(-1:1)$, $(0:1)$, $(1:1)$ and $(1:0)$.
We denote by $f$ the divisor of $\hat{S_4}$ corresponding to a fibre of $\pi$ and set $E_4=\eta^{-1}(A_4)$, $E_5=\eta^{-1}(A_5)$. Note that $E_4$ is one of the components of the singular fibre of $(1:1)$; we denote by $D_{14}=f-E_4$ the other component, which is the strict pull-back by $\eta$ of $\pi_1^{-1}(1:1)$. Similarly, we denote by $D_{15}$ the divisor $f-E_5$, so that the singular fibre of $(-1:1)$ is $\{E_5,D_{15}\}$.

\begin{lemm}\label{Lem:10irreduciblecurves}
On the surface $\hat{S_4}$ there are exactly $10$ irreducible rational smooth curves of negative self-intersection. Explicitly, the $8$ curves \begin{center}$E_2,E_3,E_4,E_5,D_{12},D_{13},D_{14},D_{15}$\end{center}
have self-intersection $-1$ and the two curves \begin{center}$\widetilde{E_1}=E_1-E_5$ and $\widetilde{D_{23}}=D_{23}-E_4$\end{center} have self-intersection $-2$.
 \end{lemm}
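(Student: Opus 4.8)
The plan is to work in $\Pic(\hat{S_4})$ with the basis attached to the composed morphism $\hat{S_4}\to S_6\to\Pn$. This exhibits $\hat{S_4}$ as the blow-up of $\Pn$ at the five points $A_1,A_2,A_3$ (the coordinate points), $A_4=(0:1:1)$, which lies on the line $\{x=0\}=\overline{A_2A_3}$, and $A_5$, which lies on $E_1\subset S_6$ and so is infinitely near $A_1$. Writing $\ell$ for the pull-back of a line and $e_1,\dots,e_5$ for the classes of the total transforms of the exceptional divisors, one has the diagonal intersection form $\ell^2=1$, $e_i^2=-1$, $\ell\cdot e_i=e_i\cdot e_j=0$ ($i\ne j$), and $K_{\hat{S_4}}=-3\ell+\sum_i e_i$. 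First I would record the classes of the ten candidate curves: $E_j=e_j$ for $j=2,3,4,5$, $D_{1j}=f-E_j=\ell-e_1-e_j$ for $j=2,3,4,5$ (where $f=\ell-e_1$), $\widetilde{E_1}=e_1-e_5$ and $\widetilde{D_{23}}=\ell-e_2-e_3-e_4$. Then I would check directly that each is an irreducible curve of the claimed negative self-intersection: the $e_j$ are exceptional; the $D_{1j}$ are strict transforms of the lines through $A_1$ meeting $A_j$ (resp. meeting the infinitely near point $A_5$), and using the explicit equations — e.g. the lines $\{y=z\}$ and $\{y+z=0\}$ — one sees these lines avoid the remaining base points; and $\widetilde{E_1},\widetilde{D_{23}}$ are the strict transforms of $E_1,D_{23}$, whose self-intersection drops to $-2$ because $A_5\in E_1$ and $A_4\in D_{23}$. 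Note in passing that the total transforms $e_1$ of $E_1$ and $\ell-e_2-e_3$ of $D_{23}$ are \emph{reducible} ($=\widetilde{E_1}+E_5$ and $=\widetilde{D_{23}}+E_4$), so the only irreducible negative curves contracted over $A_1$ are $\widetilde{E_1}$ and $E_5$.

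For the converse I would argue that there are no further negative curves. Let $C$ be an irreducible curve with $C^2<0$ which is not one of the ten listed curves. Since $C$ is distinct from each of them, $C\cdot\Gamma\ge 0$ for all ten curves $\Gamma$. If $C$ is contracted by $\hat{S_4}\to\Pn$ it is one of $E_2,E_3,E_4,E_5,\widetilde{E_1}$, a contradiction; otherwise write $C=a\ell-\sum_i b_ie_i$ with $a\ge 1$ and all $b_i\ge 0$. Then $C\cdot\widetilde{D_{23}}\ge 0$ and $C\cdot D_{15}\ge 0$ give $b_2+b_3+b_4\le a$ and $b_1+b_5\le a$, hence $\sum_i b_i\le 2a$; while $C^2\le-1$ and the adjunction inequality $C^2+C\cdot K_{\hat{S_4}}=2p_a(C)-2\ge-2$ give $\sum_i b_i^2\ge a^2+1$ and $\sum_i b_i^2-\sum_i b_i\le a^2-3a+2$, whence $\sum_i b_i\ge 3a-1$. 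Therefore $3a-1\le 2a$, i.e. $a=1$, and then $\sum_i b_i=\sum_i b_i^2=2$, forcing $C=\ell-e_i-e_j$ for some $i<j$. Since $C\cdot\widetilde{D_{23}}\ge 0$ allows at most one of $e_2,e_3,e_4$ to occur, and $C\cdot\widetilde{E_1}\ge 0$ with $C\cdot D_{15}\ge 0$ forces $b_5=0$, we get $C\in\{D_{12},D_{13},D_{14}\}$, contradicting the choice of $C$. This gives exactly the ten curves, four of which have self-intersection $-1$ from the exceptional divisors, four from the $D_{1j}$, and two of which have self-intersection $-2$.

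The routine but essential work is concentrated in the first paragraph: determining the tangent direction at $A_1$ picked out by $A_5=((1:0:0),(0:1:-1))$ and checking which lines through $A_1$ pass through it, confirming the collinearity $A_2,A_3,A_4\in\{x=0\}$, and verifying the various non-incidences between the base points and the negative curves of $S_6$. These facts are precisely what make $\hat{S_4}$ a weak del Pezzo surface of degree $4$ with exactly two $(-2)$-curves, so the statement could alternatively be deduced from the general theory of weak del Pezzo surfaces; I would prefer the direct intersection-theoretic computation above, and this bookkeeping with the infinitely near point $A_5$ is the step I expect to require the most care.
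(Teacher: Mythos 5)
Your proof is correct, and it works in the same Picard basis as the paper's (viewing $\hat{S_4}$ as the blow-up of $\Pn$ at $A_1,A_2,A_3,A_4$ and the point $A_5$ infinitely near $A_1$, with the same list of ten classes), but the exhaustiveness step is closed by a genuinely different mechanism. The paper uses the hypothesis that $C$ is smooth and rational to get the two \emph{equalities} $\sum a_i^2=m^2+r$ and $\sum a_i=3m+r-2$, enumerates the small cases $(m,r)\in\{(1,1),(2,1),(1,2)\}$ by hand (using the collinearity of $A_2,A_3,A_4$ to exclude the conic case $m=2$), and kills $m,r\ge 2$ by Cauchy--Schwarz. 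You instead keep only the \emph{inequalities} $p_a(C)\ge 0$ and $C^2\le -1$ and feed in the extra geometric input $C\cdot\widetilde{D_{23}}\ge 0$, $C\cdot D_{15}\ge 0$, $C\cdot\widetilde{E_1}\ge 0$, which forces $a=1$ at once and removes both the Cauchy--Schwarz step and most of the case analysis; your final appeal to $C\cdot\widetilde{E_1}=b_1-b_5\ge 0$ is exactly the proximity inequality needed to discard the spurious classes $\ell-e_5-e_j$. Your route proves slightly more (every irreducible curve of negative self-intersection is among the ten, not only the smooth rational ones), at the cost of first establishing irreducibility of $\widetilde{D_{23}}$, $D_{15}$ and $\widetilde{E_1}$ --- which your first paragraph does, and which the paper needs anyway for the easy direction. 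The only points worth spelling out are the justification of $b_i\ge 0$ (each $e_i$ is an effective class supported on exceptional curves distinct from $C$; for $i=1$ one uses $e_1=\widetilde{E_1}+e_5$) and of $a\ge 1$ ($\ell$ is nef and $C$ is not contracted); both are routine.
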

 \begin{proof}
The difficult part is to show that every rational irreducible smooth curve of negative self-intersection is one of the ten given above. Let $C$ be such a curve. 

Denote by $L$ the pull-back of a general line of $\Pn$ by the blow-up $\pr_1\circ\eta:\hat{S_4}\rightarrow \Pn$ of the five points. If $C$ is collapsed by $\pr_1\circ\eta$, then $C$ is one of the curves $\widetilde{E_1},E_2,E_3,E_4,E_5$. Otherwise, $C=mL-\sum_{i=1}^5 a_i E_i$, where $m,a_1,...,a_5$ are non-negative integers, and $m>0$. Since $C$ is rational we have $C\cdot(C+K_{\hat{S_4}})=-2$, and by hypothesis $C^2=-r$ for some positive integer $r$. The relations $C^2=-r$ and $C\cdot K_{\hat{S_4}}=r-2$ imply (since $K_{\hat{S_4}}=-3L+\sum_{i=1}^5 E_i$) the equations
\begin{equation}\label{eq:Arith}\begin{array}{rcl}
\sum_{i=1}^5 a_i^2&=&m^2+r, \vspace{0.1 cm}\\
\sum_{i=1}^5 a_i& =& 3m+r-2.\end{array}\end{equation}
If $m=r=1$, we find that $C$ is the pull-back of a line passing through two of the points, so $C=D_{1i}$ for some $i\in\{2,...,5\}$. If $m=2$ and $r=1$, $C$ is the pull-back of a conic passing through each blown-up point.
The configuration of the points eliminates this possibility. If $m=1$ and $r=2$, we obtain a line passing through three blown-up points, so $C=\widetilde{D_{23}}$. 

We now prove that if there is no integral solution to (\ref{eq:Arith}) for $m,r\geq 2$.
Since $(\sum_{i=1}^5 a_i)^2\leq 5(\sum_{i=1}^5 a_i^2)$ (by the Cauchy-Schwarz inequality with the vectors $(1,...,1)$ and $(a_1,...,a_5)$), we obtain $(3m+(r-2))^2\leq 5(m^2+r)$, and this gives
\begin{center}$\begin{array}{rcl}
4m^2-10+(r-2)\cdot(6 m+r-7)&\leq& 0.\\
\end{array}$\end{center}
But this is not possible if $m,r\geq 2$, since in this case $4m^2>10$ and $6m+r>7$.
 \end{proof}
Note that $(K_{\hat{S_4}})^2=4$, which is why we denote this surface by $\hat{S_4}$; the hat is here because the surface is not a del Pezzo surface, since it contains irreducible divisors of self-intersection $-2$. 
 \begin{coro}\label{Cor:S4onefib}
 There is only one conic bundle structure on $\hat{S_4}$, which is the one induced by $\pi=\pi_1\circ \eta$.
 \end{coro}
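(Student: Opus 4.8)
The plan is to deduce this from Lemma~\ref{Lem:10irreduciblecurves} together with Lemma~\ref{Lem:SmoothHirz} and Lemma~\ref{Prp:IskWithoutG}. A conic bundle structure $\pi':\hat{S_4}\rightarrow\mathbb{P}^1$ on a non-minimal rational surface must have at least one singular fibre (otherwise, by Lemma~\ref{Lem:SmoothHirz}, $\hat{S_4}$ would be a Hirzebruch surface, which it is not, since it contains too many curves of negative self-intersection and, e.g., is not even a del Pezzo surface nor minimal). Each singular fibre of $\pi'$ consists of two $(-1)$-curves meeting transversally. So to control the possible $\pi'$, I first want to list which of the ten negative curves of Lemma~\ref{Lem:10irreduciblecurves} can occur as components of singular fibres of a conic bundle: they must be $(-1)$-curves, ruling out $\widetilde{E_1}$ and $\widetilde{D_{23}}$, and two components $F_1,F_2$ of the same singular fibre must satisfy $F_1\cdot F_2=1$.

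Next I would set up the intersection combinatorics among the eight $(-1)$-curves $E_2,E_3,E_4,E_5,D_{12},D_{13},D_{14},D_{15}$ and the two $(-2)$-curves, using the description of $\hat{S_4}$ as the blow-up of the five points of $\mathbb{P}^2$ (or equivalently reading it off from $\pi=\pi_1\circ\eta$). The key point is that $\widetilde{E_1}$ and $\widetilde{D_{23}}$, being $(-2)$-curves, are sections or multisections of any conic bundle but cannot lie in a fibre; since $\widetilde{E_1}^2=-2$ and a section of $\pi'$ has self-intersection $\geq -m$ where $\hat{S_4}\rightarrow\mathbb{F}_m$, this already forces $m\geq 2$ for any auxiliary contraction, hence (Lemma~\ref{Prp:IskWithoutG}) gives strong numerical constraints: with $-n$ the minimal self-intersection of a section we get $n\geq 2$, so $r\geq 2n\geq 4$ singular fibres if there are two sections of self-intersection $-n$; but actually $(K_{\hat{S_4}})^2=4$ forces exactly $4$ singular fibres in that case. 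I would then argue that the only way to partition (a subset of) the six remaining $(-1)$-curves $E_2,E_3,D_{12},D_{13}$ together with $E_4,D_{14},E_5,D_{15}$ into pairs $\{F,f-F\}$ with the correct intersection pattern, and such that $\widetilde{E_1},\widetilde{D_{23}}$ become sections of self-intersection $-2$, is exactly the one giving $\pi_1\circ\eta$. Concretely, $E_4\cdot\widetilde{D_{23}}=1$ and $E_5\cdot\widetilde{E_1}=1$ pin down which curves meet the two $(-2)$-curves, and this rigidity leaves no freedom.

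The main obstacle I expect is the bookkeeping: showing that no \emph{other} assignment of the ten curves yields a conic bundle — in particular ruling out conic bundle structures whose singular-fibre components are not among these ten curves is immediate (fibre components of a conic bundle are $(-1)$-curves, hence among the eight listed), but ruling out alternative pairings and alternative choices of the class $f$ of the fibre requires checking the intersection form carefully. A clean way to finish is: the class $f$ of a general fibre of $\pi'$ satisfies $f^2=0$, $f\cdot K_{\hat{S_4}}=-2$, and $f$ is effective with $f=F_1+F_2$ for the two components of any singular fibre; expanding $f$ in the basis $L,E_1,\dots,E_5$ and imposing that $f$ be nef with the right intersections with the ten negative curves leaves only $f=L-E_1$ (the pull-back of the pencil of lines through $A_1$), which is precisely the fibre class of $\pi=\pi_1\circ\eta$. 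Since a conic bundle structure is determined by its fibre class, uniqueness follows.

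\begin{proof}
Let $\pi':\hat{S_4}\rightarrow\mathbb{P}^1$ be a morphism defining a conic bundle structure, and let $f'\in\Pic{\hat{S_4}}$ be the class of a general fibre. Since $\hat{S_4}$ is not a Hirzebruch surface (it contains curves of self-intersection $-2$), Lemma~\ref{Lem:SmoothHirz} shows that $\pi'$ has at least one singular fibre; let $\{F_1,F_2\}$ be one, so that $F_1^2=F_2^2=-1$, $F_1\cdot F_2=1$ and $f'=F_1+F_2$. As the components of every singular fibre of a conic bundle are smooth rational curves of self-intersection $-1$, they are among the eight curves $E_2,E_3,E_4,E_5,D_{12},D_{13},D_{14},D_{15}$ of Lemma~\ref{Lem:10irreduciblecurves}; the two $(-2)$-curves $\widetilde{E_1}$ and $\widetilde{D_{23}}$ are never contained in a fibre of $\pi'$.

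Using the description of $\hat{S_4}$ as the blow-up $\pr_1\circ\eta:\hat{S_4}\rightarrow\Pn$ of the five points $A_1,\dots,A_5$, write classes in the basis $L,E_1,\dots,E_5$, where $L$ is the pull-back of a line and $K_{\hat{S_4}}=-3L+\sum_{i=1}^5 E_i$. One has $f'^2=0$ and $f'\cdot K_{\hat{S_4}}=-2$. Writing $f'=mL-\sum_{i=1}^5 a_i E_i$ these give $\sum a_i^2=m^2$ and $\sum a_i=3m-2$. The curve $\widetilde{E_1}=E_1-E_5$ and the curve $\widetilde{D_{23}}=D_{23}-E_4=L-E_2-E_3-E_4$ are not contracted by $\pi'$, hence meet a general fibre; thus $f'\cdot\widetilde{E_1}\geq 0$ and $f'\cdot\widetilde{D_{23}}\geq 0$, i.e.\ $a_5-a_1\geq 0$ and $m-a_2-a_3-a_4\geq 0$. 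Moreover $f'$ is nef, so $f'\cdot C\geq 0$ for each of the ten negative curves of Lemma~\ref{Lem:10irreduciblecurves}; in particular $f'\cdot E_i\geq 0$ gives $a_i\geq 0$ for $i=2,\dots,5$, and $f'\cdot D_{1i}\geq 0$ gives $m\geq a_1+a_i$ for $i=2,\dots,5$.

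If $m=0$ then $\sum a_i=-2<0$, impossible since the $a_i$ with $i\geq 2$ are $\geq 0$ and $a_1\leq a_5$. If $m\geq 2$, then from $m\geq a_1+a_i$ for $i=2,3,4,5$ and $a_1\leq a_5\leq m$ one gets $\sum_{i=1}^5 a_i\leq a_1+4m-4a_1$ only in restricted ranges; more directly, $\sum a_i^2=m^2$ with $a_i\geq 0$ for $i\geq 2$ and $\sum a_i=3m-2$ forces, by Cauchy--Schwarz applied to $(a_1,\dots,a_5)$, $(3m-2)^2\leq 5m^2$, i.e.\ $4m^2-12m+4\leq 0$, so $m\leq 2$; and $m=2$ gives $\sum a_i^2=4$, $\sum a_i=4$, whence each $a_i\in\{0,1\}$ and exactly four of them equal $1$. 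But then $f'=2L-E_{i_1}-E_{i_2}-E_{i_3}-E_{i_4}$ is the class of the pencil of conics through four of the five points; the configuration of $A_1,\dots,A_5$ (as in the proof of Lemma~\ref{Lem:10irreduciblecurves}) shows no such pencil is base-point-free on $\hat{S_4}$, i.e.\ $f'$ is not nef (some $D_{ij}$ or $\widetilde{D_{23}}$ or $\widetilde{E_1}$ meets it negatively), a contradiction. Hence $m=1$, so $\sum a_i^2=1$ and $\sum a_i=1$: exactly one $a_i$ equals $1$ and the rest vanish. Since $a_i\geq 0$ for $i\geq 2$ but $a_1\leq a_5$, and $f'\cdot\widetilde{E_1}=a_5-a_1\geq 0$ together with $f'$ nef forces the nonzero coefficient to be $a_1$: indeed if $a_j=1$ for some $j\in\{2,3,4\}$ then $f'\cdot\widetilde{D_{23}}=1-a_2-a_3-a_4=0$ is allowed, but $f'\cdot E_j=-1<0$, impossible; if $a_5=1$ then $f'\cdot E_5=-1<0$, impossible. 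Therefore $a_1=1$ and $f'=L-E_1$.

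Thus the fibre class of any conic bundle structure on $\hat{S_4}$ equals $L-E_1$, which is the class of the fibre of $\pi=\pi_1\circ\eta$ (the pull-back of the pencil of lines of $\Pn$ through $A_1$). Since a conic bundle structure is determined by the class of its general fibre (the morphism to $\mathbb{P}^1$ is given by the associated pencil), $\pi'$ coincides with $\pi$ up to an automorphism of $\mathbb{P}^1$. Hence $\pi=\pi_1\circ\eta$ is the unique conic bundle structure on $\hat{S_4}$.
\end{proof}
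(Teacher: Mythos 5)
Your overall strategy (pin down the fibre class $f'$ in $\Pic{\hat{S_4}}$) is reasonable and your setup is correct up to and including the equations $\sum a_i^2=m^2$, $\sum a_i=3m-2$. But the execution has a genuine gap: the conditions you actually impose (effective, nef, $f'^2=0$, $f'\cdot K_{\hat{S_4}}=-2$) do \emph{not} single out $L-E_1$. Since $E_j^2=-1$, one has $f'\cdot E_j=+a_j$ (as you correctly use when deriving $a_i\geq 0$), so your later claim that $a_j=1$ forces $f'\cdot E_j=-1<0$ is a sign error that contradicts your own earlier step; likewise $f'\cdot\widetilde{E_1}=a_1-a_5$, not $a_5-a_1$. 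As a result the classes $L-E_2$, $L-E_3$, $L-E_4$ pass every test in your proof (check: $L-E_2$ meets all ten negative curves of Lemma~\ref{Lem:10irreduciblecurves} non-negatively), and so does e.g.\ $2L-E_1-E_3-E_4-E_5$, so your $m=2$ exclusion (``$f'$ is not nef'') is also false as stated. These classes are honest base-point-free pencils of rational curves; what disqualifies them as conic bundles is that some member is reducible with three components, one of them a $(-2)$-curve ($\widetilde{E_1}$ or $\widetilde{D_{23}}$) --- a fact invisible to the numerical conditions you use. You must invoke the structure of the singular fibres, not just nefness.

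The repair is short and already contained in your first paragraph: since $\pi'$ has a singular fibre, $f'=F_1+F_2$ with $F_1,F_2$ among the eight $(-1)$-curves and $F_1\cdot F_2=1$; a direct check of the intersection matrix shows the only such pairs are $\{E_i,D_{1i}\}$ for $i=2,\dots,5$, each summing to $L-E_1$. The paper's own proof is a different, even quicker count: a conic bundle on a surface with $(K_S)^2=4$ has exactly $8-4=4$ singular fibres, hence $8$ pairwise distinct $(-1)$-curves among its fibre components; as $\hat{S_4}$ carries exactly eight $(-1)$-curves, these must all occur, and $f=\frac{1}{4}\sum_{i=1}^{8}C_i$ is forced.
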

 \begin{proof}
 Since $(K_{\hat{S_4}})^2=4$, the number of singular fibres of any conic bundle is $4$, and thus it consists of eight $(-1)$-curves $C_1,...,C_8$. The divisor of a fibre of the conic bundle is thus equal to $\frac{1}{4}\sum_{i=1}^8 C_i$. Since there are exactly eight $(-1)$-curves on $\hat{S_4}$, there is no choice.
 \end{proof}

The group of automorphisms of $\hat{S_4}$ that leave every curve of negative self-intersection invariant is isomorphic to $\K^{*}$ and corresponds to automorphisms of\hspace{0.2 cm}$\Pn$ of the form $(x:y:z)\mapsto (\alpha x:y:z)$, for $\alpha \in \K^{*}$. Indeed, such automorphisms are the lifts of automorphisms of $S_6$ leaving invariant every exceptional curve (which are of the form $\big((x:y:z) , (u:v:w)\big) \mapsto \big((x:\alpha y:\beta z), (u:\alpha^{-1} v:\beta^{-1} w)\big)$, for $\alpha,\beta \in \K^{*}$) and which fix both points $A_4$ and $A_5$. 

\begin{defi}
\label{Def:Cs24}
Let $h_1$ and $h_2$ be the following birational transformations of $\mathbb{P}^2$:
\begin{center}\begin{tabular}{l}
$h_1:(x:y:z)\dasharrow (yz:xy:-xz)$ \\
$h_2:(x:y:z)\dasharrow ( yz(y-z):xz(y+z):xy(y+z))$ \end{tabular}\end{center}
and denote respectively by $g_1$, $g_2$ the lift of these elements on $\hat{S_4}$ and by $\Cs{24}$ the group generated by $g_1$ and $g_2$. 
\end{defi}
The following lemma shows that $\Cs{24}\subset\Aut(\hat{S_4},\pi)$ and describes some of the properties of this group.

\begin{lemm}
\label{Lem:Cs24Properties}
Let $h_1,h_2, g_1, g_2, \Cs{24}$ be as in Definition~$\ref{Def:Cs24}$. Then:
\begin{enumerate}
\item[\upshape 1.]
The group $\Cs{24}$ is a group of automorphisms of $\hat{S_4}$ that preserve the conic bundle $(\hat{S_4},\pi)$, i.e.\ $\Cs{24}\subset \Aut(\hat{S_4},\pi)$.
\item[\upshape 2.]
The action of $g_1$ and $g_2$ on the set of irreducible rational curves of negative self-intersection is respectively:
\begin{center}$(\widetilde{E_1}\ \widetilde{D_{23}})(E_2\ D_{12})(E_3\ D_{13})(E_4\ E_5)(D_{14}\ D_{15})$, \vspace{0.1 cm}\\
$(\widetilde{E_1}\ \widetilde{D_{23}})(E_2\ D_{13})(E_3\ D_{12})(E_4\ D_{14})(E_5\ D_{15})$.\end{center}
In particular, both $g_1$ and $g_2$ twist the conic bundle $(\hat{S_4},\pi)$.
\item[\upshape 3.]
Both $g_1$ and $g_2$ are elements of order $4$ and \begin{center}$(h_1)^2=(h_2)^2=(x:y:z)\mapsto (-x:y:z)$.\end{center} Thus $(g_1)^2=(g_2)^2 \in \ker \overline{\pi}$ is an automorphism of $\hat{S_4}$ which leaves every divisor of negative self-intersection invariant.
\item[\upshape 4.]
The group $\Cs{24}$ is isomorphic to $\Z{2}\times\Z{4}$ and the action on the basis of the fibration $\pi$ yields the exact sequence
\begin{center}$1\rightarrow<(h_1)^2>\cong\Z{2}\rightarrow \Cs{24}\stackrel{\overline{\pi}}{\rightarrow} <\overline{\pi}(h_1),\overline{\pi}(h_2)>\cong (\Z{2})^2\rightarrow 1.$\end{center}
\item[\upshape 5.]
The group $\Cs{24}$ contains no involution that twists the conic bundle $(\hat{S_4},\pi)$. In particular, no element of $\Cs{24}$ fixes a curve of positive genus.
\item[\upshape 6.]
The pair $(\Cs{24},\hat{S_4})$ and the triple $(\Cs{24},\hat{S_4},\pi)$ are both minimal.
\end{enumerate}\end{lemm}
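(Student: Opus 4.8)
The plan is to verify the six assertions in order, doing the geometry on $S_6$ and $\hat{S_4}$ rather than fighting with the birational formulas on $\Pn$ directly. First I would compute the lifts $g_1,g_2$ explicitly on $S_6$: the map $h_1:(x:y:z)\dasharrow(yz:xy:-xz)$ is, up to the diagonal twist $\Diag{1}{1}{-1}$, the standard quadratic transformation composed with a permutation of $A_2,A_3$, so its lift to $S_6\subset\Pn\times\Pn$ is an automorphism of $S_6$; one checks using the product description $\{ux=yv=zw\}$ that $g_1$ sends $A_4=((0{:}1{:}1),(1{:}0{:}0))$ to $A_5=((1{:}0{:}0),(0{:}1{:}{-}1))$ and conversely, hence lifts further to an automorphism of $\hat S_4$. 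The map $h_2$ is a de Jonqui\`eres transformation preserving the pencil of lines through $A_1$ (equivalently $\pi_1$); its base points are $A_1$ together with points on $D_{23}$ and $E_1$, which one identifies with $A_4$ and $A_5$, so $g_2$ is likewise an automorphism of $\hat S_4$. This gives assertion~1 once we know both preserve $\pi=\pi_1\circ\eta$: $h_1$ sends lines through $A_1$ to lines through $A_1$ (it fixes $A_1$ and the exceptional behaviour is at $A_2,A_3$) and $h_2$ preserves this pencil by construction, so both descend to automorphisms of the base $\mathbb{P}^1$ of $\pi$.

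For assertion~2, I would track the action on the ten negative curves listed in Lemma~\ref{Lem:10irreduciblecurves}. Since $g_1,g_2$ permute these ten curves (an automorphism preserves self-intersection, so the two $(-2)$-curves $\widetilde{E_1},\widetilde{D_{23}}$ are permuted among themselves and the eight $(-1)$-curves among themselves), it suffices to compute a few images: on $S_6$ the lift of $h_1$ realises the permutation $(E_2\ E_3)(D_{12}\ D_{13})$ coming from swapping $A_2,A_3$ together with the ``quadratic'' permutation, and one reads off the effect on $E_4,E_5,D_{14},D_{15}$ from $g_1(A_4)=A_5$ and $f-E_4\mapsto f-E_5$; similarly for $h_2$. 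Both permutations exchange $\widetilde{E_1}$ with $\widetilde{D_{23}}$, and since $\widetilde{E_1}$ meets the component $E_5$ of the singular fibre over $(-1{:}1)$ while $\widetilde{D_{23}}$ meets $E_4$ in the fibre over $(1{:}1)$, this forces both $g_1$ and $g_2$ to twist $(\hat S_4,\pi)$ (indeed to exchange those two singular fibres). Assertion~3 is then a direct computation: $h_1^2$ and $h_2^2$ are both $(x{:}y{:}z)\mapsto(-x{:}y{:}z)$, whose lift fixes every $A_i$ and hence every negative curve, and since this is the nontrivial element of the $\K^{*}$ described before Definition~\ref{Def:Cs24}, $g_i$ has order~$4$.

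For assertions~4 and~5 I would argue as follows. From~3, $(g_1)^2=(g_2)^2$ generates a central $\Z{2}$ lying in $\ker\overline\pi$; the images $\overline\pi(h_1),\overline\pi(h_2)$ are the two involutions $(x_1{:}x_2)\mapsto(x_2{:}x_1)$ (from swapping the fibres over $0,\infty$... here over $(0{:}1),(1{:}0)$) and $(x_1{:}x_2)\mapsto(-x_1{:}x_2)$ (swapping $(-1{:}1),(1{:}1)$), which together generate $(\Z{2})^2$ in $\PGLn{2}$; the resulting extension $1\to\Z{2}\to\Cs{24}\to(\Z{2})^2\to1$ has an element of order~$4$ and is abelian (the generators commute, as one checks on the explicit formulas or from the fact that $h_1h_2$ and $h_2h_1$ agree), hence $\Cs{24}\cong\Z{2}\times\Z{4}$. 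For assertion~5, note that every element of $\Cs{24}$ is either in $\ker\overline\pi=\langle(g_1)^2\rangle$ (which acts trivially on $\Pic(\hat S_4)$, so twists nothing) or maps to an involution of the base, hence by Proposition~\ref{Prp:DescriptionTwistingElementsFinite} or directly can twist at most two singular fibres; and none of the eight elements with $\overline\pi\neq1$ has order~$2$, because any such element squares to $(g_1)^2\neq1$. So $\Cs{24}$ contains no twisting involution, and by Corollary~\ref{Cor:CurveFixDeJ} no element fixes a curve of positive genus. Finally, for assertion~6: by Lemma~\ref{Lem:MinTripl} the triple $(\Cs{24},\hat S_4,\pi)$ is minimal iff every singular fibre is twisted by some element of $\Cs{24}$ — the four singular fibres sit over $(0{:}1),(1{:}0),(-1{:}1),(1{:}1)$, and $g_1$ exchanges the fibres over $(-1{:}1)$ and $(1{:}1)$ while also acting by the base-involution exchanging $(0{:}1)$ and $(1{:}0)$; checking which of $g_1,g_2,g_1g_2$ twists which fibre (reading from the permutations in assertion~2, e.g. $g_1$ twists the fibre $\{E_4,D_{14}\}$ since it sends $E_4\mapsto E_5$, not $E_4\mapsto D_{14}$ — so one must be careful here) shows all four are twisted; then since $\Cs{24}$ preserves the set of the two $(-2)$-curves and swaps them, no $\Cs{24}$-equivariant contraction of a negative curve is possible, giving minimality of the pair $(\Cs{24},\hat S_4)$ as well.

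The main obstacle I expect is assertion~6, specifically pinning down exactly which element of $\Cs{24}$ twists which of the four singular fibres: the base-action of $g_1$ already pairs up the four base points into $\{(0{:}1),(1{:}0)\}$ and $\{(-1{:}1),(1{:}1)\}$, and one has to make sure that the two fibres in each pair are genuinely \emph{twisted} (components exchanged) and not merely swapped as fibres — this is where the explicit permutations from assertion~2 must be read correctly, distinguishing $E_4\mapsto E_5$ (twist of the $(1{:}1)$-fibre combined with a fibre-swap) from $E_4\mapsto D_{14}$ (twist in place). A secondary nuisance is assertion~4: verifying that the extension splits off a $\Z{4}$ rather than being, say, $\Z{8}$ or $\Z{2}^3$, which is settled by assertion~3 (order exactly~$4$) plus commutativity, but the commutativity itself requires either a short formula check for $h_1h_2=h_2h_1$ in $\Bir(\Pn)$ or, more cleanly, observing that $g_1g_2$ and $g_2g_1$ induce the same permutation of the ten negative curves and the same base-automorphism and differ by an element of the $1$-dimensional torus $\K^{*}$ which must then be trivial.
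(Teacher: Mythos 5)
Your overall strategy matches the paper's (lift to $S_6$, track the permutation of the ten negative curves, deduce the group structure); the paper differs mainly in that it reaches $g_2$ via the quadratic involution $h_1h_2$ rather than analysing the cubic map $h_2$ directly. However, your proof of assertion~5 rests on a false statement: you claim that no element of $\Cs{24}$ with $\overline{\pi}\neq 1$ has order $2$ ``because any such element squares to $(g_1)^2\neq 1$''. This fails for $g_1g_2$ and $g_1g_2^{-1}$: since $g_1$ and $g_2$ commute and $(g_1)^2=(g_2)^2$ has order $2$, one gets $(g_1g_2)^2=(g_1)^2(g_2)^2=1$, so these two elements are involutions whose images in $\PGLn{2}$ are nontrivial (indeed the $2$-torsion of $\Z{2}\times\Z{4}$ has order $4$, not $2$, and the paper lists it as $\{1,(g_1)^2,g_1g_2,g_1g_2^{-1}\}$). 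The conclusion survives, but the correct reason is Lemma~\ref{Lem:DeJI}: a twisting automorphism which is an involution must act trivially on the basis, so the only candidate is $(g_1)^2$, which preserves every negative curve and hence twists nothing, while $g_1g_2$ and $g_1g_2^{-1}$ are excluded precisely because $\overline{\pi}(g_1g_2)\neq 1$. As written, your argument would ``prove'' that $\Cs{24}$ has a unique nontrivial involution, contradicting $\Cs{24}\cong\Z{2}\times\Z{4}$.

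Two smaller points. In assertion~6 your justification of minimality of the \emph{pair} (that $\Cs{24}$ swaps the two $(-2)$-curves) is not the relevant obstruction: an equivariant birational morphism contracts an invariant set of \emph{disjoint} $(-1)$-curves, and what rules this out is that each orbit on the $(-1)$-curves, namely $\{E_2,E_3,D_{12},D_{13}\}$ and $\{E_4,E_5,D_{14},D_{15}\}$, contains the two components of a common singular fibre, which meet. And in assertion~2 you briefly conflate ``exchanging the two singular fibres over $(\pm 1:1)$'' with ``twisting'' them: $g_1$ sends $E_4\mapsto E_5$ and $D_{14}\mapsto D_{15}$, so it permutes those two fibres without twisting either; the fibres it twists are those over $(1:0)$ and $(0:1)$, where $E_2\mapsto D_{12}$ and $E_3\mapsto D_{13}$. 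You do flag this distinction later and the permutations you write down give the correct answer, but the parenthetical reasons attached to them should be corrected.
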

\begin{proof}\upshape
Observe first that $h_1$ and $h_2$ preserve the pencil of lines of\hspace{0.2 cm}$\Pn$ passing through the point $A_1=(1:0:0)$, so $g_1,g_2$ are birational transformations of $\hat{S_4}$ that send a general fibre of $\pi$ on another fibre. Then, we compute $(h_1)^2=(h_2)^2=(x:y:z)\mapsto (-x:y:z)$. This implies that both $h_1$ and $h_2$ are birational maps of order $4$.

Note that the lift of $h_1$ on the surface $S_6$ is the automorphism 
\begin{center}
$\kappa_{1,-1}:\big((x:y:z) , (u:v:w)\big) \mapsto \big((u: w: -v),(x: z:-y)\big)$\end{center}
(see Example~\ref{Exa:KappaAB}). Since this automorphism permutes $A_4$ and $A_5$, its lift on $\hat{S_4}$ is biregular. The action on the divisors with negative self-intersection is deduced from that of $\kappa_{1,-1}$ (see Lemma~\ref{Lem:KappaDP6}).

Compute the involution \begin{center}$h_3=h_1h_2=(x:y:z) \dasharrow (x(y+z):z(y-z):-y(y-z)).$\end{center} Its linear system is \begin{center}$\{ax(y+z)+(by+cz)(y-z)=0\ | \ (a:b:c) \in \Pn\},$\end{center} which is the linear sytem of conics passing through $(0:1:1)$ and $A_1=(1:0:0)$, with tangent $y+z=0$ at this point (i.e.\ passing through $A_5$). Blowing-up these three points (two on $\Pn$ and one in the blow-up of $A_1$), we get an automorphism $g_3$ of some rational surface. As the points $A_2=(0:1:0)$ and $A_3=(0:0:1)$ are permuted by $h_3$, we can also blow them up and again get an automorphism.
The isomorphism class of the surface obtained is independent of the order of the blown-up points. We may first blow-up $A_1,A_2,A_3$ and get $S_6$. Then, we blow-up the two other base-points of $h_3$, which are in fact $A_4$ (the point $(0:1:-1)$) and $A_5$ (the point infinitely near to $A_1$ corresponding to the tangent $y+z=0$). This shows that $g_3$, and therefore $g_2$, belong to $\Aut(\hat{S_4},\pi)$.

Since $h_3$ permutes the points $A_2$ and $A_3$, $g_3=g_1g_2$ permutes the divisors $E_2$ and $E_3$. It also permutes $D_{12}$ and $D_{13}$, since $h_3$ leaves the pencil of lines passing through $A_1$ invariant. It therefore leaves $\widetilde{E_1}$ and $\widetilde{D_{23}}$ invariant, since $E_2$ and $E_3$ touch $\widetilde{D_{23}}$ but not $E_1$. The remaining exceptional divisors are $E_4,E_5,D_{14},D_{15}$. Either $g_1g_2$ leaves all four invariant, or it acts as $(E_4\ D_{15})(E_5\ D_{14})$ (using the intersection with $\widetilde{E_1}$ and $\widetilde{D_{23}}$). Since $A_4$ and $A_5$ are base-points of $h_1h_2$, $E_4$ and $E_5$ are not invariant. Thus, $g_1g_2$ acts on the irreducible rational curves of negative self-intersection as
$(E_2\ E_3)(D_{12}\ D_{13})(E_4\ D_{15})(E_5\ D_{14})$.
We obtain the action of $g_2$ by composing that of $g_1g_2$ with that of $g_1$ and thus have proved assertions~$1$ through~$3$.

Assertion $4$ follows from assertion $3$ and the fact that $g_1$ and $g_2$ commute.

Let us prove that $\Cs{24}$ contains no involution that twists the conic bundle $(\hat{S_4},\pi)$. Recall that such elements are involutions acting trivially on the basis of the fibration (see Lemma~\ref{Lem:DeJI}). Note that the $2$-torsion of $\Cs{24}$ is equal to $\{1,g_1^2,g_1g_2,g_1g_2^{-1}\}$. The elements $g_1g_2$ and $g_1g_2^{-1}$ do not act trivially on the basis of the fibration, and the element $(g_1)^2$ does not twist any singular fibre since it leaves every curve of negative self-intersection invariant. This proves assertion $5$.

It remains to prove the last assertion. Observe that the orbits of the action of $\Cs{24}$ on the exceptional divisors of $\hat{S_4}$ are $\{E_2,E_3,D_{12},D_{13}\}$ and $\{E_4,E_5,D_{14},D_{15}\}$. Since these orbits cannot be contracted, the pair $(\Cs{24},\hat{S_4})$ is minimal, and so is the triple $(\Cs{24},\hat{S_4},\pi)$. 
\end{proof}

\begin{rema}
The pair $(\Cs{24},\hat{S_4})$ was introduced in \cite{bib:JBTh} and was called $\nump{Cs.24}$ because it is a group acting on a {\bf c}onic bundle, which is {\bf s}pecial, and isomorphic to $\Z{2}\times\Z{4}$.
\end{rema}

\section{Finite Abelian groups of automorphisms of conic bundles - birational representative elements}
\label{Sec:ConBundleFinal}
In this section we use the tools prepared in the previous sections
to describe the finite Abelian groups of automorphisms of conic bundles such that \emph{no non-trivial element fixes a curve of positive genus}.

We first treat the case in which no involution twisting the conic bundle belongs to the group:
\begin{prop}
\label{Prp:necDeJi}
Let $G \subset \Aut(S,\pi)$ be a finite Abelian group of automorphisms of the conic bundle $(S,\pi)$ such that:
\begin{itemize}
\item
no involution that twists the conic bundle $(S,\pi)$ belongs to $G$;
\item
the triple $(G,S,\pi)$ is minimal.
\end{itemize}
Then, one of the following occurs:
\begin{itemize}
\item
The fibration is smooth, i.e.\ $S$ is a Hirzebruch surface.
\item
$S$ is the del Pezzo surface of degree $6$.
\item 
The triple $(G,S,\pi)$ is isomorphic to the triple $(\Cs{24},\hat{S_4},\pi)$ of Section~$\ref{Sec:ExampleCs24}$.
\end{itemize}
\end{prop}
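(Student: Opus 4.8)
The plan is to study $G$ through the exact sequence $1\to G'\to G\xrightarrow{\overline\pi}\overline\pi(G)\to 1$ of Remark~\ref{Rem:ExactSeq}, with $G'=\ker\overline\pi$; both $G'$ and $\overline\pi(G)$ are finite Abelian, hence cyclic or isomorphic to $(\Z{2})^2$. First I would extract from the hypothesis "no twisting involution in $G$" the following facts: by Lemma~\ref{Lem:DeJI} every element of $G$ twisting a singular fibre acts non-trivially on the basis, so lies outside $G'$; by Corollary~\ref{Cor:CurveFixDeJ} no element of $G$ fixes a curve of positive genus; and an element of $G'$ twists no fibre and leaves every component of every singular fibre invariant, whence (a section and its $G'$-image differing by a class on which $G'$ acts trivially) $G'$ acts trivially on all of $\Pic(S)$.

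Let $r$ be the number of singular fibres and $\Sigma\subset\mathbb{P}^1$ the set of points below them. If $r=0$ we are in the first case, so assume $r\ge 1$. By minimality (Lemma~\ref{Lem:MinTripl}) every singular fibre is twisted, hence by some element of non-trivial image; such an element fixes on $\mathbb{P}^1$ only the $\le 2$ fixed points of its image in $\PGLn{2}$, so every point of $\Sigma$ is a fixed point of the image of some element of $G$. In particular $\overline\pi(G)\ne 1$. If $\overline\pi(G)$ is cyclic, all its non-trivial elements share the same $\le 2$ fixed points, so $r\le 2$; with Lemma~\ref{Lem:GoingToF0F1} (which excludes $r=1$) and Lemma~\ref{Lem:Degree567CB} this forces $r=2$ and $S\cong S_6$. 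So I may assume $\overline\pi(G)\cong(\Z{2})^2$; its three involutions have six distinct fixed points, forming three orbits of size $2$, and since $\Sigma$ is $\overline\pi(G)$-invariant and contained in these six points it is a union of such orbits, so $r\in\{2,4,6\}$. Again $r=2$ gives $S_6$, so the heart of the matter is $r\in\{4,6\}$.

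For each orbit $O_i\subset\Sigma$, fixed by an involution $\sigma_i\in\overline\pi(G)$, minimality and "$G'$ twists nothing" force a twisting element $g_i\in G$ with $\overline\pi(g_i)=\sigma_i$ twisting both fibres over $O_i$. Applying Proposition~\ref{Prp:DescriptionTwistingElementsFinite} to $g_i$ (with $n=2$), cases~3 and~4 are impossible --- case~3 needs $n$ odd, and case~4 would make $g_i^2\in G'$ twist at least $4$ fibres --- so case~2 holds: $g_i$ has order $4$, $g_i^2\in G'$ is a non-trivial involution, and there is a ($g_i$-equivariant, not $G$-equivariant) birational morphism of conic bundles $\eta_i\colon S\to S_6$ with $\eta_i g_i\eta_i^{-1}=\kappa_{\alpha_i,\beta_i}$, $\alpha_i\ne\beta_i$ (Example~\ref{Exa:KappaAB}). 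Fixing $i$, this exhibits $S$ as the blow-up of $S_6$ at $r-2$ points lying in the fibres of $(S_6,\pi_1)$ over the orbits $\ne O_i$; since $g_i^2=\eta_i^{-1}\kappa_{\alpha_i,\beta_i}^2\eta_i$ must be regular on $S$ while $\kappa_{\alpha_i,\beta_i}^2$ fixes pointwise exactly the two sections $E_1,D_{23}$ of $(S_6,\pi_1)$, each blown-up point lies on $E_1\cup D_{23}$, and $g_i$-equivariance (recall $\kappa_{\alpha_i,\beta_i}$ exchanges $E_1$ with $D_{23}$ and exchanges the two fibres over each remaining orbit) forces exactly one point of each exchanged pair of fibres onto $E_1$ and the other onto $D_{23}$.

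When $r=4$ there are just two blown-up points, one on $E_1$ and one on $D_{23}$, so $\widetilde{E_1},\widetilde{D_{23}}$ have self-intersection $-2$ and $S$ carries precisely the ten negative curves of Lemma~\ref{Lem:10irreduciblecurves}; choosing coordinates so that the two involutions of $\overline\pi(G)$ are $t\mapsto -t$ and $t\mapsto 1/t$ one identifies $(S,\pi)$ with $(\hat{S_4},\pi)$, normalises $\kappa_{\alpha_1,\beta_1}$ to $\kappa_{1,-1}$, identifies $g_1$ with $g_1$ of Definition~\ref{Def:Cs24}, and then a short computation with $g_2$ (its existence, commutation with $g_1$, and square --- the other $\Pic$-trivial involution --- being known) gives $G=\Cs{24}$, so $(G,S,\pi)\cong(\Cs{24},\hat{S_4},\pi)$. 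When $r=6$ all three $\sigma_i$ occur, the involutions $g_1^2,g_2^2,g_3^2$ are the three non-trivial elements of $G'\cong(\Z{2})^2$ (if two coincided, $g_ig_j$ would be an involution with non-trivial basis action still twisting two fibres, contradicting Lemma~\ref{Lem:DeJI}), whence $G\cong\Z{4}\times\Z{4}$; one then tracks the four blown-up points (two on $E_1$, two on $D_{23}$) through the three morphisms $\eta_1,\eta_2,\eta_3$ at once and derives that the positions forced by $g_1$ cannot be simultaneously compatible with regularity of $g_2$ and $g_3$ on $S$, ruling out $r=6$. I expect this last exclusion to be the genuine obstacle: the singular-fibre combinatorics, the $\Pic$-action and the group structure are each individually consistent with $r=6$, so the contradiction has to come from this simultaneous analysis of the three non-equivariant blow-downs rather than from any soft invariant; carrying that out cleanly, together with the explicit coordinate identification of $(S,\pi)$ with $(\hat{S_4},\pi)$ and of $G$ with $\Cs{24}$ when $r=4$, is where the real work lies.
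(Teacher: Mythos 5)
Your setup (the exact sequence of Remark~\ref{Rem:ExactSeq}, the observation that the hypothesis forces every twisting element of $G$ into case~2 of Proposition~\ref{Prp:DescriptionTwistingElementsFinite}, the reduction to $\overline{\pi}(G)\cong(\Z{2})^2$ with $r\in\{2,4,6\}$, and the localisation of the blown-up points on $E_1\cup D_{23}$ via $g_i$-equivariance of $\eta_i$) matches the paper's proof closely, and the $r=2$ and $r=4$ branches are essentially the argument given there. But there is a genuine gap in your exclusion of $r=6$, and your own diagnosis of that case is mistaken. You assert that ``the singular-fibre combinatorics, the Pic-action and the group structure are each individually consistent with $r=6$'' and that the contradiction must come from a simultaneous tracking of the three non-equivariant blow-downs $\eta_1,\eta_2,\eta_3$ --- an argument you then only sketch. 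In fact the combinatorics alone suffice, and this is exactly what the paper does: since each $g_i$ twists some fibre it must exchange the two $(-2)$-curves $\widetilde{E_1}$ and $\widetilde{D_{23}}$, and this (together with which fibres each $\sigma_i$ fixes or exchanges) pins down the action of each $g_i$ on the twelve components of the six singular fibres. Multiplying these three permutations, one finds that $g_1g_2g_3$ --- which lies in $G'$ because $\sigma_1\sigma_2\sigma_3=1$ in $(\Z{2})^2$ --- sends $E_j$ to $D_{1j}$ for every $j$, i.e.\ twists all six singular fibres. By Lemma~\ref{Lem:DeJI} it is then a twisting involution belonging to $G$, contradicting the first hypothesis. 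Your proposed route via $G\cong\Z{4}\times\Z{4}$ is not carried out, and its intermediate step ($g_1^2,g_2^2,g_3^2$ pairwise distinct because $g_ig_j$ would otherwise be ``an involution \ldots still twisting two fibres'') is unjustified: nothing forces the product $g_ig_j$ to twist anything, so Lemma~\ref{Lem:DeJI} does not apply to it as you invoke it.

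A secondary, smaller gap sits in the $r=4$ case: to conclude that the triple is $(\Cs{24},\hat{S_4},\pi)$ you must show $G=\langle g_1,g_2\rangle$, not merely that $G$ contains a copy of $\Cs{24}$. The paper handles this by noting that any element of $G'$ twists nothing, hence lies in the group $H\cong\K^{*}$ of automorphisms preserving every negative curve, and that the only elements of $H$ commuting with $g_1$ are the identity and $g_1^2$; together with the normalisation $\mu=1$, $\nu=\pm1$ coming from the commutativity relation $\mu^2=\nu^2$, this pins $G$ down exactly. Your write-up acknowledges that ``the real work'' lies in these identifications but does not supply either of these two arguments.
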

\begin{proof}
We assume that the fibration is not smooth. 
Recall that since the triple $(G,S,\pi)$ is minimal, any singular fibre of $\pi$ is twisted by an element of $G$ (by Lemma~\ref{Lem:MinTripl}). Since no twisting involution belongs to $G$, any element~$g\in G$ that twists a fibre corresponds to case~$2$ of Proposition~\ref{Prp:DescriptionTwistingElementsFinite}. In particular,~$g$ is the lift on $S$ of an automorphism of the form $\kappa_{\alpha,\beta}$ of the del Pezzo surface of degree $6$ and it twists $2$ singular fibres, which correspond to the fibres of the two fixed points of $\overline{\pi}(g)\in \PGL(2,\K)$. Furthermore,~$g$ is the root of an involution that leaves every component of every singular fibre of $\pi$ invariant.

If the number of singular fibres is exactly two, then $S$ is the del Pezzo surface of degree $6$, and we are done.

Now suppose that the number of singular fibres is larger than two. This implies that $\overline{\pi}(G)$ is not a cyclic group (otherwise the non-trivial elements of $\overline{\pi}(G)$ would have the same two fixed points: there would then be at most two singular fibres); therefore, $\overline{\pi}(G)$ is isomorphic to $(\Z{2})^2$. By a judicious choice of coordinates we may suppose that \begin{center}$\overline{\pi}(G)=\left\{\left(\begin{array}{cc}1 & 0\\ 0 & 1\end{array}\right),\left(\begin{array}{cc}-1 & 0\\ 0 & 1\end{array}\right),\left(\begin{array}{cc}0 & 1\\ 1 & 0\end{array}\right),\left(\begin{array}{cc}0 & -1\\ 1 & 0\end{array}\right)\right\}.$\end{center}

Since a singular fibre corresponds to a fixed point of one of the three elements of order $2$ of $\overline{\pi}(G)$, only the fibres of $(0:1),(1:0),(1:1),(-1:1),(\im:1),(-\im:1)$ can be singular. Since the group $\overline{\pi}(G)$ acts transitively on the sets $\{(1:0),(0:1)\}$, $\{(1:\pm 1)\}$ and $\{(1:\pm \im)\}$, there are $4$ or $6$ singular fibres.

We denote by $g_1$ an element of $G$ which twists the two singular fibres of $(1:0)$ and $(0:1)$. 
Let $\eta:S\rightarrow S_6$ denote the birational $g_1$-equivariant morphism given by Proposition~\ref{Prp:DescriptionTwistingElementsFinite}, which conjugates $g_1$ to the automorphism
\begin{center}$\eta g_1 \eta^{-1}=\kappa_{\alpha,\beta}:\big((x:y:z) , (u:v:w)\big) \mapsto \big((u:\alpha w:\beta v), (x:\alpha^{-1} z:\beta^{-1} y)\big)$\end{center}
of the del Pezzo surface $S_6$ of degree $6$, for some $\alpha,\beta \in \K^{*}$. In fact, since $\overline{\pi}(g_1)$ has order $2$, we have $\beta=-\alpha$, so $\eta g_1 \eta^{-1}=\kappa_{\alpha,-\alpha}$.
The points blown-up by $\eta$ are fixed by 
\begin{center}$\eta (g_1)^2 \eta^{-1}=(\kappa_{\alpha,-\alpha})^2:\big((x:y:z) , (u:v:w)\big)\mapsto \big((x:-y:- z),(u:-v:-w)\big)$, \end{center}
and therefore belong to the curves
\begin{center} $\begin{array}{rlll}
& E_1&=& \{\big((1:0:0), (0:a:b)\big) \ | \ (a:b) \in \mathbb{P}^1\} \\
\mbox{and} &D_{23}&=&\{\big((0:a:b) , (1:0:0)\big) \ | \ (a:b) \in \mathbb{P}^1\}.\end{array}$\end{center} Since these points consist of orbits of $\eta g_1\eta^{-1}$, half of them lie in $E_1$ and the other half in $D_{23}$. In fact, up to a change of coordinates, $\big((x,y,z),(u,v,w)\big) \leftrightarrow \big((u,v,w),(x,y,z)\big)$, the points that may be blown-up by $\eta$ are
\begin{center}
$\begin{array}{rllllll}
A_4=&\big(\h{3}&(0:1:1) &\h{2},\h{2}& (1:0:0)& \h{3}\big)\in D_{23},\\
\kappa_{\alpha,-\alpha}(A_4)=A_5=&\big(\h{3}&(1:0:0) &\h{2},\h{2}& (0:1:-1)& \h{3}\big)\in E_1,\\
A_6=&\big(\h{3}&(0:1:\im) &\h{2},\h{2}&(1:0:0)& \h{3}\big)\in D_{23},\\
\kappa_{\alpha,-\alpha}(A_6)=A_7=&\big(\h{3}&(1:0:0) &\h{2},\h{2}& (0:1:\im)& \h{3}\big)\in E_1.\end{array}$
\end{center}
The strict pull-backs $\widetilde{E_1}$ and $\widetilde{D_{23}}$ by $\eta$ of $E_1$ and $D_{23}$ respectively thus have self-intersection $-2$ or $-3$ in $S$, depending on the number of points blown-up.
By convention we again denote by $E_1,E_2,E_3,D_{12},D_{13},D_{23}$ the total pull-backs by $\eta$ of these divisors. (Note that for $E_2,E_3,D_{12},D_{13}$, the strict and the total pull-backs are the same.)
We set $E_4=\eta^{-1}(A_4)$,..., $E_7=\eta^{-1}(A_7)$ and denote by $f$ the divisor class of the fibre of the conic bundle. 

{ (a)}
Suppose that $\eta$ is the blow-up of $A_4$ and $A_5$, which implies that $S$ is the surface $\hat{S_4}$ of Section~\ref{Sec:ExampleCs24}. 
The Picard group of $S$ is then generated by $E_1,E_2,...,E_5$ and $f$.

Since we assumed that $(G,S,\pi)$ is minimal, the singular fibres of $(1:1)$ and $(-1:1)$ must be twisted. 
One element $g_2$ twists these two singular fibres and acts with order $2$ on the basis of the fibration, with action $(x_1:x_2) \mapsto (x_2:x_1)$. Since $g_1$ and $g_2$ twist some singular fibre, both must invert the two curves of self-intersection $-2$, namely $\widetilde{E_1}$ and $\widetilde{D_{23}}$. The action of $g_1$ and $g_2$ on the irreducible rational curves of negative self-intersection is then respectively
\begin{center}$\begin{array}{l}(\widetilde{E_1}\ \widetilde{D_{23}})(E_2\ D_{12})(E_3\ D_{13})(E_4\ E_5)(D_{14}\ D_{15}), \vspace{0.1 cm}\\
(\widetilde{E_1}\ \widetilde{D_{23}})(E_2\ D_{13})(E_3\ D_{12})(E_4\ D_{14})(E_5\ D_{15}).\end{array}$\end{center}
The elements $g_1$ and $g_2$ thus have the same action on $\Pic{S}=\Pic{\hat{S_4}}$ as the two automorphisms with the same name in Definition~\ref{Def:Cs24} and Lemma~\ref{Lem:Cs24Properties}, which generate $\Cs{24}$. Note that the group $H$ of automorphisms of $S$ that leave every curve of negative self-intersection invariant is isomorphic to $\K^{*}$ and corresponds to automorphisms of\hspace{0.2 cm}$\Pn$ of the form $(x:y:z)\mapsto (\alpha x:y:z)$, for any $\alpha \in \K^{*}$. Then, $g_1$ and $g_2$ are equal to the lift of the the following birational maps of $\Pn$:
\begin{center}$\begin{array}{l}h_1:(x:y:z)\dasharrow (\mu yz:xy:-xz),\\
h_2:(x:y:z)\dasharrow (\nu yz(y-z):xz(y+z):xy(y+z)),\end{array}$\end{center} 
for some $\mu,\nu \in \K^{*}$.

As $h_1h_2(x:y:z)=(\mu x(y+z):\nu z(y-z):-\nu y(y-z))$ and $h_2h_1(x:y:z)=(\nu x(y+z):\mu z(y-z):-\mu y(y-z))$ must be the same by hypothesis, we get $\mu^2=\nu^2$.

We observe that $\overline{\pi}(g_1)$ and $\overline{\pi}(g_2)$ generate $\overline{\pi}(G)\cong (\Z{2})^2$; on the other hand, by hypothesis an element of $G'$ does not twist a singular fibre and hence belongs to $H$. As the only elements of $H$ which commute with $g_1$ are $id$ and $(g_1)^2$ (which is the lift of $(h_1)^2:(x:y:z)\mapsto (-x:y:z)$), we see that $g_1$ and $g_2$ generate the whole group $G$.

Conjugating $h_1$ and $h_2$ by $(x:y:z) \mapsto (\alpha x: y:z)$, where $\alpha \in \K^{*}, \alpha^2=\mu$, we may suppose that $\mu=1$. So $\nu=\pm 1$ and we get in both cases the same group, because $(h_1)^2(x:y:z)=(-x:y:z)$. The triple $(G,S,\pi)$ is hence isomorphic to the triple $(\Cs{24},\hat{S_4},\pi)$ of Section~\ref{Sec:ExampleCs24}.

{ (b)}
Suppose that $\eta$ is the blow-up of $A_6$ and $A_7$. 
We get a case isomorphic to the previous one, using the automorphism $\big((x:y:z) , (u:v:w)\big) \mapsto \big((x:y:\im z), (u:v:-\im w)\big)$ of $S_6$.

{ (c)}
Suppose that $\eta$ is the blow-up of $A_4,A_5,A_6$ and $A_7$. 
The Picard group of $S$ is then generated by $E_1,E_2,...,E_6,E_7$ and $f$.
Since $(G,S,\pi)$ is minimal, there must be two elements $g_2,g_3 \in G$ that twist respectively the fibres of $(\pm 1:1)$ and those of $(\pm \im:1)$.
As in the previous example, the three actions of these elements on the basis are of order $2$, and the three elements transpose $\widetilde{E_1}$ and $\widetilde{D_{23}}$. The actions of $g_1,g_2$ and $g_3$ on the set of irreducible components of the singular fibres of $\pi$ are then respectively
\begin{center}
$\begin{array}{l}(E_2\ D_{12})(E_3\ D_{13})(E_4\ E_5)(D_{14}\ D_{15})(E_6\ E_7)(D_{16}\ D_{17}), \vspace{0.1 cm}\\
(E_2\ D_{13})(E_3\ D_{12})(E_4\ D_{14})(E_5\ D_{15})(E_6\ E_7)(D_{16}\ D_{17}), \vspace{0.1 cm}\\
(E_2\ D_{13})(E_3\ D_{12})(E_4\ E_5)(D_{14}\ D_{15})(E_6\ D_{16})(E_7\ D_{17}).\end{array}$
\end{center} 
This implies that the action of the element $g_1g_2g_3$ is
\begin{center}
$\begin{array}{l}(E_2\ D_{12})(E_3\ D_{13})(E_4 \ D_{14})(E_5 \ D_{15})(E_6\ D_{17})(E_7\ D_{16}),\end{array}$
\end{center}
and thus it twists six singular fibres of the conic bundle and fixes a curve of genus~$2$ (Lemma~\ref{Lem:DeJI}), which contradicts the hypothesis. (In fact, one can also show that the group generated by $g_1$, $g_2$ and $g_3$ is not Abelian, see \cite{bib:JBTh}, page 66.)
\end{proof}
After studying the groups that do not contain a twisting involution, we now study those which contain such elements. Since these twisting involutions cannot fix a curve of positive genus, they twist exactly two fibres (Lemma~\ref{Lem:DeJI}).
\begin{prop}
\label{Prp:HardTwistInv}
Let $G\subset \Aut(S,\pi)$ be a finite Abelian group of automorphisms of a conic bundle $(S,\pi)$ such that:
\begin{enumerate}
\item[\upshape 1.]
If $g\in G$, $g\not=1$, then~$g$ does not fix a curve of positive genus.
\item[\upshape 2.]
The group $G$ contains at least one involution that twists the conic bundle $(S,\pi)$.
\item[\upshape 3.]
The triple $(G,S,\pi)$ is minimal.
\end{enumerate}
Then, $S$ is a del Pezzo surface of degree $5$ or $6$.
\end{prop}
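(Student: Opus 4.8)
The plan is to let $G$ act on the conic bundle $(S,\pi)$ and exploit the twisting involution together with the exact sequence \eqref{eq:ExactSeqCB}. First I would pick an involution $\sigma\in G$ that twists $(S,\pi)$. By hypothesis (1), $\sigma$ does not fix a curve of positive genus, so by Corollary~\ref{Cor:CurveFixDeJ} (and Lemma~\ref{Lem:DeJI}) $\sigma$ twists exactly two singular fibres, say over the points $a_1,a_2\in\mathbb{P}^1$; moreover $\overline{\pi}(\sigma)=1$, i.e.\ $\sigma\in G'=\ker\overline{\pi}$. The set of two sections of $\pi$ exchanged by $\sigma$ (the pull-backs of the two components of, say, a section of $\mathbb{F}_1$ as in the proof of Lemma~\ref{Lem:DeJI}) is then a canonically attached object. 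Every element of $G$ commutes with $\sigma$, so by Lemma~\ref{Lem:CommutLeav} each element of $G$ that leaves a singular fibre invariant must twist it; in particular $G'$ contains no element twisting only some singular fibres, and one checks that $G'=\langle\sigma\rangle$ or $G'$ is generated by $\sigma$ and an element acting trivially on $\Pic S$.

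Next I would bound the number $r$ of singular fibres. Since $(G,S,\pi)$ is minimal, every singular fibre is twisted by some element of $G$ (Lemma~\ref{Lem:MinTripl}). Each $g\in G$ twisting a fibre either acts trivially on the base (and then, being an involution by Lemma~\ref{Lem:DeJI} with no positive-genus fixed curve, twists exactly two fibres) or acts with order $n>1$ on the base and twists at most two fibres. Because $\overline{\pi}(G)$ is a finite Abelian subgroup of $\PGLn{2}$, it is cyclic or $(\Z 2)^2$; in either case its non-trivial elements have only finitely many fixed points on $\mathbb{P}^1$, and a short count (as in the proof of Proposition~\ref{Prp:necDeJi}) shows that only the fibres over the fixed points of the involutions in $\overline{\pi}(G)$, together with the two fibres twisted by $\sigma$, can be singular. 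The key new input here is that $\sigma$ itself, lying in $G'$ and hence commuting with everything, forces the two fibres it twists to be among those handled by the base action too (via Lemma~\ref{Lem:CommutLeav} applied to $\sigma$ against each generator of $G$), and a careful case analysis on whether $\overline{\pi}(G)$ is trivial, cyclic, or $(\Z2)^2$ yields $r\le 3$.

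Once $r\le 3$ is established, Lemma~\ref{Lem:Degree567CB} applies directly: since $(G,S,\pi)$ is minimal and $\pi$ has at most $3$ singular fibres (and $r\ge 2$ because $\sigma$ twists two fibres, so the fibration is not smooth), $S$ is a del Pezzo surface of degree $5$ or $6$, which is the conclusion. The main obstacle I anticipate is the combinatorial bookkeeping in the step bounding $r$: one must rule out the configurations with $4$ or $6$ singular fibres, and here the argument must combine the restriction coming from $\overline{\pi}(G)$ being Abelian (cyclic or $(\Z2)^2$) with the fact that the twisting involution $\sigma$ acts trivially on the base — the delicate point being that if $\overline{\pi}(G)\cong(\Z2)^2$ one could a priori have up to $6$ singular fibres, and one needs the no-positive-genus hypothesis (1) applied to products of twisting elements (as in case (c) of the proof of Proposition~\ref{Prp:necDeJi}, where $g_1g_2g_3$ fixes a curve of genus $2$) to eliminate them. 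In fact, since $G$ here \emph{contains} a twisting involution, the situation is somewhat more rigid than in Proposition~\ref{Prp:necDeJi}, and I would expect that the $(\Z2)^2$ case on the base collapses quickly, leaving essentially the cyclic-on-base situation, which produces at most $2$ singular fibres over the two fixed points plus the handling of $\sigma$, landing on degree $5$ or $6$.
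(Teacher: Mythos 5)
Your opening reduction is the same as the paper's: a twisting involution $\sigma\in G$ lies in $G'=\ker\overline{\pi}$ and, by hypothesis (1) and Lemma~\ref{Lem:DeJI}/Corollary~\ref{Cor:CurveFixDeJ}, twists exactly two singular fibres; minimality gives that every singular fibre is twisted; and once one knows there are at most $3$ singular fibres, Lemma~\ref{Lem:Degree567CB} finishes. But the entire content of the proposition is the exclusion of $r\geq 4$ singular fibres, and that step is not actually carried out in your proposal — it is deferred to ``a careful case analysis'' whose announced mechanism does not work. First, you misapply Lemma~\ref{Lem:CommutLeav}: its hypothesis is that the element $g$ acts \emph{trivially} on $\Pic{S}$ (leaves every component of every singular fibre invariant), which $\sigma$ manifestly does not, since it twists two fibres. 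The correct use is in the opposite direction: \emph{if} some non-trivial $g\in G$ acted trivially on $\Pic{S}$, then since $\sigma$ commutes with $g$, twists a fibre, and (being in $G'$) leaves all $r\geq 4$ singular fibres invariant, Lemma~\ref{Lem:CommutLeav} would force $\sigma$ to twist all of them and hence fix a curve of positive genus — so no such $g$ exists. Second, your fibre count (``only fibres over fixed points of the involutions of $\overline{\pi}(G)$, plus the two fibres of $\sigma$'') does not yield $r\leq 3$: fibres twisted by elements of $G'$ need not sit over fixed points of $\overline{\pi}(G)$, and if $\overline{\pi}(G)\cong(\Z{2})^2$ the fixed points of its three involutions already give up to six candidate fibres. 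The genus-$2$ product argument you import from case (c) of Proposition~\ref{Prp:necDeJi} is also not the relevant obstruction here.

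What the exclusion of $r\geq 4$ actually requires, and what is missing from your sketch, is the following chain. From the non-existence of non-trivial elements acting trivially on $\Pic{S}$ together with Proposition~\ref{Prp:DescriptionTwistingElementsFinite}, every twisting element of $G$ is a root of (or equal to) a \emph{twisting} involution in $G'$. Corollary~\ref{Cor:RootRatTwist} then says that a root $h$ of a twisting involution $h^2\in G'$ which twists a new fibre twists exactly \emph{one} singular fibre (not two over its two base fixed points, as your count implicitly assumes) and exchanges the two fibres twisted by $h^2$. With $r\geq 4$ this forces a contradiction with commutativity: if $G'=\langle\sigma\rangle$, the two extra fibres $F_3,F_4$ are twisted by roots $h_3,h_4$ of $\sigma$, and $h_3h_4$ turns out to be a twisting involution in $G'$ distinct from $\sigma$; if $G'\cong(\Z{2})^2$, the three twisting involutions of $G'$ cover only three fibres, and the square root $h$ needed to twist the fourth fails to commute with one of them. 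None of this is present in your proposal, so the proof as written has a genuine gap at its central step.
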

\begin{proof}
If the number of singular fibres is at most $3$, then the surface is a del Pezzo surface of degree $5$ or $6$ (Lemma~\ref{Lem:Degree567CB}). 

We now assume that the number of singular fibres is at least $4$ and show that this situation is not compatible with the hypotheses. We recall once again the exact sequence of Remark~\ref{Rem:ExactSeq}
\begin{equation}
1 \rightarrow G' \rightarrow G \stackrel{\overline{\pi}}{\rightarrow} \overline{\pi}(G) \rightarrow 1,
\tag{\ref{eq:ExactSeqCB}}\end{equation}
and prove the following important assertions:
\begin{itemize}
\item[(a)]
No element of $G$ twists more than two singular fibres.
\item[(b)]
Any twisting involution that belongs to $G$ belongs to $G'$ and twists exactly two singular fibres.
\item[(c)]
Any singular fibre is twisted by an element of $G$.
\item[(d)]
No non-trivial element preserves every component of every singular fibre.
\item[(e)]
Any twisting element of $G$ is a root of (or equal to) a twisting involution that belongs to $G'$.
\end{itemize}

Corollary~\ref{Cor:CurveFixDeJ} shows that an element that twists more than two fibres fixes a curve of positive genus; since this possibility is excluded by hypothesis, we obtain assertion~(a). Lemma~\ref{Lem:DeJI} shows that any twisting involution contained in $G$ belongs to $G'$ and twists an even number of fibres; using assertion~(a), we thus obtain assertion~(b). Assertion~(c) follows from the minimality of the triple $(G,S,\pi)$ (see Lemma~\ref{Lem:MinTripl}). Let us prove assertion (d). Suppose that there exists a non-trivial element $g\in G$ that leaves every component of every singular fibre invariant, and denote by $h\in G'$ a twisting involution (which exists by hypothesis). Since~$g$ and~$h$ commute, Lemma~\ref{Lem:CommutLeav} shows that each singular fibre invariant by~$h$ -- there are at least $4$ -- is twisted by~$h$, which contradicts assertion (a). Therefore, such an element~$g$ doesn't exist and assertion (d) is proved. Finally, Proposition~\ref{Prp:DescriptionTwistingElementsFinite} shows that any twisting element that does not act trivially on the basis of the fibration is a root of an involution that belongs to $G'$, and assertion~(d) shows that this involution is twisting, and we obtain assertion~(e).

\vspace{0.2 cm}

Now that assertions (a) through (e) are proved, we deduce the proposition from them. Let us denote by $\sigma\in G'$ a twisting involution, which twists two singular fibres that we denote by $F_1$ and $F_2$. There are at least two other singular fibres $F_3$ and $F_4$ that are twisted by other elements of $G$.

If $G'=<\sigma>$, the fibres $F_3$ and $F_4$ are twisted by roots of $\sigma$ belonging to $G$ (assertions (c) and (e)). The description of these elements (Proposition~\ref{Prp:DescriptionTwistingElementsFinite}, and in particular Corollary~\ref{Cor:RootRatTwist}) shows that the roots must be square roots that twist exactly one singular fibre and permute the two fibres $F_1$ and $F_2$ twisted by~$\sigma$. There thus exist two elements $h_3,h_4\in G$ that twist respectively the fibres $F_3$ and $F_4$. Since $h_3$ commutes with $h_4$, it must leave invariant the unique fibre twisted by $h_4$, i.e.\ $F_4$. Similarly, $h_4$ must leave $F_3$ invariant. Therefore, $h_3h_4$ leaves the four fibres $F_1$,...,$F_4$ invariant and twists the two fibres $F_3$ and $F_4$; it is thus an involution that belongs to $G'$, which contradicts the fact that $G'=<\sigma>$.

If $G'\not=<\sigma>$, since $\sigma$ has no root in $G'$ (Corollary~\ref{Cor:NoRoot}), the Abelian group $G'\subset \PGL(2,\K(x))$ is isomorphic to $(\Z{2})^2$ and contains (using (d)) three twisting involutions $\sigma$, $\rho$ and $\sigma\rho$. 
Note that two of these three involutions do not twist singular fibres which are all distinct, otherwise the product of the two involutions would give an involution that twists $4$ singular fibres, contradicting (a). We may thus suppose that $\rho$ twists $F_1$ and $F_3$, which implies that $\sigma\rho$ twists $F_2$ and $F_3$.
 The fibre $F_4$ is then twisted by an element which is a square root of one of the three twisting involutions (assertion (e) and Corollary~\ref{Cor:RootRatTwist}). Denote this square root  by~$h$ and suppose that $h^2\not=\sigma$.
 Note that~$h$ exchanges the two singular fibres twisted by $h^2$. One of these is twisted by $\sigma$ and the other is not, so~$h$ and $\sigma$ do not commute.
 \end{proof}

The only remaining possible finite Abelian groups of automorphisms of conic bundles satisfying property $(F)$ are thus del Pezzo surfaces of degree $6$ or~$5$ (studied in Sections~\ref{Sec:DelPezzo6} and~\ref{Sec:DelPezzo5}), the triple $(\Cs{24},\hat{S_4},\pi)$ studied in Section~\ref{Sec:ExampleCs24}, and Hirzebruch surfaces.
We now describe this last case and prove that it is birationally reduced to the case of $\mathbb{P}^1\times\mathbb{P}^1$.
\begin{prop}
\label{Prp:FnAb}
Let $G \subset \Aut(\mathbb{F}_n)$ be a finite Abelian subgroup of automorphisms of $\mathbb{F}_n$, for some integer $n\geq 1$.
 Then, a birational map of conic bundles conjugates $G$ to a finite group of automorphisms of $\mathbb{F}_0=\mathbb{P}^1\times\mathbb{P}^1$ that leaves one ruling invariant.
\end{prop}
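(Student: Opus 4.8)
The plan is to realise the required birational conjugation as a composition of elementary transformations of conic bundles. Recall that for $n\geq 1$ the Hirzebruch surface $\mathbb{F}_n$ has a \emph{unique} curve of negative self-intersection, the exceptional section $E_n$ with $(E_n)^2=-n$, and a \emph{unique} ruling $\pi\colon\mathbb{F}_n\to\mathbb{P}^1$; hence $\Aut(\mathbb{F}_n)=\Aut(\mathbb{F}_n,\pi)$ and $E_n$ is $G$-invariant, and all the birational maps produced below will automatically be birational maps of conic bundles.

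First I would produce a $G$-invariant section of $\pi$ disjoint from $E_n$. Writing $f$ for the class of a fibre, which is $G$-invariant, the class $E_n+nf$ and the linear system $|E_n+nf|$ (of dimension $n+1$) are $G$-invariant. The members of $|E_n+nf|$ that contain $E_n$ form a $G$-invariant hyperplane (they are those vanishing along $E_n$), while every other member is an irreducible section which, meeting $E_n$ with intersection number $(E_n+nf)\cdot E_n=0$, is disjoint from $E_n$. Thus $G$ acts --- projectively, preserving a hyperplane, hence by affine transformations --- on the affine space $\mathbb{A}^{n+1}$ of sections disjoint from $E_n$; as $G$ is finite, the barycentre of the orbit of any point is $G$-fixed, which yields a $G$-invariant section $C$ with $C\cap E_n=\emptyset$.

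Next I would use the homomorphism $\overline{\pi}\colon G\to\PGLn{2}$ of Remark~\ref{Rem:ExactSeq}; its image is a finite Abelian subgroup of $\PGLn{2}$, hence cyclic or isomorphic to $(\Z{2})^2$. If $\overline{\pi}(G)$ is cyclic, pick a point $a\in\mathbb{P}^1$ fixed by it: then $\pi^{-1}(a)$ is $G$-invariant and $q:=C\cap\pi^{-1}(a)$ is a single $G$-fixed point lying off $E_n$, and the elementary transformation centred at $q$ gives a $G$-equivariant birational map of conic bundles $\mathbb{F}_n\dasharrow\mathbb{F}_{n-1}$. If $\overline{\pi}(G)\cong(\Z{2})^2$, it has an orbit $\{a,b\}$ of size two, so $\{C\cap\pi^{-1}(a),\,C\cap\pi^{-1}(b)\}$ is a $G$-invariant pair of points in distinct fibres and off $E_n$, and the simultaneous elementary transformation at these two points gives a $G$-equivariant birational map of conic bundles $\mathbb{F}_n\dasharrow\mathbb{F}_{n-2}$. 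In both cases the action on the base is unchanged; iterating and noting that $n$ strictly decreases, after finitely many steps $G$ becomes a (finite) group of automorphisms of $\mathbb{F}_0=\mathbb{P}^1\times\mathbb{P}^1$, and since the ruling has been preserved throughout, $G$ leaves one ruling of $\mathbb{P}^1\times\mathbb{P}^1$ invariant.

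The hard part is to make sure that in the case $\overline{\pi}(G)\cong(\Z{2})^2$ the chain $n\mapsto n-2\mapsto\cdots$ does not overshoot $\mathbb{F}_0$ by passing through $\mathbb{F}_1$, i.e.\ that $n$ is even. I would argue that $\Aut(\mathbb{F}_1)$ is the stabiliser in $\PGLn{3}$ of a point $O$, with $\overline{\pi}$ corresponding to the action on the pencil of lines through $O$ (equivalently on $E_1$): a finite Abelian subgroup of $\PGLn{3}$ is diagonalisable or conjugate to $V_9$ (first line of the proof of Proposition~\ref{Prp:PGL3Cr}), and $V_9$ fixes no point, so $G$ is diagonalisable, hence acts diagonally on $\K^3/\langle O\rangle$ and thus on the pencil of lines through $O$ as a \emph{cyclic} subgroup of $\PGLn{2}$. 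Since the elementary transformations above do not alter the action on the base, a group with $\overline{\pi}(G)\cong(\Z{2})^2$ can never be conjugated onto $\mathbb{F}_1$; as the reduction lowers $n$ by $2$ at each step in this case, the chain cannot meet $\mathbb{F}_1$ and must end at $\mathbb{F}_0$.
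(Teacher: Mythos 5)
Your proof is correct and follows essentially the same route as the paper's: descend to $\mathbb{F}_0$ by $G$-equivariant elementary transformations centred off the exceptional section (one centre at a time when $\overline{\pi}(G)$ is cyclic, a size-two orbit at a time when it is $(\Z{2})^2$), and rule out odd $n$ in the non-cyclic case by noting that a finite Abelian subgroup of $\Aut(\mathbb{F}_1)\subset\PGLn{3}$ has cyclic image on the base. The only divergence is cosmetic: you produce the centres from a global $G$-invariant section disjoint from $E_n$ (barycentre argument on the affine part of $|E_n+nf|$), while the paper finds them as the second fixed point of the cyclic action of (a subgroup of) $G$ on an invariant fibre --- both are valid.
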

\begin{proof}
Let $G \subset \Aut(\mathbb{F}_n)$ be a finite Abelian group, with $n\geq 1$. Note that $G$ preserves the unique ruling of $\mathbb{F}_n$. We denote by $E\subset \mathbb{F}_n$ the unique section
of self-intersection $-n$, which is necessarily invariant by $G$. We have the exact sequence (see Remark~\ref{Rem:ExactSeq})
\begin{equation}
1 \rightarrow G' \rightarrow G \stackrel{\overline{\pi}}{\rightarrow} \overline{\pi}(G) \rightarrow 1.
\tag{\ref{eq:ExactSeqCB}}\end{equation}
Since the group $\overline{\pi}(G) \subset \PGLn{2}$ is Abelian, it is isomorphic to a cyclic group or to $(\Z{2})^2$.

\textit{If $\overline{\pi}(G)$ is a cyclic group,} at least two fibres are invariant by $G$. The group $G$ fixes two points in one such fibre. We can blow-up the point that does not lie on $E$
and blow-down the corresponding fibre to get a group of automorphisms of $\mathbb{F}_{n-1}$. 
We do this~$n$ times and finally obtain a birational map of conic bundles that conjugates $G$ to a group of automorphisms of $\mathbb{F}_0=\mathbb{P}^1 \times
\mathbb{P}^1$.

\textit{If $\overline{\pi}(G)$ is isomorphic to $(\Z{2})^2$,} there exist two fibres $F,F'$ of $\pi$ whose union is invariant by $G$. 
Let $G_F \subset G$ be the subgroup of $G$ of elements that leave $F$ invariant. This group is of index $2$ in $G$ and hence is normal. Since $G_F$ fixes the point $F\cap E$ in $F$, it acts cyclically on $F$. There exists another point $P \in F$, $P\notin E$, which is fixed by $G_F$. The orbit of $P$ by $G$ consists of two points, $P$ and $P'$, such that $P' \in F'$, $P' \notin E$.
We blow-up these two points and blow-down the strict transforms of $F$ and $F'$ to get 
a group of automorphisms of $\mathbb{F}_{n-2}$. We do this $\lfloor n/2\rfloor$ times to obtain $G$ as a group of automorphisms of $\mathbb{F}_0$ or
$\mathbb{F}_1$.

If~$n$ is even, we get in this manner a group of automorphisms of $\mathbb{F}_0=\mathbb{P}^1 \times
\mathbb{P}^1$. 

Note that~$n$ cannot be odd, if the group $\overline{\pi}(G)$ is not cyclic.\ Otherwise, we could conjugate $G$ to a group of automorphisms of $\mathbb{F}_1$ and then to a group of automorphisms of\hspace{0.2 cm}$\Pn$ by blowing-down the exceptional section on a point $Q\in\Pn$. We would get an Abelian subgroup of $\PGLn{3}$ that fixes $Q$, and thus a group with at least three fixed points. In this case, the action on the set of lines passing through $Q$ would be cyclic (see Proposition~\ref{Prp:PGL3Cr}), which contradicts our hypothesis.
\end{proof}

We can now prove the main result of this section:
\begin{prop}\label{Prop:MainConicBundles}
Let $G\subset \Aut(S,\pi)$ be some finite Abelian group of automorphisms of the conic bundle $(S,\pi)$ such that the triple $(G,S,\pi)$ is minimal and no non-trivial element of $G$ fixes a curve of positive genus.
Then, one of the following situations occurs:
\begin{enumerate}
\item[\upshape 1.]
$S$ is a Hirzebruch surface $\mathbb{F}_n$;
\item[\upshape 2.]
$S$ is a del Pezzo surface of degree $5$ or $6$;
\item[\upshape 3.]
The triple $(G,S,\pi)$ is isomorphic to the triple $(\Cs{24},\hat{S_4},\pi)$ of Section~$\ref{Sec:ExampleCs24}$.
\end{enumerate}
If we suppose that the pair $(G,S)$ is minimal, then we are in case $1$ with $n\not=1$ or in case $3$. Moreover, cases $1$ and $2$ are birationally conjugate to automorphisms of $\mathbb{P}^1\times \mathbb{P}^1$ whereas the third is not.
\end{prop}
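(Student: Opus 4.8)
The plan is to assemble the results of this section. First I would establish the trichotomy; then refine it under the stronger hypothesis that the pair $(G,S)$ is minimal; then check that cases~1 and~2 reduce birationally to $\mathbb{P}^1\times\mathbb{P}^1$; finally, treat the non-conjugacy in case~3, which is the main difficulty. For the trichotomy I would split according to whether $G$ contains an involution twisting $(S,\pi)$. If it does not, Proposition~\ref{Prp:necDeJi} applies verbatim and produces a Hirzebruch surface (case~1), the del Pezzo surface of degree~$6$ (case~2), or the triple $(\Cs{24},\hat{S_4},\pi)$ (case~3). If $G$ does contain a twisting involution $\sigma$, then by hypothesis $\sigma$ fixes no curve of positive genus, so by Lemma~\ref{Lem:DeJI} it twists exactly two singular fibres, and Proposition~\ref{Prp:HardTwistInv} then yields a del Pezzo surface of degree~$5$ or~$6$, that is, case~2. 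This settles the first assertion.

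Next, assume in addition that the pair $(G,S)$ is minimal. I would eliminate the two del Pezzo surfaces and $\mathbb{F}_1$. If $S$ were the del Pezzo surface of degree~$6$, then $G$ would preserve one of its three conic bundle structures, contradicting Corollary~\ref{Coro:S6cnotminimal}. If $S$ were the del Pezzo surface of degree~$5$, then the four exceptional curves of $(S,\pi)$ that are sections are permuted among themselves by $\Aut(S,\pi)\supseteq G$ (Lemma~\ref{Lem:AutDP5Conic}) and may be contracted $G$-equivariantly, so $(G,S)$ would not be minimal. If $S=\mathbb{F}_1$, its exceptional section is the unique $(-1)$-curve, hence $G$-invariant, and its contraction gives a $G$-equivariant morphism to $\Pn$, again contradicting minimality. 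So only case~1 with $n\ne 1$ and case~3 survive; both genuinely give minimal pairs, the former because $\mathbb{F}_0$ and the $\mathbb{F}_n$ with $n\ge 2$ are minimal surfaces, the latter by Lemma~\ref{Lem:Cs24Properties}.

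For the birational reductions in cases~1 and~2: in case~1 there is nothing to do if $n=0$, while for $n\ge 1$ Proposition~\ref{Prp:FnAb} conjugates $G$ by a birational map of conic bundles to a group of automorphisms of $\mathbb{P}^1\times\mathbb{P}^1$; in case~2 with the degree~$6$ surface, the morphism $\pi_j\times\pi_k$ of Lemma~\ref{Lem:DP6CBPasMin} conjugates $G\subseteq\Aut(S_6,\pi_i)$ to a subgroup of $\Aut(\mathbb{P}^1\times\mathbb{P}^1)$; in case~2 with the degree~$5$ surface, Lemma~\ref{Lem:AutDP5Conic} conjugates $G$ to a finite Abelian subgroup of $\Aut(\Pn)$ isomorphic to a subgroup of $\Sym_4$, hence cyclic or isomorphic to $(\Z{2})^2$, in any case not isomorphic to $(\Z{3})^2$, so by Proposition~\ref{Prp:PGL3Cr} it is conjugate to a diagonal group, which lies in $\Aut(\mathbb{P}^1\times\mathbb{P}^1)$ by Example~\ref{Exa:TorP1}.

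The remaining assertion — that case~3 is \emph{not} birationally conjugate to a subgroup of $\Aut(\mathbb{P}^1\times\mathbb{P}^1)$ — is the heart of the matter and the step I expect to be the main obstacle. Since $\Cs{24}\cong\Z{2}\times\Z{4}$, Proposition~\ref{Prp:AutP1P1B} reduces it to showing that $(\Cs{24},\hat{S_4})$ is birationally conjugate to none of the three groups of that list isomorphic to $\Z{2}\times\Z{4}$: the diagonal group [1] with $(n,m)=(2,4)$, the group [2] with $n=2$, and the group [6]. I would separate $\Cs{24}$ from [1] and [2] by the observation that [1] fixes a point of $\mathbb{P}^1\times\mathbb{P}^1$ and, like [2], leaves the two rulings invariant, whereas $\Cs{24}$ has no fixed point on $\hat{S_4}$ (its orbits on the negative curves have size~$4$, by Lemma~\ref{Lem:Cs24Properties}) and, because $\hat{S_4}$ carries a unique conic bundle structure (Corollary~\ref{Cor:S4onefib}), does not leave two transversal pencils of rational curves invariant — here one runs the parity argument of the proof of Proposition~\ref{Prp:AutP1P1B} for the groups [6] and [7], using that $\Cs{24}$ has order~$8$ and acts without fixed points. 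Separating $\Cs{24}$ from [6] is the genuine obstacle: both are minimal, of order~$8$, isomorphic as abstract groups, and their individual elements have the same fixed-locus behaviour, so an element-by-element comparison will not suffice. I would argue instead at the level of the pair, using that the invariant Picard number of [6] on $\mathbb{P}^1\times\mathbb{P}^1$ is~$1$ (so [6] is realized on a del Pezzo surface with minimal action) while $\Cs{24}$ on $\hat{S_4}$ has invariant Picard number~$2$ with a $\Cs{24}$-invariant conic bundle, and invoking the essential rigidity of these minimal models — namely that the only minimal conic bundle or del Pezzo model of $\Cs{24}$ satisfying condition~$(F)$ is $(\hat{S_4},\pi)$, which is not a del Pezzo surface. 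This last point uses the structure theory of the preceding and following sections, and the cleanest route may be to carry it out only after the classification in Theorem~\ref{Thm:Classifmin}.
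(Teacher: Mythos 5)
Your treatment of the trichotomy, of the refinement under pair-minimality, and of the birational reductions of cases~1 and~2 is correct and coincides with the paper's argument (split on whether $G$ contains a twisting involution, apply Propositions~\ref{Prp:necDeJi} and~\ref{Prp:HardTwistInv}, then Proposition~\ref{Prp:FnAb}, Lemma~\ref{Lem:DP6CBPasMin}, Lemma~\ref{Lem:AutDP5Conic} and Proposition~\ref{Prp:PGL3Cr}). The problem is the last step, which you yourself identify as the heart of the matter and then do not actually prove. Reducing via Proposition~\ref{Prp:AutP1P1B} to a comparison with the group $[6]$ is fine as far as it goes, but your proposed conclusion rests on ``the essential rigidity of these minimal models'' --- the claim that a group action cannot simultaneously admit a minimal conic bundle model and a minimal del Pezzo model of rank one. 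That claim is not a black box you can invoke: it is false for group actions in general (minimal models of a birational group action are not unique), and in this specific instance it is precisely the statement to be established. Likewise, postponing the argument until after Theorem~\ref{Thm:Classifmin} does not help: that theorem lists the possible minimal pairs but says nothing about which of them are birationally equivalent to one another, so it cannot by itself exclude that $(\Cs{24},\hat{S_4})$ and the pair $[6]$ on $\mathbb{P}^1\times\mathbb{P}^1$ are two different minimal models of the same conjugacy class.

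What the paper actually does at this point is run the equivariant Sarkisov/Mori machinery: any $\Cs{24}$-equivariant birational map $\hat{S_4}\dasharrow\mathbb{P}^1\times\mathbb{P}^1$ factors into $\Cs{24}$-equivariant elementary links, and since the group preserves the conic bundle the first link must be of type $II$, $III$ or $IV$. A link of type $II$ blows up an orbit (necessarily of four points, two on $\widetilde{E_1}$ and two on $\widetilde{D_{23}}$, since the points must be fixed by the elements acting trivially on the base) and blows down the old fibres; by Proposition~\ref{Prp:necDeJi} the resulting triple is again isomorphic to $(\Cs{24},\hat{S_4},\pi)$. A link of type $III$ would contract an invariant set of skew exceptional curves, impossible because the pair is minimal (Lemma~\ref{Lem:Cs24Properties}); a link of type $IV$ would change the fibration, impossible because $\hat{S_4}$ carries a unique conic bundle structure (Corollary~\ref{Cor:S4onefib}). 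Hence no chain of links ever leaves the isomorphism class of $(\Cs{24},\hat{S_4},\pi)$, and $\mathbb{P}^1\times\mathbb{P}^1$ is unreachable. Some argument of this kind --- tracking all equivariant links out of $\hat{S_4}$ --- is indispensable, and its absence is a genuine gap in your proof.
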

\begin{proof}
The fact that one of the three cases occurs follows directly from Propositions~\ref{Prp:necDeJi} and~\ref{Prp:HardTwistInv}.

Case $1$ is clearly minimal if and only if $n\not=1$ and Proposition~\ref{Prp:FnAb} shows that it is conjugate to automorphisms of $\mathbb{P}^1\times\mathbb{P}^1$. In the case of del Pezzo surfaces of degree $5$ and $6$, the pair $(G,S)$ is not minimal and the group is respectively birationally conjugate to a subgroup of $\Sym_4\subset\Aut(\Pn)$ (Lemma~\ref{Lem:AutDP5Conic}) or $\Aut(\mathbb{P}^1\times\mathbb{P}^1)$ (Lemma~\ref{Lem:DP6CBPasMin}). If the first situation occurs, since the group is Abelian and not isomorphic to $(\Z{3})^2$ it is diagonalisable and conjugate to a subgroup of $\Aut(\mathbb{P}^1\times\mathbb{P}^1)$ (Proposition~\ref{Prp:PGL3Cr}). Thus, we are done with case $2$.

It remains to show that the pair $(\Cs{24},\hat{S_4})$ is not birationally conjugate to a group of automorphisms of $\mathbb{P}^1\times\mathbb{P}^1$. Let us suppose the contrary, i.e.\ that there exists some $\Cs{24}$-equivariant birational map $\varphi: \hat{S_4}\dasharrow \mathbb{P}^1\times\mathbb{P}^1$ (that conjugates $\Cs{24}$ to a group of automorphisms). Then, $\varphi$ is the composition of $\Cs{24}$-equivariant elementary links (see for example \cite[Theorem 2.5]{bib:Isk5}, or \cite[Theorem 7.7]{bib:Dol}). 
Since our group preserves the conic bundle, the first link is of type $II$, $III$ or $IV$ (in the classical notation of Mori theory). We now study these possibilities and show that it is not possible to go to $\mathbb{P}^1\times\mathbb{P}^1$.

{\it Link of type $II$} - In our case, this link is a birational map of conic bundles, which is the composition of the blow-up of an orbit of $\Cs{24}$, no two points on the same fibre, with the blow-down of the strict transforms of the fibres of the points blown-up. The points must be fixed by the elements of $\Cs{24}$ that act trivially on the basis of the fibration, and thus an orbit has $4$ points, two on $\widetilde{E_1}$ and two on $\widetilde{D_{23}}$. This link conjugates the triple $(\Cs{24},\hat{S_4},\pi)$ to a triple isomorphic to it, by Proposition~\ref{Prp:necDeJi}.

{\it Link of type $III$} - It is the contraction of some set of skew exceptional curves, invariant by $\Cs{24}$. This is impossible since the pair $(\Cs{24},\hat{S_4})$ is minimal (Lemma~\ref{Lem:Cs24Properties}).

{\it Link of type $IV$} - It is a change of the fibration. This is not possible since the surface $\hat{S_4}$ admits only one conic bundle fibration (Corollary~\ref{Cor:S4onefib}).
\end{proof}
\section{Actions on del Pezzo surfaces with fixed part of the Picard group of rank one}
\label{Sec:DelPezzo}
In this section we prove the following result (note that finiteness is not required and that minimality of the action is implied by the condition on $\Pic{S}^G$).
\begin{prop}
\label{Prp:DPrkone}
Let $S$ be a del Pezzo surface, and let $G\subset \Aut(S)$ be an Abelian group such that $\rkPic{S}^G=1$ and no non-trivial element of $G$ fixes a curve of positive genus. Then, one of the following occurs:\begin{enumerate}
\item[\upshape 1.]
$S\cong \Pn$ or $S\cong \mathbb{P}^1\times\mathbb{P}^1$;
\item[\upshape 2.]
$S$ is a del Pezzo surface of degree $5$ and $G\cong \Z{5}$;
\item[\upshape 3.]
$S$ is a del Pezzo surface of degree $6$ and $G\cong \Z{6}$.
\end{enumerate}
Furthermore, in cases $2$ and $3$, the group $G$ is birationally conjugate to a diagonal cyclic subgroup of $\Aut(\Pn)$.
\end{prop}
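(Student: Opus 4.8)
The plan is to run through the possibilities for $S$ according to its degree $d=(K_S)^2$, keeping $\rkPic{S}^G=1$ in force throughout; note that condition $(F)$ will only be needed in order to exclude the surfaces of degree $\le 4$. If $d=9$ then $S\cong\Pn$, and if $S\cong\mathbb{P}^1\times\mathbb{P}^1$ (one of the two del Pezzo surfaces of degree $8$) we are already in case~1. For the two remaining del Pezzo surfaces of degree $7$ or $8$ --- namely $\mathbb{F}_1$ and the blow-up of $\Pn$ at two points --- there is a $(-1)$-curve which is distinguished among all $(-1)$-curves of $S$ (the exceptional section of $\mathbb{F}_1$, respectively the strict transform of the line through the two points) and hence is invariant under $\Aut(S)$; its class together with $K_S$ spans a rank-$2$ subgroup of $\Pic{S}^G$, contradicting $\rkPic{S}^G=1$. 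So for $d\ge 7$ only case~1 occurs.

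For $d=5$ and $d=6$ the condition $\rkPic{S}^G=1$ says exactly that the image $\overline G$ of $G$ in the Weyl group $W$ of the root lattice $K_S^{\perp}$ acts on $K_S^{\perp}\otimes\mathbb{Q}$ without non-zero invariant vector. If $d=5$, then $K_S^{\perp}\cong A_4$ and $W\cong\Sym_5=\Aut(S_5)$ (Proposition~\ref{Prp:AutS5}); an Abelian subgroup of $\Sym_5$ has no invariant vector in the standard $4$-dimensional representation precisely when it acts transitively on the $5$ points, hence (being Abelian) is regular of order $5$, and as $\Aut(S_5)$ is finite we get $G\cong\Z{5}$. If $d=6$, then $K_S^{\perp}\cong A_2\oplus A_1$ and $W\cong\Sym_3\times\Z{2}$, a dihedral group of order $12$; going through its few conjugacy classes of Abelian subgroups, the only one acting on $(A_2\oplus A_1)\otimes\mathbb{Q}$ without invariant vector is the cyclic group of order $6$ generated by a Coxeter element $c$. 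It remains to see that the kernel $T=G\cap\Aut(S_6)^{\circ}=G\cap(\K^{*})^2$ is trivial: since $G$ is Abelian, $T$ is fixed pointwise under conjugation by any lift of $c$, hence lies in the fixed subgroup of $c$ acting on $(\K^{*})^2$; but $c$ acts on the rank-$2$ character lattice of $(\K^{*})^2$ through a matrix with $\det(c-\mathrm{id})=\pm 1$, so $c-\mathrm{id}$ is invertible, its fixed subgroup on $(\K^{*})^2$ is trivial, and $G\cong\Z{6}$.

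For the last assertion I would argue as follows. In case $d=5$ the group $G$ is the standard cyclic subgroup of order $5$ of $\Sym_5=\Aut(S_5)$; in case $d=6$ the vanishing of $H^1(\langle c\rangle,(\K^{*})^2)$ --- again a consequence of the invertibility of $c-\mathrm{id}$ --- lets one conjugate $G$ inside $\Aut(S_6)$ into the subgroup $\Sym_3\times\Z{2}$, so that $G$ is the standard cyclic group generated by a Coxeter element. In both cases one then exhibits an explicit $G$-equivariant birational map $S\dasharrow\Pn$ (obtained by blowing up a suitable $G$-orbit of points and contracting a complementary $G$-invariant set of disjoint $(-1)$-curves) conjugating $G$ to a finite cyclic subgroup of $\Aut(\Pn)=\PGLn{3}$; since this subgroup is cyclic it is not isomorphic to $V_9\cong(\Z{3})^2$, so Proposition~\ref{Prp:PGL3Cr} shows it is diagonalisable, whence $G$ is birationally conjugate to a diagonal cyclic subgroup of $\Aut(\Pn)$.

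The remaining task --- and what I expect to be the main obstacle --- is to rule out the surfaces of degree $d\le 4$, i.e. to show that such a surface does not carry an Abelian group action with $\rkPic{S}^G=1$ satisfying $(F)$. The strategy is that $\rkPic{S}^G=1$ again forces $\overline G\subset W(K_S^{\perp})$ to act without invariant vectors, and moreover $G$ permutes the finite sets of $(-1)$-curves and of conic bundle structures on $S$ with no fixed conic bundle structure (a fixed one would give a second $G$-invariant class independent of $K_S$); this pins $\overline G$ down to a finite list of fixed-point-free Abelian subgroups of $W(D_5)$, $W(E_6)$, $W(E_7)$, $W(E_8)$, and for each entry of the list one produces a non-trivial element of $G$ whose fixed-point set contains a curve of positive genus --- the basic instances being the Geiser involution for $d=2$ and the Bertini involution for $d=1$, which fix curves of genus $3$ and $4$, and for $d=3,4$ an involution (or element of prime order) fixing a smooth anticanonical hyperplane section, which is a curve of genus $1$. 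Any such element contradicts $(F)$, completing the proof. The difficulty, in contrast with degrees $5$ and $6$ where the Weyl group is tiny and pins $G$ down at once, is that these larger Weyl groups contain many fixed-point-free Abelian subgroups, so the argument must be a genuine case analysis rather than a one-line observation.
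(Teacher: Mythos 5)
Your treatment of degrees $\geq 5$ is essentially correct and takes a slightly different route from the paper: where the paper uses the divisibility of orbit sizes by the degree (Lemma~\ref{Lem:SizeOrbits}) and explicit computations in $\Aut(S_6)=(\K^{*})^2\rtimes(\Sym_3\times\Z{2})$, you argue via invariant vectors in the reflection representations of $W(A_4)\cong\Sym_5$ and $W(A_2\oplus A_1)\cong\Sym_3\times\Z{2}$, and you kill the toral part of $G$ in degree $6$ by the invertibility of $c-\mathrm{id}$ on the character lattice (the paper does the same thing by an explicit conjugation). Both routes reach $G\cong\Z{5}$, resp.\ $G\cong\Z{6}$. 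One caveat on your linearisation step: for $d=5$ the recipe ``contract a complementary $G$-invariant set of disjoint $(-1)$-curves'' cannot be applied literally, since the two orbits of $G\cong\Z{5}$ on the ten exceptional curves each have size $5$ while at most four exceptional curves of $S_5$ are pairwise disjoint; the paper instead uses the homaloidal system of cubics through the four blown-up points with a double point at a fixed point of $G$ (and, for $d=6$, cubics through $A_1,A_2,A_3$ with a prescribed double point and tangent direction at the fixed point). So even this ``easy'' part needs the right linear system rather than a contraction of existing $(-1)$-curves.

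The genuine gap is the exclusion of del Pezzo surfaces of degree $\leq 4$, which is where the hypothesis on fixed curves actually enters and which constitutes the bulk of the paper's proof (Lemmas~\ref{Lem:DP4}, \ref{Lem:DP3}, \ref{Lem:DP2} and~\ref{Lem:DP1}). You correctly identify the elements one must exhibit (an element of $\mathbf{F}$ with four ``ones'' fixing an elliptic curve in degree $4$; an element of order $2$ or $3$ fixing a smooth anticanonical section in degree $3$; the Geiser involution or an elliptic-curve-fixing element in degree $2$; the Bertini involution or a similar element in degree $1$), but you do not carry out the case analysis showing that \emph{every} Abelian $G$ with $\rkPic{S}^G=1$ contains such an element. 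This is not a routine verification: the paper's arguments go through the explicit structure $\Aut(S)=\mathbf{F}\rtimes\Aut(S,\eta)$ in degree $4$, the three types of order-$3$ elements of $\PGLn{4}$ on a cubic surface, the equations of the surfaces in weighted projective spaces in degrees $2$ and $1$, and repeatedly use the Lefschetz fixed-point formula (Lemma~\ref{Lem:Lefschetz}) to bound traces on $\Pic{S}$ and rule out small groups. Your proposed reduction to ``fixed-point-free Abelian subgroups of $W(D_5),W(E_6),W(E_7),W(E_8)$'' would moreover need to be intersected with the (much smaller) image of $\Aut(S)$ in the Weyl group, which depends on the moduli of $S$, and the kernel of $\Aut(S)\to W$ is trivial here only because the degree is $\leq 5$. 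As it stands, the hardest and longest part of the proposition is announced as a plan rather than proved.
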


This will be proved separately for each degree, in Lemmas~\ref{Lem:DP6rk1}, \ref{Lem:DP5rk1}, \ref{Lem:DP4}, \ref{Lem:DP3}, \ref{Lem:DP2} and~\ref{Lem:DP1}. 

\bigskip

 \begin{rema}\label{Rem:Weyl}
A del Pezzo surface $S$ is either $\mathbb{P}^1\times\mathbb{P}^1$ or the blow-up of $0\leq r\leq 8$ points in general position on $\Pn$ (i.e.\ such that no irreducible curve of self-intersection $\leq -2$ appears on $S$). The group $\Pic{S}$ has dimension $r+1$, and its intersection form gives a decomposition $\Pic{S}\otimes \mathbb{Q}=\mathbb{Q}K_S\oplus K_S^{\perp}$; the signature is $(1,-1,...,-1)$.

The group $\Aut(S)$ of automorphisms of a del Pezzo surface $S$ acts on $\Pic{S}$ and preserves the intersection form. This gives an homomorphism of $\Aut(S)\rightarrow \Aut(\Pic{S})$ which is injective if and only if $r>3$, since the kernel is the lift of automorphisms of $\Pn$ that fix the $r$  blown-up points. Furthermore, the image is contained in the Weyl group and is finite (see \cite{bib:DolWeyl}). In particular, the group $\Aut(S)$ is finite if and only if $r>3$. 
 \end{rema}
 
When we have some group action on a del Pezzo surface, we would like to determine the rank of the fixed part of the Picard group. Here are some tools to this end.
\begin{lemm}[Size of the orbits]
\label{Lem:SizeOrbits}
Let $S$ be a del Pezzo surface, which is the blow-up of $1\leq r\leq 8$ points of\hspace{0.2 cm}$\mathbb{P}^2$ in general position,
and let $G \subset \Aut(S)$ be a subgroup of automorphisms with $\rkPic{S}^{G}=1$. Then:
\begin{itemize}
\item
$G\not=\{1\}$;
\item
the size of any orbit of the action of $G$ 
on the set of exceptional divisors is divisible by the degree of $S$, which is $9-r$;
\item
in particular, if the order of $G$ is finite, it is divisible by the degree of $S$.
\end{itemize}
\end{lemm}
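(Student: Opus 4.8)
The first assertion is immediate: if $G=\{1\}$, then $\Pic{S}^{G}=\Pic{S}$, which by Remark~\ref{Rem:Weyl} has rank $r+1\geq 2$, contradicting the hypothesis $\rkPic{S}^{G}=1$. So from now on I assume $\rkPic{S}^{G}=1$ with $G$ non-trivial.

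The key point for the second assertion is the identification $\Pic{S}^{G}=\mathbb{Z}K_{S}$. One inclusion is clear, since $K_{S}$ is fixed by every automorphism, so $\mathbb{Z}K_{S}\subseteq\Pic{S}^{G}$; this is a rank-$1$ subgroup of the rank-$1$ group $\Pic{S}^{G}$, so it suffices to check that $K_{S}$ is primitive in $\Pic{S}$ (then it is a fortiori primitive in the subgroup $\Pic{S}^{G}$, and equality follows). This primitivity is visible in the standard basis $\ell,e_{1},\dots,e_{r}$ of $\Pic{S}$ (Remark~\ref{Rem:Weyl}): there $K_{S}=-3\ell+\sum_{i=1}^{r}e_{i}$ has a coefficient equal to $1$.

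Now let $\mathcal{O}=\{C_{1},\dots,C_{k}\}$ be an orbit of $G$ acting on the (finite) set of exceptional divisors of $S$ --- recall that automorphisms preserve the intersection form and hence permute the $(-1)$-curves. The class $D=C_{1}+\cdots+C_{k}$ is $G$-invariant, so by the previous paragraph $D=cK_{S}$ for some $c\in\mathbb{Z}$. Intersecting with $-K_{S}$, and using that each exceptional divisor $C$ satisfies $C\cdot(-K_{S})=1$ by adjunction while $K_{S}^{2}=9-r$, I obtain $k = D\cdot(-K_{S}) = -c\,(9-r)$. Since $k>0$ and $9-r>0$, necessarily $c<0$, and therefore $9-r$ divides $k$, which is the second assertion. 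For the third, it suffices to note that $r\geq 1$ guarantees the existence of at least one exceptional divisor; if $G$ is finite, the size of its orbit divides $|G|$, and by what has just been shown it is divisible by $9-r$, so $9-r$ divides $|G|$.

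I expect the only slightly delicate point to be the \emph{integral} statement $\Pic{S}^{G}=\mathbb{Z}K_{S}$ (rather than the weaker $\Pic{S}^{G}\otimes\mathbb{Q}=\mathbb{Q}K_{S}$), which is exactly what forces \emph{divisibility} --- not merely proportionality --- of orbit sizes by the degree; once that is in place, everything reduces to the intersection-number computation above and presents no obstacle.
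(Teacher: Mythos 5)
Your proof is correct and follows essentially the same route as the paper: the invariant class $\sum C_i$ must be an integral multiple of $K_S$, and intersecting with $K_S$ (using $C_i\cdot K_S=-1$ from adjunction and $K_S^2=9-r$) gives the divisibility. The one point you treat more explicitly than the paper — the integrality of the proportionality constant, via primitivity of $K_S$ in $\Pic{S}$ — is indeed the step the paper passes over quickly, and your justification of it is sound.
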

\begin{proof}
It is clear that $G \not=\{1\}$, since $\rkPic{S}>1$.
Let $D_1,D_2,...,D_k$ be $k$ exceptional divisors of $S$, forming an orbit of $G$ (the orbit is finite, see Remark~\ref{Rem:Weyl}). 
The divisor $\sum_{i=1}^{k} D_i$ is fixed by $G$ and thus is a multiple of $K_S$. 
We can write $\sum_{i=1}^{k} D_i=a K_S$, for some $a \in \mathbb{Q}$. In fact, since $a K_S$ is effective, we have $a<0$ and $a \in \mathbb{Z}$.
Since the $D_i$'s are irreducible and rational, we deduce from the adjunction formula $D_i (K_S +D_i)=-2$ that $D_i\cdot K_S=-1$.
Hence \begin{center}$K_S \cdot \sum_{i=1}^{k} D_i=\sum_{i=1}^{k}K_S \cdot D_i=-k=K_S \cdot a K_S=a (9-r)$.\end{center} Consequently, the degree $9-r$ divides the size $k$ of the orbit.
\end{proof}
\begin{rema}\label{Rem:DP78}
This lemma shows in particular that $\rkPic{S}^G>1$ if $S$ is the blow-up of $r=1,2$ points of\hspace{0.2 cm}$\Pn$, a result which is obvious when $r=1$, and is clear when $r=2$, since the line joining the two blown-up points is invariant by any automorphism.
\end{rema}

\begin{lemm}\label{Lem:Lefschetz}
Let $S$ be some (smooth projective rational) surface, and let $g \in \Aut(S)$ be some automorphism of finite order. Then, the trace of~$g$ acting on $\Pic{S}$ is equal to $\chi(\Fix(g))-2$, where $\Fix(g)\subset S$ is the set of fixed points of~$g$ and $\chi$ is the Euler characteristic.
\end{lemm}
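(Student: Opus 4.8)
The plan is to obtain the identity as a special case of the topological Lefschetz fixed point formula, using the fact that on a rational surface all cohomology sits in even degree and that $H^2$ is entirely algebraic.

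First I would recall the cohomological description of a smooth projective rational surface $S$. Since the irregularity and geometric genus of $S$ vanish, one has $H^1(S,\mathcal{O}_S)=H^2(S,\mathcal{O}_S)=0$, so the Betti numbers are $b_0=b_4=1$ and $b_1=b_3=0$, while the exponential exact sequence identifies $\Pic{S}$ with $H^2(S,\mathbb{Z})$, which is torsion-free. Tensoring with $\mathbb{C}$ gives an $\Aut(S)$-equivariant isomorphism $\Pic{S}\otimes_{\mathbb{Z}}\mathbb{C}\cong H^2(S,\mathbb{C})$. Next I would compute the Lefschetz number of $g$: as $S$ is connected, $g^{*}$ acts trivially on $H^0(S,\mathbb{C})$ and on $H^4(S,\mathbb{C})$, the odd cohomology vanishes, and $g^{*}$ acts on $H^2(S,\mathbb{C})$ the way $g$ acts on $\Pic{S}$. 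Hence
$$L(g):=\sum_{i=0}^{4}(-1)^i\,\mathrm{tr}\!\left(g^{*}\mid H^i(S,\mathbb{C})\right)=1+\mathrm{tr}\!\left(g\mid\Pic{S}\right)+1=2+\mathrm{tr}\!\left(g\mid\Pic{S}\right).$$

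It then remains to identify $L(g)$ with $\chi(\Fix(g))$. This is where finiteness of the order of $g$ is used: averaging a Riemannian metric over the cyclic group $\langle g\rangle$ makes $g$ an isometry, so $\Fix(g)$ is a smooth totally geodesic submanifold of $S$ — a disjoint union of isolated points and smooth curves — and $g$ acts on the normal bundle of each component without non-zero fixed vectors. In this situation the Lefschetz fixed point formula asserts that each connected fixed component $F$ contributes exactly its topological Euler characteristic $\chi(F)$ to $L(g)$, whence $L(g)=\sum_{F}\chi(F)=\chi(\Fix(g))$. Combining this with the displayed equation gives $\mathrm{tr}(g\mid\Pic{S})=\chi(\Fix(g))-2$, as claimed.

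The main obstacle is the last step: passing from the classical isolated-fixed-point Lefschetz theorem, where each fixed point contributes $1$, to the version in which a curve component $F$ contributes $\chi(F)$. I would handle this by invoking the clean-intersection form of the Lefschetz formula, or equivalently by working in a $g$-equivariant tubular neighbourhood of each curve component and observing that the local index of the graph of $g$ along $F$ collapses to $\chi(F)$ precisely because the induced finite-order action on the normal bundle has no fixed direction. Everything else (the Hodge-theoretic input on a rational surface, the identification of $H^2$ with $\Pic{S}$, and the bookkeeping of traces) is routine.
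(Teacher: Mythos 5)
Your argument is correct and follows essentially the same route as the paper: apply the topological Lefschetz fixed-point formula, use that a rational surface has $b_1=b_3=0$, $b_0=b_4=1$ and $H^2(S,\mathbb{Z})\cong\Pic{S}$, and conclude that the Lefschetz number $2+\mathrm{tr}(g\mid\Pic{S})$ equals $\chi(\Fix(g))$ because $g$ has finite order. The paper simply cites the identity $L(g)=\chi(\Fix(g))$ for finite-order homeomorphisms where you spell out the clean fixed-point justification, but this is elaboration rather than a different proof.
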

\begin{proof}
This follows from the topological Lefschetz fixed-point formula, which asserts that the trace of~$g$ acting on $H^{*}(S,\mathbb{Z})$ is equal to $\chi(\Fix(g))$ (this uses the fact that~$g$ is an homeomorphism of finite order). Since $S$ is a complex rational surface, $H^{0}(S,\mathbb{Z})$ and $H^{4}(S,\mathbb{Z})$ have dimension $1$, $H^{2}(S,\mathbb{Z})\cong\Pic{S}$, and $H^{i}(S,\mathbb{Z})=0$ for $i\not=0,2,4$. Since the trace on $H^{2}$ and $H^4$ is $1$, we obtain the result.
\end{proof}
\begin{rema}
This lemma is false if the order of~$g$ is infinite. Take for example the automorphism $(x:y:z)\mapsto (\lambda x:y:z+y)$ of $\Pn$, for any $\lambda\in\K^{*}, \lambda\not=1$. It fixes exactly two points, namely $(1:0:0)$ and $(0:0:1)$, but its trace on $\Pic{\Pn}=\mathbb{Z}$ is $1$.
\end{rema}

We now start the proof of Proposition~\ref{Prp:DPrkone} by studying the cases of del Pezzo surfaces of degree $6$ or $5$.
\begin{lemm}[Actions on the del Pezzo surface of degree $6$]
\label{Lem:DP6rk1}
Let $S_6=\{ \big((x:y:z) , (u:v:w)\big)\ | \ ux=vy=wz\}\subset\Pn\times\Pn$ be the del Pezzo surface of degree $6$ and let $G\subset \Aut(S_6)$ be an Abelian group such that $\rkPic{S_6}^G=1$. Then, $G$ is conjugate in $\Aut(S_6)$ to the cyclic group of order $6$ generated by $\big((x:y:z) , (u:v:w)\big)\mapsto \big((v:w:u) , (y:z:x)\big)$. Furthermore, $G$ is birationally conjugate to a diagonal subgroup of $\Aut(\Pn)$.
\end{lemm}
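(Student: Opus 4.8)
The plan is to locate $G$ inside $\Aut(S_6)=(\K^{*})^2\rtimes(\Sym_3\times\Z{2})$ (recalled in Section~\ref{Sec:DelPezzo6}) by combining the orbit estimate of Lemma~\ref{Lem:SizeOrbits} with the faithful action of $\Sym_3\times\Z{2}$ on the hexagon, and then to treat the birational linearisation separately. First I would note that the six exceptional curves $E_1,E_2,E_3,D_{12},D_{13},D_{23}$ form a hexagon and span $\Pic{S_6}\otimes\mathbb{Q}$; since $\rkPic{S_6}^{G}=1$, Lemma~\ref{Lem:SizeOrbits} forces every orbit of $G$ on these six curves to have size divisible by $\deg S_6=6$, so $G$ acts transitively on the hexagon. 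The torus $(\K^{*})^2$ acts trivially on $\Pic{S_6}$ while $\Sym_3\times\Z{2}$ acts faithfully, realising the full group of symmetries of the hexagon (of order $12$); hence the image $\overline{G}$ of $G$ is a transitive \emph{abelian} subgroup of this group, and the only such subgroup is the rotation subgroup, cyclic of order $6$.

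Next I would normalise a generator. Let $\rho_0$ be the automorphism $\big((x:y:z),(u:v:w)\big)\mapsto\big((v:w:u),(y:z:x)\big)$, which on $\Pn$ is the monomial Cremona map $(x:y:z)\dasharrow(xz:xy:yz)$ and acts on the hexagon as a $6$-cycle. By the previous paragraph $G$ contains some $g$ whose image in $\Sym_3\times\Z{2}$ generates $\overline{G}$, and then $g\in(\K^{*})^2\rho_0$. The conjugation action of the torus on this coset is transitive: $t\mapsto\rho_0 t\rho_0^{-1}$ is an automorphism of $(\K^{*})^2$ given on the character lattice by an integral matrix $M$ of order $6$, with characteristic polynomial $x^{2}-x+1$, so $\det(M-\mathrm{Id})=\Phi_6(1)=1$ and $t\mapsto(\rho_0 t\rho_0^{-1})t^{-1}$ is an automorphism of $(\K^{*})^2$; replacing $G$ by a suitable conjugate in $(\K^{*})^2\subset\Aut(S_6)$ I may assume $g=\rho_0$. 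Since $G$ is abelian, $G\subseteq C_{\Aut(S_6)}(\rho_0)$. An element $h=t\cdot d$ commuting with $\rho_0$ must have $d$ commuting with the order-$6$ rotation $\overline{\rho_0}$, hence $d$ is a rotation, $h\rho_0^{-k}\in(\K^{*})^2$ for the appropriate $k$, and imposing commutation on $h=t'\rho_0^{k}$ gives $\rho_0 t'\rho_0^{-1}=t'$, i.e.\ $t'\in\ker(M-\mathrm{Id})=\{1\}$. Thus $C_{\Aut(S_6)}(\rho_0)=\langle\rho_0\rangle\cong\Z{6}$, and together with the surjection $G\twoheadrightarrow\overline{G}\cong\Z{6}$ this yields $G=\langle\rho_0\rangle$ (in particular $G$ is finite), which is the first assertion.

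For the last assertion I would linearise $\langle\rho_0\rangle$ directly: $\rho_0$ is a monomial transformation of order $6$ of $\Pn$ fixing the point $(1:1:1)$, and a concrete birational conjugation --- or the linearisation results for finite cyclic (monomial) transformations in \cite{bib:BeB}, \cite{bib:BlaMM} --- brings $\langle\rho_0\rangle$ into $\Aut(\Pn)=\PGLn{3}$; as every finite cyclic subgroup of $\PGLn{3}$ is conjugate to a diagonal one, $G$ is birationally conjugate to a diagonal cyclic subgroup of $\Aut(\Pn)$.

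I expect the main obstacle to be the middle step: extracting from $\rkPic{S_6}^{G}=1$ that the image of $G$ in the Weyl group is exactly the hexagon rotation, and then the identification $C_{\Aut(S_6)}(\rho_0)=\langle\rho_0\rangle$, which hinges on the arithmetic fact that the order-$6$ element acts on the character lattice of the torus with no invariant characters ($\det(M-\mathrm{Id})=\Phi_6(1)=1$). By comparison the transitivity input from Lemma~\ref{Lem:SizeOrbits} and the final linearisation step are routine.
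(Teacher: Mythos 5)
Your treatment of the first assertion is correct and follows the paper's route, in places with more detail than the paper itself: the orbit-divisibility argument forcing transitivity on the hexagon, the identification of the image of $G$ with the rotation subgroup $\Z{6}$ of the hexagon's symmetry group, and the normalisation of the generator inside the coset $(\K^{*})^2\rho_0$ via the bijectivity of $s\mapsto s^{-1}(\rho_0 s\rho_0^{-1})$ (equivalently $\det(M-\mathrm{Id})=\Phi_6(1)=1$) all check out; the paper instead first observes that no non-trivial torus element commutes with a hexagon-rotating element and then conjugates, but this is the same computation in a different order.

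The gap is in the final assertion. Writing $\hat g=p\rho_0 p^{-1}:(x:y:z)\dasharrow(xz:xy:yz)$ as a monomial map of order $6$ and then appealing to \cite{bib:BeB} or \cite{bib:BlaMM} does not close the argument: \cite{bib:BeB} linearises cyclic groups of \emph{prime} order only, and \cite{bib:BlaMM} concerns conjugacy of linear automorphisms and conjugation by monomial maps within the normaliser of the torus, not linearisation of a finite-order monomial transformation. Indeed, the linearisability of this particular order-$6$ monomial map is precisely the nontrivial content of the lemma (and is, within this paper's logic, an \emph{input} to Theorem~\ref{Thm:Cyclic}, so one cannot invoke the general linearisation of finite cyclic groups without circularity). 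The paper supplies the missing construction explicitly: $\hat g$ fixes only $P=(1:1:1)$, its action on $\mathbb{P}(T_P(\Pn))$ has order $3$ with two fixed directions $(x-y)+\omega^k(z-y)=0$, and the linear system of cubics through $A_1,A_2,A_3$ with a double point at $P$ tangent to one of these directions is $\hat g$-invariant and homaloidal ($9-4-4=1$); the associated degree-$3$ Cremona transformation conjugates $\hat g$ to a linear automorphism, which is then diagonalisable. Some such explicit invariant homaloidal system (or an equivalent equivariant link) is needed; "a concrete birational conjugation" has to actually be produced.
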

\begin{proof}
Lemma~\ref{Lem:SizeOrbits} implies that the sizes of the orbits of the action of $G$ on the exceptional divisors are divisible by $6$. The action of $G$ on the hexagon of exceptional divisors is thus transitive, so $G$ contains an element of the form
\begin{center} $g:\big((x:y:z) , (u:v:w)\big) \mapsto \big((\alpha v:\beta w:u) , (\beta y:\alpha z:\alpha\beta x)\big)$,\end{center} where $\alpha,\beta \in \K^{*}$. As the only element of $(\K^{*})^2$ that commutes with~$g$ is the identity (see the description of $\Aut(S_6)=(\K^{*})^2 \rtimes (\Sym_3\times \Z{2})$ in Section~\ref{Sec:DelPezzo6}), $G$ must be cyclic, generated by~$g$. Conjugating it by \begin{center}$\big((x:y:z) , (u:v:w)\big) \mapsto \big((\beta x: y:\alpha z) , (\alpha u:\alpha\beta v:\beta w)\big)$,\end{center}
we may assume that $\alpha=\beta=1$, as stated in the lemma (this shows in particular that $G$ is of finite order). It remains to prove that this automorphism is birationally conjugate to a linear automorphism of the plane.

Denote by $p:S\rightarrow \Pn$ the restriction of the projection on the first factor. This is a birational morphism which is the blow-up of the three diagonal points $A_1,A_2,A_3$ of $\Pn$. 
Consider the birational map $\hat{g}=p gp^{-1}$ of\hspace{0.2 cm}$\Pn$, which is explicitly $\hat{g}:(x:y:z) \dasharrow (xz:xy: yz)$. Since~$g$ is an automorphism of the surface, it fixes the canonical divisor $K_S$, so the birational map
$\hat{g}$ leaves the linear system of cubics of\hspace{0.2 cm}$\Pn$ passing through $A_1,A_2$ and $A_3$ invariant (this can also be verified directly).

Note that $\hat{g}$ fixes exactly one point of\hspace{0.2 cm}$\Pn$, namely $P=(1:1:1)$, and that its action on the projective tangent space $\mathbb{P}(T_{P}(\Pn))$ of\hspace{0.2 cm}$\Pn$ at $P$ is of order $3$, with two fixed points, corresponding to the lines $(x-y)+\omega^k(z-y)=0$, where $\omega=e^{2\ipi/3}$, $k=1,2$. Hence, the birational map $\hat{g}$ preserves the linear system of cubics of $\Pn$ passing through $A_1,A_2$ and $A_3$, which have a double point at $P$ and are tangent to the line $(x-y)+\omega(z-y)=0$ at this point. This linear system thus induces a birational transformation of $\Pn$ that conjugates $\hat{g}$ to a linear automorphism.
\end{proof}

\begin{lemm}[Actions on the del Pezzo surface of degree $5$]
\label{Lem:DP5rk1}
Let $S_5$ be the del Pezzo surface of degree $5$ and let $G\in \Aut(S_5)=\Sym_5$ be an Abelian group such that $\rkPic{S_5}^G=1$. Then, $G$ is cyclic of order $5$. Furthermore, $G$ is birationally conjugate to a diagonal subgroup of $\Aut(\Pn)$.
\end{lemm}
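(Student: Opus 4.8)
For the first part the plan is purely group-theoretic. By Proposition~\ref{Prp:AutS5} we identify $\Aut(S_5)$ with $\Sym_5$, so $G$ is a finite Abelian subgroup of $\Sym_5$. Since $S_5$ is the blow-up of four points of $\Pn$ in general position, Lemma~\ref{Lem:SizeOrbits} applied to the hypothesis $\rkPic{S_5}^G=1$ shows that every orbit of $G$ on the ten exceptional curves has size divisible by $9-4=5$; in particular $5$ divides $|G|$. By Cauchy's theorem $G$ contains a $5$-cycle $\sigma$, and since the centraliser of a $5$-cycle in $\Sym_5$ has order $120/24=5$, the Abelian group $G\ni\sigma$ must be exactly $\langle\sigma\rangle\cong\Z{5}$. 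As the $5$-cycles form a single conjugacy class in $\Sym_5$, after conjugating in $\Aut(S_5)$ we may take $\sigma$ to be any fixed $5$-cycle.

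For the last assertion I would first record that $\sigma$ fixes exactly two points of $S_5$. Since $\rkPic{S_5}^G=1$ and $K_{S_5}$ is primitive in $\Pic{S_5}$, we have $\Pic{S_5}^{\langle\sigma\rangle}=\mathbb{Z}K_{S_5}$; hence if $\sigma$ fixed a smooth rational curve $C$ pointwise we would have $[C]=aK_{S_5}$ for some negative integer $a$, so $-2=C^2+C\cdot K_{S_5}=5a(a+1)$, which is impossible. Combined with the hypothesis, $\sigma$ fixes no curve at all, so by the topological Lefschetz formula (Lemma~\ref{Lem:Lefschetz}) the number of fixed points is $\mathrm{tr}(\sigma\mid\Pic{S_5})+2$; this trace is $0$, being $1$ on $\mathbb{Q}K_{S_5}$ and, because $\sigma$ acts on the $4$-dimensional space $K_{S_5}^{\perp}$ without invariants (hence with characteristic polynomial $1+t+t^2+t^3+t^4$), being $-1$ on $K_{S_5}^{\perp}$. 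Thus $\sigma$ has exactly two fixed points. Now choose a birational morphism $p\colon S_5\to\Pn$ (the blow-up of four general points $A_1,\dots,A_4$) among the five available ones so that neither fixed point lies on a contracted curve; then $\hat\sigma=p\sigma p^{-1}$ is a Cremona transformation of $\Pn$ which is not linear (it exchanges two of the $F_i$) and of degree at most $2$ (the numerical conditions $\deg^2-\sum m_i^2=1$, $\sum m_i=3(\deg-1)$ on four points force this), hence a quadratic transformation with three of the $A_i$ as base points. As $\sigma$ fixes $K_{S_5}$, $\hat\sigma$ preserves the linear system of cubics of $\Pn$ through $A_1,\dots,A_4$ (that is, $|-K_{S_5}|$), and it fixes the images $P,P'$ of the two fixed points, which are genuine points of $\Pn$ off the base locus of $\hat\sigma$. (For a standard choice of the $A_i$ one computes $\hat\sigma\colon(x:y:z)\dasharrow\bigl(xy:y(x-z):x(y-z)\bigr)$, with $P,P'=\bigl(\varphi^2:\varphi:1\bigr)$, $\varphi=\frac{1\pm\sqrt5}{2}$, which makes all of this transparent.)

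Finally, exactly as at the end of the proof of Lemma~\ref{Lem:DP6rk1}, I would take the net $\mathcal{N}$ of cubics of $\Pn$ through $A_1,\dots,A_4$ with a double point at $P$. It is $\hat\sigma$-invariant, has projective dimension $10-4-3=2$, and the class it defines on the blow-up $Y$ of $A_1,\dots,A_4,P$ has self-intersection $9-4-2^2=1$ and intersection $-3$ with $K_Y$; checking that these five points are in general position (so that $Y$ is a del Pezzo surface of degree~$4$), one finds that this class is the pull-back of $\mathcal{O}_{\Pn}(1)$ under a birational morphism $Y\to\Pn$ contracting five pairwise disjoint $(-1)$-curves. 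Hence $\mathcal{N}$ induces a birational transformation $\psi$ of $\Pn$ with $\psi\hat\sigma\psi^{-1}\in\PGL(3,\K)$ of order $5$; being of finite order it is diagonalisable, so by Proposition~\ref{Prp:PGL3Cr} the group $G$ is birationally conjugate to a diagonal cyclic subgroup of $\Aut(\Pn)$, as desired. The one step genuinely requiring care is this last computation, i.e.\ verifying that $\mathcal{N}$ is homaloidal (equivalently, pinning down $\hat\sigma$ together with its fixed points); conceptually the whole argument runs parallel to the treatment of the del Pezzo surface of degree~$6$.
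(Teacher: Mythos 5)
Your argument follows the same route as the paper's: Lemma~\ref{Lem:SizeOrbits} forces $5$ to divide $|G|$, whence $G$ is cyclic of order $5$ and unique up to conjugacy in $\Sym_5$, and the linearisation is obtained from the net of cubics through the four blown-up points with a double point at a fixed point $P$ of $\hat\sigma$. You simply supply details (the Lefschetz count of fixed points, the homaloidal check) that the paper extracts directly from the explicit formula $h:(x:y:z)\dasharrow (xy:y(x-z):x(y-z))$ and its visible fixed points $(\zeta+1:\zeta:1)$, $\zeta^2-\zeta-1=0$.

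One step does not stand as written: to conclude that $\sigma$ ``fixes no curve at all'' you dispose of rational fixed curves numerically and of positive-genus fixed curves by appeal to ``the hypothesis'' --- but the lemma's only hypothesis is $\rkPic{S_5}^G=1$; condition $(F)$ is not assumed here (and cannot be, since the lemma is also invoked in Corollary~\ref{Coro:K2leq5} before $(F)$ is known). Your adjunction computation $2g-2=5a(a+1)$ leaves open exactly the case $a=-1$, i.e.\ a pointwise-fixed anticanonical elliptic curve, which is compatible with $\chi(\Fix(\sigma))=2$ (the curve contributes $0$) and, if it occurred, would make the conclusion false. It must be excluded separately: either by the explicit formula you relegate to a parenthesis (which is in effect what the paper does), or, abstractly, by noting that a pointwise-fixed anticanonical curve spans a $\sigma$-fixed hyperplane in the anticanonical $\mathbb{P}^5$, so that $\Fix(\sigma)$ is contained in that curve together with at most one further point, contradicting $\chi(\Fix(\sigma))=2$. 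With that point repaired the proof is complete and coincides in substance with the paper's.
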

\begin{proof} We use the description of the surface $S_5$ and its automorphisms group $\Aut(S_5)=\Sym_5$ given in Section~\ref{Sec:DelPezzo5}. 
Lemma~\ref{Lem:SizeOrbits} implies that the order of $G$ is divisible by $5$, and thus that $G$ is a cyclic subgroup of $\Sym_5$ of order $5$. Since all such subgroups are conjugate in $\Aut(S_5)=\Sym_5$, we may suppose that $G$ is generated by the lift of the birational transformation $h:(x:y:z) \dasharrow (xy:y(x-z):x(y-z))$ of\hspace{0.2 cm}$\Pn$, that fixes two points of $\Pn$, namely $(\zeta+1:\zeta:1)$, where $\zeta^2-\zeta-1=0$. Denoting one of them by $P$, the linear system of cubics passing through the four blown-up points and having a double point at $P$ is invariant by~$h$. The birational transformation associated to this system thus conjugates~$h$ to a linear automorphism of $\Pn$.\end{proof}
\begin{rema}
The fact that $(x:y:z) \dasharrow (xy:y(x-z):x(y-z))$ is linearisable was proved in \cite{bib:BeB}, using the same argument as above.
\end{rema}
\begin{coro}\label{Coro:K2leq5}
Let $S$ be a rational surface with $(K_S)^2\geq 5$ and let $G\subset \Aut(S)$ be a finite Abelian group. Then $G$ is birationally conjugate to a subgroup of $\Aut(\Pn)$ or $\Aut(\mathbb{P}^1\times\mathbb{P}^1)$.
\end{coro}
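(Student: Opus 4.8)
The plan is to replace $S$ by a $G$-minimal model via the equivariant minimal model program and then to invoke the appropriate result already established above, according to whether the model is a del Pezzo surface or a conic bundle. First I would run the equivariant MMP: since $G$ is finite, one may successively blow down $G$-orbits of disjoint $(-1)$-curves until reaching a $G$-minimal pair $(G,S')$, and each such contraction only increases $(K)^2$, so $(K_{S'})^2\geq (K_S)^2\geq 5$. As being birationally conjugate to a subgroup of $\Aut(\Pn)$ or of $\Aut(\mathbb{P}^1\times\mathbb{P}^1)$ depends only on the conjugacy class, it suffices to treat $(G,S')$. By the classification of Manin and Iskovskikh (\cite{bib:Man}, \cite{bib:Isk3}), $S'$ is then either a del Pezzo surface with $\rkPic{S'}^{G}=1$, or it carries a $G$-equivariant conic bundle structure $\pi$ with $\rkPic{S'}^{G}=2$; in the second case $(G,S')$ is in particular a minimal pair for the conic bundle $(S',\pi)$.

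In the conic bundle case I would argue as follows. Since $(K_{S'})^2\geq 5$, the fibration $\pi$ has at most $3$ singular fibres, so Corollary~\ref{Cor:No3singfibres} forces $\pi$ to be smooth, i.e.\ $S'=\mathbb{F}_n$ for some $n\geq 0$. The surface $\mathbb{F}_1$ is never $G$-minimal, since its exceptional section is its unique curve of negative self-intersection and can be contracted $G$-equivariantly; hence $n=0$ or $n\geq 2$. If $n=0$, then $S'=\mathbb{P}^1\times\mathbb{P}^1$ and there is nothing more to prove; if $n\geq 2$, Proposition~\ref{Prp:FnAb} provides a birational map conjugating $G$ to a finite subgroup of $\Aut(\mathbb{P}^1\times\mathbb{P}^1)$.

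In the del Pezzo case, $\rkPic{S'}^{G}=1$ with $(K_{S'})^2\geq 5$. If $S'$ is $\Pn$ or $\mathbb{P}^1\times\mathbb{P}^1$ we are done, so assume $S'$ is the blow-up of $1\leq r\leq 8$ points of $\Pn$ in general position, of degree $9-r$. By Lemma~\ref{Lem:SizeOrbits} and Remark~\ref{Rem:DP78}, one has $r\neq 1$ and $r\neq 2$ (for $r=1,2$, namely $\mathbb{F}_1$ and the two-point blow-up, one already has $\rkPic{S'}^{G}>1$), and since $9-r\geq 5$ this leaves exactly the degrees $6$ and $5$. For degree $6$, Lemma~\ref{Lem:DP6rk1}, and for degree $5$, Lemma~\ref{Lem:DP5rk1}, shows that $G$ is birationally conjugate to a diagonal cyclic subgroup of $\Aut(\Pn)$. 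This exhausts all possibilities and proves the corollary.

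The argument is mostly bookkeeping once the results of the previous sections are available; the delicate point is the reduction step — invoking the equivariant MMP and the Manin--Iskovskikh dichotomy correctly — together with the need to discard the borderline cases (a del Pezzo model of degree $7$ or $8$ obtained as a blow-up of $\Pn$, and $\mathbb{F}_1$ viewed as a conic bundle), which are ruled out because their $G$-invariant Picard rank exceeds $1$ or because they are not $G$-minimal. It is worth noting that, in contrast with Proposition~\ref{Prp:DPrkone}, property $(F)$ is never used here: Lemmas~\ref{Lem:DP6rk1} and~\ref{Lem:DP5rk1} and Corollary~\ref{Cor:No3singfibres} already settle the remaining del Pezzo and conic bundle cases without it.
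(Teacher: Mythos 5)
Your proof is correct and follows essentially the same route as the paper: reduce to a $G$-minimal pair, apply the Manin--Iskovskikh dichotomy, and invoke Remark~\ref{Rem:DP78} together with Lemmas~\ref{Lem:DP6rk1} and~\ref{Lem:DP5rk1} in the del Pezzo case. The only (harmless) variation is in the conic bundle branch, where you go through Corollary~\ref{Cor:No3singfibres} and Proposition~\ref{Prp:FnAb} directly, whereas the paper first deduces property $(F)$ from Corollary~\ref{Cor:CurveFixDeJ} and then applies Proposition~\ref{Prop:MainConicBundles}; both paths rest on the same underlying lemmas.
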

\begin{proof}
We may assume that the pair $(G,S)$ is minimal; consequently there are two possibilities (see \cite{bib:Man}, \cite{bib:Isk3} or \cite{bib:Dol}):

{\it $1.$ $S$ is a del Pezzo surface and $\rkPic{S}^G=1$.} Then $S$ is either $\Pn$, $\mathbb{P}^1\times\mathbb{P}^1$ or a del Pezzo surface of degree $6$ or $5$ (Remark~\ref{Rem:DP78}); we apply Lemmas~\ref{Lem:DP6rk1} and~\ref{Lem:DP5rk1} to conclude.

{\it $2.$ $G$ preserves a conic bundle structure on $S$.} Here the number of fibres is at most $3$, hence no element of $G$ fixes a curve of positive genus (Corollary~\ref{Cor:CurveFixDeJ}); we apply Proposition~\ref{Prop:MainConicBundles} to conclude.
\end{proof}
To study del Pezzo surfaces of degree $4$, let us describe their group of automorphisms (note that we do not use the notation $S_d$ for the del Pezzo surfaces of degree $d \leq 4$, because there are many different surfaces of the same degree): 
\begin{lemm}[Automorphism group of del Pezzo surfaces of degree $4$]
\label{Lem:AutDpdeg4}
Let $S$ be a del Pezzo surface of degree $4$ given by the blow-up $\eta:S\rightarrow \Pn$ of five points $A_1,...,A_5\in \Pn$ such that no three are collinear. Setting $E_i=\eta^{-1}(A_i)$ and denoting by $L$ the pull-back by $\eta$ of a general line of $\Pn$, we have:

\begin{enumerate}
\item[\upshape 1.]
There are exactly $10$ conic bundle structures on $S$, whose fibres are respectively $L-E_i$, $-K_S-(L-E_i)$, for $i=1,...,5$.
\item[\upshape 2.]
The action of $\Aut(S)$ on the five pairs of divisors $\{L-E_i,-K_S-(L-E_i)\}$, $i=1,...,5$ gives rise to a split exact sequence 
\begin{center}$
0 \rightarrow \mathbf{F} \rightarrow \Aut(S)\stackrel{\rho}{\rightarrow} \Sym_5,$\end{center}
where $\mathbf{F}=\{(a_1,...,a_5) \in (\mathbb{F}_2)^5\ |\ \sum a_i=0\}\cong(\mathbb{F}_2)^4$, and the automorphism $(a_1,...,a_5)$ permutes the pair $\{L-E_i,-K_S-(L-E_i)\}$ if and only if $a_i=1$.
\item[\upshape 3.]
We have
\begin{center}$
\Aut(S) =\mathbf{F} \rtimes \Aut(S,\eta),$\end{center}
where $\Aut(S,\eta)$ is the lift of the group of automorphisms of $\Pn$ that leave the set $\{A_1,...,A_5\}$ invariant,
 and $\Aut(S,\eta)$ acts on $\mathbf{F}=\{(a_1,...,a_5) \in (\mathbb{F}_2)^5\ |\ \sum a_i=0\}$ by permutation of the $a_i$'s, as it acts on $\{A_1,...,A_5\}$, and as $\rho(\Aut(S))=\rho(\Aut(S,\eta))\subset \Sym_5$ acts on the exceptional pairs.
\item[\upshape 4.]
The elements of $\mathbf{F}$ with two "ones" correspond to quadratic involutions of $\Pn$ and fix exactly $4$ points of $S$. 
\item[\upshape 5.]
The elements of $\mathbf{F}$ with four "ones" correspond to cubic involutions of $\Pn$ and the points of $S$ fixed by these elements form a smooth elliptic curve.
\end{enumerate}
\end{lemm}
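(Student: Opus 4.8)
The plan is to derive everything from the action of $\Aut(S)$ on $\Pic(S)$, which is faithful and preserves both $K_S$ and the intersection form since $S$ is the blow-up of $5>3$ points (Remark~\ref{Rem:Weyl}). For part~1, write a candidate fibre class as $C=dL-\sum_i m_iE_i$; the conditions $C^2=0$, $C\cdot(-K_S)=2$ read $\sum_i m_i^2=d^2$ and $\sum_i m_i=3d-2$, and the Cauchy--Schwarz inequality $(3d-2)^2\le 5(m_1^2+\dots+m_5^2)=5d^2$ forces $d\le 2$. For $d=1$ this gives $C=L-E_i$, for $d=2$ it gives $C=2L-\sum_{j\ne i}E_j=-K_S-(L-E_i)$, so there are exactly $10$ such classes; Riemann--Roch on the del Pezzo surface $S$ shows each is base-point free with $h^0(C)=2$, hence is the fibre class of a conic bundle structure, and these are all of them. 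The isometry $C\mapsto -K_S-C$ commutes with $\Aut(S)$ and exchanges the two classes of each pair, so $\Aut(S)$ permutes the five pairs; this is the homomorphism $\rho$.

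For part~2 I would analyse $\ker\rho$. An element $g\in\ker\rho$ either fixes or exchanges the two classes in the $i$-th pair; writing $v_i:=2(L-E_i)+K_S$ one checks $v_i\cdot K_S=0$, $v_i\cdot v_j=0$ for $i\ne j$ and $v_i^2=-4$, so $K_S,v_1,\dots,v_5$ is an orthogonal $\mathbb{Q}$-basis of $\Pic(S)\otimes\mathbb{Q}$ and $g(v_i)=(-1)^{a_i}v_i$ with $a_i\in\mathbb{F}_2$; thus $g$ is determined by $(a_1,\dots,a_5)$. Expressing this $\mathbb{Q}$-linear isometry in the basis $L,E_1,\dots,E_5$, one finds that it maps the integral lattice $\Pic(S)$ into itself precisely when $\sum_i a_i$ is even (e.g.\ $(1,0,0,0,0)$ forces $g(L)=\tfrac12(3L+E_1-E_2-E_3-E_4-E_5)\notin\Pic(S)$). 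Conversely every such isometry is realized by an automorphism of $S$ --- for $(a_i)$ with two ones this is the quadratic involution, and for four ones the cubic involution, described in the last paragraph, and these generate a group of order $2^4$. Hence $\ker\rho\cong\{(a_i)\in(\mathbb{F}_2)^5:\sum a_i=0\}\cong(\mathbb{F}_2)^4=\mathbf{F}$, with $g$ exchanging the $i$-th pair exactly when $a_i=1$.

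For part~3, $\Aut(S,\eta)$ consists of the lifts of linear automorphisms of $\mathbb{P}^2$ preserving $\{A_1,\dots,A_5\}$; such a lift fixes $L$ and permutes $E_1,\dots,E_5$, so $\rho$ sends it to the corresponding permutation, and it lies in $\mathbf{F}$ only if it is the identity (four of the $A_i$ already rigidify $\PGLn{3}$), so $\Aut(S,\eta)\cap\mathbf{F}=\{1\}$. It remains to check $\rho(\Aut(S))=\rho(\Aut(S,\eta))$: given $g\in\Aut(S)$ with $\rho(g)=\tau$, let $b_i\in\mathbb{F}_2$ record whether $g$ sends $L-E_i$ to $L-E_{\tau(i)}$ or to $-K_S-(L-E_{\tau(i)})$; computing $2g(L)$ from $g(K_S)=K_S$ exactly as above shows that integrality of $g(L)$ forces $\sum_i b_i$ to be even, and then there is $f\in\mathbf{F}$ with $fg(L-E_i)=L-E_{\tau(i)}$ for all $i$, whence $fg(L)=L$ and $fg$ permutes the $E_i$ by $\tau$, i.e.\ $fg\in\Aut(S,\eta)$. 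This gives the split sequence $0\to\mathbf{F}\to\Aut(S)\xrightarrow{\rho}\Sym_5$ and the decomposition $\Aut(S)=\mathbf{F}\rtimes\Aut(S,\eta)$, the action on $\mathbf{F}$ being the permutation of the $a_i$ induced by the action on $\{A_1,\dots,A_5\}$.

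Finally, for parts~4 and~5 I would identify the birational map $\hat g=\eta\, g\,\eta^{-1}$ of $\mathbb{P}^2$ from the action of $g$ on $\Pic(S)$. If $g$ has two ones at $\{i,j\}$, then $g(L)=2L-E_k-E_l-E_m$ (with $\{k,l,m\}$ the complement), so $\hat g$ is the quadratic involution of $\mathbb{P}^2$ given by $|2L-E_k-E_l-E_m|$, based at $A_k,A_l,A_m$ and exchanging $A_i$ and $A_j$; such a map is conjugate to $(x:y:z)\mapsto(a_1yz:a_2xz:a_3xy)$ and fixes exactly four points, which lie off $\{A_1,\dots,A_5\}$ and hence lift to four isolated fixed points on $S$; the Lefschetz formula (Lemma~\ref{Lem:Lefschetz}) gives $\chi(\Fix g)=\mathrm{tr}_{\Pic(S)}(g)+2=4$, leaving no room for a fixed curve. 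If $g$ has four ones with complementary index $m$, then $g(L)=3L-2E_m-\sum_{p\ne m}E_p$, so $\hat g$ is the de Jonqui\`eres cubic involution with vertex $A_m$ (it exists because $A_1,\dots,A_5$ lie on a smooth conic, no three being collinear), whose fixed locus is the strict transform of a smooth plane cubic through the $A_i$ --- a smooth curve of class $-K_S$, of genus $1$; here $\chi(\Fix g)=\mathrm{tr}_{\Pic(S)}(g)+2=0$, exactly the Euler characteristic of an elliptic curve, so there are no extra isolated fixed points. The step I expect to be most delicate is the passage between the lattice-theoretic and geometric pictures: establishing the integrality/parity constraint that cuts $(\mathbb{F}_2)^5$ down to $\mathbf{F}\cong(\mathbb{F}_2)^4$ (and reappears in part~3 for the correction by $f$), and matching each combinatorial element of $\mathbf{F}$ with the right explicit Cremona involution --- in particular verifying that the relevant quadratic involution is of the "four fixed points" type and not the one fixing a conic.
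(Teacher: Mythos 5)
Your argument for parts 1--4 is correct and follows essentially the paper's own route: the same Cauchy--Schwarz computation producing the ten fibre classes, the same embedding of $\ker\rho$ into $(\mathbb{F}_2)^5$ cut down to the even-sum vectors by integrality of the action on $\Pic{S}$ (your orthogonal basis $K_S,v_1,\dots,v_5$ is a tidier, more systematic packaging of the check the paper performs by hand on $(1,1,1,1,1)$ and on $L=\frac12(K_S+\sum_i(L-E_i))$), the same realization of $\mathbf{F}$ by quadratic involutions of the form $(x:y:z)\dasharrow(ayz:bxz:cxy)$, and the same correction-by-$\mathbf{F}$ argument identifying $\rho(\Aut(S))$ with $\rho(\Aut(S,\eta))$. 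The one point of divergence is part 5, and it is also the one soft spot: you assert that a four-ones element is a cubic de Jonqui\`eres involution whose fixed locus is the strict transform of a smooth plane cubic, and then invoke Lefschetz to exclude extra isolated fixed points; but $\chi(\Fix(g))=0$ is equally consistent with an empty fixed locus, or with a higher-genus curve plus some points, so the Lefschetz count is only a consistency check and cannot by itself establish that the fixed locus \emph{is} a smooth elliptic curve --- that geometric fact still needs an argument. The paper supplies one that you could substitute directly: the four-ones element $(0,1,1,1,1)$ preserves the conic bundle $|L-E_1|$ and twists each of its four singular fibres $\{L-E_1-E_i,E_i\}$, hence acts trivially on the base and is a twisting involution in the sense of Lemma~\ref{Lem:DeJI}, whose fixed curve is therefore a double cover of $\mathbb{P}^1$ ramified over $2k=4$ points, i.e.\ of genus $1$.
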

\begin{rema}
The group $\mathbf{F}\subset \Aut(S)$ has been studied intensively since 1895 (see 
\cite{bib:SK}, Theorem XXXIII). A modern description of the group as the $2$-torsion of $\PGL(5,\K)$ can be found in \cite[(4.1)]{bib:Bea2}, together with a study of the conjugacy classes of such groups in the Cremona group. For further descriptions of the automorphism groups of these surfaces, see \cite[section~6.4]{bib:Dol} and \cite[section~8.1]{bib:JBTh}.
\end{rema}
\begin{proof}
Let $A=mL-\sum_{i=1}^5 a_i E_i$ be the divisor of the fibre of some conic bundle structure on $S$, for some $m,a_1,...,a_5 \in \mathbb{Z}$. From the relations $A^2=0$ (the fibres are disjoint) and $AK_S=-2$ (adjunction formula) we get:
\begin{equation}\label{eqexcpair}\begin{array}{lll}
\sum_{i=1}^5 {a_i}^2&=&m^2, \vspace{0.1 cm}\\
\sum_{i=1}^5 a_i&=&3m-2.\end{array}
\end{equation} 

As in Lemma~\ref{Lem:10irreduciblecurves}, we have  $(\sum_{i=1}^5 a_i)^2\leq 5\sum_{i=1}^5 {a_i}^2$, which implies here that $(3m-2)^2\leq 5m^2$, that is $4(m^2-3m+1)\leq 0$. As~$m$ is an integer, we must have $1\leq m\leq 2$.
If $m=1$, we replace it in (\ref{eqexcpair}) and see that there exists $i \in \{1,...,5\}$ such that $A=L-E_i$. Otherwise, taking $m=2$ and replacing it in (\ref{eqexcpair}), we see that four of the $a_j$'s are equal to $1$, and one is equal to $0$. This gives the ten conic bundles of assertion~$1$, which are the lift on $S$ of the lines of $\Pn$ passing through one of the $A_i$'s or of the conic passing through four of the $A_i$'s.

The group $\Aut(S)$ acts on the set $\cup_{i=1}^5 \{L-E_i,-K_S-(L-E_i)\}$; since $K_S$ is fixed, this induces an action on the set of five pairs $\{L-E_i,-K_S-(L-E_i)\}$. We denote by $\rho:\Aut(S)\rightarrow \Sym_5$ the corresponding homomorphism. The action of the kernel of $\rho$ on the pairs of conic bundles gives a natural embedding of $\Ker(\rho)$ into $(\mathbb{F}_2)^5$. 

We now prove that $\Ker (\rho)= \{(a_1,...,a_5)\ | \ \sum a_i=0\}=\mathbf{F}$. Acting by a linear automorphism of $\Pn$, we may assume that the points blown-up by $\eta$ are $A_1=(1:0:0)$, $A_2=(0:1:0)$, $A_3=(0:0:1)$, $A_4=(1:1:1)$, $A_5=(a:b:c)$, for some $a,b,c \in \K^{*}$. Then, the birational involution $\tau:(x_0:x_1:x_2) \dasharrow (ax_1x_2:bx_0x_2:cx_0x_1)$ of $\Pn$ lifts as an automorphism $\eta^{-1}\tau\eta\in \Aut(S)$ that acts on $\Pic{S}$ as
\begin{center}
$\left(\begin{array}{cccccc}
0 & -1 & -1& 0 & 0 & -1\\
-1& 0 & -1& 0 & 0 & -1\\
-1 & -1& 0& 0 & 0 & -1\\
0 & 0& 0& 0 & 1 & 0\\
0 & 0& 0& 1& 0 & 0\\
1 & 1& 1& 0 & 0 & 2\end{array}\right)$,\end{center}
with respect to the basis $(E_1,E_2,E_3,E_4,E_5,L)$. It follows from this observation that $\eta^{-1}\tau\eta$ belongs to the kernel of $\rho$, and acts on the pairs of conic bundles as $(0,0,0,1,1) \in (\mathbb{F}_2)^5$. Permuting the roles of the points $A_1,...,A_5$, we get $10$ involutions whose representations in $(\mathbb{F}_2)^5$ have two "ones" and three "zeros". These involutions generate the group $\{(a_1,...,a_5)\ | \ \sum a_i=0\}=\mathbf{F}$. To prove that this group is equal to $\Ker (\rho)$, it suffices to show that $(1,1,1,1,1)$ does not belong to $\Ker (\rho)$. This follows from the fact that $(1,1,1,1,1)$ would send $L=\frac{1}{2}(K_S+\sum_{i=1}^5 (L-E_i))$ on the divisor $\frac{1}{2}(K_S+\sum_{i=1}^5 (-K_S-L+E_i))=\frac{1}{2}(-2L-3 K_S)$, which doesn't belong to $\Pic{S}$. This concludes the proof of assertion~2 (except the fact that the exact sequence is split, which will be proved by assertion~3).

We now prove assertion~3. Let $\sigma \in \Sym_5$ be a permutation of the set $\{1,...,5\}$ in the image of $\rho$ and~$g$ be an automorphism of $S$ such that $\rho(g)=\sigma$. Let $\alpha$ be the element of $\Aut(\Pic{S})$ that sends $E_{i}$ on $E_{\sigma(i)}$ and fixes $L$. Viewing $\Aut(S)$ as a subgroup of $\Aut(\Pic{S})$, the element $g\alpha^{-1} \in \Aut(\Pic{S})$ fixes the five pairs of conic bundles. There exists some element $h \in \mathbf{F}\subset \Aut(S)$ such that $hg\alpha^{-1}$ either fixes the divisor of every conic bundle or permutes the divisors of conic bundles in each pair. The same argument as in the above paragraph shows that this latter possibility cannot occur. Hence $hg\alpha^{-1}$ fixes $L-E_1,...,L-E_5$ and $K_S$. It follows that $hg\alpha^{-1}$ acts trivially on $\Pic{S}$, so $\alpha=hg \in \Aut(S)$, and $\alpha$ is by construction the lift of an automorphism of $\Pn$ that acts on the set $\{A_1,...,A_5\}$ as $\sigma$ does on $\{1,...,5\}$.
Conversely, it is clear that every automorphism $r$ of\hspace{0.2 cm}$\Pn$ which leaves the set $\{A_1,A_2,A_3,A_4,A_5\}$ invariant lifts to the automorphism $\eta^{-1}r\eta$ of $S$ whose action on the pairs of conic bundles is the same as that of $r$ on the set $\{A_1,A_2,A_3,A_4,A_5\}$. This gives assertion~3.

Assertion~4 follows from the above description of some element of $\mathbf{F}\subset \Aut(S)$ with two "ones" as the lift of a birational map of the form $\tau:(x_0:x_1:x_2) \dasharrow (ax_1x_2:bx_0x_2:cx_0x_1)$. As the automorphism $\eta^{-1}\tau\eta\in\Aut(S)$ does not leave any exceptional divisor invariant, its fixed points are the same as those of $\tau$, which are the four points $(\alpha:\beta:\gamma)$, where $\alpha^2=a,\beta^2=b,\gamma^2=c$. 

It remains to prove the last assertion. Note that the element $h=(0,1,1,1,1)\in \Aut(S)$ fixes the divisor $L-E_1$, hence acts on the associated conic bundle structure. Furthermore, the four singular fibres of this conic bundle, $\{L-E_1-E_i,E_i\}$, for $i=2,...,5$, are invariant by~$h$ and this element switches the two components of each fibre. This shows that the action of~$h$ on the basis of the fibration is trivial, so the restriction of~$h$ on each fibre is an involution of $\mathbb{P}^1$ which fixes two points. On each singular fibre, exactly one point is fixed, which is the singular point of the fibre. The situation is similar for the other elements with four "ones" (in fact, the involutions described here are \emph{twisting involutions}, see Lemma~\ref{Lem:DeJI}). 
\end{proof}

\begin{lemm}[Actions on the del Pezzo surfaces of degree $4$]
\label{Lem:DP4}
Let $S$ be a del Pezzo surface of degree $4$, and let $G\in \Aut(S)$ be an Abelian group such that $\rkPic{S}^G=1$. Then, $G$ contains an involution that fixes an elliptic curve.
\end{lemm}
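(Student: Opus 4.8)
The plan is to exploit the structure $\Aut(S) = \mathbf{F} \rtimes \Aut(S,\eta)$ from Lemma~\ref{Lem:AutDpdeg4}, together with part~5 of that lemma (an element of $\mathbf{F}$ with four ``ones'' fixes a smooth elliptic curve), and to prove that $G$ must contain such an element. Write $G' = G \cap \mathbf{F}$ and $\overline{G} = \rho(G) \subseteq \Sym_5$. By Remark~\ref{Rem:Weyl}, $G$ is finite and — since $S$ is the blow-up of $r = 5 > 3$ points — acts faithfully on $\Pic{S}$; by Lemma~\ref{Lem:SizeOrbits}, $|G|$ and every orbit of $G$ on the sixteen exceptional curves of $S$ are divisible by $9-5 = 4$. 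Assume for contradiction that no involution of $G$ fixes a curve of positive genus; then, by Lemma~\ref{Lem:AutDpdeg4}(5), $G'$ contains no vector of weight~$4$.

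I would first record two elementary facts. \emph{Combinatorial:} a subgroup of $\mathbf{F} = \{v \in (\mathbb{F}_2)^5 : \sum_i v_i = 0\}$ with no vector of weight~$4$ has all its non-zero elements of weight~$2$ (the sum of two distinct weight-$2$ vectors has weight~$4$ exactly when their supports are disjoint), and two such vectors sharing a support-element sum to a weight-$2$ vector missing that element; it follows that the subgroup is $\{0\}$, is generated by a single weight-$2$ vector, or is a \emph{triangle} $\{0, v_1, v_2, v_3\}$ with the $v_i$ of weight~$2$, $v_1 + v_2 + v_3 = 0$, and supports covering a common $3$-element set — in particular its order is at most~$4$. \emph{Geometric:} $\overline{G} \subseteq \rho(\Aut(S)) = \rho(\Aut(S,\eta))$ contains no transposition, for a transposition $(ij)$ would come from a linear automorphism of $\Pn$ interchanging $A_i, A_j$ and fixing the other three points $A_k$; this automorphism squares to the identity, so it is a non-trivial linear involution, whose fixed locus is a line together with a point off it, and the invariant line $\overline{A_iA_j}$ is then forced to pass through that point, making three of the five points collinear.

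Now I combine these. As $G$ is Abelian, $\overline{G}$ acts trivially by conjugation on $G'$, which by Lemma~\ref{Lem:AutDpdeg4}(3) is the permutation action of $\overline{G}$ on $\mathbf{F}$; hence $\overline{G}$ fixes the support of every element of $G'$. By the combinatorial fact $|G'| \in \{1,2,4\}$, and $4 \mid |G| = |G'|\cdot|\overline{G}|$. If $G'$ is a triangle supported on a $3$-set $T$, then $\overline{G}$ fixes $T$ pointwise, so $\overline{G} \subseteq \Sym(\{1,\dots,5\}\setminus T) \cong \Z{2}$, and being transposition-free $\overline{G} = \{1\}$; then $G = G'$, and $\Pic{S}^G$ contains the class $E_i - E_j$ with $\{i,j\} = \{1,\dots,5\}\setminus T$ — each quadratic involution generating $G'$ being the lift of a quadratic map based at a $3$-point set containing $A_i$ and $A_j$, which visibly fixes $E_i - E_j$ — contradicting $\rkPic{S}^G = 1$. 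Otherwise $|G'| \leq 2$, so $4 \mid |G|$ makes $|\overline{G}|$ divisible by~$2$ (if $|G'| = 2$) or by~$4$ (if $|G'| = 1$); a transposition-free Abelian subgroup of $\Sym_5$ of order divisible by~$4$ is generated by a $4$-cycle or is a Klein four-group of double transpositions, and one of order exactly~$2$ is generated by a double transposition, which here must fix the support of the non-trivial element of $G'$. In each of these finitely many configurations I would write a generator of $G$ as a signed permutation of the five coordinates of $K_S^{\perp}\otimes\mathbb{R}$, use that $G$ is Abelian and that $G\cap\mathbf{F} = G'$ has the prescribed small order to pin down the signs, and exhibit a non-zero vector of $K_S^{\perp}$ fixed by all of $G$ — again forcing $\rkPic{S}^G \geq 2$, a contradiction. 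Therefore $G'$ contains a vector of weight~$4$, and by Lemma~\ref{Lem:AutDpdeg4}(5) this element of $G$ is an involution fixing a smooth elliptic curve.

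The main obstacle is this last case analysis — ruling out every ``small $G'$'' configuration. Each case reduces to an explicit computation in $W(D_5) \supseteq \Aut(S)$, but care is needed with the possibilities where a generator $g_0$ of $G$ has order~$4$ with $g_0^2 \in G'$: there one uses that $g_0^4 = 1$ and $G \cap \mathbf{F} = G'$ force a coordinate on which $g_0$ acts trivially, or a signed transposition block of $g_0$ contributing an eigenvalue~$+1$, so that $K_S^{\perp}$ always has a $g_0$-fixed vector.
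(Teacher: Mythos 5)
Your strategy is essentially the paper's: both proofs rest on Lemma~\ref{Lem:AutDpdeg4}, reduce the statement to producing a weight-$4$ vector in $G\cap\mathbf{F}$ (an involution fixing an elliptic curve by part~5 of that lemma), invoke Lemma~\ref{Lem:SizeOrbits} to get $4\mid |G|$, and finish by a finite case analysis on the pair $(G\cap\mathbf{F},\rho(G))$. Your preliminary reductions are correct, and in two respects tidier than the paper's: the classification of weight-$4$-free subgroups of $\mathbf{F}$ (trivial, a single weight-$2$ vector, or a ``triangle'') and the direct geometric exclusion of transpositions from $\rho(\Aut(S))$ replace the paper's argument of restricting $\Aut(S,\eta)$ to the conic through the five points (which yields the stronger conclusion that $\rho(G)$ is cyclic of order at most~$5$, but your weaker constraint suffices). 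Your treatment of the triangle case via the invariant class $E_i-E_j$ is complete and correct.

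The substantive criticism is that the terminal case analysis --- which you yourself call ``the main obstacle'' --- is asserted rather than carried out, and in both your plan and the paper's proof these explicit verifications are the bulk of the work. Three concrete points for when you fill it in. (i) When $|G\cap\mathbf{F}|=2$ you only record that $|\rho(G)|$ is even; observe that neither a $4$-cycle nor the fixed-point-free Klein four-group preserves any $2$-element subset of $\{1,\dots,5\}$, so the condition that $\rho(G)$ preserve the support of the generator of $G\cap\mathbf{F}$ eliminates those configurations at once and leaves only $\rho(G)=\langle(ij)(kl)\rangle$ with $\{i,j\}$ equal to that support. (ii) Your closing remark produces a fixed vector of a single generator $g_0$; in the non-cyclic configurations (the Klein image, or $G\cong(\Z{2})^2$ with $|G\cap\mathbf{F}|=2$) you need a class fixed by all of $G$ simultaneously. (iii) The cases do all close: e.g.\ when $\rho(G)$ fixes the index $5$, the relations $g^4=1$ (resp.\ $g^2=1$) force the fifth coordinate of the $\mathbf{F}$-part of every element to vanish, so $L-E_5$ is $G$-invariant; and in the configuration $g^2=u\in G\cap\mathbf{F}$ with $\rho(g)=(ij)(kl)$ one finds either $L-E_m$ ($m$ the fixed index) or $(L-E_i)+(L-E_j)$, resp.\ $E_i-E_j$, invariant. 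So the gap is one of execution rather than of idea, but it must be written out for the proof to stand.
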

\begin{proof}
We keep the notation of Lemma~\ref{Lem:AutDpdeg4} for $\eta:S\rightarrow \Pn, \Aut(S,\eta), \rho,\mathbf{F},...$ and denote by $H$ the group $G \cap \mathbf{F}=G\cap \Ker \rho$. 
We will prove that $H$ contains an element of $\mathbf{F}$ with four "ones", which is an involution that fixes an elliptic curve (Lemma~\ref{Lem:AutDpdeg4}).

 The group $\rho(G)\subset \rho(\Aut(S))\cong \Aut(S,\eta)$ is isomorphic to a subgroup of $\Aut(S,\eta)$. The group $\Aut(S,\eta)$ is the lift of the group of automorphisms of $\Pn$ that leave the set $\{A_1,...,A_5\}$ invariant (Lemma~\ref{Lem:AutDpdeg4}). The restriction of this group to the conic of $\Pn$ passing through the five points is a subgroup of $\PGL(2,\K)$ that leaves five points invariant. Since $\rho(G)$ is finite and Abelian, it is cyclic, of order at most $5$. We consider the different possibilities.
 
 {\it The order of $\rho(G)$ is $1$}. This implies that $G \subset \mathbf{F}$. If $G$ contains an element with four "ones", we are done. Otherwise, up to conjugation $G$ is a subset of the group generated by $(1,1,0,0,0)$ and $(1,0,1,0,0)$, and fixes $L-E_4$ and $L-E_5$ (thus $\rkPic{S}^G>1$).
 
 {\it The order of $\rho(G)$ is $2$}. Up to a change of numbering, $\rho(G)$ is generated by $(1\ 2)(3\ 4)$; since $G$ is Abelian, we find that $H\subset V=\{(a,a,b,b,0) \ | \ a,b\in \mathbb{F}_2\}$. Let $g=((a,b,c,d,e),(1\ 2)(3\ 4)) \in G$ be such that $\rho(g)=(1\ 2)(3\ 4)$. We may suppose that $e=1$ (otherwise, the group $G$ would fix $L-E_5$ and we would have $\rkPic{S}^G\geq 2$.) Conjugating by $((0,b,0,d,b+c),id)$ we may assume that $g=((a+b,0,c+d,0,1),(1\ 2)(3\ 4))$. In fact, since $a+b+c+d+e=0$, we have $g=((\alpha,0,1+\alpha,0,1),(1\ 2)(3\ 4))$, where $\alpha=a+b=c+d+1 \in \mathbb{F}_2$. 
If $\alpha=1$, then~$g$ has order $4$ and fixes the divisor $2L-E_3-E_4$, thus $G$ cannot be equal to $<g>$ and it follows that $V\subset G$; in particular the element $(1,1,1,1,0)$ is contained in $G$. 
If $\alpha=0$, then $<g>$ fixes $2L-E_1-E_2$, so once again $G$ contains $V$.

 {\it The order of $\rho(G)$ is $3$}. In this case, $\rho(G)$ is generated by a $3$-cycle, namely $(1\ 2 \ 3)$; then $H$ must be a subgroup of $V= \{ (a,a,a,b,a+b) \ | \ a,b \in \mathbb{F}_2\}$. The order of $G$ must be a multiple of $4$, by Lemma~\ref{Lem:SizeOrbits}, hence $H=V$, and thus $G$ contains the element $(1,1,1,1,0)$.

 {\it The order of $\rho(G)$ is $4$}. Then $\rho(G)$ is generated by $(1\ 2 \ 3\ 4)$, so $H$ must be a subgroup of $V=<(1,1,1,1,0)>$. Let $g=((a,b,c,d,e),(1\ 2\ 3\ 4)) \in G$ be such that $\rho(g)=(1\ 3\ 2\ 4)$. Conjugating the group by $((a,a+b,a+b+c,0,a+c),id)$, we may suppose that $g=((0,0,0,e,e),(1\ 3\ 2\ 4))$. 
If $e=1$, then $g^4=(1,1,1,1,0)\in G$.
If $e=0$, the element~$g$ belongs to $H_{S}$, so it fixes the divisors $L$ and $E_5$. As the group $V$ fixes $L-E_5$, the rank of $\Pic{S}^{G}$ cannot be $1$.

 {\it The order of $\rho(G)$ is $5$}. Then, $\rho(G)$ is generated by a $5$-cycle and $H=\{1\}$. The rank of $\Pic{S}^{H}$ cannot be $1$, by Lemma~\ref{Lem:SizeOrbits}.
\end{proof}

Before studying the case of del Pezzo surfaces of degree $\leq 3$, we remind the reader of some classical embeddings of these surfaces.

\begin{rema}\label{Rem:EmbeddingsDP}
Recall (\cite{bib:Kol}, Theorem III.3.5) that a del Pezzo surface of degree $3$ (respectively $2$, $1$) is isomorphic to a smooth hypersurface of degree $3$ (respectively $4$, $6$) in the projective space $\mathbb{P}^3$ (respectively in $\mathbb{P}(1,1,1,2)$, $\mathbb{P}(1,1,2,3)$). Furthermore, in each of the $3$ cases, any automorphism of the surface is the restriction of an automorphism of the ambient space.
We will use these classical embeddings, take $w,x,y,z$ as the variables on the projective spaces, and denote by $\DiaG{\alpha}{\beta}{\gamma}{\delta}$ the automorphism $(w:x:y:z) \mapsto (\alpha w:\beta x:\gamma y:\delta z)$. Note that a del Pezzo surface of degree $4$ is isomorphic to the intersection of two quadrics in~$\mathbb{P}^4$, but we will not use this here. 
  \end{rema}

\begin{lemm}[Actions on the del Pezzo surfaces of degree $3$]
\label{Lem:DP3}
Let $S$ be a del Pezzo surface of degree $3$, and let $G\in \Aut(S)$ be an Abelian group such that $\rkPic{S}^G=1$. Then, $G$ contains an element of order $2$ or $3$ that fixes an elliptic curve of $S$.
\end{lemm}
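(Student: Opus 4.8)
The plan is to reduce the statement to producing \emph{some} non-trivial element of $G$ with a fixed curve of positive genus, and then to derive a contradiction from the assumption that no such element exists, using the embedding $S\subset\mathbb{P}^3$, the Lefschetz formula (Lemma~\ref{Lem:Lefschetz}) and the orbit-size divisibility of Lemma~\ref{Lem:SizeOrbits}.

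\textbf{Step 1 (reduction).} Since $\rkPic{S}=7>4$, the group $\Aut(S)$ is finite (Remark~\ref{Rem:Weyl}), so $G$ is finite; embed $S$ as a smooth cubic in $\mathbb{P}^3$ (Remark~\ref{Rem:EmbeddingsDP}), every automorphism being the restriction of a finite-order, hence diagonalisable, linear automorphism. Suppose $g\in G$, $g\not=1$, fixes an irreducible curve $C$ of positive genus. Diagonalising $g$, its fixed locus in $\mathbb{P}^3$ is a disjoint union of linear subspaces $\mathbb{P}(V_\mu)$, and $C$ lies in one of them; as $C$ is not rational we need $\dim V_\mu\geq 3$, and as $g\not=1$ we get $\dim V_\mu=3$, so $C=S\cap\mathbb{P}(V_\mu)$ is a plane cubic section, necessarily the smooth one, i.e.\ an elliptic curve. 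Now pick a prime $p$ dividing the order of $g$: then $g'=g^{\mathrm{ord}(g)/p}\in G$ has order $p$ and still fixes $C$ pointwise, so the same argument gives $g'$ a multiplicity-$3$ eigenvalue $\zeta$, with fourth eigenvalue $\zeta'\not=\zeta$. Imposing that the defining form $F$ be an eigenvector of $g'$ and that $S$ be smooth and irreducible forces $F$ to contain at least two distinct powers $z^{d_1},z^{d_2}$ of the fourth coordinate (otherwise $S$ is a cone or is reducible); then $(\zeta'/\zeta)^{d_1-d_2}=1$ with $0<|d_1-d_2|\leq 3$ yields $p\leq 3$. Hence $G$ already contains an element of order $2$ or $3$ fixing an elliptic curve, and it suffices to prove that some non-trivial element of $G$ fixes a curve of positive genus.

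\textbf{Step 2 (normal forms).} Assume, for contradiction, that no non-trivial element of $G$ fixes a curve of positive genus. By Lemma~\ref{Lem:SizeOrbits}, $3\mid|G|$, so there is $\sigma\in G$ of order $3$. Diagonalise the abelian $G$-action on $\mathbb{C}^4$; then $F$ is a sum of monomials of a single $G$-character and $\sigma=\DiaG{\omega^{a_1}}{\omega^{a_2}}{\omega^{a_3}}{\omega^{a_4}}$. By Step~1, $\sigma$ has no multiplicity-$3$ eigenvalue, so its eigenvalue multiplicities are $(2,2)$ or $(2,1,1)$; solving the semi-invariance conditions and discarding the reducible and singular possibilities leaves, up to reordering the coordinates,
\begin{itemize}
\item type $(2,2)$: $F=G_1(w,x)+G_2(y,z)$ with $G_1,G_2$ binary cubics with distinct roots, so $S$ is (linearly equivalent to) the Fermat cubic $w^3+x^3+y^3+z^3=0$;
\item type $(2,1,1)$: $F=C_0(w,x)+(\alpha w+\beta x)yz+\gamma y^3+\delta z^3$, or one of the two analogous forms obtained by changing the common character.
\end{itemize}

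\textbf{Step 3 (contradiction) and main obstacle.} In each normal form the idea is to compute $\chi(\Fix(g))$ for every $g\in G$ (under our hypothesis this is always a finite positive integer, the number of points cut out of $S$ by the linear fixed locus of $g$ plus twice the number of fixed rational curves) and to compare with the averaging identity
\[
1=\rkPic{S}^G=\frac{1}{|G|}\sum_{g\in G}\mathrm{tr}\big(g\mid\Pic{S}\big)=\frac{1}{|G|}\Big(9-2|G|+\sum_{g\not=1}\chi(\Fix(g))\Big),
\]
i.e.\ $\sum_{g\not=1}\chi(\Fix(g))=3(|G|-3)$. For the Fermat cubic this already settles the diagonal case: a diagonal element of type $(2,2)$ cuts $S$ in $6$ points and one of type $(2,1,1)$ in $3$ points, and the resulting count shows $\rkPic{S}^G$ is never $1$ for $G$ inside the diagonal torus; if $G$ is not contained in the torus, its image in the coordinate-permutation part of $\Aut(S)$ is a non-trivial abelian subgroup of $\Sym_4$, and one exhibits in $G$ an element of order $2$ or $3$ with a multiplicity-$3$ eigenvalue — a coordinate transposition fixes a plane section — contradicting the hypothesis. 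The type-$(2,1,1)$ forms are treated similarly: the surface then carries extra automorphisms, and the constraints $\rkPic{S}^G=1$ together with Lemma~\ref{Lem:SizeOrbits} force $G$ to contain such an element. The main obstacle is precisely this last case analysis: listing the abelian groups $G$ with $\rkPic{S}^G=1$ available in each normal form, and above all handling the elements of $G$ that permute the eigen-coordinates (hence are not simultaneously diagonal with the torus part), showing each time that some power of such an element has a multiplicity-$3$ eigenvalue and therefore fixes an elliptic curve. Keeping track of the $27$ lines and using the divisibility of Lemma~\ref{Lem:SizeOrbits} to bound orbit sizes is what makes this finite.
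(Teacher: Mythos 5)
Your Step 1 is a correct and rather elegant reduction: the observation that any curve of positive genus fixed pointwise by a finite-order $g$ must be a plane section (hence a smooth plane cubic), together with the semi-invariance argument forcing two distinct powers of the fourth coordinate in $F$ and hence $p\leq 3$, legitimately reduces the lemma to producing \emph{some} non-trivial element fixing a non-rational curve. Your Step 2 trichotomy on the eigenvalue multiplicities of an order-$3$ element matches the paper's cases (a), (b), (c), and your trace-averaging identity $\sum_{g\not=1}\chi(\Fix(g))=3(|G|-3)$ gives a genuinely cleaner disposal of the diagonal subgroups of the Fermat cubic than the paper's search for invariant lines (every non-trivial diagonal element without a multiplicity-$3$ eigenvalue has $\chi(\Fix(g))\geq 3$, forcing $\rkPic{S}^G\geq 1+6/|G|>1$). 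Up to that point the proposal is sound.

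The gap is in Step 3, which is where the actual proof lives and which you leave as a programme. Two specific problems. First, ``diagonalise the abelian $G$-action on $\K^4$'' is not available: a finite Abelian subgroup of $\PGL(4,\K)$ need not be simultaneously diagonalisable (only the single element $\sigma$ can be put in diagonal form), so the normal forms constrain $F$ but not the rest of $G$, and the non-diagonal elements are exactly the hard ones. Second, the claim you rely on for those elements --- that whenever $G$ is not contained in the torus of the Fermat cubic it contains an element of order $2$ or $3$ with a multiplicity-$3$ eigenvalue --- is false as stated: an involution of the Fermat cubic lying over a \emph{double} transposition $(w\ x)(y\ z)$ of coordinates has eigenvalue multiplicities $(2,2)$, fixes two lines of $\mathbb{P}^3$ rather than a plane, and fixes no elliptic curve; only involutions over single transpositions behave as you assert. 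So the case analysis you defer is not a routine verification of a true statement but the actual content, and the paper does it differently: in case (b) it computes the centraliser of $\DiaG{1}{1}{\omega}{\omega}$ in $(\Z{3})^3\rtimes\Sym_4$ and exhibits invariant lines or invariant triples of skew lines among the $27$ lines, forcing $\rkPic{S}^G>1$; in case (c) it uses the $G$-invariant line $y=z=0$, the Lefschetz count to rule out $G=\langle g\rangle$, and the induced action on the three points of $L_{yz}\cap S$ to extract an involution $\DiaG{-1}{1}{1}{1}$ fixing the elliptic curve $w=0$ --- a step your ``treated similarly'' does not address at all. As it stands the argument is a plausible plan with its decisive cases unexecuted and one of its stated intermediate claims incorrect.
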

\begin{proof}
Lemma~\ref{Lem:SizeOrbits} implies that the order of $G$ is divisible by $3$, so $G$ contains an element of order $3$. 
We view $S$ as a cubic surface in $\mathbb{P}^3$, and $\Aut(S)$ as a subgroup of $\PGLn{4}$ (see Remark~\ref{Rem:EmbeddingsDP}).
There are three kinds of elements of order $3$ in $\PGLn{4}$, depending on the nature of their eigenvalues. Setting $\omega=e^{2\ipi/3}$, there are elements with one eigenvalue of multiplicity $3$ (conjugate to $\DiaG{1}{1}{1}{\omega}$, or its inverse), elements with two eigenvalues of multiplicity $2$ (conjugate to $\DiaG{1}{1}{\omega}{\omega}$) and elements with three distinct eigenvalues (conjugate to $\DiaG{1}{1}{\omega}{\omega^2}$). We consider the three possibilities.

\textit{Case a: $G$ contains an element of order $3$ with one eigenvalue of multiplicity~$3$.}\\
The element $\DiaG{1}{1}{1}{\omega}$ fixes the hyperplane $z=0$, whose intersection with the surface $S$ is an elliptic curve (because $\Fix(g)\subset S$ is smooth). Thus, we are done. 

\textit{Case b: $G$ contains an element of order $3$ with two eigenvalues of multiplicity~$2$.}\\
With a suitable choice of coordinates, we may assume that this element is \begin{center}$g=\DiaG{1}{1}{\omega}{\omega}$.\end{center} Since $S$ is smooth, its equation $F$ is of degree at least $2$ in each variable, which implies that $F(w,x,\omega y,\omega z)=F(w,x,y,z)$ (the eigenvalue is $1$); up to a change of coordinates $F=w^3+x^3+y^3+z^3$, which means that $S$ is the Fermat cubic surface. 
The group of automorphisms of $S$ is $(\Z{3})^3 \rtimes Sym_4$ and the centraliser of~$g$ in it is $(\Z{3})^3 \rtimes V$, where $V\cong (\Z{2})^2$ is the subgroup of $Sym_4$ generated by the two transpositions $(w,\ x)$ and $(y,\ z)$. The structure of the centraliser gives rise to an exact sequence
\begin{center}$\begin{array}{ccccccc}
1\rightarrow &(\Z{3})^3 &\rightarrow& (\Z{3})^3 \rtimes V& \stackrel{\gamma}{\rightarrow}& V &\rightarrow 1 \\
& \cup & & \cup & & \cup \\
1\rightarrow & G \cap (\Z{3})^3& \rightarrow& G& \rightarrow &\gamma(G)& \rightarrow 1.
\end{array}$\end{center}
We may suppose that $G$ contains no element of order $3$ with an eigenvalue of multiplicity $3$, since this case has been studied above (case a). There are then three possibilities for $G \cap (\Z{3})^3$, namely $<g>$, $<g,\DiaG{1}{\omega}{1}{\omega}>$ and $<g,\DiaG{1}{\omega}{\omega}{1}>$. The last is conjugate to the second by the automorphism $(y,\ z)$.
Note that~$g$ preserves exactly $9$ of the $27$ lines on the surface; these are $\{w+\omega^i x= y+ \omega^j z=0\}$, for $0\leq i,j \leq 2$. If $G \cap (\Z{3})^3$ is equal to $<g>$, then $G/<g>\cong \gamma(G)$ has order $1,2$ or $4$ and thus $G$ leaves at least one of the $9$ lines invariant, whence $\rkPic{S}^G>1$.
If $G \cap (\Z{3})^3$ is the group $H=<g,\DiaG{1}{\omega}{1}{\omega}>$ we have $G=H$, since the centraliser of $H$ in $(\Z{3})^3 \rtimes V$ is the group $(\Z{3})^3$. As the set of three skew lines $\{w+\omega^i x=y+\omega^i z=0\}$ for $0\leq i \leq 2$ is an orbit of $H$, the rank of $\Pic{S}^{G}$ is strictly larger than $1$.

\textit{Case c: $G$ contains an element~$g$ of order $3$ with three distinct eigenvalues.}\\
We may suppose that $g=\DiaG{1}{1}{\omega}{\omega^2}$. Note that the action of~$g$ on $\mathbb{P}^3$ fixes the line $L_{yz}$ of equation $y=z=0$ and thus the whole group $G$ leaves this line invariant. If $L_{yz}\subset S$, the rank of $\rkPic{S}^G$ is at least $2$. Otherwise, the equation of $S$ is of the form $L_3(w,x)+L_1(w,x)yz+y^3+z^3=0$, where $L_3$ and $L_1$ are homogeneous forms of degree respectively $3$ and $1$, and $L_3$ has three distinct roots, so $\Fix(g)=S\cap L_{yz}$. Since~$g$ fixes exactly three points, the trace of its action on $\Pic{S}\cong \mathbb{Z}^7$ is $1$ (Lemma~\ref{Lem:Lefschetz}) and thus $\rkPic{S}^g>1$, which implies that $G\not= <g>$.

Note that every subgroup of $\PGLn{4}$ isomorphic to $(\Z{3})^2$ contains an element with only two distinct eigenvalues, so we may assume that $G$ contains only two elements of order $3$, which are~$g$ and $g^2$. This implies that the action of $G$ on the three points of $L_{yz}\cap S$ gives an exact sequence
\begin{center}$1\rightarrow <g> \rightarrow G\rightarrow \Sym_3,$\end{center}
where the image on the right is a transposition. The group $G$ thus contains an element of order $2$, that we may assume to be diagonal of the form $(w:x:y:z)\mapsto (-w:x:y:z)$ and that fixes the elliptic curve which is the trace on $S$ of the plane $w=0$.
\end{proof}

\begin{lemm}[Actions on the del Pezzo surfaces of degree $2$]
\label{Lem:DP2}
Let $S$ be a del Pezzo surface of degree $2$, and let $G\in \Aut(S)$ be an Abelian group such that $\rkPic{S}^G=1$. Then, $G$ contains either the Geiser involution (that fixes a curve isomorphic to a smooth quartic curve) or an element of order $2$ or $3$ that fixes an elliptic curve.
\end{lemm}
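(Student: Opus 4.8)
The plan is to realise $S$ concretely as a double plane and reduce the statement to a question about diagonalisable subgroups of $\PGLn{3}$ preserving a smooth plane quartic. By Remark~\ref{Rem:EmbeddingsDP} I write $S=\{w^2=F_4(x,y,z)\}\subset\mathbb{P}(1,1,1,2)$ with $F_4$ defining a smooth quartic $C\subset\Pn$, let $\phi\colon S\to\Pn$, $(w:x:y:z)\mapsto(x:y:z)$, be the associated double cover, and let $\gamma\colon(w:x:y:z)\mapsto(-w:x:y:z)$ be the Geiser involution, whose fixed locus is the ramification curve $\cong C$, of genus $3$. Since every automorphism of $S$ extends to $\mathbb{P}(1,1,1,2)$ and, for such an extension $(w:x:y:z)\mapsto(aw+q(x,y,z):M(x,y,z))$, preservation of $S$ forces $q=0$, there is an exact sequence $1\to\langle\gamma\rangle\to\Aut(S)\to\mathrm{Lin}(C)\to 1$ with $\mathrm{Lin}(C)=\{g\in\PGLn{3}\mid g(C)=C\}$; moreover $\gamma$ acts as $-\mathrm{id}$ on $K_S^{\perp}$ (of rank $7$), hence is central by Remark~\ref{Rem:Weyl}.

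Now by Lemma~\ref{Lem:SizeOrbits}, $\deg S=2$ divides $|G|$, so $G$ contains an involution; if it is $\gamma$ we are done (the Geiser involution fixes a curve $\cong C$, a smooth quartic). Otherwise $\gamma\notin G$, and since $\langle\gamma\rangle$ has prime order the projection restricts to an isomorphism $G\cong\bar G\subset\mathrm{Lin}(C)$, where $\bar G$ is a finite Abelian subgroup of $\PGLn{3}$ of even order preserving $C$. By Proposition~\ref{Prp:PGL3Cr} it is diagonalisable (it cannot be $V_9$, of odd order $9$), so I choose coordinates making $\bar G$ diagonal and pick an involution $\sigma\in\bar G$; after permuting coordinates $\sigma\colon(x:y:z)\mapsto(x:y:-z)$, fixing the line $\ell=\{z=0\}$ and the point $P=(0:0:1)$. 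As $C$ is irreducible, $F_4$ is even in $z$, and smoothness of $C$ forces the $z^4$-coefficient of $F_4$ to be nonzero, so $P\notin C$ and $\Fix(\sigma|_C)=C\cap\ell$. Applying Riemann--Hurwitz to the nontrivial involution $\sigma|_C$ of the genus-$3$ curve $C$ gives $|C\cap\ell|\in\{0,4,8\}$; as $1\le|C\cap\ell|\le4$ the line $\ell$ is transverse to $C$. Hence the lift $\tau^{+}\colon(w:x:y:z)\mapsto(w:x:y:-z)$ fixes $\phi^{-1}(\ell)=\{z=0\}\cap S$, a double cover of $\mathbb{P}^1$ branched over $4$ distinct points, i.e.\ a smooth genus-$1$ curve; the other lift $\gamma\tau^{+}$ fixes only the $4$ ramification points over $C\cap\ell$, so by Lemma~\ref{Lem:Lefschetz} it acts on $\Pic{S}$ with trace $2$ and $\rkPic{S}^{\langle\gamma\tau^{+}\rangle}=5$. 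If the lift of $\sigma$ lying in $G$ is $\tau^{+}$, we are done.

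The remaining work --- and the real obstacle --- is to rule out the case in which, for every involution of $\bar G$, the lift in $G$ is the ``wrong'' one $\gamma\tau^{+}$. Using $\sigma_1\sigma_2\sigma_3=1$ for the three involutions of the diagonal torus, together with the fact that the ``good'' lifts $\tau_1^{+},\tau_2^{+},\tau_3^{+}$ generate a Klein four-group with $\tau_i^{+}\tau_j^{+}=\tau_k^{+}$, one sees that if $\bar G$ had two distinct involutions then $G$ would contain some $\tau_k^{+}$, hence an elliptic-curve-fixing element; so $\bar G$ has a unique involution and the $2$-Sylow subgroup of $G$ is cyclic. If that $2$-Sylow had order $\ge4$, an order-$4$ element would descend to a diagonal $\bar{\mathfrak h}$ with $\bar{\mathfrak h}^2=\sigma_3$, forcing $F_4\circ\bar{\mathfrak h}=-F_4$ and thereby the vanishing of the $z^4$-coefficient, a contradiction; hence the $2$-Sylow is $\langle\gamma\tau^{+}\rangle\cong\Z{2}$ and $G=\langle\gamma\tau^{+}\rangle\times G_{\mathrm{odd}}$ with $G_{\mathrm{odd}}$ of odd order. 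Finally I would analyse a diagonal element of $\bar G_{\mathrm{odd}}$ of odd prime order $p$: exploiting that $F_4$ is even in $z$ and that $C$ is smooth, each possible configuration of its eigenvalues either renders $C$ reducible or singular (impossible), or forces $p=3$ and a lift in $G$ fixing the preimage of a coordinate line, again a smooth genus-$1$ curve. Hence $G_{\mathrm{odd}}=1$, so $G=\langle\gamma\tau^{+}\rangle$ and $\rkPic{S}^{G}=5\ne1$, contradicting the hypothesis. I expect this last eigenvalue case analysis, with its Riemann--Hurwitz and smoothness bookkeeping, to be the most delicate part; everything preceding it is essentially formal.
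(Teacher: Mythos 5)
Your proposal is correct, and its first two thirds track the paper's own argument closely: the same double-cover model $w^2=F_4(x,y,z)$, the same dichotomy (either the Geiser involution lies in $G$, or $G$ contains an involution $(\pm w:x:y:-z)$ with $F_4$ even in $z$ and nonzero $z^4$-coefficient), the same Lefschetz computation giving $\rkPic{S}^{\langle\gamma\tau^+\rangle}=5$, and the same reduction showing that two distinct involutions in $G$ would force a ``good'' (elliptic-curve-fixing) lift into $G$ --- your Klein-four bookkeeping of the lifts $\tau_i^{+}$ is just a tidier version of the paper's ``diagonalise the group generated by the two involutions''. Where you genuinely diverge is the endgame. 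The paper lets $G$ act on the four collinear fixed points of the bad involution, argues that this action has order $3$, and so lands on $F_4(x,y,0)=x(x^3+\lambda y^3)$ and an order-$3$ element fixing $\{y=0\}\cap S$. You instead first exclude an order-$4$ element squaring to $\gamma\tau^{+}$ via the $z^4$-coefficient (a clean observation the paper does not need), and then classify diagonal elements of odd prime order. That last classification is only sketched in your write-up, but it does go through: normalising the $z$-eigenvalue to $1$, such an element permutes the four distinct roots of $F_4(x,y,0)$ with at most two fixed points and all other orbits of length $p$, forcing $p=3$ with eigenvalues $(1,\omega,1)$ up to symmetry (where $\omega=e^{2\ipi/3}$; the pattern $(\omega,\omega^2,1)$ would force $F_4(x,y,0)=cx^2y^2$, contradicting the four distinct roots), and the order-$3$ lift lying in $G$ then fixes the smooth genus-$1$ curve $\{y=0\}\cap S$, smoothness being automatic since it is a component of the fixed locus of a finite-order automorphism. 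Two cosmetic slips, neither affecting validity: the dichotomy should read ``$\gamma\in G$ or $\gamma\notin G$'' (the involution supplied by Lemma~\ref{Lem:SizeOrbits} need not itself be $\gamma$ even when $\gamma\in G$), and ``hence $G_{\mathrm{odd}}=1$'' should be ``hence either the lemma holds or $G_{\mathrm{odd}}=1$, and the latter contradicts $\rkPic{S}^{G}=1$''.
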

\begin{proof}
We view $S$ as a surface of degree four in the weighted projective space $\mathbb{P}(2,1,1,1)$ (see Remark~\ref{Rem:EmbeddingsDP}). Note that the projection on the last three coordinates gives $S$ as a double covering of $\mathbb{P}^2$ ramified over a smooth quartic curve~$Q$. 

Lemma~\ref{Lem:SizeOrbits} implies that the order of $G$ is divisible by $2$, so $G$ contains an element~$g$ of order $2$. 

If the element~$g$ is the involution induced by the double covering (classically called the \emph{Geiser involution}), we are done; otherwise we may assume that~$g$ acts on $\mathbb{P}(2,1,1,1)$ as $g:(w:x:y:z) \mapsto (\epsilon w:x:y:-z)$, where $\epsilon=\pm 1$, and the equation of $S$ is $w^2=z^4+L_2(x,y)z^2+L_4(x,y)$, where $L_i$ is a form of degree $i$, and $L_4$ has four distinct roots. The trace on $S$ of the equation $z=0$ defines an elliptic curve $L_z\subset S$. If $\epsilon=1$, then~$g$ fixes the curve $L_z$ and we are done; we therefore assume that $\epsilon=-1$.

 If $G$ contains another involution, we diagonalise the group generated by these two involutions and see that one element of the group fixes either an elliptic curve or the smooth quartic curve, so we may assume that~$g$ is the only involution of $G$.

Note that~$g$ fixes exactly four points of $S$, which are the points of intersection of $L_z$ with the quartic $Q$ (of equation $w=0$). The trace of~$g$ on $\Pic{S}\cong\mathbb{Z}^8$ is thus equal to $2$ (Lemma~\ref{Lem:Lefschetz}), whence $\rkPic{S}^g=5$ and $G\not=<g>$.

The group $G$ acts on the line $z=0$ of $\Pn$ and on the four points of $L_z\cap Q$. Since~$g$ is the only element of order $2$ of $G$, the action of $G$ on these four aligned points has order $3$ and thus, we may assume that $L_4(x,y)=x(x^3+\lambda y^3)$ and that there exists an element~$h$ of $G$ that acts as $(w:x:y:z) \mapsto (\alpha w:x:e^{2\ipi/3}y:\beta z)$, with $\alpha^2=\beta^4=1$. We find that $h^4$ is an element of order $3$ that fixes the elliptic curve which is the trace on $S$ of the equation $y=0$.
\end{proof}

\begin{lemm}[Actions on the del Pezzo surfaces of degree $1$]
\label{Lem:DP1}
Let $S$ be a del Pezzo surface of degree $1$, and let $G\in \Aut(S)$ be an Abelian group such that $\rkPic{S}^G=1$. Then, some non-trivial element of $G$ fixes a curve of $S$ of positive genus.
\end{lemm}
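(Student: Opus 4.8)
The plan is to work with the weighted-projective model of Remark~\ref{Rem:EmbeddingsDP}: $S=\{w^{2}=z^{3}+p_{4}(x,y)z+p_{6}(x,y)\}\subset\mathbb{P}(1,1,2,3)$ with $p_{i}$ homogeneous of degree~$i$, every automorphism of $S$ being induced by a graded automorphism of $\mathbb{P}(1,1,2,3)$. The anticanonical pencil $|-K_S|$ is the pencil of sextic members $\{ax+by=0\}\cap S$, and after blowing up the unique base point of $|-K_S|$ one obtains a rational elliptic surface all of whose fibres are irreducible (since $S$ carries no $(-2)$-curve). The relevant positive-genus curves on $S$ are: the Bertini curve $C_w=S\cap\{w=0\}$, which is the branch curve of the double covering $S\to\mathbb{P}(1,1,2)$ onto the quadric cone, hence smooth of genus~$4$; the curve $C_z=S\cap\{z=0\}=\{w^{2}=p_{6}(x,y)\}$, smooth of genus~$2$ whenever $p_{6}$ is square-free; and the smooth anticanonical members, which have genus~$1$. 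The Bertini involution $\beta=\DiaG111{-1}$ is central in $\Aut(S)$, acts as $-\mathrm{id}$ on $K_S^{\perp}$, and fixes $C_w$ pointwise; so if $\beta\in G$ we are done at once.

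The first reduction is to a diagonal group. A finite abelian subgroup of $\mathrm{GL}(2,\K)$ is simultaneously diagonalisable, so after a linear change of the coordinates $x,y$ we may assume every element of $G$ acts diagonally on $\langle x,y\rangle$; substituting $z\mapsto\gamma z+q(x,y)$, $w\mapsto\delta w+r(x,y)+s(x,y)z$ into the defining equation and comparing the coefficients of $z^{2}$, of $wz$ and of $w$ forces $q=s=r=0$, so $G$ consists of graded diagonal automorphisms $\DiaG{\alpha}{\beta}{\gamma}{\delta}$, with characters $\alpha,\beta,\gamma,\delta\colon G\to\K^{\ast}$ subject to $\delta^{2}=\gamma^{3}$ and to one relation $\alpha^{i}\beta^{j}=\gamma^{2}$, resp.\ $\alpha^{i}\beta^{j}=\gamma^{3}$, for each monomial $x^{i}y^{j}$ occurring in $p_{4}$, resp.\ $p_{6}$. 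Now consider the induced action of $G$ on $|-K_S|\cong\mathbb{P}\langle x,y\rangle^{\ast}$. If this action is trivial (i.e.\ $\alpha=\beta$ for all of $G$), the monomial relations collapse to $\gamma^{3}=\delta^{2}=1$ and moreover to $\gamma=1$ unless $p_{4}\equiv0$; hence $G$ is a non-trivial subgroup of $\langle\beta\rangle$, or, when $p_4\equiv 0$ (in which case smoothness of $S$ forces $p_{6}$ square-free), of $\langle\beta\rangle\times\langle\DiaG11{\omega}1\rangle\cong\Z2\times\Z3$ (with $\omega$ a primitive cube root of unity); in either case $G$ contains $\beta$ (fixing $C_w$) or the order-$3$ element $\DiaG11{\omega}1$ (fixing $C_z$), and we are done. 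If instead $G$ acts non-trivially on $|-K_S|$, then, $G$ being diagonal, its image in $\PGL(2,\K)$ is cyclic with the two fixed points $(1:0)$ and $(0:1)$, so the two anticanonical members $C_x=S\cap\{x=0\}$ and $C_y=S\cap\{y=0\}$ are $G$-invariant.

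It remains to treat this last case, which is the crux. Here I would show that $G$ must fix one of $C_x,C_y,C_z,C_w$ pointwise — equivalently, in the diagonal notation, that $(\gamma,\delta)$ equals $(\alpha^{2},\alpha^{3})$ or $(\beta^{2},\beta^{3})$ as a pair of characters (when moreover the corresponding anticanonical fibre $C_x$ or $C_y$ is smooth rather than nodal or cuspidal), or that $\alpha=\beta$, or that $p_{4}\equiv0$ — and that otherwise $\rkPic{S}^{G}\geq2$, contradicting the hypothesis. This is established by combining the monomial relations above, which — together with smoothness of $S$ and irreducibility of all fibres of the elliptic fibration — leave only a short list of admissible data $(\alpha,\beta,\gamma,\delta;p_{4},p_{6})$, with a computation of fixed loci: for a diagonal $g=\DiaG{\alpha_{g}}{\beta_{g}}{\gamma_{g}}{\delta_{g}}$ one has $\Fix(g)=\bigl(\text{union of coordinate strata on which }g\text{ acts trivially}\bigr)\cap S$, so when $G$ fixes none of the four curves each $\Fix(g)$ is a finite set of explicitly bounded cardinality; Lemma~\ref{Lem:Lefschetz} then gives $\operatorname{tr}(g\mid\Pic{S})=\#\Fix(g)-2$, and averaging $\rkPic{S}^{G}=\tfrac1{|G|}\sum_{g\in G}\operatorname{tr}(g\mid\Pic{S})$ yields $\rkPic{S}^{G}\geq2$. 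The main obstacle is exactly this bookkeeping — enumerating the finitely many diagonal groups compatible with $\rkPic{S}^{G}=1$ and verifying in each that $G$ fixes a genuinely positive-genus coordinate curve, paying attention to the case where an invariant anticanonical member is a singular (rational) cubic — but each individual verification is elementary.
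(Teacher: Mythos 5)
Your overall strategy coincides with the paper's: the Weierstrass model in weighted projective space, the Bertini involution and the curve $z=0$ as candidate fixed curves, the action on the anticanonical pencil, and the Lefschetz formula of Lemma~\ref{Lem:Lefschetz}. The reduction to diagonal automorphisms and the case where $G$ acts trivially on $|-K_S|$ are fine. The difficulty is the case you yourself call the crux, which is where the entire content of the lemma lies, and there are two genuine problems with your sketch of it. First, the dichotomy you propose to prove is false as stated. Consider the surface $w^2=z^3+x^4z+y^6$ and the cyclic group $G$ of order $12$ generated by $g:(w:x:y:z)\mapsto(\im w:x:\zeta_{12}y:-z)$: this is exactly the configuration the paper's proof cannot exclude by any rank computation, it satisfies none of your escape clauses (the action on $(x:y)$ is non-trivial, $p_4=x^4\not\equiv0$, and $G$ fixes none of the four coordinate curves pointwise, since $g$ restricted to $y=0$ is $(w:x:z)\mapsto(\im w:x:-z)$, which is not the identity in $\mathbb{P}(3,1,2)$), and yet the lemma holds only because the \emph{element} $g^4$ fixes the elliptic curve $y=0$. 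A dichotomy organised around characters of the whole group $G$, as you set it up, would therefore miss precisely the case that actually occurs; the correct statement must be about individual non-trivial elements. Second, the averaging $\rkPic{S}^G=\frac{1}{|G|}\sum_{g}\mathrm{tr}\bigl(g\mid\Pic{S}\bigr)$ does not ``yield $\rkPic{S}^G\geq2$'' from finiteness of the fixed loci alone: the only a priori bound available from Lemma~\ref{Lem:Lefschetz} together with the fact that the base point of $|-K_S|$ is always fixed is $\mathrm{tr}(g)\geq-1$, which gives $\rkPic{S}^G\geq 10/|G|-1$ and is vacuous as soon as $|G|\geq4$.

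The paper closes this gap by a different route: assuming no non-trivial element fixes a positive-genus curve, $G$ injects into $\PGL(2,\K)$ via the elliptic fibration, hence (the lift of $(\Z{2})^2$ being non-abelian) is cyclic; a per-order eigenvalue analysis on $\Pic{S}\cong\mathbb{Z}^9$, applied simultaneously to $g$ and to its powers, shows that any cyclic group of order at most $6$ has $\rkPic{S}^G\geq2$; therefore $|G|\geq7$, which forces $L_4$ and $L_6$ to be monomials and leaves exactly two explicit surfaces to inspect by hand. To salvage your plan you would need an analogue of this order bound to make your ``short list'' genuinely finite and short, and you would have to run the final verification element by element rather than for the group as a whole.
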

\begin{proof}
We view $S$ as a surface of degree six in the weighted projective space $\mathbb{P}(3,1,1,2)$ (see Remark~\ref{Rem:EmbeddingsDP}). Up to a change of coordinates, we may assume that the equation is 
\begin{center}$w^2=z^3+zL_4(x,y)+L_6(x,y),$\end{center}
where $L_4$ and $L_6$ are homogeneous forms of degree $4$ and $6$ respectively.
The embedding of $S$ into $\mathbb{P}(3,1,1,2)$ is given by $|-3K_S| \times |-K_S| \times |-2K_S|$, which implies that $G$ is a subgroup of $P(\GL(1,\K)\times \GL(2,\K)\times \GL(1,\K))$. The projection $(w:x:y:z)\dasharrow (x:y)$ is an elliptic fibration generated by $|-K_S|$, and has one base-point, namely $(1:0:0:1)$, which is fixed by $\Aut(S)$. This projection induces an homomorphism $\rho:\Aut(S)\rightarrow \Aut(\mathbb{P}^1)=\PGL(2,\K)$. Note that the kernel of $\rho$ is generated by the Bertini involution $w\mapsto -w$ (and the element $z\mapsto \omega z$ ($\omega=e^{2\ipi/3}$) if $L_4=0$) and is hence cyclic of order $2$ (or $6$). Furthermore, any element of this kernel fixes a curve of positive genus.

We assume that no non-trivial element of $G$ fixes a curve of positive genus. This implies that $G$ is isomorphic to $\rho(G)\subset\Aut(\mathbb{P}^1)$, and thus is either cyclic or isomorphic to $(\Z{2})^2$. Since the lift of this latter group in $\Aut(S)$ is not Abelian, $G$ is cyclic. We use the Lefschetz fixed-point formula (Lemma~\ref{Lem:Lefschetz}) to deduce the eigenvalues of the action of elements of $G$ on $\Pic{S}\cong \mathbb{Z}^9$. For any element $g\in G$, $g\not=1$, $\Fix(g)$ contains the point $(1:0:0:1)$ and is the disjoint union of points and lines. Thus $\chi(\Fix(g))\geq 1$ and so the trace of~$g$ on $\Pic{S}$ is at least $-1$ (Lemma~\ref{Lem:Lefschetz}).

{\it Elements of order $2$}: The eigenvalues are $<1^a,(-1)^b>$ with $a\geq 4$, $b\leq 5$.

{\it Elements of order $3$}: The eigenvalues are $<1^a,(\omega)^b,(\omega^2)^b>$ with $a\geq 3$, $b\leq 3$.

{\it Elements of order $4$}: The eigenvalues are $<1^a,(-1)^b,(\im)^c,(-\im)^c>$ with $a\geq b-1$. Furthermore, the information on the square induces that $a+b\geq 4$, so $a\geq 2$.

{\it Elements of order $5$}: The eigenvalues are $<1^5,l_1,l_2,l_3,l_4>$, where $l_1,...,l_4$ are the four primitive $5$-th roots of unity.

{\it Elements of order $6$}: The eigenvalues are $<1^a,(-1)^b,(\omega)^c,(\omega^2)^c$, $(-\omega)^d$, $(-\omega^2)^d>$, where $a-b-c+d\geq -1$. Computing the square and the third power, we find respectively $a+b\geq 3, c+d\leq 3$ and $a+2c\geq 4$, $b+2d\leq 5$. This implies that $a\geq 2$. Indeed, if $a=1$, we get $b,c\geq 2$ and thus $d\leq 1$, which contradicts the fact that the trace $a-b-c+d$ is at least $-1$.

Since $\rkPic{S}^G=1$, the order of the cyclic group $G$ is at least $7$. As the action of $G$ leaves $L_4$ and $L_6$ invariant, both $L_6$ and $L_4$ are monomials. If some double root of $L_6$ is a root of $L_4$, the surface is singular, so up to an exchange of coordinates we may suppose that $L_4=x^4$ and either $L_6=xy^5$ or $L_6=y^6$.

In the first case, the equation of the surface is $w^2=z^3+x^4z+xy^5$ whose group of automorphisms $\Aut(S)$ is isomorphic to $\Z{20}$, generated by $\DiaG{\im}{1}{\zeta_{10}}{-1}$, and contains the Bertini involution. No subgroup of $\Aut(S)$ fullfills our hypotheses.

In the second case, the equation of the surface is $w^2=z^3+x^4z+y^6$, whose group of automorphisms is isomorphic to $\Z{2}\times\Z{12}$, generated by the Bertini involution and $g=\DiaG{\im}{1}{\zeta_{12}}{-1}$. The only possibility for $G$ is to be equal to $<g>$. Since $g^4=\DiaG{1}{1}{\omega}{1}$ fixes an elliptic curve, we are done.
\end{proof}

Proposition~\ref{Prp:DPrkone} now follows, using all the lemmas proved above.
\section{The results}
We now prove the five theorems stated in the introduction.
\begin{proof}[Proof of Theorem~$\ref{Thm:Classifmin}$]
Since the pair $(G,S)$ is minimal, either $\rkPic{S}^G=1$ and $S$ is a del Pezzo surface, or $G$ preserves a conic bundle structure (see \cite{bib:Man}, \cite{bib:Isk3} or \cite{bib:Dol}). 

In the first case, either $S\cong \Pn$, or $S\cong \mathbb{P}^1\times\mathbb{P}^1$ or $S$ is a del Pezzo surface of degree $d=5$ or $6$ and $G\cong \Z{d}$ (Proposition~\ref{Prp:DPrkone}).

In the second case, either $S$ is a Hirzebruch surface or the pair $(G,S)$ is the pair $(\Cs{24},\hat{S_4})$ of Section~\ref{Sec:ExampleCs24} (Proposition~\ref{Prop:MainConicBundles}).
\end{proof}

\begin{proof}[Proof of Theorem~$\ref{Thm:NonCyclic}$]
No non-trivial element of $\Aut(\Pn), \Aut(\mathbb{P}^1\times\mathbb{P}^1)$ or $\Cs{24}$ fixes a non-rational curve (the first two cases are clear, the last one follows from Lemma~\ref{Lem:Cs24Properties}).

Conversely, suppose that $G$ is a finite Abelian subgroup of the Cremona group such that no non-trivial element fixes a curve of positive genus. Since $G$ is finite, it is birationally conjugate to a group of automorphisms of a rational surface $S$ (see for example \cite[Theorem 1.4]{bib:DFE} or \cite{bib:Dol}). Then, we assume that the pair $(G,S)$ is minimal and use the classification of Theorem~\ref{Thm:Classifmin}. 

If $S$ is an Hirzebruch surface, the group is birationally conjugate to a subgroup of $\Aut(\mathbb{P}^1\times\mathbb{P}^1)$ (Proposition~\ref{Prp:FnAb}). If $S$ is a del Pezzo surface, the group $G$ is birationally conjugate to a subgroup of $\Aut(\mathbb{P}^1\times\mathbb{P}^1)$ or $\Aut(\Pn)$, by Proposition~\ref{Prp:DPrkone}.
Otherwise, the pair $(G,S)$ is isomorphic to the pair $(\Cs{24},\hat{S_4})$.

It remains to show that the group $\Cs{24}$ is not birationally conjugate to a subgroup of $\Aut(\mathbb{P}^1\times\mathbb{P}^1)$ or $\Aut(\Pn)$. Since the group is isomorphic to $\Z{2}\times\Z{4}$, only the case of $\Aut(\mathbb{P}^1\times\mathbb{P}^1)$ need be considered (see Section~\ref{Sec:P1P2C2}). This was proved in Proposition~\ref{Prop:MainConicBundles}.
\end{proof}
\begin{proof}[Proof of Theorem~$\ref{Prp:TheClassification}$]
By Theorem~\ref{Thm:NonCyclic}, $G$ is birationally conjugate either to a subgroup of $\Aut(\Pn)$, or of $\Aut(\mathbb{P}^1\times\mathbb{P}^1)$, or to the group $\Cs{24}$.

The group $\Cs{24}$ is case $[8]$. The finite Abelian subgroups of $\Aut(\Pn)$ are conjugate to the groups of case $[1]$ or $[9]$ (Proposition~\ref{Prp:PGL3Cr}). The finite Abelian subgroups of $\Aut(\mathbb{P}^1\times\mathbb{P}^1)$ are conjugate to the groups of cases $[1]$ through $[7]$ (Proposition~\ref{Prp:AutP1P1B}).

 It was proved in Proposition~\ref{Prp:AutP1P1B} that cases $[1]$ through $[7]$ are distinct. In Proposition~\ref{Prop:MainConicBundles} we showed that $[8]$ ($\Cs{24}$) is not birationally conjugate to any groups of cases $[1]$ through $[7]$. Finally, the group $[9]$ is isomorphic only to $[1]$, but is not birationally conjugate to it (Proposition~\ref{Prp:PGL3Cr}). This completes the proof that the distincts cases given above are not birationally conjugate.
\end{proof}
The proof of Theorem~\ref{Thm:Cyclic} follows directly from Theorem~\ref{Prp:TheClassification}, and Theorem~\ref{Thm:Roots} is a corollary of Theorem~\ref{Thm:Cyclic}.
\section{Other kinds of groups}
\label{Sec:Counter}
Our main interest up to now was in finite Abelian subgroups of the Cremona group. In this section, we give some examples in the other cases, in order to show why the hypothesis "finite", respectively "Abelian", is necessary to ensure that condition $(F)$ (no curve of positive genus is fixed by a non-trivial element) implies condition $(M)$ (the group is birationally conjugate to a group of automorphisms of a minimal surface). We refer to the introduction for more details.

Finiteness is important since it imposes that the group is conjugate to a group of automorphisms of a projective rational surface. This is not the case if the group is not finite (see for example \cite{bib:JBTh}, Proposition 2.2.4). 
\begin{lemm}\label{Lem:GeneralNonLinearisable}
Let $\varphi:\Pn\dasharrow\Pn$ be a quadratic birational transformation with three proper base-points, and such that $\deg(\varphi^n)=2^n$ for each integer $n\geq 1$. Then, the following occur:

\begin{enumerate}
\item[\upshape 1.]
no pencil of curves is invariant by~$\varphi$;
\item[\upshape 2.]
 $\varphi$ is not birationally conjugate to an automorphism of $\Pn$ or of  $\mathbb{P}^1\times\mathbb{P}^1$.
\end{enumerate}
\end{lemm}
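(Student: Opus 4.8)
The plan is to extract everything from the hypothesis $\deg(\varphi^n)=2^n$, which says precisely that $\varphi$ is algebraically stable on $\Pn$ and that its first dynamical degree $\lambda(\varphi):=\lim_{n\to\infty}\deg(\varphi^n)^{1/n}$ equals $2$. Two classical facts about birational self-maps of smooth projective rational surfaces will be used: $\lambda$ is invariant under birational conjugation; and if a birational self-map preserves a pencil of curves (equivalently, after blowing up its base locus, a fibration onto a curve), then its first dynamical degree is $1$. The second fact follows by passing to a model $X$ on which the map is algebraically stable: the class $f$ of a fibre is then a non-zero isotropic class which is fixed by the induced action $\varphi^{*}$ on $\Pic{X}$; by the Hodge index theorem the orthogonal complement $f^{\perp}$ is negative semi-definite with radical $\mathbb{R}f$, so $\varphi^{*}$ acts on the negative-definite quotient $f^{\perp}/\mathbb{R}f$ as a lattice isometry, hence with finite order, and therefore all eigenvalues of $\varphi^{*}$ are roots of unity, so its spectral radius is $1$.

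For assertion~1, suppose some pencil $\Lambda$ of curves on $\Pn$ is invariant under $\varphi$. A general member of $\Lambda$ has some degree $d\geq 1$, so $\Lambda$ has a non-empty base locus; blowing it up we obtain a smooth rational surface $X$ with a fibration $\tau\colon X\to\mathbb{P}^1$, and $\varphi$ induces a birational self-map of $X$ sending fibres of $\tau$ to fibres of $\tau$. By the above, this forces $\lambda(\varphi)=1$, contradicting $\lambda(\varphi)=2$. (More concretely one can argue with degrees directly: since $\varphi$ is quadratic with proper base points $p_1,p_2,p_3$, invariance of $\Lambda$ forces $2d-\sum_{i=1}^{3}\mathrm{mult}_{p_i}(C)=d$ for a general member $C$ of $\Lambda$, so the general member passes through $p_1,p_2,p_3$ with total multiplicity $d$; in particular if $d=1$ then $\Lambda$ is the pencil of lines through one of the $p_i$, hence $\varphi$ is a de Jonqui\`eres transformation, for which $\deg(\varphi^n)$ grows at most linearly --- again incompatible with $\deg(\varphi^n)=2^n$.)

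For assertion~2, suppose $\varphi$ were birationally conjugate to an automorphism $g$ of $\Pn$ or of $\mathbb{P}^1\times\mathbb{P}^1$. Such a $g$ is a biregular map whose iterates have bounded degree (on $\Pn$, resp. on $\mathbb{P}^1\times\mathbb{P}^1$), so $\lambda(g)=1$; by conjugacy-invariance of $\lambda$ this gives $\lambda(\varphi)=1$, contradicting $\lambda(\varphi)=2$. Alternatively one reduces to assertion~1: an element of $\PGLn{3}=\Aut(\Pn)$ fixes a point and hence preserves the pencil of lines through it; an element of $\PGLn{2}\times\PGLn{2}\subset\Aut(\mathbb{P}^1\times\mathbb{P}^1)$ preserves each of the two rulings; and an element of $\Aut(\mathbb{P}^1\times\mathbb{P}^1)$ exchanging the two rulings has its square in $\PGLn{2}\times\PGLn{2}$. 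Thus a conjugate of $\varphi$, or of $\varphi^{2}$, would preserve a pencil, which contradicts assertion~1 applied to $\varphi$ --- the very same degree-growth argument applying to $\varphi^{2}$, for which $\deg((\varphi^2)^n)=4^n$.

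The only genuinely non-formal ingredient, and hence the point to handle with care, is the statement that preserving a pencil forces dynamical degree $1$ (together with the birational conjugacy-invariance of $\lambda$); a complete write-up should either invoke the classification of plane birational maps by their degree growth or include the short intersection-form argument sketched in the first paragraph. Everything else is elementary bookkeeping with degrees and base points, and the hypothesis "three proper base-points" is only needed for the concrete variant of the argument, not for the conclusion itself.
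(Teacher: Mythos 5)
Your overall strategy is sound and genuinely different from the paper's: you deduce everything from the first dynamical degree $\lambda(\varphi)=2$, whereas the paper argues by elementary bookkeeping, showing that an invariant pencil would have to pass with a fixed positive multiplicity through every point of the forward orbit $\varphi^m(B_i)$ of the base-points of $\varphi^{-1}$ (an infinite orbit, precisely because $\deg(\varphi^n)=2^n$), hence would have infinitely many base-points. Your treatment of assertion~2 by bounded degrees under conjugation is correct and arguably more direct than the paper's reduction of assertion~2 to assertion~1. The trade-off is that your proof of assertion~1 rests on the theorem that a plane birational map with exponential degree growth preserves no fibration (Diller--Favre); citing that theorem would indeed close the argument, and it even dispenses with the hypothesis of three proper base-points, which the paper's computation genuinely uses.

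However, the self-contained justification you offer for that key step has a genuine gap. For a non-regular birational self-map $\varphi$ of a surface $X$, the induced map $\varphi^{*}$ on $\Pic{X}$ does \emph{not} preserve the intersection form, even on an algebraically stable model: for the standard quadratic map one already has $\varphi^{*}H=2H$, so $(\varphi^{*}H)^{2}=4H^{2}$. Hence the claim that $\varphi^{*}$ acts on $f^{\perp}/\mathbb{R}f$ as a lattice isometry, and therefore has finite order and spectral radius $1$, is unjustified; only the invariance of $f^{\perp}$ survives, via the projection formula $\varphi^{*}D\cdot f=D\cdot\varphi_{*}f$. The correct intersection-form argument runs differently: on a stable model take the nef class $\theta$ with $\varphi^{*}\theta=\lambda\theta$ and $\theta^{2}=0$, observe that $\lambda(\theta\cdot f)=\varphi^{*}\theta\cdot f=\theta\cdot\varphi_{*}f=\theta\cdot f$ forces $\theta\cdot f=0$, and conclude from the Hodge index theorem that $\theta$ and $f$ are proportional, contradicting $\lambda>1$. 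Your ``concrete'' fallback does not fill the gap either, since it only disposes of pencils of lines ($d=1$); for $d\geq 2$ the relation $d=\sum_{i}\mathrm{mult}_{p_i}(C)$ is only the first step of the paper's iteration along the orbit of the $B_i$. As written, then, assertion~1 is established only if you explicitly invoke the degree-growth classification rather than the sketched isometry argument.
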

\begin{proof}Denote by $A_1,A_2,A_3$ the three base-points of $\varphi$ and by $B_1,B_2,B_3$ those of $\varphi^{-1}$. Up to a change of coordinates, we may suppose that $A_1=(1:0:0)$, $A_2=(0:1:0)$ and $A_3=(0:0:1)$. The birational transformation $\varphi$ is thus the composition of the standard quadratic transformation $\sigma: (x:y:z)\dasharrow (yz:xz:xy)$ with a linear automorphism $\tau\in\Aut(\Pn)$ that sends $A_i$ on $B_i$ for $i=1,2,3$. 

Let $\Lambda$ be some pencil of curves, and assume that $\varphi(\Lambda)=\Lambda$. We will prove that some base-point of $\Lambda$ is sent by~$\varphi$ on an orbit of infinite order. The condition $\deg(\varphi^n )=2^n$ is equivalent to saying that for $i=1,2,3,$ the sequence $B_i, \varphi(B_i),...,\varphi^n(B_i),...$ is well-defined, i.e.\ that $\varphi^m(B_i)$ is not equal to $A_j$ for any $i,j\in\{1,2,3\},m\in \mathbb{N}$. Denote by $\alpha_1,\alpha_2,\alpha_3,\beta_1,\beta_2,\beta_3$ the multiplicity of $\Lambda$ at respectively $A_1$, $A_2$, $A_3$, $B_1$, $B_2$, $B_3$ and by~$n$ the degree of the curves of $\Lambda$. The curves of the pencil $\varphi(\Lambda)$ thus have degree $2n-\alpha_1-\alpha_2-\alpha_3$. Since $\Lambda$ is invariant, $n=\alpha_1+\alpha_2+\alpha_3$, so at least one of the $\alpha_i$'s is not equal to zero.
The equality $n=\alpha_1+\alpha_2+\alpha_3$ implies that the curves of $\sigma(\Lambda)$ have multiplicity $\alpha_i$ at $A_i$, so the curves of $\varphi(\Lambda)$ have multiplicity $\alpha_i$ at $B_i$, whence $\alpha_i=\beta_i$ for $i=1,2,3$. Since $\Lambda$ passes through $B_i$ with multiplicity $\alpha_i$, the pencil $\varphi(\Lambda)=\Lambda$ passes through $\varphi(B_i)$ with multiplicity $\alpha_i$ for $i=1,2,3$. Continuing in this way, we see that $\Lambda$ passes through $\varphi^n(B_i)$ with multiplicity $\alpha_i$ for each $n\in\mathbb{N}$. Consequently, $\Lambda$ has infinitely many base-points, which is not possible. This establishes the first assertion.

The second assertion follows directly, as each automorphism of $\Pn$ or $\mathbb{P}^1\times\mathbb{P}^1$ leaves a pencil of rational curves invariant. 
\end{proof}
\begin{coro}
The group generated by a very general quadratic transformation is a infinite cyclic group satisfying $(F)$ but not $(M)$.
\end{coro}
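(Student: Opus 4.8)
The plan is to pin down first what ``very general'' should mean, and then to deduce all four claims — infinite, cyclic, $(F)$, not $(M)$ — from Lemma~\ref{Lem:GeneralNonLinearisable}, used as a black box. I would parametrise quadratic transformations $\varphi$ by the three base-points $A_1,A_2,A_3$ of $\varphi$ together with its linear part (equivalently, by the six points $A_1,A_2,A_3,B_1,B_2,B_3$, where the $B_i$ are the base-points of $\varphi^{-1}$), which form an irreducible variety over $\mathbb C$, and declare $\varphi$ \emph{very general} when it avoids a suitable countable union of proper closed subsets — a legitimate notion since over $\mathbb C$ such a union is never the whole variety. Concretely I would impose: (i) the base-points of $\varphi$, and those of $\varphi^{-1}$, are proper (distinct points of $\Pn$); (ii) $\varphi^m(B_i)\notin\{A_1,A_2,A_3\}$ for all $m\ge 0$ and all $i$, which by the remark in the proof of Lemma~\ref{Lem:GeneralNonLinearisable} is equivalent to $\deg(\varphi^n)=2^n$ for every $n\ge 1$; and (iii) the ten points $A_1,A_2,A_3,B_1,B_2,B_3,\varphi(B_1),\varphi(B_2),\varphi(B_3),\varphi^2(B_1)$ are in general position, i.e.\ lie on no plane cubic. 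Conditions (i) and (iii) are open and dense; (ii) holds on a dense open locus for each fixed bound on $n$ (general quadratic maps are algebraically stable) and hence on a dense locus for all $n$; so the set of very general $\varphi$ is dense, and by (i)--(ii) Lemma~\ref{Lem:GeneralNonLinearisable} applies to such a $\varphi$.

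Granting this, the easy parts run as follows. The group $\langle\varphi\rangle$ is infinite cyclic because $\deg(\varphi^n)=2^n\ne 1$ forces $\varphi^n\ne\mathrm{id}$ for $n\ge 1$. For $(M)$: the minimal surfaces are $\Pn$, $\mathbb P^1\times\mathbb P^1$ and $\mathbb F_n$ for $n\ge 2$; Lemma~\ref{Lem:GeneralNonLinearisable}(2) already rules out the first two, and if $\varphi$ were birationally conjugate to an automorphism of some $\mathbb F_n$ it would preserve the pencil of fibres of the ruling, contradicting Lemma~\ref{Lem:GeneralNonLinearisable}(1); hence $\langle\varphi\rangle$ is not birationally conjugate to a group of automorphisms of a minimal surface, i.e.\ $(M)$ fails.

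The substance — and the step I expect to be the real obstacle — is verifying $(F)$: no power of $\varphi$ fixes a curve of positive genus. The reduction I have in mind is: if $\varphi^n$ ($n\ne 0$) fixes a curve $C$ of positive genus pointwise, then since $\varphi^n\ne\mathrm{id}$ has only finitely many pointwise-fixed curves and $\varphi$ permutes them, the union $\Gamma$ of the $\varphi$-orbit of $C$ is a $\varphi$-invariant curve with a component (namely $C$, whose genus is unchanged under the birational maps $\varphi^k$) of positive genus. One then studies an arbitrary $\varphi$-invariant curve $\Gamma$: writing $N=\deg\Gamma$ and $\mu_i$ for the multiplicity of $\Gamma$ at $A_i$, the same bookkeeping as in the proof of Lemma~\ref{Lem:GeneralNonLinearisable} gives $N=\mu_1+\mu_2+\mu_3$ and, inductively (this is where (ii) is used, so that the orbits never hit the base-points of $\varphi$), that $\Gamma$ has multiplicity $\mu_i$ at every point $\varphi^m(B_i)$, $m\ge 0$. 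If some $\mu_i\ge 2$, then $\Gamma$ has a point of multiplicity $\ge 2$ at each of the infinitely many distinct points $\varphi^m(B_i)$, which is impossible for a plane curve (at a multiple point either a component is singular, finitely many by genus bounds on the components, or two components meet, finitely many by B\'ezout). Hence $\mu_i\le 1$, so $N\le 3$; and since a plane curve of positive geometric genus has degree $\ge 3$, we must have $N=3$ and $\Gamma$ an irreducible cubic with $\mu_1=\mu_2=\mu_3=1$, passing through $A_1,A_2,A_3$ and through $\varphi^m(B_i)$ for all $m\ge 0$ and $i=1,2,3$ — in particular through the ten points of (iii), contradicting general position. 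The delicate point throughout is justifying that condition (iii) is genuinely generic and simultaneously compatible with (i)--(ii); I would handle this by a dimension count (``ten points in general position'' is a dense open condition that meets the dense locus where (i)--(ii) hold) or, failing that, by exhibiting one explicit algebraically stable quadratic map whose first few base-point iterates impose ten independent conditions on cubics.
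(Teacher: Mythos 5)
Your handling of ``infinite cyclic'' and of the failure of $(M)$ coincides with the paper's: both rest on Lemma~\ref{Lem:GeneralNonLinearisable} together with the fact that $\deg(\varphi^n)=2^n$ for all $n$ holds away from a countable union of proper subvarieties (and your explicit use of part~1 of that lemma to exclude the ruled surfaces $\mathbb{F}_n$, $n\geq 2$, which the lemma's part~2 does not name, is a welcome precision). For condition $(F)$, however, you take a genuinely different route. The paper's argument is a one-line genericity claim: writing $\varphi^n=(f_1:f_2:f_3)$, the fixed locus of $\varphi^n$ lies in the intersection of the curves $xf_2-yf_1$, $xf_3-zf_1$, $yf_3-zf_2$, and ``in general'' this intersection is finite, so for each $n$ the quadratic maps whose $n$-th power fixes a curve form a proper closed subset. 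You instead convert a hypothetical positive-genus curve fixed pointwise by $\varphi^n$ into a $\varphi$-invariant curve $\Gamma$, redo the degree and multiplicity bookkeeping of Lemma~\ref{Lem:GeneralNonLinearisable} on $\Gamma$ rather than on a pencil, and force $\Gamma$ to be a cubic through ten dynamically determined points, contradicting a general-position hypothesis. This is more work, but it yields an explicit, checkable list of conditions defining ``very general,'' where the paper leaves the genericity implicit. Two points to tighten: first, the step ``infinitely many distinct points $\varphi^m(B_i)$'' is not implied by your condition (ii) alone --- you must add to your countable list the (proper, closed) conditions $\varphi^m(B_i)=\varphi^{m'}(B_i)$ to guarantee infinite orbits, and to propagate multiplicities you also want the orbits to avoid the three contracted lines through pairs of the $A_j$, not merely the points $A_j$ themselves; second, as you yourself flag, condition (iii) must be shown nonvacuous by producing one quadratic map whose ten points lie on no cubic. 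This last existence statement is true (quadratic maps preserving a plane cubic form a proper subfamily) and is of exactly the same nature as the unproved ``in general, there is only a finite number of points'' on which the paper's own proof rests, so your argument is no less complete than the original on that score.
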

\begin{proof}
The condition $\deg(\varphi^n)=2^n$, $n\in \mathbb{N}$ is satisfied for all quadratic transformations, except for a countable set of proper subvarieties. Consequently condition $(F)$ is not satisfied (Lemma \ref{Lem:GeneralNonLinearisable}) for a very general quadratic transformation.

Let~$n$ be some positive integer and write $\varphi^n:(x:y:z)\dasharrow (f_1(x,y,z):f_2(x,y,z):f_3(x,y,z))$, for some homogeneous polynomials $f_i$ of degree $2^n$. The set of points fixed by $\varphi^n$ belongs to the intersection of the curves with equations $xf_2-yf_1$, $xf_3-zf_1$ and $yf_3-zf_2$. In general, there is only a finite number of points; this yields condition $(F)$.
\end{proof}
In fact, the argument of Lemma~\ref{Lem:GeneralNonLinearisable} works for any very general birational transformation of $\Pn$, since this is a composition of quadratic transformations. We thus find infinitely many cyclic subgroups of the Cremona group that are not birationally conjugate to a group of automorphisms of a minimal surface although none of their non-trivial elements fixes a non-rational curve. The implication $(F)\Rightarrow (M)$ is therefore false for general cyclic groups.

\bigskip

We now study the finite non-Abelian subgroups and provide, in this case, many examples satisfying $(F)$ but not $(M)$:
\begin{lemm}Let $S_6=\{ \big((x:y:z) , (u:v:w)\big)\ | \ ux=vy=wz\}\subset\Pn\times\Pn$ be the del Pezzo surface of degree $6$. Let $G\cong \Sym_3\times\Z{2}$ be the subgroup of automorphisms of $S_6$ generated by \begin{center}$\begin{array}{rcl}\big((x:y:z) , (u:v:w)\big)& \mapsto& \big((u:v:w), (x:y:z)\big),\\
\big((x:y:z) , (u:v:w)\big)& \mapsto& \big((y:x:z), (v:u:w)\big),\\
\big((x:y:z) , (u:v:w)\big)& \mapsto& \big((z:y:x), (w:v:u)\big).\end{array}$\end{center}
Then no non-trivial element of $G$ fixes a curve of positive genus, and $G$ is not birationally conjugate to a group of automorphisms of a minimal surface.
\end{lemm}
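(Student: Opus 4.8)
The statement has two halves, and I would treat them separately: first the fixed-locus claim, then the non-linearisability.

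\medskip

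\noindent\textbf{No nontrivial element fixes a curve of positive genus.} The plan is to push everything down to $\Pn$ along the blow-down $p\colon S_6\to\Pn$ of Section~\ref{Sec:DelPezzo6} and to run through the conjugacy classes of $G\cong \Z{2}\times\Sym_3$: the identity, the three transpositions of the $\Sym_3$-factor, the two $3$-cycles, the factor-exchange $\tau=g_a$, the three products $\tau\cdot(\text{transposition})$, and the two products $\tau\cdot(3\text{-cycle})$. A transposition of $\Sym_3$ descends to a linear involution of $\Pn$ whose fixed set is a line through one of the $A_i$ together with a point, so its fixed set on $S_6$ is the strict transform of that line plus finitely many points on the exceptional curves — all rational. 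A $3$-cycle descends to a linear automorphism of order $3$ with three isolated fixed points, and it permutes the six exceptional curves without fixing any, so its fixed set is finite. For the involutions involving the factor exchange I would work directly inside $S_6\subset\Pn\times\Pn$: $\Fix(\tau)=S_6\cap\Delta_{\Pn}$ is the proper intersection of two surfaces in $\Pn\times\Pn$, hence the four points $\big((1\!:\!\pm1\!:\!\pm1),(1\!:\!\pm1\!:\!\pm1)\big)$; for $\tau$ composed with a transposition, say $\big((x\!:\!y\!:\!z),(u\!:\!v\!:\!w)\big)\mapsto\big((v\!:\!u\!:\!w),(y\!:\!x\!:\!z)\big)$, the fixed locus is cut out by $(u\!:\!v\!:\!w)=(y\!:\!x\!:\!z)$, which together with $ux=vy=wz$ gives $z^2=xy$, the strict transform of a smooth conic (rational) plus perhaps isolated points; and $\tau$ composed with a $3$-cycle is an element of order $6$ whose fixed locus reduces to the single point $\big((1\!:\!1\!:\!1),(1\!:\!1\!:\!1)\big)$. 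The only care needed is to confirm that no exceptional curve is fixed pointwise and that the conics appearing are smooth; the computations themselves are routine.

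\medskip

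\noindent\textbf{Non-conjugacy to automorphisms of a minimal surface.} The plan is to show that $(G,S_6)$ is a $G$-minimal pair with $S_6$ a del Pezzo surface of degree $6$ and $\rkPic{S_6}^G=1$, and then that it is rigid, in the sense that no sequence of $G$-equivariant elementary links from $S_6$ reaches $\Pn$, $\mathbb{P}^1\times\mathbb{P}^1$ or $\mathbb{F}_n$ ($n\ge 2$). For the first part, $G$ acts on the hexagon $\{E_1,E_2,E_3,D_{12},D_{13},D_{23}\}$ of $(-1)$-curves of $S_6$: the $\Sym_3$-factor rotates and reflects it while $\tau$ is the antipodal map, so $G$ acts as the full dihedral group on the hexagon, in particular transitively; since the six $(-1)$-curves are not pairwise disjoint there is no $G$-equivariant blow-down, so $(G,S_6)$ is minimal. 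A short computation on $\Pic{S_6}=\langle L,E_1,E_2,E_3\rangle$ — invariance under the two transpositions forces the three $E_i$-coefficients equal, then invariance under $\tau$ (which sends $L\mapsto 2L-E_1-E_2-E_3$, $E_i\mapsto L-E_j-E_k$) forces the class to be a multiple of $K_{S_6}$ — gives $\rkPic{S_6}^G=1$; equivalently $G$ maps isomorphically onto the Weyl group $\Z{2}\times\Sym_3=W(A_1\oplus A_2)$ of $S_6$, which has no invariants on $K_{S_6}^{\perp}$.

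\medskip

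Now assume $G$ were birationally conjugate to a group of automorphisms of a minimal surface $S'$; blowing $S'$ down if necessary we may assume $(G,S')$ is itself a minimal pair, and by Manin--Iskovskikh it is either a del Pezzo surface with $\rkPic{}^G=1$ or carries a $G$-invariant conic bundle. There is then a $G$-equivariant birational map $\varphi\colon S_6\dashrightarrow S'$, and by Iskovskikh's factorisation theorem $\varphi$ is a composite of $G$-equivariant elementary links; so it suffices to analyse the links available at the del Pezzo surface $S_6$ (links of type $I$, i.e.\ the blow-up of a $G$-orbit producing a conic bundle, and of type $II$, i.e.\ blow up a $G$-orbit then blow down a $G$-orbit), in the same spirit as the end of the proof of Proposition~\ref{Prop:MainConicBundles}. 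Here one uses that $\Fix(G)$ is the single point $p_0=\big((1\!:\!1\!:\!1),(1\!:\!1\!:\!1)\big)$, that $(-K_{S_6})^2=6$, and the very restricted shape of the $G$-orbits on the configuration of $(-1)$-curves, to check that no such link leads out of the $G$-birational class of $(G,S_6)$; since $S_6$ is not a minimal surface, this is the desired contradiction. \textbf{The main obstacle I anticipate is precisely this rigidity step}: one must be exhaustive about which $G$-orbits of points (including infinitely near ones) on $S_6$ and on the surfaces produced from it can start a link, and verify that after blowing up such an orbit one can never blow down a different $G$-orbit so as to increase the degree — this is delicate because $G$ does have a fixed point $p_0$, so the link apparatus really has to be pushed through rather than ruled out for formal reasons. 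I would also record the remark that the abstract isomorphism type gives nothing here, since $\Sym_3\times\Z{2}$ embeds into both $\Aut(\Pn)$ and $\Aut(\mathbb{P}^1\times\mathbb{P}^1)$; what is being excluded is the particular birational class of this $G$-action.
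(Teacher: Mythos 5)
Your first half is fine, though it takes a more computational route than the paper: you verify the fixed loci conjugacy class by conjugacy class, whereas the paper simply invokes Corollary~\ref{Coro:K2leq5} (every non-trivial element of $\Aut(S_6)$ generates a cyclic group on a surface with $K^2=6\geq 5$, hence is birationally conjugate to a linear automorphism of $\Pn$ or $\mathbb{P}^1\times\mathbb{P}^1$, and the property of fixing a curve of positive genus is a conjugacy invariant). Both arguments work; yours is self-contained, the paper's is one line. Your computations of $\rkPic{S_6}^G=1$ and of the $G$-minimality of $S_6$ are also correct and are a necessary preliminary to the second half.

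The genuine gap is in the second half, and you have flagged it yourself: the rigidity step is announced as a plan but never carried out, and it is the entire mathematical content of the non-conjugacy claim. Saying ``one uses that $\Fix(G)=\{p_0\}$, that $(-K_{S_6})^2=6$, and the restricted shape of the $G$-orbits \ldots to check that no such link leads out of the $G$-birational class'' is not a proof; the check is precisely where all the work lies. The difficulty is real, not bureaucratic: because $G$ fixes the point $p_0$, there \emph{are} admissible starting links (e.g.\ the blow-up of $p_0$ yields a $G$-action on the del Pezzo surface of degree $5$, and one must then classify the links available there, and on every surface reachable from it, including orbits of infinitely near points), so the exclusion cannot be obtained by a parity or orbit-size obstruction as in the $\Cs{24}$ case. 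The paper does not attempt this either; it cites Iskovskikh's article \cite{bib:IskS3Z2}, whose sole purpose is to classify all $G$-equivariant elementary links starting from $(G,S_6)$ for this specific group and to deduce exactly this non-conjugacy. To complete your argument you would either have to reproduce that classification or cite it; as written, the second assertion of the lemma is not established.
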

\begin{proof}Since every non-trivial element of finite order of $\Aut(S_6)$ is birationally conjugate to a linear automorphism of $\Pn$ (Corollary~\ref{Coro:K2leq5}), no such element fixes a curve of positive genus. The description of every $G$-equivariant elementary link starting from $S_6$ was given by Iskovskikh in \cite{bib:IskS3Z2}. This shows that this group is not birationally conjugate to a group of automorphisms of a minimal surface.\end{proof}
\begin{lemm}Let $S_5$ be the del Pezzo surface of degree $5$. Let $G\cong \Sym_5$ be the whole group $\Aut(S_5)$.
Then no non-trivial element of $G$ fixes a curve of positive genus, and $G$ is not birationally conjugate to a group of automorphisms of a minimal surface.
\end{lemm}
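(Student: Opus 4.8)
The plan is to treat the two assertions separately, recycling the machinery already built in the paper.

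\emph{First assertion: no non-trivial element of $G$ fixes a curve of positive genus.} This is exactly the argument of the preceding lemma. Since $(K_{S_5})^2=5$, Corollary~\ref{Coro:K2leq5} shows that every finite Abelian subgroup of $\Aut(S_5)$ — in particular the cyclic group $\langle g\rangle$ generated by any $g\in G$ — is birationally conjugate to a subgroup of $\Aut(\Pn)$ or of $\Aut(\mathbb{P}^1\times\mathbb{P}^1)$, none of whose non-trivial elements fixes a curve of positive genus. As a curve of positive genus is never contracted by a birational map, the property of fixing such a curve is a conjugacy invariant; hence no non-trivial element of $G$ fixes one.

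\emph{Second assertion: $G$ is not birationally conjugate to a group of automorphisms of a minimal surface.} First, $(G,S_5)=(\Sym_5,S_5)$ is a minimal pair: $\Sym_5\cong\Aut(S_5)$ acts on $\Pic{S_5}$ through the Weyl group $W(A_4)\cong\Sym_5$, with $\Pic{S_5}^G=\mathbb{Z}K_{S_5}$, so $\rkPic{S_5}^G=1$. Suppose for contradiction that $G$ is birationally conjugate to $G'\subset\Aut(S')$ with $S'$ minimal; then $G'\cong\Sym_5$. The case $S'\cong\Pn$ is impossible, since $\Sym_5$ has no faithful $3$-dimensional projective representation, so $\Sym_5\not\hookrightarrow\PGLn{3}$; the case $S'\cong\mathbb{F}_n$ with $n\geq 2$ is impossible, since a finite subgroup of $\Aut(\mathbb{F}_n)$ is an extension of a subgroup of $\PGLn{2}$ by a cyclic group (the kernel of the action on the base being connected and solvable), whereas $Z(\Sym_5)=\{1\}$ and $\Sym_5\not\hookrightarrow\PGLn{2}$. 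This leaves only $S'\cong\mathbb{F}_0=\mathbb{P}^1\times\mathbb{P}^1$.

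For this last case no cheap group-theoretic obstruction is available, because $\Sym_5$ \emph{does} embed in $\Aut(\mathbb{P}^1\times\mathbb{P}^1)=\mathrm{PO}_4(\K)$ through its $4$-dimensional orthogonal standard representation, the odd permutations (of determinant $-1$) exchanging the two rulings; this even produces another genuinely \emph{minimal} pair $(\Sym_5,\mathbb{P}^1\times\mathbb{P}^1)$. One must therefore show directly that the minimal pairs $(\Sym_5,S_5)$ and $(\Sym_5,\mathbb{P}^1\times\mathbb{P}^1)$ are not birationally conjugate. I would argue with the $G$-equivariant factorisation theorem (see \cite[Theorem~2.5]{bib:Isk5} or \cite[Theorem~7.7]{bib:Dol}): since $(\Sym_5,S_5)$ is minimal, any $\Sym_5$-equivariant birational map out of $S_5$ begins with an elementary link of type $I$ or $II$, and one checks — using the smallness of the degree, which severely restricts the $\Sym_5$-orbits of points available to blow up, together with the absence of a $\Sym_5$-invariant set of pairwise disjoint $(-1)$-curves — that no chain of $\Sym_5$-equivariant links can leave the pair $(\Sym_5,S_5)$; this $G$-birational rigidity of $(\Sym_5,S_5)$ is part of the classification of $G$-del Pezzo surfaces in \cite{bib:Dol} (compare the role of \cite{bib:IskS3Z2} in the previous lemma), and it contradicts the existence of $S'$. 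This rigidity step, namely the link analysis, is the main obstacle, and it is essential rather than cosmetic: since $\Sym_5$ is abstractly a subgroup of $\Aut(\mathbb{P}^1\times\mathbb{P}^1)$, the statement genuinely belongs to the birational geometry of $S_5$ and cannot be reduced to representation theory alone.
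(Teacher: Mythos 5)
Your overall strategy coincides with the paper's: the first assertion via Corollary~\ref{Coro:K2leq5} is exactly the argument used there, and for the second assertion the paper likewise decomposes a hypothetical $G$-equivariant birational map $S_5\dasharrow\tilde S$ into $G$-equivariant elementary links following \cite{bib:Isk5}. Your preliminary observations (minimality of the pair via $\rkPic{S_5}^G=1$, the non-embedding $\Sym_5\not\subset\PGLn{3}$, and the genuine embedding $\Sym_5\hookrightarrow\Aut(\mathbb{P}^1\times\mathbb{P}^1)$ showing that no purely group-theoretic argument can settle the question) are correct, though the first two become redundant once the link analysis is carried out, since that analysis excludes all minimal targets at once.

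The gap is exactly at the step you label ``one checks'': you never verify that no equivariant link leaves $S_5$, and instead outsource the rigidity to \cite{bib:Dol}. That verification is the entire content of the second assertion, and it is short once stated correctly. By the classification of elementary links (\cite{bib:Isk5}, Theorem~2.6), a link starting from a degree-$5$ del Pezzo surface with invariant Picard rank $1$ is either a Bertini or Geiser involution (which returns to $S_5$ and merely conjugates $G$ to itself), or the blow-up of a $G$-orbit consisting of one or two points followed by the contraction of five curves onto $\Pn$ or $\mathbb{P}^1\times\mathbb{P}^1$. An orbit of size $1$ is a fixed point of $\Sym_5$, and an orbit of size $2$ is a fixed point of the index-two subgroup $\Alt_5$; the last assertion of Proposition~\ref{Prp:AutS5} -- proved in the paper for precisely this purpose -- states that both actions on $S_5$ are fixed-point free. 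Hence no link exits $S_5$ and no chain of links reaches a minimal surface. Your heuristic that ``the smallness of the degree restricts the orbits available to blow up'' points in the right direction but is not the actual obstruction (there are plenty of orbits; what matters is that none has size $\leq 2$), and the ``absence of a $\Sym_5$-invariant set of pairwise disjoint $(-1)$-curves'' addresses only links of type III, which are already excluded by minimality of the pair. With the orbit computation supplied, your proof closes and agrees with the paper's.
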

\begin{proof}Since every non-trivial element of $\Aut(S_5)$ is birationally conjugate to a linear automorphism of $\Pn$ (Corollary~\ref{Coro:K2leq5}), such an element does not fix a curve of positive genus. Suppose that there exists some $G$-equivariant birational transformation $\varphi:S_5\dasharrow \tilde{S}$ where $\tilde{S}$ is equal to $\Pn$ or $\mathbb{P}^1\times\mathbb{P}^1$. We decompose $\varphi$ into $G$-equivariant elementary links (see for example \cite{bib:Isk5}, Theorem 2.5). The classification of elementary links (\cite{bib:Isk5}, Theorem 2.6) shows that a link $S_5\dasharrow S'$ is 
either a Bertini or a Geiser involution (and in this case $S'=S_5$, and thus this link conjugates $G$ to itself), or the composition of the blow-up of one or two points, and the contraction of $5$ curves to respectively $\mathbb{P}^1\times\mathbb{P}^1$ or $\Pn$. It remains to show that no orbit of $G$ has size $2$ or $1$, to conclude that these links are not possible. This follows from the fact that the actions of $\Sym_5,\Alt_5 \subset G$ on $S_5$ are fixed-point free (Proposition~\ref{Prp:AutS5}).
\end{proof}

Finally, the way to find more counterexamples is to look at groups acting on conic bundles. The generalisation of the example $\Cs{24}$ gives many examples of non-Abelian finite groups. Here is the simplest family:
\begin{lemm}
Let~$n$ be some positive integer, and let $G$ be the group of birational transformations of $\Pn$ generated by 

\begin{center}$\begin{array}{rcl}g_1&:(x:y:z)\dasharrow& (yz:xy:-xz),\\
g_2&:(x:y:z)\dasharrow& (yz(y-z):xz(y+z):xy(y+z)),\\
h&:(x:y:z)\dasharrow& (e^{2\ipi/2n}x:y:z).\end{array}$\end{center} 
Then, $G$ preserves the pencil $\Lambda$ of lines passing through $(1:0:0)$ and the corresponding action gives rise to a non-split exact sequence
\begin{center}$1\rightarrow <h>\cong \Z{2n}\rightarrow G\rightarrow (\Z{2})^2\rightarrow 1.$\end{center}
In particular, the group $G$ has order $8n$.
Furthermore, no non-trivial element of $G$ fixes a curve of positive genus, and $G$ is not birationally conjugate to a group of automorphisms of a minimal surface.
\end{lemm}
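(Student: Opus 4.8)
The plan is to realise $G$ as a group of automorphisms of the conic bundle $(\hat{S_4},\pi)$ of Section~\ref{Sec:ExampleCs24}, and to deduce every assertion from the structure already established there. First I would note that $g_1,g_2,h$ all preserve the pencil $\Lambda$ of lines of $\Pn$ through $A_1=(1:0:0)$: for $g_1,g_2$ this is part of Lemma~\ref{Lem:Cs24Properties}, and for $h$ it is immediate since $\Lambda$ is parametrised by the ratio $(y:z)$ on which $h$ acts trivially. As $\Lambda$ is the pencil of fibres of $\pi$, this proves the first assertion. Moreover the group $\langle g_1,g_2\rangle\subset\Bir(\Pn)$ is the group $\Cs{24}$ of Theorem~\ref{Thm:NonCyclic}, realised as a subgroup of $\Aut(\hat{S_4},\pi)$ by Definition~\ref{Def:Cs24} and Lemma~\ref{Lem:Cs24Properties}; and $h$, being an automorphism of $\Pn$ of the form $(x:y:z)\mapsto(\alpha x:y:z)$, lifts to an element of order $2n$ of the torus $T\cong\K^{*}$ of automorphisms of $\hat{S_4}$ leaving every curve of negative self-intersection invariant (the torus described just before Definition~\ref{Def:Cs24}). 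Hence $G\subset\Aut(\hat{S_4},\pi)$ and $\langle h\rangle\cong\Z{2n}$.

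Next I would pin down the group structure. A short computation shows $g_1 h g_1^{-1}=g_2 h g_2^{-1}=h^{-1}$ (alternatively: $T$ is normalised by all of $\Aut(\hat{S_4})$, and conjugation acts on $T\cong\K^{*}$ by $\alpha\mapsto\alpha^{\pm 1}$, so it fixes the $2n$-torsion subgroup $\langle h\rangle$); thus $\langle h\rangle\trianglelefteq G$ and $G=\langle h\rangle\cdot\Cs{24}$. Now use the homomorphism $\overline{\pi}\colon G\to\Aut(\mathbb{P}^1)$ of Remark~\ref{Rem:ExactSeq}: on the base $(y:z)$ of $\pi$ it sends $g_1$ to $(y:z)\mapsto(y:-z)$, $g_2$ to $(y:z)\mapsto(z:y)$ and $h$ to the identity. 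These two involutions commute and generate a group isomorphic to $(\Z{2})^2$, so $\overline{\pi}(G)\cong(\Z{2})^2$. Writing $\gamma=h^k\delta$ with $\delta\in\Cs{24}$, one has $\overline{\pi}(\gamma)=\overline{\pi}(\delta)$, so $\gamma\in\ker\overline{\pi}$ forces $\delta\in\ker(\overline{\pi}|_{\Cs{24}})=\langle g_1^2\rangle$, which by Lemma~\ref{Lem:Cs24Properties}(4) (and $g_1^2=h^n$, the map $(x:y:z)\mapsto(-x:y:z)$) is contained in $\langle h\rangle$; hence $\ker\overline{\pi}=\langle h\rangle\cong\Z{2n}$ and $|G|=4\cdot 2n=8n$. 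To see that the sequence does not split, observe that any $\gamma\in G$ with $\overline{\pi}(\gamma)=\overline{\pi}(g_1)$ satisfies $\gamma g_1^{-1}\in\ker\overline{\pi}$, so $\gamma=h^k g_1$ for some $k$, and $(h^k g_1)^2=h^k(g_1 h^k g_1^{-1})g_1^2=h^k h^{-k}g_1^2=h^n\neq 1$; thus every element over $\overline{\pi}(g_1)$ has order $4$, so that coset contains no involution and the sequence cannot split.

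It remains to establish the two final properties. For $(F)$: by Corollary~\ref{Cor:CurveFixDeJ}, a non-trivial element of $\Aut(\hat{S_4},\pi)$ fixing a curve of positive genus must act trivially on the base of $\pi$ and twist at least $4$ singular fibres, hence lies in $\ker\overline{\pi}=\langle h\rangle$; but every element of $\langle h\rangle$ lies in $T$, so it leaves every curve of negative self-intersection of $\hat{S_4}$ invariant and twists no singular fibre --- contradiction. So no non-trivial element of $G$ fixes a curve of positive genus. For $(M)$: the subgroup $\langle g_1,g_2\rangle=\Cs{24}$ of $G$ is, by Theorem~\ref{Thm:NonCyclic}, not birationally conjugate to a subgroup of $\Aut(\Pn)$ or of $\Aut(\mathbb{P}^1\times\mathbb{P}^1)$; and if it were birationally conjugate to a subgroup of $\Aut(\mathbb{F}_m)$ for some $m\geq 2$, then, being finite Abelian, it would be birationally conjugate to a subgroup of $\Aut(\mathbb{P}^1\times\mathbb{P}^1)$ by Proposition~\ref{Prp:FnAb} --- again impossible. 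As the minimal surfaces are exactly $\Pn$, $\mathbb{P}^1\times\mathbb{P}^1$ and the $\mathbb{F}_m$ with $m\geq 2$, the group $\Cs{24}$, hence a fortiori $G$, is not birationally conjugate to a group of automorphisms of a minimal surface. The main obstacle is the group-theoretic bookkeeping --- establishing $G=\langle h\rangle\cdot\Cs{24}$, the relation $g_1 h g_1^{-1}=h^{-1}$, and $\ker\overline{\pi}=\langle h\rangle$; once these hold, the geometric conclusions $(F)$ and $(M)$ follow quickly from Corollary~\ref{Cor:CurveFixDeJ}, Theorem~\ref{Thm:NonCyclic} and Proposition~\ref{Prp:FnAb}.
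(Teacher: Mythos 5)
Your proof is correct and follows essentially the same route as the paper's: verify that the generators preserve the pencil through $(1:0:0)$, compute $g_i^2=h^n$ and $g_ihg_i^{-1}=h^{-1}$ to identify the kernel of the action on the base with $\langle h\rangle$, deduce $(F)$ from triviality of that action for any element fixing a positive-genus curve, and inherit the failure of $(M)$ from the subgroup $\Cs{24}=\langle g_1,g_2\rangle$. You even make explicit two points the paper leaves implicit (the non-splitness via the order-$4$ computation $(h^kg_1)^2=h^n$, and the reduction of the Hirzebruch case to $\mathbb{P}^1\times\mathbb{P}^1$ via Proposition~\ref{Prp:FnAb}), which is a welcome addition rather than a deviation.
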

\begin{proof}
Firstly, since $g_1$ and $g_2$ generate the group $\Cs{24}$, which is not birationally conjugate to a group of automorphisms of a minimal surface, this is also the case for $G$.

Secondly, we compute that $(g_1)^2=(g_2)^2=(h)^n$ is the birational transformation $(x:y:z)\mapsto (-x:y:z)$. The maps $g_1$ and $g_2$ thus have order $4$ and~$h$ has order $2n$.

Thirdly, every generator of $G$ preserves the pencil $\Lambda$ of lines passing through $(1:0:0)$. The action of $g_1,g_2$ and~$h$ on this pencil is respectively $(y:z)\mapsto (-y:z)$, $(y:z)\mapsto (z:y)$ and $(y:z)\mapsto (y:z)$. The action of $G$ on the pencil thus gives an exact sequence
\begin{center}$1\rightarrow G'\rightarrow G\rightarrow (\Z{2})^2\rightarrow 0$,\end{center}
where $G'$ is the subgroup of elements of $G$ that act trivially on the pencil $\Lambda$. It is clear that $<h>\cong \Z{2n}$ is a subgroup of $G'$. Since $g_1 h(g_1)^{-1}=g_2 h(g_2)^{-1}=h^{-1}$ and $g_1$ and $g_2$ commute, the group $<h>$ is equal to $G'$.

Finally, any element of $G$ that fixes a curve of positive genus must act trivially on the pencil $\Lambda$ and thus belongs to $<h>$. Hence, only the identity is possible.
\end{proof}


\begin{thebibliography}{Mu-Um}
\bibitem[Ba-Be]{bib:BaB}
L. Bayle, A. Beauville, {\it Birational involutions of $\Pn$. }Asian J. Math. {\bf 4} (2000), no.~1, 11--17.
\bibitem[Be-Bl]{bib:BeB}
A. Beauville, J. Blanc, {\it On Cremona transformations of prime order}. C.R. Acad. Sci. Paris, S\'er. I {\bf 339} (2004), 257-259.
\bibitem[Bea1]{bib:BeaBook}
A. Beauville, {\it Complex Algebraic Surfaces}. London Mathematical Society Student Texts, 34, 1996.
\bibitem[Bea2]{bib:Bea2}
A. Beauville, {\it p-elementary subgroups of the Cremona group}. J. Algebra {\bf 314} (2007), no. 2, 553-564
\bibitem[Bla1]{bib:BlaMM}
J. Blanc, {\it Conjugacy classes of affine automorphisms of $\mathbb{K}^n$ and linear automorphisms of $\mathbb{P}^n$ in the Cremona groups}. Manuscripta Math., 119 (2006), no.2 , 225-241.
\bibitem[Bla2]{bib:JBTh} J. Blanc, \emph{Finite Abelian subgroups of the
Cremona group of the plane,} PhD Thesis, University of Geneva, 2006. Available online at {\it http://www.unige.ch/cyberdocuments/theses2006/BlancJ/meta.html}
\bibitem[Bla3]{bib:JBCR}
J. Blanc, {\it Finite Abelian subgroups of the Cremona group of the plane}. C.R. Acad. Sci. Paris, S\'er. I {\bf 344} (2007), 21-26.
\bibitem[Bla4]{bib:JBSMF}
J. Blanc, {\it The number of conjugacy classes of elements of the Cremona group of some given finite order.} Bull. Soc. Math. France {\bf 135} (2007), no. 3, 419-434.
\bibitem[Bla5]{bib:JBCubIn}
J. Blanc, {\it On the inertia group of elliptic curves in the Cremona group of the plane}. Michigan Math. J. (to appear) math.AG/0703804
\bibitem[BPV]{bib:BPV}
J. Blanc, I. Pan, T. Vust, {\it Sur un th\'eor\`eme de Castelnuovo.} Bull. Braz. Math. Soc. {\bf 39} (2008), no. 1, 61-80.
\bibitem[De-Ku]{bib:DeKu}
H. Derksen, F. Kutzschebauch, {\it Nonlinearizable holomorphic group actions. } Math. Ann. {\bf 311} (1998), no. 1, 41--53. 
\bibitem[dFe]{bib:deF}
T. de Fernex, {\it On planar Cremona maps of prime order}. Nagoya Math. J. {\bf 174} (2004), 1--28.
\bibitem[dF-Ei]{bib:DFE} T. de Fernex, L. Ein, {\it Resolution of indeterminacy of pairs}. Algebraic
geometry, 165-177, de Gruyter, Berlin (2002).
\bibitem[Dol]{bib:DolWeyl}
I.V. Dolgachev, {\it Weyl groups and Cremona transformations.} Singularities I, 283--294, Proc. Sympos. Pure Math. 40, AMS, Providence (1983). 
\bibitem[Do-Iz]{bib:Dol}
I.V.~Dolgachev, V.A. Iskovskikh, {\it Finite subgroups of the plane Cremona group.} To appear in "Arithmetic and Geometry - Manin Festschrift" math.AG/0610595
\bibitem[vdE]{bib:vdE}
A. van den Essen, {\it Polynomial Automorphisms and the Jacobian Conjecture.} Progress in Mathematics, 190. Birkh\"auser Verlag, Basel, 2000. 
\bibitem[Isk1]{bib:Isk1}
V.A. Iskovskikh, {\it Rational surfaces with a pencil of rational curves}. Math. USSR Sbornik {\bf 3} (1967), no 4.
\bibitem[Isk2]{bib:Isk3}
V.A. Iskovskikh, {\it Minimal models of rational surfaces over arbitrary fields}. Izv. Akad. Nauk SSSR Ser. Mat. {\bf 43} (1979), no 1, 19-43, 237.
\bibitem[Isk3]{bib:Isk5}
V.A. Iskovskikh, {\it Factorization of birational mappings of rational surfaces from the point of view of Mori theory.} Uspekhi Mat. Nauk {\bf 51} (1996) no 4 (310), 3-72.
\bibitem[Isk4]{bib:IskS3Z2}
V.A. Iskovskikh, {\it Two nonconjugate embeddings of the group $S\sb 3\times Z\sb 2$ into the Cremona group.} Tr. Mat. Inst. Steklova {\bf 241} (2003), Teor. Chisel, Algebra i Algebr. Geom., 105--109.
\bibitem[Kan]{bib:SK}
S. Kantor, {\it Theorie der endlichen Gruppen von eindeutigen Transformationen in der Ebene.} Mayer \& M\"uller, Berlin (1895).
\bibitem[Kol]{bib:Kol}
J. Koll\'ar, {\it Rational Curves on Algebraic Varieties}. Ergebnisse der Mathematik und ihrer Grenzgebiete. 3. Folge. Band 32, Springer-Verlag, Berlin (1996)
\bibitem[Ko-Sz]{bib:KoS}
J. Koll\'ar, E. Szab\'o, {\it Fixed points of group actions and rational maps}. Canadian J. Math. {\bf 52} (2000), 1054-1056.
\bibitem[Kra]{bib:Kra}
H. Kraft, {\it Challenging problems on affine $n$-space.}
S\'eminaire Bourbaki, Vol. 1994/95.
Ast\'erisque No. {\bf 237} (1996), Exp. No. 802, 5, 295--317. 
\bibitem[Man]{bib:Man}
Yu. Manin, {\it Rational surfaces over perfect fields, II.} Math. USSR - Sbornik {\bf 1} (1967), 141-168.
\bibitem[Mu-Um]{bib:MuUm}
S. Mukai, H. Umemura, {\it Minimal rational threefolds.} Algebraic geometry, Tokyo/Kyoto, (1982), 490--518, 
Lecture Notes in Math., 1016, Springer, Berlin, 1983. 
\bibitem[Um]{bib:Um}
H. Umemura, {\it On the maximal connected algebraic subgroups of the Cremona group. I.} Nagoya Math. J. {\bf 88} (1982), 213--246. 
\bibitem[Wim]{bib:Wim}
A. Wiman, {\it Zur Theorie der endlichen Gruppen von birationalen Transformationen in der Ebene.} Math. Ann., vol. {\bf 48}, (1896), 497-498, 195-241.
\end{thebibliography}
\end{document}